\documentclass[]{amsart}

\usepackage[Symbol]{upgreek}

\usepackage{graphicx}

\usepackage{amssymb}
\usepackage{epic}
\usepackage{mathrsfs}
\usepackage{accents}
\usepackage{array}
\usepackage{stmaryrd}
\usepackage{enumitem}
\usepackage{longtable}
\usepackage[shortcuts]{extdash}

\usepackage{etoolbox} % for \patchcmd

\usepackage{tikz}
\usepackage{pgfplots}
\pgfplotsset{compat = newest}

\usepackage[hidelinks]{hyperref}

\input{xy}
\xyoption{matrix}
\xyoption{arrow}

\numberwithin{equation}{section}
\makeatletter
\renewcommand{\tocsection}[3]
 { \indentlabel{\@ifnotempty{#2}{\parbox{2.5em}{\ignorespaces#1 #2.}\quad}}#3}

\makeindex

\newcommand{\bbold}{\mathbb}

\def\R { {\bbold R} }
\def\Q { {\bbold Q} }
\def\Z { {\bbold Z} }
\def\C { {\bbold C} }
\def\N { {\bbold N} }

\def\c {\mathcal{C}}

\def\g {\operatorname{g}}
\def \I{\operatorname{I}}

\def \Ex{\operatorname{E}}
\def \Dx{\operatorname{D}}

\def \order{\operatorname{order}}

\def \ex{\operatorname{e}}

\renewcommand\epsilon{\varepsilon}

\def \d{\operatorname{d}}

\def \ev{\operatorname{e}}
\def \bar {\overline}
\def \<{\langle}
\def \>{\rangle}

\def \tilde {\widetilde}

\def \hat {\widehat}

\def \((  {(\!(}
\def \)) {)\!)}

\def \Li{\operatorname{Li}}
\def \res{\operatorname{res}}

\def \k {{{\boldsymbol{k}}}}

\DeclareMathSymbol{\precequ}{\mathrel}{symbols}{"16}
\DeclareMathSymbol{\succequ}{\mathrel}{symbols}{"17}

\def \nasymp{\not\asymp}

\renewcommand{\Re}{\operatorname{Re}}
\renewcommand{\Im}{\operatorname{Im}}

\newtheorem{theorem}{Theorem}[section]

\newtheorem{lemma}[theorem]{Lemma}
\newtheorem{prop}[theorem]{Proposition}
\newtheorem{cor}[theorem]{Corollary}

\newtheorem*{theoremA}{Theorem A}
\newtheorem*{theoremB}{Theorem B}
\newtheorem*{cor1}{Corollary 1}
\newtheorem*{cor2}{Corollary 2}
\newtheorem*{cor3}{Corollary 3}
\newtheorem*{cor4}{Corollary 4}
\newtheorem*{cor5}{Corollary 5}

\theoremstyle{definition}

\newtheorem{definition}[theorem]{Definition}

\theoremstyle{remark}
\newtheorem*{example}{Example}
\newtheorem*{examples}{Examples}
\newtheorem{exampleNumbered}[theorem]{Example}
\newtheorem{examplesNumbered}[theorem]{Examples}

\newtheorem*{remarks}{Remarks}
\newtheorem*{remark}{Remark}

\newtheorem{remarkNumbered}[theorem]{Remark}

\newcommand{\abs}[1]{\lvert#1\rvert}

\def \Ric{\operatorname{Ri}}

\let\oldi\i
\let\oldj\j
\renewcommand\i{\relax\ifmmode{\boldsymbol{i}}\else\oldi\fi}
\renewcommand\j{\relax\ifmmode{\boldsymbol{j}}\else\oldj\fi}

\renewcommand\leq{\leqslant}
\renewcommand\geq{\geqslant}
\renewcommand\preceq{\preccurlyeq}
\renewcommand\succeq{\succcurlyeq}
\renewcommand\le{\leq}
\renewcommand\ge{\geq}

\DeclareMathAlphabet{\mathbf}{OML}{cmm}{b}{it}

\DeclareFontFamily{U}{fsy}{}
\DeclareFontShape{U}{fsy}{m}{n}{<->s*[.9]psyr}{}
\DeclareSymbolFont{der@m}{U}{fsy}{m}{n}
\DeclareMathSymbol{\der}{\mathord}{der@m}{182}

\DeclareSymbolFont{der@m}{U}{fsy}{m}{n}
\DeclareMathSymbol{\derdelta}{\mathord}{der@m}{100}

\newcommand\ndeg{\operatorname{ndeg}}

\DeclareSymbolFont{imag@m}{OT1}{cmr}{m}{ui}
\DeclareMathSymbol{\imag}{\mathord}{imag@m}{105}

\DeclareFontFamily{OMS}{smallo}{}
\DeclareFontShape{OMS}{smallo}{m}{n}{<->s*[.65]cmsy10}{}
\DeclareSymbolFont{smallo@m}{OMS}{smallo}{m}{n}
\DeclareMathSymbol{\smallo}{\mathord}{smallo@m}{79}

\DeclareFontFamily{OMS}{largerdot}{}
\DeclareFontShape{OMS}{largerdot}{m}{n}{<->s*[.8]cmsy10}{}
\DeclareSymbolFont{largerdot@m}{OMS}{largerdot}{m}{n}
\DeclareMathSymbol{\largerdot}{\mathord}{largerdot@m}{15}

\DeclareMathSymbol{\llambda}{\mathord}{der@m}{108}
\DeclareMathSymbol{\rrho}{\mathord}{der@m}{114}

\def \upg{\upgamma}
\def \Upg{\Upgamma}
\def \upl{\uplambda}
\def \Upl{\Uplambda}
\def \upo{\upomega}

\def \Upd{\Updelta}

\newcommand{\equationqed}[1]{\[\pushQED{\qed}#1 \qedhere\popQED\]\let\qed\relax}
\newcommand{\alignqed}[1]{\begin{align*}\pushQED{\qed} #1 \qedhere\popQED\end{align*}\let\qed\relax}

\makeatletter
\newcommand{\dminus}{\mathbin{\text{\@dminus}}}

\newcommand{\@dminus}{%
  \ooalign{\hidewidth\raise1ex\hbox{\bf.}\hidewidth\cr$\m@th-$\cr}%
}
\makeatother

\def\ddeg{\operatorname{ddeg}}

\def \Caz{\mathcal{C}^0_a}
\def \Cao{\mathcal{C}^1_a}
\def \Cat{\mathcal{C}^2_a}

\def \Caom{\mathcal{C}^{\omega}_a}

\def \Gr{\mathcal{C}^r}
\def \Gn{\mathcal{C}^n}

\def \Go{\mathcal{C}^1}
\def \Gt{\mathcal{C}^2}
\def \Gi{\mathcal{C}^{<\infty}}
\def \Ginf{\mathcal{C}^{\infty}}
\def \Gom{\mathcal{C}^{\omega}}
\def \inv{\operatorname{inv}}
\def \Sol{\operatorname{Sol}}
\def \sign{\operatorname{sign}}

\def \Caz{\mathcal{C}^0_a}

\def \Cao{\mathcal{C}^1_a}
\def \Cat{\mathcal{C}^2_a}

\def \Can{\mathcal{C}^n_a}
\def \Caom{\mathcal{C}^{\omega}_a}
\def \Calinf{\mathcal{C}^{<\infty}}

\def \Caln{\mathcal{C}^{n}}

\makeatletter
\renewcommand\part{\@startsection{part}{0}%
  \z@{\linespacing\@plus\linespacing}{.5\linespacing}%
  {\normalfont\bfseries\centering}}
\makeatother

\makeatletter
\renewcommand\theindex{\@restonecoltrue\if@twocolumn\@restonecolfalse\fi
  \columnseprule\z@ \columnsep 35\p@
  \twocolumn[\@xp\part\@xp*\@xp{\bf Index}\bigskip]%
  \let\item\@idxitem
  \parindent\z@  \parskip\z@\@plus.3\p@\relax
  \small}

\makeatletter
\newcommand{\smallbullet}{} % for safety
\DeclareRobustCommand\smallbullet{%
  \mathord{\mathpalette\smallbullet@{0.6}}%
}
\newcommand{\smallbullet@}[2]{%
  \vcenter{\hbox{\scalebox{#2}{$\m@th#1\bullet$}}}%
}
\makeatother
  
\renewcommand{\marginpar}[1]{}

\begin{document}

\title{Revisiting Second-Order Linear Differential Equations over Hardy Fields}
% Revisited}
\author[Aschenbrenner]{Matthias Aschenbrenner}
\address{Kurt G\"odel Research Center for Mathematical Logic\\
Universit\"at Wien\\
1090 Wien\\ Austria}
\email{matthias.aschenbrenner@univie.ac.at}

\author[van den Dries]{Lou van den Dries}
\address{Department of Mathematics\\
University of Illinois at Urbana-Cham\-paign\\
Urbana, IL 61801\\
U.S.A.}
\email{vddries@illinois.edu}

\author[van der Hoeven]{Joris van der Hoeven}
\address{CNRS, LIX (UMR 7161)\\ 
Campus de l'\'Ecole Polytechnique\\  91120 Palaiseau \\ France}
\email{vdhoeven@lix.polytechnique.fr}

\date{March, 2026}

\begin{abstract} 
We review second-order homogeneous linear differential equations 
%on the real line 
with coefficient functions whose germs  lie in a Hardy field (and hence are strongly non-oscillating). 
We prove a conjecture of Boshernitzan~(1982): the oscillating solutions to such an equation are given by  amplitude and phase functions with germs in a bigger Hardy field, and hence
oscillate in a very regular way. We give sharp conditions for the uniqueness of such germs, 
study their asymptotic behavior, and use this to obtain information about the zeros and critical points of oscillating solutions. 
\end{abstract}

\pagestyle{plain}
 
\maketitle

%\bigskip

%\tableofcontents

\section*{Introduction}

\noindent
Second-order ordinary linear differential equations are ubiquitous
in applications of mathematics. For instance, most  ``special functions'' of mathematical physics
satisfy such  equations, often with rational function coefficients; see~\cite[p.~1]{BW} for an explanation. Examples  encountered below are the Airy, Bessel, and Coulomb wave functions.
%\begin{quote}  {\it  Indeed, these functions  were discovered through the study of physical problems: vibrations, heat flow, equilibrium, and so on. The associated equations are partial differential equations of second order. In some coordinate systems, these equations can be solved by separation of variables, leading to the second-order ordinary differential equations in question.}\/
%\end{quote}
In this paper we study the asymptotic behavior of solutions to second-order homogeneous   linear
differential equations whose coefficient functions are {\it non-oscillating at $+\infty$ in a
very strong sense.}\/ 
%This includes many classical    differential equations giving rise to special functions.
 
To be precise, here is some notation and terminology: $\mathcal C$ is the ring of germs at $+\infty$ of continuous functions $[a,+\infty)\to\R$
where~${a\in\R}$, and for each  $n$,  $\c^n$ is its subring consisting of those $g\in\c$ with an
    $n$-times continuously differentiable  representative~$[a,+\infty)\to\R$, ${a\in\R}$. Then~$\mathcal C^{n+1} \subseteq\mathcal C^n$, and for $g\in\mathcal C^{n+1}$ we have its derivative~${g'\in\mathcal C^n}$.
Let  $f\in\c$ and consider   the  differential equation
\begin{equation}\tag{$\ast$}\label{eq:ast}
Y''+fY\ =\ 0.
\end{equation}
A (real) {\it solution}\/ to \eqref{eq:ast} is a germ $y\in\c^2$ such that $y''+fy=0$ (in $\c$).
%Recall that 
The solutions to \eqref{eq:ast}  form a $2$-dimensional
subspace of the $\R$-linear space $\c^2$. 
Some equations~\eqref{eq:ast} have oscillating solutions, where
a germ $g\in\c$ {\it oscillates}\/  if it has a representative~$g\colon [a,\infty)\to\R$ ($a\in\R$) such that 
$g(t)=0$ for arbitrarily large~$t\ge a$ and~$g(t)\ne 0$ for arbitrarily large~$t\ge a$. 
By Sturm's Separation Theorem~\cite[Chap\-ter~2, \S{}6, Theorem~7]{BR}, if {\it some}\/ solution to \eqref{eq:ast} oscillates, then so does {\it every}\/ nonzero solution to~\eqref{eq:ast}. In this case we say that $f$ {\it generates oscillation.}\/ (So $f=1$ generates oscillation.) It is well-known that the oscillating solutions to~\eqref{eq:ast} can be represented
by   a {\it phase function}~\cite[Chapter~5]{Boruvka}:  a function~${\phi\colon [a,+\infty)\to\R}$~($a\in\R$)
 with~${\phi(t)\to+\infty}$ as~$t\to+\infty$ whose germ (also denoted by $\phi$) is  in $\c^3$, with~${\phi'(t)>0}$ for all sufficiently large $t\ge a$, such that 
the solutions to \eqref{eq:ast} are exactly the germs~$y=c\cos(\phi+d)/\sqrt{\phi'}$ with $c,d\in\R$.

For $g, h\in \c$ we set $g<_{\ev} h$ (or $h>_{\ev} g$) iff $g(t)< h(t)$ {\em eventually\/}, that is for all sufficiently large $t$; here and below $g$ also denotes any representative of its germ. Thus $g\in \c$ is non-oscillating iff $g>_{\ev} 0$, or $g=0$, or $g<_{\ev} 0$. For other notations and conventions used below, see the end of this introduction.  

The maps $g\mapsto g'\colon\c^{n+1}\to\c^n$ restrict to a {\em derivation\/}  on the sub\-ring~$\mathcal C^{<\infty}:=\bigcap_n \mathcal C^n$ of $\c$.
%which satisfies the Product Rule (that is, a {\it derivation}\/ on $\c^{<\infty}$).
If~$f$ in \eqref{eq:ast} lies in $\mathcal C^{<\infty}$,
then so does each solution to \eqref{eq:ast}. 
In this paper we restrict  $f$  further to be {\it hardian}\/, that is, $f\in \mathcal C^{<\infty}$ and
for each polynomial~${p\in\Q[Y_0,\dots,Y_n]}$, the germ $p(f,f',\dots,f^{(n)})$ is non-oscillating. 
This concept formalizes the 19th century idea (du Bois-Reymond, Borel, Lindel\"of) of ``functions of regular growth''.
Many naturally occurring non-oscillating differentially algebraic functions ---for example, any univariate rational function with real coefficients, and~$\ex^x$ and $\log x$---have hardian germs.
%whereas that of~$\ex^{\cos x}$ is not. 
This is also the case for certain differentially transcendental functions: the Riemann $\zeta$-function, Euler's $\Gamma$-function~\cite{Rosenlicht83}, and
even some  functions ultimately growing faster than each iterate of $\ex^x$~\cite{Boshernitzan86}. 

If $g\in\c^{<\infty}$ is hardian, then so is $g'$; but a sum of hardian germs is not always hardian.
A {\it Hardy field}\/ (Bourbaki~\cite{Bou}) is a subfield of~$\mathcal C^{<\infty}$  closed under the derivation $g\mapsto g'$ of $\mathcal C^{<\infty}$, and thus consists of hardian germs that can be manipulated algebraically.
Any hardian germ $h$ generates a Hardy field $\Q(h, h', h'',\dots)$.  A Hardy field is naturally a differential field, and also an ordered
field with the (total, strict) ordering given by $<_{\ev}$, which we indicate then also by $<$. 

 A basic  Hardy field is the field $\R(x)$ of germs of rational functions with real coefficients,
where~$x=$~the germ of the identity function. A larger Hardy field, discovered by Hardy~\cite{Har12a}, consists of the 
germs of {\it logarithmic-exponential functions} ({\it LE-functions}\/, for short):
 the real-valued functions  obtainable in finitely many steps from real constants and the identity function using addition, multiplication,  division,  taking logarithms, and exponentiating. Examples include  the germs of 
the function given for large positive~$x$ by
$x^r$ ($r\in\R$), $\ex^{x^2}$, and~$\log\log x$. (We denote this Hardy field by $H_{\text{LE}}$; a precise definition is at the beginning of Section~\ref{sec:prelims}.)
Each Hardy field extends to a Hardy field~$H\supseteq \R$  which is {\it Liouville closed}:  real closed, 
closed under exponentiation, and containing for each $h\in H$ a $g\in\c^1$ 
%(hence any $g\in \c^1$) 
with~$g'=h$;
such $H$ is also closed under logarithms of positive germs, and contains~$\R(x)$.
(That $H$ is real closed means that $H[\imag]\subseteq\Calinf[\imag]$, where~$\imag^2=-1$, is an algebraically closed field.)
For this and other
basic facts on Hardy fields, see~\cite{Boshernitzan81, Boshernitzan82,  Ros}.
O-minimal expansions of the real field are a natural source of Hardy fields~\cite{Miller}.

{\it In the rest of this introduction $H$ is a Hardy field containing~$f$.}\/
It is well-known (\cite[Theorem~16.7]{Boshernitzan82}, \cite[Corollary~2]{Ros}, or Corollary~\ref{cor:char osc} below) that
if  no solution to \eqref{eq:ast} oscillates, then every solution to \eqref{eq:ast} is {\it $H$-hardian,}\/
that is, contained in a Hardy field extending  $H$.
For example, the germs of the  $\R$-linearly independent solutions~$\operatorname{Ai},\operatorname{Bi}\colon\R\to\R$ 
to the Airy equation~$Y''-xY=0$ given by
\begin{align*}
\operatorname{Ai}(t)\ &=\ \frac{1}{\pi}\int_0^\infty \cos\left(\frac{s^3}{3}+st\right)ds, \\
\operatorname{Bi}(t)\ &=\ \frac{1}{\pi}\int_0^\infty \left[\exp\left(-\frac{s^3}{3}+st\right) +\sin\left(\frac{s^3}{3}+st\right)\right]ds
\end{align*}
are $H$-hardian. We focus on the case where \eqref{eq:ast} has an oscillating 
 (hence non-hardian) solution. 
 Our first main result shows that 
     these oscillating solutions may nevertheless be described  
by  amplitude   and phase functions with $H$-hardian germs:

\begin{theoremA}  
Suppose \eqref{eq:ast} has an oscillating solution. Then there are $H$-hardian germs~${g>0}$, $\phi>\R$  such that the solutions to    \eqref{eq:ast} are exactly the germs $cg\cos(\phi+d)$ where $c,d\in\R$.
Any such $H$-hardian germs~$g$,~$\phi$ lie in a common Hardy field extension of $H$, and $g^2 \phi'\in\R$. 
\end{theoremA}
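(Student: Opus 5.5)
We plan to pass to the complexified equation and use the Riccati equation. Extend $H$ to a Liouville closed Hardy field, still denoted $H$, and consider the Riccati equation $z'+z^2+f=0$ associated to \eqref{eq:ast} over $H[\imag]$. If $y=g\cos(\phi+d)$ is to be a solution family, then $w=g\ex^{\imag\phi}$ is a complex solution of $w''+fw=0$, and $z=w'/w=g^\dagger+\imag\phi'$ satisfies the Riccati equation. Conversely, suppose we have $z=a+\imag b$ with $a,b$ in a Hardy field $K\supseteq H$ and $b>0$ solving $z'+z^2+f=0$. Then set $\phi:=\int b$ and $g:=\exp\int a$, both in a Liouville closed extension of $K$. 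Now $w=g\ex^{\imag\phi}$ solves the complex equation, so its real and imaginary parts $g\cos\phi$ and $g\sin\phi$ are solutions. These are $\R$-linearly independent because $b\neq 0$, so they give all solutions $cg\cos(\phi+d)$. The imaginary part of the Riccati equation reads $b'+2ab=0$, that is, $(g^2\phi')'=0$, so $g^2\phi'\in\R$. We also get $\phi>\R$: otherwise $\phi$ would converge, the solutions would not oscillate, and this contradicts the hypothesis.

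The existence step is therefore reduced to finding a hardian germ $b>0$ with $a=-b^\dagger/2$ and $z=a+\imag b$ solving the Riccati equation. Its real part becomes the third-order equation
\[
f\ =\ b^2 + \tfrac12 (b^\dagger)' - \tfrac14 (b^\dagger)^2 ,
\]
i.e.\ $\omega(2b)$-type relation $\sigma(b)=f$ in the language of the paper's Riccati/$\omega$ preliminaries. We need a solution $b$ of this equation that is $H$-hardian. Classically, given a basis $y_1,y_2$ of real solutions with Wronskian $1$, the germ $b=1/(y_1^2+y_2^2)$ satisfies this equation. The issue is to choose $y_1,y_2$ so that $y_1^2+y_2^2$ is hardian. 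For this I would first try the approach via Boshernitzan/Rosenlicht-type results stated earlier, where non-oscillating solutions of ODEs over $H$ lie in Hardy field extensions. Note that $u=y_1^2+y_2^2$ satisfies the linear third-order equation $u'''+4fu'+2f'u=0$, whose solution space is spanned by $y_1^2$, $y_1y_2$, $y_2^2$. I would show that some positive-definite quadratic form $u$ in the solutions is non-oscillating together with all differential polynomials over $H$. Concretely, I would construct $b$ as a limit in the ordered differential field sense: using the asymptotic expansion/Liouville-Green approximations $b\sim\sqrt f$ and successive corrections, I would pin down $b$ as the unique solution of $\sigma(b)=f$ satisfying a suitable asymptotic condition relative to $H$. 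I would then invoke the earlier result that a germ $\c^{<\infty}$-solution of an ODE with this kind of uniqueness/contraction (a fixed point of an operator that is monotone on a Hardy field) is hardian over $H$.

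For the last claim, suppose $g,\phi$ and $\tilde g,\tilde\phi$ are two such $H$-hardian pairs. Both produce solutions $b=\phi'$ and $\tilde b=\tilde\phi'$ of $\sigma(b)=f$. The corresponding $u=1/b$ and $\tilde u$ are positive definite quadratic forms in the same solution space, hence $\tilde u=\alpha u+\beta v+\gamma w$ with constant coefficients. Using that both $u$ and $\tilde u$ are hardian and the oscillatory nature of $y_1^2-y_2^2$ and $y_1y_2$, I would show that $\tilde u/u$ is a constant. If not, $\tilde u/u = c_1 + c_2\cos(2\phi+d)$ with $c_2\ne0$ oscillates, which contradicts hardianity. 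Hence $\tilde\phi'=c\phi'$ and $\tilde g = c' g$, so everything lies in the Hardy field generated over $H$ by $g,\phi$ (adjoin integrals). The main obstacle is the existence step: showing that a suitable $b$ (equivalently $y_1^2+y_2^2$) is $H$-hardian, and not merely non-oscillating.
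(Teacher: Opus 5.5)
\textbf{There is a genuine gap: you never prove the existence step.}

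Your converse direction is fine. A positive $b$ in a Hardy field extension of $H$ with $\sigma(2b)=4f$ yields a parametrizing pair; this is the correct normalization, whereas you wrote $\sigma(b)=f$. The imaginary part of the Riccati equation gives $(g^2\phi')'=0$, and your argument that $\phi$ cannot converge is correct. But the existence of such an $H$-hardian $b$ is the whole content of Theorem~A (Boshernitzan's conjecture), and none of your routes supplies it.

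\begin{itemize}
\item \emph{Non-oscillation.} Results like Proposition~\ref{prop:2nd order Hardy field} concern equations \emph{without} oscillating solutions. For the third-order operator of Lemma~\ref{lem:Appell}, every positive definite form $\alpha y_1^2+\beta y_1y_2+\gamma y_2^2$ is a positive, hence non-oscillating, solution. Yet if $\Dx(H)$ is $\upo$-free, Lemmas~\ref{parphi}(ii) and~\ref{lem:g,phi unique} leave only the positive multiples of a single one $H$-hardian. So ``non-oscillating'' buys nothing, and ``non-oscillating together with all differential polynomials over $H$'' merely restates the goal.
\item \emph{Contraction or uniqueness.} No available result says that a solution singled out by a contraction or uniqueness condition is hardian; the only tool of that strength is Theorem~\ref{thm:maxhardymainthm} itself.
\item \emph{Uniqueness fails in general.} When $\Dx(H)$ is not $\upo$-free, Lemma~\ref{lem:7.15+7.17} and Remark~\ref{rem:non-uniqueness} give continuum many $H$-hardian $\upg>0$ with $\sigma(\upg)=\upo$, no two in a common Hardy field. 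So no asymptotic condition relative to $H$ can pick out ``the'' hardian solution.
\item \emph{Liouville--Green.} The guess $b\sim\sqrt f$ already fails for $f\asymp x^{-2}$; cf.\ Corollary~\ref{cor:upper lower bd for phi}.
\end{itemize}

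The missing idea is the paper's. Replace $H$ by a \emph{maximal} Hardy field extension. By Theorem~\ref{thm:maxhardymainthm} it is Liouville closed, $\upo$-free and newtonian, so $K=H[\imag]$ is linearly closed (Corollary~\ref{cor:maxhardymainthm, 2}). Then $4\der^2+4f$ splits over $K$ but, since $f$ generates oscillation, not over $H$. So \eqref{eq:sigma} gives $u\in H^{>}$ with $\sigma(u)=4f$, and Proposition~\ref{prop:2nd order kernel} produces $g=1/\sqrt u$ and $\phi=\pm\int u/2$.

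\textbf{Your final paragraph misreads the last assertion of Theorem~A and argues for something false.} Theorem~A does not say that two parametrizing pairs agree up to constants. By Remark~\ref{rem:non-uniqueness} they need not, and the paper proves uniqueness only under extra hypotheses (Theorem~B, Lemma~\ref{lem:g,phi unique}).

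Your argument breaks exactly there. The germs $u$ and $\tilde u$ are each hardian, but nothing puts them in a common Hardy field. So $\tilde u-c_1u=c_2u\cos(2\phi+d)$ need not be hardian, and its oscillation is no contradiction. Lemma~\ref{lem:g,phi unique} avoids this by using $\phi_1'\sim\phi_2'$ from Lemma~\ref{lem:asymptotics of phi'}, which forces the periodic function $F$ there to tend to $1$, hence to be identically $1$.

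What Theorem~A actually claims concerns a single pair $(g,\phi)$ whose members are a priori hardian in possibly different Hardy fields. It follows from what you already have. For $y=g\ex^{\phi\imag}\in\Calinf[\imag]$, the imaginary part of the Riccati equation gives $\phi''+2g^\dagger\phi'=0$. Hence $g^2\phi'=c\in\R^{>}$, so $g=\sqrt{c/\phi'}$ lies in the real closure of the Hardy field $H\langle\phi\rangle$. This is the paper's Lemma~\ref{lem:parametrization of ker A}.
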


\noindent
The next theorem gives sufficient conditions for $(g, \phi)$ to be unique up to multiplying~$g$ by a positive constant
and adding a constant to $\phi$. 
%(In general, $(g,\phi)$ is not unique in that sense.)  
To state this theorem we use more notation and terminology:   put~$H^>:=\{h\in H:h>0\}$ and~$H^{>\R}:=\{{h\in H:h>\R}\}$, and
for~$h\in H^>$,  let~$h^\dagger:=h'/h=(\log h)'$ be the logarithmic derivative of~$h$;
then~$h^\dagger > 0$ for all~$h\in H^{>\R}$.
We call a germ {\it perfectly $H$-hardian}\/ if it is  $E$-hardian for each Hardy field extension~$E$ of $H$,
and {\it perfectly hardian}\/ if it is perfectly $\Q$-hardian. %\marginpar{or use ``absolutely $H$-hardian''?}
%(In \cite{ADH6} we also called these germs ``absolutely tame''.)
%Thus {\em $\Q$-hardian\/} is equivalent to {\em hardian}.  
Every perfectly hardian~$y\in\Calinf$ is differentially algebraic (over~$\Q$), by~\cite[Theorem~14.4]{Boshernitzan82} (see also~\cite[Theorem~5.20]{ADH5}).

\begin{theoremB}
Each of the following conditions on $H$ 
ensures that if \eqref{eq:ast} has an oscillating solution, then any $g$, $\phi$ as in Theorem~\textup{A} are perfectly $H$-hardian, with~$g$ unique up to multiplying by a positive real number and $\phi$  up to  
adding a real number:
\begin{enumerate}
\item[\textup{(a)}] there exists $\upl\in H$ such that $\upl-h^{\dagger\dagger} \le h^\dagger/n$ for all $h\in H^{>\R}$ and $n\ge 1$.
\item[\textup{(b)}] for all $\upo\in H$ such that $\upo/4$ generates oscillation, there exists 
$h\in H^{>\R}$ with $2h^{\dagger\dagger}{}'-(h^{\dagger\dagger})^2+(h^\dagger)^2 \le\upo$;
\end{enumerate}
\end{theoremB}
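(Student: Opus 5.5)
Throughout, $y=g\cos(\phi+d)$ and $g^2\phi'=c_0\in\R^{>}$ by Theorem~A. The plan is to reduce everything to the Riccati-type equation satisfied by $\phi'$. Put $\ell:=\phi'$, so $g=\sqrt{c_0/\ell}$. Writing $\omega:=4f$ and substituting $y=\ell^{-1/2}\cos\phi$ into $Y''+fY=0$ gives the classical equation
\[
\omega\ =\ 2(\ell^\dagger)'-(\ell^\dagger)^2+4\ell^2 .
\]
Set $\sigma:=4\ell^2=(2\phi')^2$; this is the equation $\sigma - (\sigma^{\dagger})'/1\cdots$, i.e.\ $\omega=\sigma+\omega(-\sigma^\dagger/2\ldots)$, which in the terminology of $H$-fields is the statement that $\sigma$ is a solution of $\omega(z)+z=\omega$ with $z\in\Upg$-type constraints. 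So Theorem~B becomes a statement about the number of hardian solutions $\ell>0$, with $\ell\succ$ every $h^\dagger$ ($h\in H^{>\R}$) suitably, of this third-order equation in a Hardy field extension of $H$.

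The key steps are as follows. (1) I will show that conditions (a) and (b) each force $H$ to be ``$\upl$-free/$\upo$-free-like'' relative to the oscillation: under (a) $H$ has an asymptotic gap-free behaviour with a $\upl$ pseudolimit, and under (b) the given $\upo=4f$ lies strictly above the $\upo$-cut $\{2h^{\dagger\dagger}{}'-(h^{\dagger\dagger})^2+(h^\dagger)^2\}$. Since $f$ generates oscillation, $4f/4$ does so, so (b) produces $h\in H^{>\R}$ with the displayed expression $\le 4f$; case (a) I will reduce to this as well by using the $\upl$ to show $\upo$-freeness in the relevant range. (2) With such $h$ in hand, compare $\ell$ with $h^\dagger$: I expect to show $\ell\succ h^\dagger/n$-scale, i.e.\ $\ell$ lies above the $\Upg$-cut, so that in any Hardy field $E\supseteq H\langle \ell\rangle$ the germ $\ell$ is determined by $\omega$ up to the asymptotic class, using the fact that the map $z\mapsto 2z^{\dagger\prime}-(z^\dagger)^2+4z^2$ is strictly increasing on elements above the cut (a derivative computation of Riccati type). (3) Uniqueness: if $(g_1,\phi_1),(g_2,\phi_2)$ both work, then $\ell_1,\ell_2$ lie in Hardy fields extending $H$; the relation between two phase functions of the same equation is a Möbius relation $\tan\phi_2=(a\tan\phi_1+b)/(c\tan\phi_1+e)$. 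Hardianity of $\phi_2-\phi_1$ (non-oscillation) forces this Möbius map to be a rotation-compatible one, yielding $\phi_2=\phi_1+$const and $\ell_1=\ell_2$, exactly when $\ell_1$ is ``large'' in the sense of step (2); otherwise oscillating corrections of size $\ell_1$-comparable could survive. (4) Perfect hardianity: given any Hardy field $E\supseteq H$, apply Theorem~A over the Hardy field generated by $E$ to get $(g_E,\phi_E)$ that are $E$-hardian; by (3) (whose hypotheses (a)/(b) persist upward since the relevant $h,\upl$ still exist in $E$ — here one must check that the inequalities involving all $h\in E^{>\R}$ still hold, which is where (a) needs the cofinality of $H$'s logarithmic scale in $E$'s, or else an argument that only the $h$ from $H$ matter) we get $g=cg_E$, $\phi=\phi_E+d$, hence $g,\phi$ are $E$-hardian.

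The main obstacle is step (3)/(2): showing that non-uniqueness of the hardian phase can only come from $\ell$ lying near the $\Upg$-cut, and that (a) or (b) rule this out. I would first try a direct computation: with $\tan\phi_2=T(\tan\phi_1)$ for a non-rotation Möbius $T$, one gets $\phi_2'=\phi_1'/(\alpha\cos^2(\phi_1+\beta)+\gamma\sin^2(\phi_1+\beta))$, which is oscillating unless $\alpha=\gamma$; so in fact uniqueness of $\phi$ up to constants (given the $c_0$ normalization) may hold more generally, and the real content of (a)/(b) is the upward persistence needed in step (4).
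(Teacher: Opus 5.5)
Your step~(3) has a genuine gap, and your closing remark points the wrong way. You invoke ``hardianity of $\phi_2-\phi_1$'', but $\phi_1$ and $\phi_2$ are only known to be $H$-hardian \emph{separately}. They lie in Hardy field extensions $H_1$, $H_2$ of $H$ that need not sit inside a common Hardy field, so $\phi_2-\phi_1$ and $\phi_2'/\phi_1'$ need not be hardian or even have a limit. If they did lie in a common Hardy field, uniqueness would be automatic: $\sigma$ is injective on $\Upg(M)$ for maximal $M$ by Lemma~\ref{lem:11.8.29}. The whole difficulty is comparing pairs from incompatible extensions. Your own computation shows this. With $g_i^2\phi_i'=1$ one gets $\phi_2'/\phi_1'=F(\phi_2)$ for a $2\pi$-periodic $F$, and a nonconstant $F$ contradicts nothing when $\phi_1'$, $\phi_2'$ are hardian in different Hardy fields. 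Hence your suggestion is false that uniqueness might hold without (a)/(b), with (a)/(b) only serving ``upward persistence'' in step~(4). Remark~\ref{rem:non-uniqueness} with Lemma~\ref{lem:7.15+7.17} gives $H$ violating (a) and (b) and continuum many $H$-hardian $\upg>0$ with $\sigma(\upg)=\upo$, no two in a common Hardy field. This yields infinitely many inequivalent hardian phases. Conversely, step~(4) needs no persistence of (a)/(b) to $E$: once uniqueness is proved for pairs in arbitrary extensions $H_1,H_2$ of $H$, you simply compare $(g,\phi)$ with a pair in a maximal $M\supseteq E$.

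What is missing is an asymptotic anchor in a field contained in every extension. You need $e\in\Dx(H)$ with $\phi'\sim e/2$ for every $H$-hardian parametrization. Then $\phi_1'\sim\phi_2'$, hence $g_1\sim g_2$, hence $F(t)\to 1$. This forces $F\equiv 1$, $g_1=g_2$ and $\phi_1-\phi_2\in\R$; that is Lemma~\ref{lem:g,phi unique}.

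To get $e$ you need $4f\in\sigma\big(\Upg(\Dx(H))\big){}^{\uparrow}$. Under (b) this is immediate, since (b) says exactly that $\sigma(h^\dagger)\le 4f$ for some $h\in H^{>\R}$. Under (a) you must show that $\Dx(H)$ is $\upo$-free ($H$ is not $\upl$-free; Lemma~\ref{lem:7.11}), which your step~(1) only gestures at.

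Then, as in Lemma~\ref{lem:asymptotics of phi'}, take $e\in\Dx(H)^{>}$ with $e^2=4f-\sigma(\upg)\succ\upg^2$ for a suitable $\upg\in\Upg(\Dx(H))$. In a maximal $M$ containing $\phi$, newtonianity gives $u\sim 1$ in $M$ with $\sigma(eu)=4f$ (Lemma~\ref{lem:ADH 14.2.18}). Only then does the strict monotonicity of $\sigma$ on $\Upg(M)$ from your step~(2) give $2\phi'=eu\sim e$. Monotonicity alone yields uniqueness inside one field, not an anchor in $\Dx(H)$.
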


\noindent
Conditions \textup{(a)}, \textup{(b)} may seem strange, but are  optimal: 
 if neither \textup{(a)} nor~\textup{(b)} holds, then for some $f\in H$
there are infinitely many  $H$-hardian but not perfectly $H$-hardian pairs $(g,\phi)$ in Theorem~A, up to equivalence given by multiplying $g$ with a positive real constant and
adding a real constant to $\phi$:
Remark~\ref{rem:non-uniqueness}.
%In Section~\ref{sec:E(H) upo-free} we show they cannot be dropped: Theorems~\ref{thm:Bosh} and~\ref{thm:upo-freeness of the perfect hull}.
For readers familiar with~[ADH] we mention that  \textup{(a)} is equivalent to $H$  not being $\upl$-free, and that \textup{(b)}
holds whenever~$H$ is $\upo$-free. Thus (a) holds if $H$ has finite rank in the sense of Rosenlicht~\cite{Rosenlicht83}.
Any $H$ has by  \cite{ADH5} a Hardy field extension, differentially algebraic over $H$, that satisfies (b). 
The Hardy field $H_{\text{LE}}$ of LE-functions satisfies~\textup{(b)}.

If our hardian germ $f$ is differentially algebraic, then the Hardy field~$\Q(f,f',\dots)$
generated by $f$ satisfies~\textup{(a)}. This  leads to a handy description of all solutions to~\eqref{eq:ast} in this case. To state it, 
recursively define the (perfectly hardian) iterated logarithms~$\ell_0,\ell_1,\dots$ of~$x$ by~$\ell_0:=x$ and~$\ell_{n+1}:=\log \ell_n$, and   put
$$\upo_n \, :=\,  \frac{1}{\ell_0^2}+\frac{1}{(\ell_0\ell_1)^2}+\cdots+\frac{1}{(\ell_0\cdots\ell_{n})^2}.$$
By the remarks before Theorem~A, if   $f$ does not generate oscillation, then each solution to \eqref{eq:ast} is perfectly $H$-hardian.
If $f\le\upo_n/4$ for some~$n$, then $f$ does not generate oscillation; the converse holds when
$f$ is differentially algebraic by~\cite[Theorem~17.7]{Boshernitzan82}; see also \cite[Corollary~7.9]{ADH5}. We can add to this: 

\begin{cor1}
Suppose $f$ is differentially algebraic. Then $f$ generates oscillation iff 
 $f>\upo_n/4$ for all $n$. In this case,  there is an $H$-hardian germ $\phi>\R$ such that~$y\in\c^2$ is a solution to \eqref{eq:ast} iff   $y=c(\phi')^{-1/2}\cos(\phi+d)$ for some~$c,d\in\R$, and
any such  $\phi$ is perfectly $H$-hardian and  unique up to  
adding a real constant.
\end{cor1}

\noindent
From $f$ we obtain crude information about the asymptotics of the phase~$\phi$. The next corollary gives one example;
for more such results, see
Section~\ref{sec:perfect applications}.
Notation for~${g,h\in\c}$: 
$g\prec h$ (or $h\succ g$)  iff~$h(t)\ne 0$ eventually and $g(t)/h(t)\to 0$ as $t\to+\infty$,
and 
$g\sim h$ iff $g-h\prec h$, that is, $h(t)\ne 0$ eventually and $g(t)/h(t)\to 1$ as $t\to+\infty$.
%$h(t)\ne 0$ eventually and~$g(t)/h(t)\to 1$ as $t\to+\infty$.

\begin{cor2}
Suppose $f\succ x^{-2}$ and $f>0$. Then $f$ generates oscillation, and any~$\phi$  as in Theorem~\textup{A} is perfectly $H$-hardian, unique up to adding a real constant, and satisfies ${\phi\sim  h}$ where $h\in\c^1$ is such that
 $h'=\sqrt{f}$.
In particular, in case~${f\sim cx^{-2+r}}$~\textup{(}$c,r\in\R^>$\textup{)} we have~${\phi\sim \frac{2\sqrt{c}}{r}x^{r/2}}$ for such $\phi$.   
\end{cor2}

\noindent
It is important that any $H$-hardian germs $g$, $\phi$ as in Theorem~A 
 are differentially algebraic over~$H$:  for $z:=2\phi'$ we have
 $$2zz''-3(z')^2+z^4 - 4fz^2\  =\  0, \qquad g=c\sqrt{z} \text{ for some }c\in\R^>.$$
 Consequence: if $f$ has a $\c^{\infty}$-representative $(a,+\infty)\to\R$ for some $a\in\R$, then so do~$g$,~$\phi$; likewise
 with ``real-analytic'' in place of ``$\c^{\infty}$''; see \cite[Section~7]{ADH2}. 
 Often $\phi$ (and hence $g$) can be described more explicitly in terms of $f$: 
 %for example in case {\em generalized quadratures} in the sense of Liouville.
%The next corollary restricts the germs of phase functions that arise this way:

\begin{cor3}
Suppose $f$ is differentially algebraic and generates oscillation. 
Let  $\phi$ be as in Corollary~\textup{1}, and suppose
    $\phi$ lies   in
the smallest Liouville closed Hardy field containing $H:=\R(f, f', f'',\dots)$. Then $\phi'=\sqrt{h}$ for some $h\in H^>$.
\end{cor3}

\noindent
We can test, for any $f\in\Q(x)^\times$ as input, whether $f$ generates oscillation: compute~$c\in \Q^\times$ and
$k\in \Z$ such that $f(t)=ct^k+O(t^{k-1})$ as $t\to +\infty$; then $f$ generates oscillation iff either ($k>-2$ and $c>0$), or 
($k=2$ and $c > \frac{1}{4}$). We can also test for~$f\in \Q(x)^\times$ generating oscillation whether
some $\phi$  satisfies the condition in Corollary 3; see the remarks following Corollary~\ref{cor:phi' quadratic}.

\begin{example}[{Chebyshev's equation~\cite[2.235]{Kamke}}]
%on the interval $(1,+\infty$)}]
This is the linear differential equation
$$(x^2-1)Z''+xZ'+\alpha Z\ =\ 0\qquad (\alpha\in\R^\times).$$
A germ $z\in\c^2$  is a solution to it iff    $y:=(x^2-1)^{1/4}z$ is a solution to~\eqref{eq:ast}
with
$$f\ =\ f_\alpha\ :=\ \frac{(\alpha+\frac{1}{4})x^2-\alpha+\frac{1}{2}}{(x^2-1)^2}\in H:=\R(x).$$
If $\alpha>0$, then $f$ generates oscillation, and \eqref{eq:ast} has the (Liouvillian) phase function 
$$\phi\ =\ \beta\operatorname{arcosh}(x)\  :=\  \beta\log \left(x+{\sqrt {x^{2}-1}}\right),\qquad( \beta:=\sqrt{\alpha},\ 
\phi' =  \frac {\beta}{\sqrt{x^{2}-1}}).$$
If $\alpha<0$, then  $f$ does not generate oscillation and the above Chebychev equation has the $\R$-linearly independent hardian solutions $$\exp\big({-\beta\operatorname{arcosh}(x)}\big), \qquad \exp\big(\beta\operatorname{arcosh}(x)\big), \qquad(\beta:=\sqrt{-\alpha}). $$
It is not an accident  that the set of $\alpha\in\R^\times$ such that $f_\alpha$ generates oscillation   is an interval, in this case ~$(0,+\infty)$: see~\cite[Corollary~3]{ADH2}.)
\end{example}

\noindent
With our hardian phase $\phi$ from Theorem~A we control  the zeros and critical points of oscillating
solutions to \eqref{eq:ast} by hardian germs.
This is made explicit 
in the next two corollaries, where $y\in\c^2$ denotes an oscillating solution to~\eqref{eq:ast}; we also take
 an~${e\in\R}$ and a continuous representative $[e,+\infty)\to\R$ of~$f$
  and a
twice continuously differentiable representative $[e,+\infty)\to\R$
of $y$, also denoted by~$f$,~$y$, respectively, such that~${y''+fy=0}$ on $[e,+\infty)$.
Then the subset~$y^{-1}(0)$ of~$[a,+\infty)$ of zeros of~$y$ is closed, has no limit point, and is unbounded. This yields the enumeration~${s_0<s_1<s_2< \cdots}$ of $y^{-1}(0)$ with $s_n\to+\infty$ as~$n\to\infty$. Increasing~$e$ if necessary and restricting $f$,~$y$ accordingly we arrange that $f(t)\ne 0$ for all~$t\in [e,+\infty)$. Then each interval~$(s_n,s_{n+1})$
contains exactly one zero $t_n$ of $y'$, and  $t_0 < t_1 < t_2 < \cdots$  enumerates the zeros of $y'$ that are $\ge s_0$. Moreover: 

\begin{cor4}
There are  
continuous functions~$\zeta_0,\zeta_1,u\colon [n_1,+\infty)\to\R$, $(n_1\in\N$, $n_1\ge e)$
with hardian germs
such that $\zeta_0(t), \zeta_1(t)\to +\infty$ as $t\to +\infty$ and
\begin{enumerate}
\item[\textup{(i)}]   $\zeta_0$, $\zeta_1$ are strictly increasing  with $H$-hardian  compositional inverses; and
\item[\textup{(ii)}] $s_n=\zeta_0(n)$, $t_n=\zeta_1(n)$,  $\abs{y(t_n)}=u(n)$, for $n\ge n_1$.
\end{enumerate}
If $f$ is differentially algebraic, then we can choose such $\zeta_0$, $\zeta_1$, $u$ to have differentially algebraic germs.
\end{cor4}

\noindent
If $g$, $\phi$ in  Theorem~A satisfy $\phi\succ \log x$, as is the case for $f\succ x^{-2}$ by Corollary~2, then~$\abs{y(t_n)}\sim g(t_n)$   as $n\to\infty$.
%$$\abs{y(t_n)}\ \sim\ g(t_n)\  \text{ as } n\to\infty.$$
(In contrast, the hardian germ $u$ interpolates {\em precisely\/} the values~$\abs{y(t_n)}$ of $\abs{y}$, eventually.)
We do not know whether~$\zeta_0$,~$\zeta_1$,~$u$ can always be chosen to have $H$-hardian germs.
(For a case where they can, see~\cite{ADHbf}.)
Nevertheless, interpolating  the sequence $(s_n)$ by a ``hardian'' function  provides in some cases good access to asymptotic properties of  $(s_n)$:

\begin{cor5}
Suppose $f\sim cx^{-2+r}$ \textup{(}$c\in\R^>$, $r\in\R^\ge$\textup{)}.
\begin{enumerate}
\item[\textup{(i)}]
If $r>0$, then
 $s_n\sim \big(r\pi n/2\sqrt{c}\big)^{2/r}$ as $n\to\infty$; and
\item[\textup{(ii)}] if~$r=0$ and ${f\in\R(x)}$, then $c>\frac{1}{4}$ and
$\log s_n \sim   \pi n/\sqrt{c-\frac{1}{4}}$ as~$n\to\infty$.
\end{enumerate}
\end{cor5}

%\noindent
%At the root of the proof of the last statement in Corollary~3 is the fact that

\noindent
For the rest of this introduction we   focus on the case   $f\in \R(x)$, ${f\sim c}$~(${c\in\R^>}$), which appears in various applications. 
Then there are~$c_n\in\R$~($n\ge 1$) with
$$f \ \sim\ c+c_1 x^{-1}+c_2 x^{-2}+\cdots,$$
where we use the sign $\sim$ not in the sense of comparing germs, but to indicate an 
{\em asymptotic expansion}:
 for a sequence $(h_n)$ in $\c$ with $h_0\succ h_1\succ h_2 \succ\cdots$ we say that~$h\in \c$ has
the asymptotic expansion $$h\  \sim\ a_0h_0 + a_1h_1 + a_2h_2+\cdots \qquad(a_n\in\R)$$ if
$h-(a_0h_0+\cdots+ a_nh_n)\prec h_{n}$ for all $n$ (and then the sequence $a_0, a_1, a_2,\dots$ of coefficients is uniquely determined by
$h, h_0, h_1, h_2,\dots$).  Let $\phi$ be as in Corollary~1, so 
$\phi$ is perfectly hardian and unique up to addition of a constant.    Corollary~2 gives~$\phi\sim \sqrt{c}\,x$, so $\phi'\sim \sqrt{c}$
by $\phi$ being hardian, 
suggesting we can  improve this 
to an asymptotic expansion
for $\phi$ as follows:  with $z:=2\phi'$,
make an ``An\-satz''
\begin{align*} z\ &\sim\ z_0+z_1 x^{-1}+z_2 x^{-2}+z_3 x^{-3}+z_4 x^{-4}+\cdots \quad (z_n\in\R,\ z_0=2\sqrt{c}),\text{ so}\\
\phi\ &\sim \  \frac{z_0}{2} x + \frac{z_1}{2}\log x+ \text{const}-\frac{z_2}{2}x^{-1}- \frac{z_3}{4} x^{-2}-\frac{z_4}{6}x^{-3}-\cdots
\end{align*}
by termwise integration. Now formally substitute the ``Ansatz'' for $z$ in 
\begin{align*} 
\sigma(z) 	&\ :=\ \frac{2zz''-3(z')^2+z^4}{z^2}, \text{ which gives}  \\
			&\  z_0^2 + (2 z_0 z_1 )x^{-1} + (z_1^2 + 2 z_0 z_2 )x^{-2} + \frac{2 (z_0 z_1 z_2  + 2 z_1 +  z_0^2 z_3)}{z_0}x^{-3} + {}\\ &\qquad\qquad \frac{-7 z_1^2 + 2 z_0^2 z_1 z_3  + z_0^2 z_2^2  + 12 z_0 z_2  + 2 z_0^3 z_4 }{z_0^2}x^{-4} + \cdots
\end{align*}
and  successively obtain $z_1,z_2,\dots$ in terms of $c_1, c_2,\dots$ by comparing
coefficients of powers of~$x^{-1}$ on both sides of the equality $\sigma(2\phi')=4f$.\marginpar{to be justified in [8]} 

Relying on our model theoretic transfer results from \cite{ADH2},
we shall rigorously justify this heuristic  in~\cite{ADHbf}, a companion   to the present paper.
(The reader will meet the differential rational function $\sigma$ again in Section~\ref{sec:diffalg} below.) %\marginpar{add remarks about improved way to compute $1/\phi'$?}

\medskip\noindent
Let $f\in \R(x)$ and $f\sim c\in \R^{>}$ as before, so $f$ generates oscillation. Let $a\in \R$ be such that $f$ has no pole on $[a,+\infty)$ and let $y\colon [a,+\infty)\to \R$ be twice continuously differentiable with $y''+fy=0$, $y\ne 0$. Let
 $s_0<s_1<\cdots$ be the sequence of zeros of $y$. 
Then~${s_n \sim \pi n/\sqrt{c}}$ as $n\to\infty$, by Corollary~5(i).
Corollary~4  gives an~$n_0\in\N$ and a continuous strictly increasing~$\zeta\colon [n_0,+\infty)\to\R$,
with differentially algebraic hardian germ such that~${\zeta(n)=s_n}$ for all $n\ge n_0$.
In~\cite{ADHbf} we   also  obtain an asymptotic expansion for $\zeta$ (and hence for $s_n$),
and we apply these Hardy field techniques to obtain   classical results about Bessel functions and their zeros,   traditionally proved via
complex analysis, 
in an intuitive ``algebraic'' way. The same techniques are readily applicable to other special functions; with this in
mind, here is another classical linear differential equation:

\begin{example}[Coulomb wave equation]
This is the equation~\eqref{eq:ast} for 
$$f\ =\ 1-2\eta \,x^{-1}-l(l+1)\,x^{-2}\in\R(x)\qquad (\eta\in\R,\ l\in\N).$$
Its solutions are the germs of Coulomb wave functions (which give rise to solutions to the  Schr\"odinger equation for the Coulomb potential  in spherical coordinates;
cf.~\cite[\S{}3.6]{BW}).
Clearly $f>\upo_n/4$ for all $n$, so there is a perfectly hardian germ~${\phi>\R}$, unique up to addition of a constant,
such that the solutions to \eqref{eq:ast} are the germs~$y=c(\phi')^{-1/2}\cos(\phi+d)$ where~$c,d\in\R$. 
The germ $\phi$ is differential algebraic, and $\phi\sim x$ by Corollary~2.
In fact,   the approach sketched above yields an asymptotic expansion
%\marginpar{changed $\eta/4$ into $\eta/2$ in RHS} 
$$\phi\  \sim\  x-\eta\log x + \text{const}+ \frac{1}{2}\big( l(l+1) + \eta^2 \big) x^{-1} + \frac{\eta}{4}\big( l(l+1)+\eta^2 -1 \big)x^{-2} +  \cdots $$ 
Hence the germ $y$ of each Coulomb wave function satisfies, for some $c,d\in\R$:
%\marginpar{changed $\eta/4$ into $\eta/2$ in RHS}
\begin{multline*}
y\ =\  c\cos \!\left(x-\eta\log x +d+\frac{1}{2}\big( l(l+1) + \eta^2 \big) x^{-1} + \frac{\eta}{4}\big( l(l+1)+\eta^2 -1 \big)x^{-2}\right)\\ {}+ O(x^{-3}).
\end{multline*}
This improves the relation $y=c\cos(x-\eta\log x+d )+o(1)$   found 
in the literature; see~\cite[(6.7.11)]{BW}, \cite[(31.0.3)]{Temme}.
An asymptotic expansion of the zeros of~$y$ is in~\cite{GST}.
(Such an asymptotic expansion can also be obtained using tools 
in~\cite{ADHbf}.) \marginpar{to check in [8]}
\end{example}

\noindent
%We finish  by remarking that 
In a series of influential papers,~\cite{Bremer17, Bremer18,
Bremer23, BremerRokhlin16, BremerRokhlin17, HBR, HBRV}, Bremer et al.~(with precursors in~\cite{SV90, SV12}) have
%Heitman, and Rokhlin and others  
used~non-oscillatory phase functions  $\phi$ to give efficient algorithms for evaluating
various special functions and approximating their zeros. Here ``non-oscillatory'' means (at least) that~$\phi$, $\phi'$, $\phi''$ 
 are non-oscillating in the sense of this introduction (see \cite[(23) and subsequent discussion]{Bremer17}), and ideally,
 that these functions admit an asymptotic expansion as before. Quoting from \cite[p.~1]{HBR}:
``Phase functions have been extensively studied [\dots] Despite this long history, a useful property of phase functions appears to have been overlooked. Specifically, that when
[the function~$f$] is nonoscillatory, solutions of [the equation \eqref{eq:ast}] can be accurately represented using a nonoscillatory phase function.
This is somewhat surprising~[\dots].''
Theorems~A and~B above (and~\cite{ADHbf}) provide a natural, large class of linear differential equations~\eqref{eq:ast} 
to which the methods of Bremer et al.~may   be applicable, since their solutions
can be represented at $+\infty$ by phase functions which are hardian,   hence   strongly non-oscillating.

\subsection*{Boshernitzan's work} 
Theorem~A above in the case where $f$ is perfectly hardian  proves Conjecture~4  from~\cite[\S{}20]{Boshernitzan82}.
Theorems~A and~B   also hold with the self-adjoint  equation \eqref{eq:ast}
replaced by a  more general linear differential equation
\begin{equation}\tag{$\ast\ast$}\label{eq:astast}
Y''+f_0Y'+f_1Y\ =\ 0,
\end{equation}
where $f_0,f_1\in H$.  
Boshernitzan's seminal article~\cite{Boshernitzan87} announced the \eqref{eq:astast} version of
Theorem~A   as part of his Theorem~5.4.
But a proof of that never appeared. (An email exchange about this between Boshernitzan and one of us in 2014 was inconclusive.)  
  Theorem~\ref{thm:Bosh} below is a strengthening of Theorem~A, which 
includes the conditions \textup{(a)}, \textup{(b)}, for the uniqueness of the  germs~$g$,~$\phi$ from Theorem~B.
(See the remarks following Theorem~\ref{thm:Bosh}.)
 Our proof of this theorem relies on our earlier work~\cite{ADH5, ADH4, ADH2} on constructing dif\-fer\-en\-tial\-ly algebraic Hardy field extensions, the relevant results of which are reviewed in Section~\ref{sec:prelims}. 
  Boshernitzan~\cite[Remark on p. 117]{Boshernitzan87} did prove \marginpar{not yet checked} that every inhomogeneous linear differential equation
$$ Y''+f_0Y'+f_1Y\ =\ g,$$
where $f_0,f_1,g\in H$, has an $H$-hardian solution. This also follows from
 \cite{ADH2}; see Corollary~\ref{cor:maxhardymainthm, 2} below.

\subsection*{Organization of the paper}
We made an effort to make this paper accessible to readers unfamiliar with the material in our book [ADH] and~\cite{ADH5, ADH4, ADH2}.  This partly explains its length. 
Sections~\ref{sec:diffalg} and~\ref{sec:ADH}   set the stage by giving a brisk exposition of the algebraic approach to ``tame'' asymptotic analysis developed in [ADH].
In Sections~\ref{sec:germs} and~\ref{sec:prelims} we discuss germs, second-order linear differential equations, and Hardy fields, including the results from \cite{ADH5, ADH2} used later.
In Section~\ref{sec:perfect applications} we then prove Theorems~A and~B and Corollaries~1,~2, and~3.
Corollaries~4 and~5 and related results about the zeros of oscillating solutions to \eqref{eq:ast}  are proved
in Section~\ref{sec:zeros}. Finally, in Section~\ref{sec:E(H) upo-free} we prove the optimality of conditions \textup{(a)}, \textup{(b)} 
in Theorem~B for guaranteeing the ``uniqueness-up-to-constants'' of the phase function. 

\subsection*{Notations and conventions} 
We follow the conventions from [ADH]. Thus  $m$,~$n$   range over the set~$\N=\{0,1,2,\dots\}$ of natural numbers.  
If $A$ is an abelian group written additively, then $A^{\ne}:=A\setminus\{0\}$.
For a commutative ring $R$ (always with multiplicative identity $1$),  $R^\times$ denotes the multiplicative group of units of $R$.
Thus if $K$ is a field then $K^{\ne}=K^\times$. By convention, the ordering of an ordered abelian group and of an ordered field is total. 
If $\Gamma$ is an ordered abelian group written additively, then $\Gamma^>:=\{\gamma\in\Gamma:\gamma>0\}$,
and likewise with $\ge$, $<$, or $\le$ in place of~$>$.
Let $S$ be a totally ordered set and~$A\subseteq S$. We say that $A$ is {\it upward closed}\/ in $S$ 
({\it downward closed}\/ in $S$) if
it contains each~$s\in S$ such that $s\ge a$ for some~$a\in A$
($s\le a$ for some $a\in A$, respectively). We denote by~$A^\uparrow$ the smallest subset of~$S$ containing $A$
which is upward closed in~$S$, and by~$A^\downarrow$ the smallest subset of $S$ containing $A$
which is downward closed in $S$.

\subsection*{Acknowledgements}
Joris van der Hoeven has been supported by an ERC-2023-ADG grant for the
ODELIX project (number 101142171). Funded by the European Union. Views and opinions expressed are however those
of the authors only and do not necessarily reflect those of the European
Union or the European Research Council Executive Agency. Neither the European
Union nor the granting authority can be held responsible for them.

\section{Differential Algebra}\label{sec:diffalg}

\noindent 
In this section  we  quickly review some basic concepts of differential algebra with focus on linear differential operators.
% as well the {\it universal exponential extension}\/ of an algebraically closed differential field and {\it eigenvalues}\/ of linear differential operators  from  \cite{ADH4}.

 \subsection*{Differential rings}
{\it Throughout this section $R$ is a differential ring}:  a commutative ring equipped with a derivation~${\der\colon R \to R}$ and containing (an isomorphic copy of)~$\Q$~as a subring. 
Then $C_R:=\ker\der$ is a subring of~$R$, called the ring of constants of~$R$, and~${\Q\subseteq C_R}$.  
We  set $C=C_R$ if $R$ is understood from the context.
When the derivation $\der$ of~$R$ is clear from the context and $a\in R$, then we also denote $\der(a),\der^2(a)$ by $a', a''$,
and $\der^n(a)$ by~$a^{(n)}$; for~$a\in R^\times$ we set~$a^\dagger:=a'/a$ (the {\it logarithmic derivative}\/  of $a$). Thus $(ab)^\dagger=a^\dagger + b^\dagger$ for~$a,b\in R^\times$, and so~$R^\dagger:= (R^\times)^\dagger$ is a subgroup of the additive group of $R$. 
We say that $R$ is {\it closed under integration}\/ if $\der(R)=R$.
 If $R$ is an integral domain, then the derivation on $R$ extends uniquely
to a derivation on the fraction field $F$ of $R$; we call $F$ equipped with this derivation the
{\it differential fraction field}\/ of $R$.

 \subsection*{Differential polynomials}
Let $R\{Y\}=R[Y, Y', Y'',\dots]$ denote the differential ring of differential polynomials in a differential indeterminate $Y$ over $R$. Each~${y\in R}$ yields the differential ring morphism~$P\mapsto P(y)\colon R\{Y\}\to R$ over $R$
with~$Y\mapsto y$. Let 
$P=P(Y)\in R\{Y\}$. Then~$\deg P$ denotes the (total) degree of~$P$, with~$\deg 0:=-\infty<\N$. We say that $P$ has {\it order}\/ at most~$r\in \N$ if~${P\in R[Y,Y',\dots, Y^{(r)}]}$. 
For~$a\in R$ we let~$P_{+a}:=P(a+Y)$ and~$P_{\times a}:=P(aY)$.  
%For such $\i$ we set 
%$$|\i|\ :=\ i_0+i_1+ \cdots + i_r, \qquad \|\i\|\ :=\ i_1+2i_2 + \dots + ri_r.$$
%The {\em degree\/} of $P\ne 0$ is 
%$$\deg P:=\max\big\{|\i|:\ P_{\i}\ne 0\big\}\in \N,$$ 
%and the {\em weight} of $P$ is
%$$\wt P:=\max\big\{\|\i\|:\ P_{\i}\ne 0\big\}\in \N.$$
If $P$ is  homogeneous   of degree $d\in\N$, then its {\it Riccati transform}\/~${\Ric(P)\in R\{Z\}}$, [ADH, p. 300],
satisfies   $$\Ric(P)(z)\ =\ P(y)/y^d\quad \text{ for $y\in R^\times$, $z=y^\dagger$.}$$
We put $R_n:=\Ric(Y^{(n)})\in\Q\{Z\}$, so 
\begin{equation}\label{eq:Rn}
R_0=1,\quad R_1=Z,\quad R_2=Z^2+Z',\quad\dots.
\end{equation}
We have the recursion $R_{n+1}=ZR_n+R_n'$, and by an easy induction on $n$ this  yields~$R_n=Z^n+\text{lower degree terms}$.

 \subsection*{Linear differential operators}
The ring~$R[\der]$ of {\it linear differential operators}\/ over the differential ring~$R$  is the ring extension of $R$ generated over $R$
by an element~$\der$ (using here the same symbol denoting the derivation of~$R$),
such that $R[\der]$ is  free as a left $R$-module with basis
$1=\der^0, \der=\der^1, \der^2, \der^3,\dots$, and
$\der a = a \der + a'$  for  $a\in R$; see [ADH, 5.1].  
Thus each $A\in R[\der]$ has  the form 
\begin{equation}\label{eq:lin op}
A\ =\ a_0+a_1\der+\cdots+a_n\der^n \qquad
(a_0,\dots,a_n\in R),
\end{equation}
and for such $A$ and $y\in R$ we put 
$$A(y)\ :=\ a_0y+a_1y'+\cdots+a_ny^{(n)}\in R.$$
Then $(AB)(y)=A\big(B(y)\big)$ for $A,B\in R[\der]$ and $y\in R$.
For $A$ as in \eqref{eq:lin op} and $R_0,R_1,\dots$ as in \eqref{eq:Rn} we also put 
$$\Ric(A)\ :=\ a_0R_0+a_1R_1+\cdots+a_nR_n \in R\{Z\},$$ 
so $\Ric(A)(z)=A(y)/y$ for  $y\in R^\times$, $z=y^\dagger$.

\medskip
\noindent
The kernel of $A\in R[\der]$ is by definition the   $C_R$-submodule
$$\ker A \ =\  \big\{ y\in R:\, A(y)=0 \big\}$$
of $R$. If we want to indicate the dependence on $R$, we write $\ker_R A$ for $\ker A$. 
For~$A\in R[\der]^{\neq}$ there are unique $n$ and elements $a_0,\dots,a_n$  of $R$ with $a_n\neq 0$ such that~\eqref{eq:lin op} holds. Then $\order A:=n$ is the {\it order}\/ of $A$, and we say that $A$ is {\it monic}\/ if~$a_n=1$. 
Let  $u\in R^\times$. For $A\in R[\der]$ we set $A_{\ltimes u}:=u^{-1}Au\in R[\der]$, the {\it twist}\/ of~$A$ by~$u$. If~$A$ is monic, then so
is $A_{\ltimes u}$, and $A\mapsto A_{\ltimes u}$ is an automorphism of the ring~$R[\der]$ 
which is the identity on $R$ [ADH, p.~243]; its inverse is $B\mapsto B_{\ltimes u^{-1}}$. 
 We say that~$A$ {\it splits}\/ over~$R$ if~$A=a(\der-b_1)\cdots(\der-b_n)$
for some~$a\in R^\times$  and~$b_1,\dots,b_n\in R$. 
If~$A$ splits over $R$, then so does $aAb$ for $a,b\in R^\times$. 
For $r\in\N$, we say that $R$ is {\it $r$-linearly closed}\/ if every monic $A\in R[\der]$ of order~$\le r$ splits over $R$. We also call $R$ {\it linearly closed}\/ if it is $r$-linearly closed for all $r\in\N$.
 %Let $\phi\in R^\times$. Then we have the ring $R^\phi[\derdelta]$ of linear differential operators over the differential ring~$R^\phi$ (with derivation $\derdelta=\phi^{-1}\der$), and we have a ring isomorphism~$A\mapsto A^\phi\colon R[\der]\to R^\phi[\derdelta]$;  it is the identity on $R$, with $\der^\phi=\phi\derdelta$.

\medskip
\noindent
In this paper we focus on the order $2$ case. Accordingly, let
$$A\ =\  \der^2+a_1\der+a_0,\qquad a_0, a_1\in R.$$
Assume also that $u\in R^\times$ satisfies $u^\dagger=-\frac{1}{2}a_1$. Put~$\tilde A:=4A_{\ltimes u}$;
then $$\tilde A\ =\  4\der^2+f\ \text{ where }f:=4a_0-2a_1'-a_1^2 \quad (\text{[ADH, example following~5.1.13]}).$$
Moreover,
$\tilde A$ splits over~$R$ iff $A$ does, and  
$y\mapsto uy\colon \ker \tilde A\to\ker A$ is an isomorphism
of $C_R$-modules. These remarks often allow us to reduce to the case $A=4\der^2+f$ where~$f\in R$.
(As in~[ADH],   the role of the factor~$4$ is just to make some later expressions prettier.)   
We define the function
$$\omega\colon R\to R, \qquad \omega(z)\ :=\ -4R_2(z/2)\ =\ -(2z'+z^2),$$
which is related to the Schwarzian derivative; cf.~[ADH, 5.2].
For $A=4\der^2+f$ (${f\in R}$) and  $y\in R^\times$ we then have
$A(y) = y\big(f-\omega(2y^\dagger)\big)$, hence~$f\in\omega(2R^\dagger)$ iff~${R^\times\cap\ker A\ne\emptyset}$.
As we shall see, an equally important role for parametrizing solutions to order $2$ linear differential equations  is played by the function
$$\sigma\colon R^\times\to R,\qquad \sigma(y)\ :=\    \omega(-y^\dagger)+y^2\  =\  
\frac{2yy''-3(y')^2 +y^4}{y^2}$$ 
first studied by Kummer~\cite{Kummer}.
Note:  for $y\in R^\times$ we have $\sigma(y)=\sigma(-y)$, and if~$\imag$ is an element of a differential ring extension of $R$ with $\imag^2=-1$, then
$\sigma(y)=\omega(-y^\dagger+y\imag)$. We also remark that for $f\in R$ and $y\in R^\times$, the equality $\sigma(y)=f$ is equivalent to~$P(y)=0$ where~$P(Y):=2YY''-3(Y')^2+Y^4-fY^2 \in R\{Y\}$.

\subsection*{Compositional conjugation}
Let   $\phi\in R^\times$. We let $R^{\phi}$ be the {\it compositional conjugate}\/ of~$R$ by~$\phi$: the differential ring
with the same underlying ring as~$R$ but with derivation~$\phi^{-1}\der$ (usually denoted by~$\derdelta$) instead of $\der$. 
We then have an $R$-algebra isomorphism~$P\mapsto P^\phi\colon R\{Y\}\to R^\phi\{Y\}$ with~$P^\phi(y)=P(y)$ for all~${y\in R}$;
see~[ADH, 5.7]. We also have the ring isomorphism $A\mapsto A^\phi\colon R[\der]\to R^\phi[\derdelta]$ with~$a^\phi=a$ for~$a\in R$ and $\der^\phi=\phi\derdelta$, via which we identify
$R[\der]$ with $R^\phi[\derdelta]$. Then
\begin{align*}
\der^1\  &=\ \phi\cdot \derdelta \\
\der^2\  &=\ \phi^2\cdot\derdelta^2 + \phi\cdot\derdelta(\phi)\cdot\derdelta \\
\der^3\  &=\  \phi^3\cdot\derdelta^3 + 3\phi^2\cdot\derdelta(\phi)\cdot\derdelta^2 
+ \big(\phi^2\cdot\derdelta^2(\phi)+\phi\cdot \derdelta(\phi)^2\big)\cdot\derdelta \\
&\vdots\\
\der^n\ &=\ G^n_n(\phi)\cdot\derdelta^n+G^n_{n-1}(\phi)\cdot\derdelta^{n-1}+\cdots+G^n_1(\phi)\cdot\derdelta
\end{align*}
where $G^n_k\in\Q\{X\}\subseteq R^\phi\{X\}$ ($k=1,\dots,n$) are homogeneous of degree $n$ and independent of $R$, $\phi$. Note that then for $n\ge 1$,
$$(Y^{(n)})^\phi\  =\  G^n_n(\phi)Y^{(n)}+G^n_{n-1}(\phi)Y^{(n-1)}+\cdots+G^n_1(\phi)Y' \qquad\text{in $R^\phi\{Y\}$.}$$
In the context of differential rings of germs, compositional conjugation corresponds to a change of variables; see Section~\ref{sec:germs}.

\subsection*{Differential fields}
{\it In the rest of this section  $K$ is a differential field,}\/
that is, a differential ring whose underlying ring is a field.
Then  $C:=C_K$ is a subfield of $K$ and is algebraically closed in $K$; cf.~[ADH, 4.1.1].
If $L$ is an algebraic field extension of $K$, then the derivation on $K$ extends uniquely to a derivation on $L$ [ADH, 1.9.2],
and the constant field of the differential field $L$ is the algebraic closure of $C$ in~$L$~[ADH, 4.1.2].
 Moreover, $K\{Y\}$ is an integral domain; its differential fraction field  is denoted by~$K\<Y\>$.  Let~$y$ be an element of a differential field extension~$L$ of~$K$. Then~$K\{y\}$ and $K\<y\>$ denote respectively the   differential subring of~$L$ generated by~$y$ over $K$, and the differential fraction field of $K\{y\}$ in~$L$. 
We call~$y$ 
{\it differentially algebraic\/} over $K$ if  $P(y)=0$ for some~$P\in K\{Y\}^{\ne}$; otherwise~$y$ is  called {\it differentially transcendental\/} over $K$.
  As usual in [ADH], the prefix~``$\d$'' abbreviates ``differentially'', so ``$\d$-algebraic'' means ``differentially algebraic''.
We say that $L$ is 
 {\it $\d$-algebraic\/} over $K$ if each $y\in L$ is $\d$-algebraic over $K$.
 See [ADH, 4.1] for more on this. 
Given $r\in\N$,
we say that $K$ is {\it $r$-linearly surjective}\/ if $A(K)=K$ for each $A\in K[\der]^{\ne}$ of order~$\le r$.
If  $K$ is closed under integration and~$K^\dagger=K$, then~$K$ is $1$-linearly surjective. (See~[ADH, example following~5.5.22].) If $K$ is $r$-linearly closed and $1$-linearly surjective, then it is $r$-linearly surjective.

\subsection*{Linear differential operators over differential fields}
Let~$A\in K[\der]^{\ne}$ have order $r\in\N$. Then   the subspace $\ker A$ of the $C$-linear space $K$ has dimension~$\le r$; cf.~[ADH, 4.1.14]. 
By [ADH, 5.1.21], if~$r\ge 1$ and~$u$ is a nonzero element of a differential field extension of~$K$ with $g:=u^\dagger\in K$, then
\begin{equation}\label{eq:5.1.21}
A\in K[\der](\der-g)\quad\Longleftrightarrow\quad A(u)=0.
\end{equation}
In particular, for $r=2$ we have 
$\ker A\ne\{0\}  \Rightarrow \text{$A$ splits over $K$}$.
If also~$K^\dagger=K$, then
%$\dim_C\ker A=2  \Leftrightarrow  $
$\ker A\ne\{0\} \Leftrightarrow  
\text{$A$ splits over $K$.}$
Moreover:

\begin{lemma}\label{lem:2.4.5}
Suppose $K$ is closed under integration and $K^\dagger=K$. Then 
$$\dim_{C}\ker A=r  \quad \Longleftrightarrow\quad  \text{$A$ splits over $K$.}$$
\end{lemma}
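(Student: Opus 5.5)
We argue both directions by induction on $r=\order A$; the case $r=0$ is trivial, since then $A$ is a nonzero element of $K$, which splits, and $\ker A=\{0\}$. Throughout, $A\in K[\der]^{\ne}$ of order $r$, and we may replace $A$ by $a^{-1}A$ with $a$ its leading coefficient. This changes neither the kernel nor splitting, so we assume $A$ monic.

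($\Rightarrow$) Suppose $\dim_C\ker A=r\ge 1$. Pick $u\in\ker A$, $u\ne0$, and put $g:=u^\dagger\in K$. By \eqref{eq:5.1.21}, $A=B(\der-g)$ with $B\in K[\der]$ monic of order $r-1$. The map $y\mapsto (\der-g)(y)$ sends $\ker A$ into $\ker B$, and its kernel on $\ker A$ is $\ker(\der-g)=Cu$, which is one-dimensional: any $y$ with $y^\dagger=g$ satisfies $(y/u)'=0$. Hence $\dim_C\ker B\ge r-1$, and since $\dim\ker B\le r-1$ always, equality holds. By induction $B$ splits, so $A$ splits. This direction uses neither hypothesis on $K$.

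($\Leftarrow$) Suppose $A=(\der-b_1)\cdots(\der-b_r)$. Write $A=(\der-b_1)B$ with $B:=(\der-b_2)\cdots(\der-b_r)$ of order $r-1$. By induction $\dim_C\ker B=r-1$. We claim that $B\colon\ker A\to\ker(\der-b_1)$ is surjective and that $\ker(\der-b_1)$ is one-dimensional. Then
\[
\dim\ker A=\dim\ker B+\dim\ker(\der-b_1)=r,
\]
using $\ker B\subseteq\ker A$, which is the kernel of this restricted map.

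For the one-dimensionality of $\ker(\der-b_1)$: since $K^\dagger=K$ there is $v\in K^\times$ with $v^\dagger=b_1$, so $v\in\ker(\der-b_1)$, and as above $\ker(\der-b_1)=Cv$.

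For surjectivity we need $w\in K$ with $B(w)=v$. Here we use $1$-linear surjectivity: $K$ closed under integration and $K^\dagger=K$ make $K$ $1$-linearly surjective, as remarked before the lemma. Solving successively $(\der-b_2)w_2=v$, then $(\der-b_3)w_3=w_2$, and so on, gives $w:=w_r$ with $B(w)=v$. Then $A(w)=(\der-b_1)v=0$, so $w\in\ker A$ and $B(w)=v$.

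The main obstacle is this surjectivity step, which is precisely where both hypotheses on $K$ enter. Once it is in place, the result follows by rank-nullity over $C$.
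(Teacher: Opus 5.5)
Your proof is correct and follows the paper's route. The forward direction is the same induction, factoring $A=B(\der-g)$ via \eqref{eq:5.1.21}; your kernel count for $\ker B$ replaces the paper's citation of [ADH, 5.1.12]. Your backward direction, using $K^\dagger=K$ and $1$-linear surjectivity with rank--nullity, is a direct proof of the fact the paper cites as [ADH, 5.1.27(iv)] with $r=1$.
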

\begin{proof}
We show ``$\Rightarrow$'' by induction on $r$. The case~$r=0$ is trivial.  Let $\dim_{C}\ker A=r\ge 1$. 
Now~\eqref{eq:5.1.21} yields~$B\in K[\der]$ and $g\in K$ such that~$A=B(\der-g)$.
Then $\dim_C\ker B=r-1=\order B$ by  [ADH, 5.1.12], so by the inductive hypothesis, $B$ 
splits over~$K$; hence so does~$A$. For~``$\Leftarrow$''  use~[ADH, 5.1.27(iv)] with $r=1$.
\end{proof}

\noindent
{\em In the rest of this subsection, $A=4\der^2+f$ where $f\in K$}. 
Then
\begin{align} 
\label{eq:omega 2}
 \ker A\ne\{0\}  &\quad \Longleftrightarrow\quad  f\in\omega(2K^\dagger). 
\intertext{In [ADH, 5.2] we  also showed:}
\label{eq:omega}
\text{$A$ splits over $K$}  &\quad\Longleftrightarrow\quad f\in \omega(K).
\end{align}
The next lemma 
relates the kernels of the operators $A$ and  ${B:=\der^3+f\der+(f'/2})$. It  goes back to Appell~\cite{Appell}.
(See  \cite[Proposition~4.26(1)]{vdPS} for a more general version.)

\begin{lemma} \label{lem:Appell}
Let   $y,z\in \ker A$. Then $yz\in \ker B$. 
Moreover, if~$y$,~$z$ is a basis of the $C$-linear space $\ker A$, then~$y^2, yz, z^2$ is a basis of $\ker B$.
\end{lemma}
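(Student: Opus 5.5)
The plan is to verify the first claim by direct computation, and to get the basis claim from a linear-independence argument plus the dimension bound $\dim_C\ker B\le 3$ (cf.~[ADH, 4.1.14]).

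\emph{Step 1: $yz\in\ker B$.} From $A(y)=A(z)=0$ we get $y''=-\tfrac14 fy$ and $z''=-\tfrac14 fz$. Put $w:=yz$. Then
\[
w'\ =\ y'z+yz', \qquad w''\ =\ y''z+2y'z'+yz''\ =\ -\tfrac12 fw+2y'z'.
\]
Differentiating once more, and using the expressions for $y''$ and $z''$ in the term $2(y''z'+y'z'')$, gives
\[
w'''\ =\ -\tfrac12 f'w-\tfrac12 fw'+2(y''z'+y'z'')\ =\ -\tfrac12 f'w-\tfrac12 fw'-\tfrac12 f(yz'+y'z)\ =\ -\tfrac12 f'w-fw'.
\]
Hence $B(w)=w'''+fw'+\tfrac12 f'w=0$.

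\emph{Step 2: the basis claim.} Let $y,z$ be a basis of $\ker A$. By Step~1 (applied to the pairs $(y,y)$, $(y,z)$, $(z,z)$), the elements $y^2$, $yz$, $z^2$ lie in $\ker B$. Since $B$ has order $3$, $\dim_C\ker B\le 3$, so it suffices to show that $y^2,yz,z^2$ are $C$-linearly independent.

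Suppose $ay^2+byz+cz^2=0$ with $a,b,c\in C$. Note $y\neq 0$ because it is a basis element, and $K$ is a field.

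Case $c\neq 0$. Dividing by $y^2$ shows that $t:=z/y\in K$ satisfies $a+bt+ct^2=0$, a nonzero polynomial equation over $C$. So $t$ is algebraic over $C$. Since $C$ is algebraically closed in $K$, we get $t\in C$, that is, $z=ty$ with $t\in C$. This contradicts the $C$-linear independence of $y,z$.

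Case $c=0$. Then $y(ay+bz)=0$, so $ay+bz=0$, and hence $a=b=0$ by the independence of $y,z$.

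In both cases the only relation is the trivial one, so $y^2,yz,z^2$ are linearly independent and therefore form a basis of $\ker B$.

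The only step needing care is the use in Step~2 that $C$ is algebraically closed in $K$ (noted earlier in this section); the rest is routine.
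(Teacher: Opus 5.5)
Your proof is correct and follows essentially the same route as the paper: you compute $(yz)'''=-f(yz)'-\tfrac12 f'\,yz$ directly, and you get independence of $y^2,yz,z^2$ from the fact that the quotient of the two basis elements would satisfy a nonzero quadratic over $C$, hence lie in $C$ because $C$ is algebraically closed in $K$. You also state explicitly the bound $\dim_C\ker B\le 3$ needed for spanning, which the paper leaves implicit.
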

\begin{proof}
We have
$$(yz)'\ =\ y'z+yz',\quad (yz)''\ =\ y''z+2y'z'+yz''\ =\ 2y'z'-(f/2)yz,$$
hence
\begin{align*}
(yz)'''\ 	&=\  2y''z'+2y'z''-(f'/2)yz-(f/2)(yz)' \\
		&=\  -(f/2)(yz'+y'z)-(f'/2)yz-(f/2)(yz)' \\
		&=\  -f(yz)'-(f'/2)yz,
\end{align*}
and so $yz\in \ker B$.
Suppose  $ay^2+byz+cz^2=0$ where $a,b,c\in C$, not all zero;
we claim that then $y$, $z$ are  $C$-linearly dependent.  We have
  $a\neq 0$ or $c\neq 0$, and so we may assume $a\neq 0$, $z\ne 0$. 
Then  $u:=y/z$ satisfies~$au^2+bu+c=0$, so~$u\in C$, hence~$y\in Cz$. 
\end{proof}

\subsection*{The case of real operators} 
{\it In this subsection $K=H[\imag]$ where~$H$ is a real closed differential subfield of $K$ 
with
$H^\dagger=H$ and $\imag^2=-1$.}\/ Then $K$ is algebraically closed; so $C$ is algebraically closed
and $K^\dagger$ is divisible.
Let $A = 4\der^2+f$, $f\in H$. 
By [ADH, 5.2] we have:
\begin{equation}\label{eq:sigma}
\text{$A$ splits over $K$}  \quad\Longleftrightarrow\quad f\in \omega(H)\cup\sigma(H^\times).
\end{equation}
Suppose  $y\in H^\times$ and $f=\sigma(y)\notin\omega(H)$. Put $g:=\frac{1}{2}(-y^\dagger+y\imag)$.
Then [ADH, p.~262] gives~${A=4({\der+g})({\der-g})}$.
Also $y\imag\notin K^\dagger$: otherwise $g\in K^\dagger$, hence $\ker_K A\ne \{0\}$  by~\eqref{eq:5.1.21},
so $\ker_H A\ne \{0\}$, and thus   $f\in\omega(H)$ by~\eqref{eq:omega}, a contradiction.

\medskip
\noindent
Following [ADH, p.~263] we now indicate a
differential field extension~$L$ of~$K$  such that $C_L=C$ and $\dim_C \ker_L A = 2$, and  specify  
a basis of $\ker_L A$.
First, take~${r\in K}$ with $r^2=y$. Next,  [ADH, 4.1.9] provides an element $e(y) \ne 0$ in a differential field
extension of $K$ with $e(y)^\dagger=\frac{1}{2}y\imag$. (Think of $e(y)$ as~$\exp(\frac{1}{2}\imag\int y)$.)
Put~$L:=K\big(e(y)\big)$.
Then $y\imag\notin K^\dagger$ gives $e(y)\notin K$, so $e(y)$ is transcendental over $K$ and~${C_L=C}$ by~[ADH, 4.6.11, 4.6.12]. We also obtain from~[ADH, 4.6.11, 4.6.12] that for each unit $e$ of a differential ring extension $R$
of $K$ with $e^\dagger=\frac{1}{2}y\imag$, the differential subring~$K[e,e^{-1}]$ of $R$ is an integral domain, $e$ is transcendental over $K$,
and thus we have a differential field isomorphism $K\langle e\rangle=K(e) \to L$ over $K$ with $e\mapsto e(y)$.

\medskip
\noindent
Next set
$$e(-y)\ :=\ e(y)^{-1}, \qquad y_1\ :=\ \frac{e(y)}{r}, \qquad y_2\ :=\ \frac{e(-y)}{r}.$$
Then $2y_1^\dagger = -y^\dagger + y\imag$ and $2y_2^\dagger = -y^\dagger-y\imag$, so
$L=K(y_1,y_2)=K\langle y_1,y_2\rangle$, and
$$\omega(2y_1^\dagger)\ =\ \sigma(y)\  =\  \sigma(-y)\ =\ \omega(2y_2^\dagger)\ =\  f,\ \text{ so }\ y_1, y_2\in \ker_L A.$$
Also $(y_1/y_2)^\dagger = y_1^\dagger - y_2^\dagger = y\imag \ne 0$, hence 
 $y_1, y_2$ are linearly independent over $C$.

\medskip
\noindent
To illustrate these remarks, we now consider again  the   operator  
$$B\ :=\ \der^3+f\der+(f'/2)\in H[\der]$$ 
to obtain a fact
used in the proof of Lemma~\ref{lem:B, 2} below:

\begin{lemma}\label{lem:kerB}
Let $y\in H^\times$ and~$\sigma(y)=f\notin\omega(H)$. Then  $\dim_{C_H} \ker_H  B=1$.
\end{lemma}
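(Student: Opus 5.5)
The plan is to exhibit an explicit nonzero element of $\ker_H B$, and then bound $\dim_{C_H}\ker_H B$ from above by passing to the extension $L=K(y_1,y_2)$ constructed just before the lemma.

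\emph{Existence.} Take $y_1,y_2\in\ker_L A$ as above, with $y_1=e(y)/r$, $y_2=e(-y)/r$ and $r^2=y$. Lemma~\ref{lem:Appell}, applied in the differential field $L$, gives $y_1y_2\in\ker_L B$. Now
$$y_1y_2\ =\ e(y)e(y)^{-1}/r^2\ =\ 1/y\ \in\ H^\times,$$
so $1/y$ is a nonzero element of $\ker_H B$ and $\dim_{C_H}\ker_H B\ge 1$. One could also check $B(1/y)=0$ directly from $\sigma(y)=f$, but the route through Lemma~\ref{lem:Appell} avoids that computation.

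\emph{Upper bound.} Since $C_L=C$ and $\dim_C\ker_L A=2$ with basis $y_1,y_2$, Lemma~\ref{lem:Appell} shows that $y_1^2,\ y_1y_2,\ y_2^2$ is a $C$-basis of $\ker_L B$. As $B$ has order $3$, this is the whole kernel, with $y_1^2=e(y)^2/y$ and $y_2^2=e(y)^{-2}/y$. Let $h\in\ker_H B$. Then
$$h\ =\ a\,e(y)^2/y+b/y+c\,e(y)^{-2}/y\qquad (a,b,c\in C).$$
Multiplying by $y\,e(y)^2$ gives
$$a\,e(y)^4+(b-hy)\,e(y)^2+c\ =\ 0.$$
Because $e(y)$ is transcendental over $K$, all coefficients vanish: $a=c=0$ and $h=b/y$. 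Also $b=hy\in H\cap C=C_H$. Hence $\ker_H B=C_H\cdot(1/y)$, and $\dim_{C_H}\ker_H B=1$.

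\emph{Main obstacle.} The key step is the transcendence of $e(y)$ over $K$ (together with $C_L=C$). This was derived above from the hypothesis $f\notin\omega(H)$, via $y\imag\notin K^\dagger$ and [ADH, 4.6.11, 4.6.12], so I will simply invoke it. The only bookkeeping remaining is to note that $C_H\subseteq C$, so that the $C_H$-dimension computed inside $H$ is governed by the coefficients in $C$ found in $L$.
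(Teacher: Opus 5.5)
Your proof is correct and follows the paper's argument: Lemma~\ref{lem:Appell} in $L$ gives the basis $e(y)^2/y,\ 1/y,\ e(y)^{-2}/y$ of $\ker_L B$, and the transcendence of $e(y)$ over $K$ then forces $\ker_H B=C_H\cdot(1/y)$. Your version differs only in writing out explicitly the coefficient comparison that the paper leaves implicit.
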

\begin{proof}
Take $r$, $e(y)$, $y_1$, $y_2$, $L$ as above. Then 
$$y_1^2\ =\ \frac{e(y)^2}{y}, \qquad y_1y_2\ =\ \frac{1}{y},\qquad y_2^2\ =\ \frac{e(-y)^2}{y}$$
is a basis of the $C$-linear space $\ker_L B$ by  Lemma~\ref{lem:Appell}.
Now $e(y)$ is transcendental over $K$, so $\ker_H B = H\cap \ker_L B = C_H(1/y)$.
\end{proof}

\subsection*{Splittings and derivatives} 
This subsection is only used for proving Lemma~\ref{lem:param zeros of derivatives}, 
and thus for Corollary~3 in the introduction, but not for the proof of  Theorems~A and~B.
Let $A\in K[\der]^{\ne}$ be monic of order~$r\in\N$ with~$a_0:=A(1)\neq 0$.
Let $A^\der$ be the unique element of $K[\der]$ such that  $A^\der \der = \der A - a_0^\dagger A$.
Then $A^\der$ is monic of order $r$, and if~$A\in H[\der]$ with $H$ a differential subfield of $K$, then also $A^\der\in H[\der]$.

\begin{examples}
If $\order A=0$ then $A^\der=1$, and if $\order A=1$ then $A^\der=\der+(a_0-a_0^\dagger)$.
If $A=\der^2+a_1\der+a_0$ ($a_0,a_1\in K$, $a_0\ne 0$),  then
$$A^\der\ =\ \der^2+(a_1-a_0^\dagger)\der+(a_1'+a_0-a_1a_0^\dagger).$$
\end{examples}

\noindent
If  $A(y)=0$ with $y$ in a differential ring extension of $K$, then~$A^\der(y')=0$. Also:

\begin{lemma}\label{lem:A^der}
Let $R$ be a differential integral domain extending $K$. Suppose the differential fraction field of $R$ has constant field $C$, and  $\dim_C \ker_R A=r$.
Then $$\ker_R A^\der=\{y':y\in\ker_R A\}$$ and $\dim_C \ker_R A^\der=r$.
\end{lemma}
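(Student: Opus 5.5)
We will prove the two inclusions and then compare dimensions. The inclusion $\{y':y\in\ker_R A\}\subseteq\ker_R A^\der$ is immediate from the defining identity $A^\der\der=\der A-a_0^\dagger A$: for $y\in\ker_R A$ we get $A^\der(y')=(A^\der\der)(y)=A(y)'-a_0^\dagger A(y)=0$. (Here $a_0^\dagger\in K$ makes sense since $a_0\ne0$.) Thus the $C$-linear map $D\colon\ker_R A\to\ker_R A^\der$, $y\mapsto y'$, is well defined.

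Next we show $D$ is injective, so its image has dimension $r$. If $y\in\ker_R A$ and $y'=0$, then $y\in C$, since constants of $R$ lie in the constant field $C$ of its fraction field. So $0=A(y)=a_0y$, because every term of $A$ of positive order kills a constant. As $a_0\neq0$ and $R$ is an integral domain, this forces $y=0$. Hence $\dim_C D(\ker_R A)=r$.

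It remains to see that $\dim_C\ker_R A^\der\le r$. Then $D(\ker_R A)$, an $r$-dimensional subspace, must be all of $\ker_R A^\der$, and both claims follow. This bound is the only point needing a cited fact, and it is the expected main obstacle. Work in the differential fraction field $F$ of $R$, which has constant field $C$ by hypothesis. Since $A^\der\in K[\der]\subseteq F[\der]$ is monic of order $r$, [ADH, 4.1.14] gives $\dim_C\ker_F A^\der\le r$. Since $\ker_R A^\der\subseteq\ker_F A^\der$, we obtain $\dim_C\ker_R A^\der\le r$, which completes the argument.
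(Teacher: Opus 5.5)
Your proof is correct and is essentially the paper's argument. The paper takes a basis $y_1,\dots,y_r$ of $\ker_R A$ and shows that $y_1',\dots,y_r'$ are $C$-linearly independent, by the same step you use: a constant zero $y$ of $A$ gives $ya_0=0$, contradicting $a_0\neq 0$. It leaves implicit the inclusion $\{y':y\in\ker_R A\}\subseteq\ker_R A^\der$ (remarked just before the lemma) and the bound $\dim_C\ker_R A^\der\le r$ via [ADH, 4.1.14] in the fraction field, both of which you spell out.
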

\begin{proof}
Let $y_1,\dots,y_r$ be a basis of the $C$-linear space $\ker_R A$, and assume towards a contradiction that $c_1y_1'+\cdots+c_ry_r'=0$ with $c_1,\dots,c_r\in C$ not all zero. 
 Then~$y:=c_1y_1+\cdots+c_ry_r\in\ker^{\neq}_R A$ and~$y'=0$, so $y\in C^\times$.  Hence $0=A(y)=yA(1)=ya_0$ and thus $a_0=0$, contradicting our assumption in this subsection that $a_0\ne 0$.
\end{proof}

%\noindent
%Let $\Univ=\Univ_K$ and $f_1\ex(\lambda_1),\dots,f_r\ex(\lambda_r)\in\Univ^\times$ be a basis of the $C$-linear space $\ker_{\Univ} A$, where~$f_j\in K^\times$ and $\lambda_j\in\Lambda$ for $j=1,\dots,r$. Then by Lemma~\ref{lem:A^der},
%$$(f_1'+\lambda_1f_1)\ex(\lambda_1),\dots,(f_r'+\lambda_rf_r)\ex(\lambda_r)\in\Univ^\times$$ 
%is a basis of the $C$-linear space $\ker_{\Univ} A^\der$. Hence by~\eqref{eq:2.4.6}:

%\begin{cor}\label{cor:A^der}
%If $\dim_C \ker_{\Univ} A=r$, then $\mult_\alpha(A)=\mult_\alpha(A^\der)$ for all~$\alpha\in K/K^\dagger$, so $\Sigma(A)=\Sigma(A^\der)$, and both $A$, $A^\der$ split over $K$.
%\end{cor}

%\noindent
%Using \eqref{eq:2.4.8}, Corollary~\ref{cor:A^der} yields: if $K$ is $1$-linearly surjective when~$r\geq 2$ and~$A$ splits over~$K$, then~$A^\der$ splits over $K$.

\section{Asymptotic Differential Algebra}\label{sec:ADH}

\noindent
Here we introduce
various objects studied in~[ADH] and recall a few basic facts about them proved there.  
We also include a short proof of a theorem of Kolchin (Corollary~\ref{cor:Kolchin}) and
 some technical results (Lemmas~\ref{lem:ADH 14.2.18} and~\ref{lem:upl-free}) needed later.

\subsection*{Valued fields}
A (Krull) valuation on a field $K$ is a surjective map~$v\colon K^\times \to \Gamma$ onto an ordered abelian 
group $\Gamma$ (additively written) satisfying the usual laws, and extended to
$v\colon K \to \Gamma_{\infty}:=\Gamma\cup\{\infty\}$ by $v(0)=\infty$,
where the ordering on $\Gamma$ is extended to a total ordering
on $\Gamma_{\infty}$ by $\gamma<\infty$ for all~$\gamma\in \Gamma$. 
Given such a valuation~$v$ on $K$,  $\mathcal O=\{a\in K:va\ge 0\}$ is a valuation ring of $K$;
conversely, every valuation ring $\mathcal O$ on a field $K$ gives rise to a valuation 
$v\colon K^\times \to \Gamma$  on $K$  
such that~$\mathcal O=\{a\in K:va\geq 0\}$. (See [ADH, 3.1].)
A {\em valued field\/} $K$ is a field 
%(also denoted by $K$) 
together with a valuation ring~$\mathcal O$ of that field.
 Let $K$ be a valued field, let $\mathcal{O}$ be its valuation ring with $v\colon K^\times \to \Gamma$ the corresponding valuation. Then~$\mathcal O$ is a local ring  with maximal ideal $\smallo:=\{a\in K:va>0\}$. From now on $K$ (and any  valued field that gets menstioned)  has {\it equicharacteristic zero}\/, that is, the
residue field~$\res(K):=\mathcal{O}/\smallo$ of $K$   has characteristic zero.
With~$a$,~$b$ ranging over~$K$, set 
\begin{align*} a\asymp b &\ :\Leftrightarrow\ va =vb, & a\preceq b&\ :\Leftrightarrow\ va\ge vb, & a\prec b &\ :\Leftrightarrow\  va>vb,\\
a\succeq b &\ :\Leftrightarrow\ b \preceq a, &
a\succ b &\ :\Leftrightarrow\ b\prec a, & a\sim b &\ :\Leftrightarrow\ a-b\prec a.
\end{align*}
Thus $\mathcal O=\{a:a\preceq 1\}$ and $\smallo=\{a:a\prec 1\}$.
If $a\sim b$, then~$a, b\ne 0$ and $a\asymp b$; moreover, $\sim$ is an equivalence relation on $K^\times$.   

\medskip
\noindent
We have the residue morphism $a\mapsto \overline{a}:=a+\smallo\colon\mathcal O\to\res(K)$, which we extend to the ring morphism $P\mapsto\bar{P}\colon\mathcal O[Y]\to \res(K)[Y]$ with $Y\mapsto Y$.
 The valued field~$K$ is called {\it henselian} if  for each $P\in\mathcal O[Y]$ and $u\in\mathcal O$ such that $\bar{P}(\bar{u})=0$, $\big(\frac{\partial\bar{P}}{\partial Y}\big)(\bar{u})\ne 0$, there exists $y\in u+\smallo$ with $P(y)=0$. (See [ADH, 3.3].)

\medskip
\noindent
Let now~$L$ be a valued field extension of $K$; then
the relations~$\asymp$,~$\preceq$, \dots~on~$L$ displayed above restrict to the corresponding relations on $K$, and
we identify in the usual way the value group~${\Gamma_K}$ of~$K$ with an ordered subgroup of the value group~$\Gamma_L$ of~$L$
and~$\res(K)$ with a subfield of $\res(L)$. 
Such a valued field extension is called {\it immediate} if  ${\Gamma_K=\Gamma_L}$ and $\res(K)=\res(L)$. 
If $L$ is algebraic over $K$, then the group~$\Gamma_L/\Gamma_K$ is torsion and the field extension $\res(L)\supseteq\res(K)$ is algebraic.

\subsection*{Valued differential fields}  
Let $K$ be
a {\em valued differential field}\/, that is, a valued field (of equicharacteristic zero by our earlier convention) together with a derivation on the underlying field.
We say that the derivation $\der$ of   $K$ is    {\it small}\/ if~$\der\smallo\subseteq\smallo$; then~${\der\mathcal O\subseteq\mathcal O}$~[ADH, 4.4.2], hence $\der$ induces a derivation  
 on the residue field $\res(K)$ of $K$ making the residue morphism~${\mathcal O\to\res(K)}$ a differential ring morphism.
 We also say that~$K$ is {\it monotone}\/ if~$f'\preceq f$ for all $f\prec 1$ in $K$. If $K$ is monotone, then it has small derivation;
 the converse holds whenever  $\Gamma$ is archimedean [ADH, 6.1.2].
 If $K$ has small derivation,  then every valued differential field extension
of~$K$ that is algebraic over~$K$ also has small derivation [ADH, 6.2.1]; likewise with ``is monotone'' in place of ``has small derivation''[ADH, 6.3.10].

\subsection*{Liouvillian zeros of linear differential operators}
Let $K$ be a differential field.
A {\it Liouville extension}\/ of $K$ 
is a differential field extension $E$ of $K$ such that $C_E$ is algebraic over $C$ and for each $t\in E$ there are~$t_1,\dots,t_n\in L$ such that~$t\in K(t_1,\dots,t_n)$ and for $i=1,\dots,n$:
 \begin{enumerate}
\item $t_i$ is algebraic over $K(t_1,\dots,t_{i-1})$, or
\item $t_i'\in K(t_1,\dots,t_{i-1})$, or
\item $t_i'\in t_i K(t_1,\dots,t_{i-1})$.
\end{enumerate}
See [ADH, p.~462] for basic properties of Liouville extensions, such as the following:

\begin{lemma}\label{lem:10.6.6}
Let $M|L$ and $L|K$ be differential field extensions. If $M|K$ is a Liouville extension, then so is $M|L$. If $M|L$ and $L|K$ are Liouville extensions, then so is $M|K$.
\end{lemma}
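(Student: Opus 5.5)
The plan is to argue straight from the definition of Liouville extension. Note that the definition as printed has $t_i\in L$, but in context it should read $t_i\in E$. For $M|L$ we also need $C_M$ algebraic over $C_L$. This holds because $C_M$ is algebraic over $C_K\subseteq C_L$.

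For the first claim, suppose $M|K$ is Liouville and let $t\in M$. The definition gives $t_1,\dots,t_n\in M$ with $t\in K(t_1,\dots,t_n)\subseteq L(t_1,\dots,t_n)$, each $t_i$ satisfying (1), (2) or (3) over $K(t_1,\dots,t_{i-1})$. Since $K(t_1,\dots,t_{i-1})\subseteq L(t_1,\dots,t_{i-1})$, each condition persists with $K$ replaced by $L$:
\begin{itemize}
\item algebraicity over the smaller field implies algebraicity over the larger one;
\item $t_i'\in K(\dots)$ implies $t_i'\in L(\dots)$;
\item $t_i'\in t_iK(\dots)$ implies $t_i'\in t_iL(\dots)$.
\end{itemize}
So the same chain witnesses that $M|L$ is Liouville.

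For transitivity, first handle constants: $C_M$ is algebraic over $C_L$, which is algebraic over $C_K$, so $C_M$ is algebraic over $C_K$. Now let $t\in M$. Take $t_1,\dots,t_n\in M$ with $t\in L(t_1,\dots,t_n)$ and each $t_i$ of type (1)–(3) over $L(t_1,\dots,t_{i-1})$. Only finitely many elements of $L$ are involved:
\begin{itemize}
\item the coefficients of the minimal polynomials witnessing (1);
\item the expressions of $t_i'$ or $t_i'/t_i$ as rational functions over $L$ in $t_1,\dots,t_{i-1}$, for (2) and (3);
\item the expression of $t$ itself in $L(t_1,\dots,t_n)$.
\end{itemize}
Call these finitely many elements $a_1,\dots,a_m\in L$.

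Since $L|K$ is Liouville, each $a_j$ lies in $K(s_{j,1},\dots,s_{j,k_j})$ for a chain of type (1)–(3) elements over $K$. Concatenate these chains for $j=1,\dots,m$ into one chain $s_1,\dots,s_p$. Each link remains of type (1)–(3) over the field generated by its predecessors, because adjoining earlier elements only enlarges the base field, as in the first part. Set $F:=K(s_1,\dots,s_p)$; then $a_1,\dots,a_m\in F$.

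Finally append $t_1,\dots,t_n$ to the chain. Each $t_i$ is of type (1)–(3) over $F(t_1,\dots,t_{i-1})$, since the witnessing data lie in $K(a_1,\dots,a_m,t_1,\dots,t_{i-1})\subseteq F(t_1,\dots,t_{i-1})$. Also $t\in F(t_1,\dots,t_n)$. Hence $s_1,\dots,s_p,t_1,\dots,t_n$ is a chain over $K$ witnessing the condition for $t$, and $M|K$ is Liouville.

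The only real care point is this bookkeeping step: checking that the finitely many elements of $L$ actually used can be adjoined by a single chain over $K$.
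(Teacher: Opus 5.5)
Your proof is correct. It is the routine verification straight from the definition, which is all the paper intends: the paper gives no proof of its own and only cites [ADH, p.~462]. Enlarging the base field preserves conditions (1)--(3), which gives the first claim, and concatenating finitely many chains over $K$ gives transitivity.

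There is one small bookkeeping slip. For type (1), the coefficients of the minimal polynomial of $t_i$ lie in $L(t_1,\dots,t_{i-1})$, not in $L$. So you should collect the $L$-coefficients of rational expressions for them in $t_1,\dots,t_{i-1}$, exactly as you already do for (2) and (3).
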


\noindent
In the proof of Lemma~\ref{lem:Liouvillian zeros} we shall use 
a theorem of Kolchin (Corollary~\ref{cor:Kolchin})  which gives information about linear differential operators $A\in K[\der]^{\ne}$
having
zeros~$y\ne 0$    in Liouville extensions of~$K$. 
We include a variant of Rosenlicht's short proof
of a more general result, since it illustrates the effectiveness of asymptotic differential-algebraic reasoning. For this we let
$m\geq 1$,  $P\in K\{Y\}$, $\deg P < m$. 

\begin{lemma}\label{ympy}
Let $L$ be a differential field extension of $K$ and $t\in L$, $t'\in K + tK$,
and suppose $L$ is algebraic over $K(t)$.
If $y^m=P(y)$ for some $y\in L$, then $z^m=P(z)$ for some $z$ in a differential field extension of $K$
which is algebraic over $K$.
\end{lemma}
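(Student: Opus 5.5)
The plan is to reduce to a valuation-theoretic argument: put a valuation on $L$ that is trivial on $K$, show that $y$ is bounded for it, and then pass to the residue field.

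\emph{Case: $t$ algebraic over $K$.} Then $L$ is algebraic over $K$, and we simply take $z:=y$.

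\emph{Case: $t$ transcendental over $K$.} Equip $K(t)$ with the valuation at infinity: trivial on $K^\times$, with $v(t)=-1$, so that $v(p/q)=\deg q-\deg p$ for $p,q\in K[t]^{\ne}$. Its residue field is $K$, and the residue map restricted to $K$ is the identity.

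First I check that this derivation is small. Write $t'=a+bt$ with $a,b\in K$. Then for $p\in K[t]$ we get $\deg p'\le\deg p$. Hence for $p/q$ with $\deg p<\deg q$,
$$(p/q)'=(p'q-pq')/q^2\quad\text{has}\quad \deg(p'q-pq')-2\deg q\ \le\ \deg p-\deg q\ <\ 0,$$
so $\der\smallo\subseteq\smallo$. Next extend the valuation to the algebraic extension $L$. By [ADH, 6.2.1] the derivation of $L$ is still small. Moreover $\Gamma_L\subseteq\Q$ is archimedean, so by [ADH, 6.1.2] $L$ is monotone. Also $\res(L)$ is algebraic over $\res(K(t))=K$, and $\der$ induces a derivation on $\res(L)$. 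With it, $\res(L)$ is a differential field extension of $K$ (identifying $K$ with its image), algebraic over $K$, and the residue map $\mathcal O_L\to\res(L)$ is a differential ring morphism.

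\emph{Key step: $y\preceq 1$.} Suppose instead $y\succ 1$. Monotonicity applied to $y^{-1}\prec 1$ gives $(y^{-1})'=-y'/y^2\preceq y^{-1}$, that is, $y'\preceq y$. Inductively $y^{(k)}\preceq y$ for all $k$. To see this, assume $y^{(k)}\preceq y$. If $y^{(k)}\succ 1$, the same argument applied to $y^{(k)}$ gives $y^{(k+1)}\preceq y^{(k)}\preceq y$. Otherwise $y^{(k)}\preceq 1$, so by smallness $y^{(k+1)}\preceq 1\prec y$. Since the coefficients of $P$ lie in $K$ and are therefore $\preceq 1$, each monomial of $P$ evaluated at $y$ is $\preceq y^{\deg P}\prec y^m$. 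Thus $P(y)\prec y^m$, contradicting $y^m=P(y)$. Hence $y\in\mathcal O_L$.

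\emph{Conclusion.} Let $z:=\bar y\in\res(L)$. Because the residue map is a differential ring morphism that is the identity on $K$, applying it to $y^m=P(y)$ gives $z^m=P(z)$. As noted above, $\res(L)$ is a differential field extension of $K$ algebraic over $K$, as required.

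I expect the main obstacle to be the bounding step $y\preceq1$, specifically controlling all higher derivatives $y^{(k)}$ by $y$. The argument above handles it via monotonicity, which rests on $\Gamma_L$ being archimedean.
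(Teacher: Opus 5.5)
Your proof is correct and follows essentially the paper's argument. Both use the valuation at infinity on $K(t)$, extend it to $L$, show $y\preceq 1$ via monotonicity, and take $z:=\bar y$ in the residue field. The differences are cosmetic. The paper gets monotonicity of $K(t)$ by embedding it in the Laurent series field $K(\!(t^{-1})\!)$, transfers it to $L$ via [ADH, 6.3.10], and bounds derivatives by $y^{(n)}=y\,R_n(y^\dagger)\preceq y$. You instead check smallness by a degree count, pass to $L$ via [ADH, 6.2.1] and the archimedean value group [ADH, 6.1.2], and bound the $y^{(k)}$ by induction.
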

\begin{proof}
We may assume $t$ is transcendental over $K$.
View $K(t)$ as a subfield of the Laurent series field $F=K(\!(t^{-1})\!)$.  We equip $F$ with the   valuation ring~$\mathcal O_F=K[[t^{-1}]]$ and the continuous derivation extending that of $K(t)$, cf.~[ADH, p.~226]. Then the valued differential field~$F$ is monotone.
Hence the valued differential subfield~$K(t)$ of $F$ is also monotone. We equip $L$ with a valuation ring $\mathcal O_L$ lying over~$\mathcal O_F\cap K(t)$; then~$L$ is monotone, since it is algebraic over $K(t)$. 
We  
identify $K$ with its image under the residue morphism $a\mapsto\bar{a}\colon\mathcal O_L\to\k_L:=\res(L)$. Then $K$ is a differential subfield of the differential residue field $\k_L$ of $L$,
and   $\k_L$ is algebraic over~$K$.
Let now~$y\in L$ with~$y^m=P(y)$, and towards a contradiction suppose~$y\succ 1$.
Then $y^\dagger\preceq 1$, thus~$y^{(n)}=y\,R_n(y^\dagger)\preceq y$ for all~$n$, with $R_n=\Ric(Y^{(n)})$ as in \eqref{eq:Rn}, and hence $y^m=P(y)\preceq y^d$ where~${d=\deg P<m}$, a contradiction. Thus~$y\preceq 1$, and $z:=\bar{y}\in\k_L$ has the required property. 
\end{proof}

\begin{prop}[{Rosenlicht~\cite[p.~371]{Rosenlicht73}}]
Suppose $y^m=P(y)$ for some $y$ in a Liouville extension of~$K$. Then $z^m=P(z)$ for some $z$ in a differential field extension of $K$ which is algebraic over $K$.
\end{prop}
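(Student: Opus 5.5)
We argue by induction on the length of a tower witnessing that $y$ lies in a Liouville extension, using Lemma~\ref{ympy} at each step. Let $E$ be a Liouville extension of $K$ and $y\in E$ with $y^m=P(y)$. By definition there are $t_1,\dots,t_n\in E$ with $y\in K(t_1,\dots,t_n)$ such that each $t_i$ is algebraic over $K_{i-1}:=K(t_1,\dots,t_{i-1})$, or $t_i'\in K_{i-1}$, or $t_i'\in t_iK_{i-1}$. We first check that each $K_i$ is a differential subfield of $E$. This holds because $t_i'$ lies in $K_{i-1}(t_i)$: in the algebraic case since the derivation extends uniquely to algebraic extensions, and in the other two cases by assumption. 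So $y\in K_n$, and we prove by induction on $n$ the statement: \emph{for every differential field $K$ and every $P\in K\{Y\}$ with $\deg P<m$, if $y^m=P(y)$ for some $y$ in such a tower $K_n$, then $z^m=P(z)$ for some $z$ in a differential field extension of $K$ that is algebraic over $K$.}

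For the base case $n=0$ we take $z=y$. For the inductive step, we apply the inductive hypothesis to the differential field $K_1$ in place of $K$. The tower $t_2,\dots,t_n$ over $K_1$ has length $n-1$, and $P\in K\{Y\}\subseteq K_1\{Y\}$ still has degree $<m$. This gives $z_1$ with $z_1^m=P(z_1)$ in a differential field extension $L$ of $K_1$ that is algebraic over $K_1$.

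It remains to descend from $K_1$ to $K$, according to the type of $t_1$. If $t_1$ is algebraic over $K$, then $L$ is algebraic over $K$, so we take $z=z_1$. If $t_1'\in K$ or $t_1'\in t_1K$, then $t_1'\in K+t_1K$ and $L$ is algebraic over $K(t_1)$. Lemma~\ref{ympy}, applied with $t=t_1$, then yields $z$ with $z^m=P(z)$ in a differential field extension of $K$ algebraic over $K$. Note that Lemma~\ref{ympy} is stated for any differential field extension $L\supseteq K$ containing $t$, so $L$ need not lie inside $E$.

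The only point requiring care, and the only real obstacle, is phrasing the induction so that the base field changes. This is why the inductive statement quantifies over all differential fields $K$ and all $P$: the hypothesis is applied over $K_1$, not over $K$, and $P$ remains a polynomial of degree $<m$ over the larger field. Everything else is bookkeeping.
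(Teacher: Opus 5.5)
Your proof is correct. It rests on the same single nontrivial input as the paper, Lemma~\ref{ympy}, used once for each transcendental generator; what differs is how the induction is organized.

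The paper keeps the base field $K$ fixed and inducts on $d=\operatorname{trdeg}\big(K(t_1,\dots,t_n)|K\big)$. It takes the \emph{largest} $i$ with $t_i$ transcendental over $K_{i-1}$. It then applies Lemma~\ref{ympy} over $K_{i-1}$ with $t=t_i$, which is possible because everything above $t_i$ is algebraic, and obtains a solution $z$ algebraic over $K_{i-1}$. Finally it applies the inductive hypothesis to the new Liouville sequence $(t_1,\dots,t_{i-1},z)$ over $K$, whose transcendence degree is $d-1$.

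You instead peel off the \emph{bottom} generator. The inductive hypothesis does the work over the larger base $K_1$, and you then descend from $K_1$ to $K$ by applying Lemma~\ref{ympy} with $t=t_1$. The costs and gains trade off as follows:
\begin{itemize}
\item Your inductive statement must quantify over all base fields, as you note. In exchange, you can induct on the length of the tower, which drops by one at each step.
\item The paper's new sequence $(t_1,\dots,t_{i-1},z)$ need not be shorter than the original one (for instance when $i=n$). That is why the paper inducts on transcendence degree rather than length.
\end{itemize}
Both arguments rely on Lemma~\ref{ympy} only requiring an abstract differential field extension, since the auxiliary solutions ($z$ in the paper, $z_1$ in yours) need not lie in the original Liouville extension.

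One minor point: in the algebraic case, the cleanest way to see that $t_i'\in K_{i-1}(t_i)$ is to differentiate the minimal polynomial of $t_i$ over $K_{i-1}$. Appealing to uniqueness of extensions of derivations is less direct.
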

\begin{proof}
A {\it Liouville sequence}\/ over $K$ is by definition a sequence $(t_1,\dots,t_n)$ of elements of a differential field extension $E$ of $K$ such that  for $i=1,\dots,n$:
\begin{enumerate}
\item $t_i$ is algebraic over $K(t_1,\dots,t_{i-1})$, or
\item $t_i'\in K(t_1,\dots,t_{i-1})$, or
\item $t_i'\in t_i K(t_1,\dots,t_{i-1})$.
\end{enumerate}
Note that then $K_i:=K(t_1,\dots,t_{i})$ is a differential subfield of $E$ for $i=1,\dots,n$.
By induction on $d\in\N$ we now show:  if $(t_1,\dots, t_n)$ is a Liouville sequence over~$K$
with~$\operatorname{trdeg}\!\big(K(t_1,\dots,t_n)|K\big) = d$ and $y^m=P(y)$ for
some $y\in K(t_1,\dots,t_n)$, then   the conclusion of the proposition holds. This is obvious for $d=0$, so let $(t_1,\dots, t_n)$ be a Liouville sequence over~$K$
with $\operatorname{trdeg}\!\big(K(t_1,\dots,t_n)|K\big) = d\ge 1$ and $y^m=P(y)$ for
some $y\in K(t_1,\dots,t_n)$.
Take $i\in\{1,\dots,n\}$ maximal such that $t_i$ is transcendental over~$K_{i-1}=K(t_1,\dots,t_{i-1})$.
Then $t_i'\in  K_{i-1}+t_i K_{i-1}$, and $K(t_1,\dots, t_n)$ is algebraic over $K(t_1,\dots, t_i)$. Applying Lemma~\ref{ympy} 
 to $K_{i-1}$, $t_i$ in the role of $K,t$ yields a~$z$ in an algebraic differential field extension of $K_{i-1}$ with  $z^m=P(z)$. Now apply the inductive hypothesis to the Liouville sequence~$(t_1,\dots,t_{i-1},z)$ over~$K$.
\end{proof}

\begin{cor}[Kolchin]\label{cor:Kolchin}
Let $A\in K[\der]^{\neq}$, and suppose $A(y)=0$ for some $y\neq 0$ in a Liouville extension of $K$.
Then $A(y)=0$ for some $y\neq 0$ in a differential field extension of $K$ with $y^\dagger$  algebraic over $K$.
\end{cor}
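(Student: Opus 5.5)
We derive this from Rosenlicht's proposition, applied to the Riccati transform of $A$. Let $r:=\order A$. If $r=0$, then $A=a_0\in K^\times$ has no nonzero zero in any differential field extension, so the hypothesis forces $r\ge 1$. We may also replace $A$ by $a_r^{-1}A$ and assume $A$ is monic.

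The Riccati transform of $A$ is
$$\Ric(A)\ =\ R_r+a_{r-1}R_{r-1}+\cdots+a_0R_0\in K\{Z\}.$$
By the remark after \eqref{eq:Rn}, $R_n=Z^n+(\text{terms of degree}<n)$. So we can write $\Ric(A)=Z^r-P(Z)$ with $P\in K\{Z\}$ and $\deg P<r$.

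Let $E$ be a Liouville extension of $K$ and $y\in E^\times$ with $A(y)=0$, and put $u:=y^\dagger\in E$. Then $\Ric(A)(u)=A(y)/y=0$, that is, $u^r=P(u)$, with $u$ in the Liouville extension $E$. Rosenlicht's proposition, with $m:=r\ge 1$, gives $z$ in a differential field extension $F$ of $K$ such that $z$ is algebraic over $K$ and $z^r=P(z)$, that is, $\Ric(A)(z)=0$.

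It remains to realize $z$ as a logarithmic derivative. By [ADH, 4.1.9], as already used in Section~\ref{sec:diffalg} to produce $e(y)$, there is a differential field extension $M$ of $F$ containing $w\ne0$ with $w^\dagger=z$. Then $A(w)=w\cdot\Ric(A)(w^\dagger)=w\cdot\Ric(A)(z)=0$, $w\ne 0$, and $w^\dagger=z$ is algebraic over $K$, as required.

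Everything here is routine. The only point needing care is the degree bound $\deg P<r$, which requires $A$ to be monic of order $r\ge1$ so that the leading term $Z^r$ of $\Ric(A)$ dominates.
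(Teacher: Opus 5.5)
Your proposal is correct and follows the same route as the paper: write $\Ric(A)=Z^r+Q$ with $\deg Q<r$, apply Rosenlicht's proposition to $u=y^\dagger$, then use [ADH, 4.1.9] to realize the resulting algebraic zero $z$ as a logarithmic derivative. Your version just writes out the details the paper's two-line proof leaves implicit: the monic normalization, the case $r=0$, and that final realization step.
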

\begin{proof}
Let $r=\order A$ and
note that $\operatorname{Ri}(A)=Z^r+Q$ with $\deg Q<r$. Now apply  the proposition  above.
\end{proof}

\begin{remark}
Corollary~\ref{cor:Kolchin} goes back to Liouville~\cite{Liouville39} in an analytic setting for $A$ of or\-der~$2$ and $K=\C(x)$ with $C=\C$, $x'=1$.
\end{remark}

\subsection*{Asymptotic fields} An
\textit{asymptotic field}\/ is a valued differential field $K$ such that  for all     $f,g\prec 1$ in~$K^\times$: ${f\prec g}\Longleftrightarrow {f'\prec g'}$. This condition may be thought of as a valuation-theoretic
formulation of l'H\^{o}pital's Rule; it implies various other compatibilities between dominance and derivation, see [ADH, 9.1.3, 9.1.4]:

\begin{lemma}\label{lem:9.1.4(ii)}
Let $K$ be an asymptotic field, and $f,g\in K$. Then
\begin{align*} \text{ if }0\ne f,g\nasymp 1, \text{ then }&f\preceq g\quad \Longleftrightarrow\quad f'\preceq g',\\
\text{ if }f\preceq g\nasymp 1, \text{ then }& f\sim g\quad\Longleftrightarrow\quad f'\sim g'.
\end{align*} 
\end{lemma}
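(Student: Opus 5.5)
Both equivalences will be derived from the defining property of an asymptotic field: for $f,g\prec 1$ in $K^\times$ we have $f\prec g\Leftrightarrow f'\prec g'$. In an asymptotic field each $h\nasymp 1$ has $h'\ne 0$; this will be used repeatedly.

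\emph{First claim.} Let $0\ne f,g\nasymp 1$. I split according to whether $f$ and $g$ are infinitesimal or infinite.
\begin{enumerate}
\item If $f,g\prec 1$, the claim follows from the defining property: $f\prec g\Leftrightarrow f'\prec g'$, and applying it with the roles of $f$ and $g$ exchanged gives $f\succ g\Leftrightarrow f'\succ g'$. Since $\preceq$ is total, negating the second equivalence yields $f\preceq g\Leftrightarrow f'\preceq g'$.
\item If $f,g\succ 1$, pass to $1/f,1/g\prec 1$. Then $(1/f)'=-f'/f^2$, and $f\preceq g$ iff $1/g\preceq 1/f$. By the infinitesimal case this is equivalent to $g'/g^2\preceq f'/f^2$. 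I still need to convert this into $f'\preceq g'$, which requires comparing $f^2$ with $g^2$. I expect this step to need the most care.
\item If $f\prec 1\prec g$, then $f\preceq g$ holds, so I must show $f'\preceq g'$. Compare $f$ with $1/g\prec 1$ and use $(1/g)'=-g'/g^2\prec g'$.
\item If $g\prec 1\prec f$, then $f\succ g$, so I must show $f'\succ g'$, i.e.\ that $f'\preceq g'$ fails. Argue symmetrically to the previous case.
\end{enumerate}

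In practice I would simply cite [ADH, 9.1.3, 9.1.4], since the lemma is quoted from there. If I had to prove it, the cleanest route is the standard fact that $\psi(\gamma)=v(g^\dagger)$ is constant on archimedean classes, together with $v(g')=vg+\psi(vg)$ for $vg\ne 0$. The map $\gamma\mapsto \gamma+\psi(\gamma)$ is then strictly increasing on $\Gamma^{\ne}$, which gives all four cases at once. This monotonicity is the main obstacle: it must be extracted from the axiom.

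\emph{Second claim.} Let $f\preceq g\nasymp 1$. If $f\sim g$, then $h:=f-g\prec g$. Then $h'\prec g'$: if $h\nasymp 1$ this comes from strict monotonicity, and if $h\asymp 1$ then $h'\preceq$ (derivative of an element of valuation $0$), which is $\prec g'$ by the same valuation comparison. Hence $f'=g'+h'\sim g'$. Conversely, if $f'\sim g'$, write $h=f-g$ with $h\preceq g$. Since $h'\prec g'$, strict monotonicity of $\gamma\mapsto\gamma+\psi(\gamma)$ forces $vh>vg$, possibly after absorbing a constant. Here one must handle $h\asymp 1$ separately, using $g\nasymp 1$. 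This gives $f\sim g$.
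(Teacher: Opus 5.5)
Citing [ADH, 9.1.3, 9.1.4] is exactly what the paper does; it gives no argument of its own. Your self-contained sketch, however, has a genuine gap in cases 2--4, and your ``cleanest route'' assumes the hard part rather than proving it.

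\emph{Case 2.} The reciprocal trick gives $g'/g^2\preceq f'/f^2$, that is, $f'\succeq (f/g)^2g'$. When $f\prec g$ this is only a lower bound for $f'$, and no comparison of $f^2$ with $g^2$ turns it into $f'\preceq g'$. It settles only the case $f\asymp g$.

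\emph{Cases 3 and 4.} Comparing $f$ with $1/g$ helps in case 3 only if $f\preceq 1/g$. If $1/g\prec f\prec 1$ (e.g.\ $f=1/\log x$, $g=x$ in a Hardy field), the axiom yields $f'\succ(1/g)'$, which points the wrong way. Case 4 has the same problem.

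\emph{The missing idea.} You need the inequality $\epsilon'\prec g^\dagger$ for $0\ne\epsilon\prec 1$ and $g\succ 1$. Apply the axiom to the infinitesimals $\epsilon/g\prec 1/g$: this gives $\epsilon'/g-\epsilon g'/g^2=(\epsilon/g)'\prec(1/g)'=-g'/g^2$. Since also $\epsilon g'/g^2\prec g'/g^2$ (note $g'\ne 0$), you get $\epsilon'/g\prec g'/g^2$, i.e.\ $\epsilon'\prec g^\dagger$. With this inequality:
\begin{itemize}
\item case 3 is $f'\prec g^\dagger\prec g'$;
\item case 4 is the mirror image, $g'\prec f^\dagger\prec f'$;
\item case 2 with $f\prec g$ follows from $f=\epsilon g$ and $f'=\epsilon'g+\epsilon g'\prec g'$.
\end{itemize}
This inequality, i.e.\ $\Psi<(\Gamma^>)'$, is precisely what the strict monotonicity of $\gamma\mapsto\gamma+\psi(\gamma)$ on $\Gamma^{\ne}$ rests on. Constancy of $\psi$ on archimedean classes is a feature of $H$-asymptotic fields and is not available for a general asymptotic field. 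Nor would it give that monotonicity across archimedean classes.

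\emph{Second claim.} Your outline works once the first claim is in place, but two points need fixing. In the forward direction, $h\asymp 1$ can occur only when $g\succ 1$. In that case case 1 applied to $h/g\asymp 1/g$ gives $h'\preceq g^\dagger\prec g'$; this is the estimate your garbled sentence needs. In the converse, $h\asymp 1$ again forces $g\succ 1$, so $h\prec g$ holds outright. Nothing needs to be ``absorbed''; indeed there need not be a constant asymptotic to $h$, since $K$ is not assumed $\d$-valued.
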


\noindent
Compositional conjugates of   asymptotic fields remain asymptotic. 
In the rest of this subsection, $K$ is an asymptotic field with constant field $C$. Then $C\subseteq\mathcal O$, and
if~$\mathcal O=C+\smallo$ (equivalently, for each $f\asymp 1$ in $K$ there is $c\in C$ with $f\sim c$), then $K$ is  called {\it differential-valued}\/ ({\it $\d$-valued}\/ for short). If $K$ is $\d$-valued, then the residue morphism 
$\mathcal{O}\to \res(K)$ 
maps the constant field $C$ isomorphically onto~$\res(K)$.
Not every asymptotic field extends to a $\d$-valued field; those that do are called {\it pre-$\d$-valued.}\/ By \cite[Theorem~4.4]{AvdD}, 
\begin{equation}\label{eq:pdv}
\text{$K$ is pre-$\d$-valued} \quad\Longleftrightarrow\quad \text{$f'\prec g^\dagger$ whenever $f\preceq 1\prec g$ in $K$.}
\end{equation}
The   {\em asymptotic couple}\/ of $K$ is the pair  $(\Gamma,\psi)$  
where~$\psi\colon\Gamma^{\neq}\to\Gamma$ is given by 
$$\psi(vg)\ =\ v(g^\dagger)\ \text{ for $g\in K^\times$ with $vg\ne 0$}.$$ 
We put $\Psi:=\psi(\Gamma^{\neq})=\big\{v(g^\dagger): g\in K^\times\setminus\mathcal O^\times\big\}$. 
If we want to stress the dependence on $K$, we write~$(\Gamma_K,\psi_K)$ and $\Psi_K$ instead of $(\Gamma,\psi)$ and $\Psi$, respectively.
If $L$ is an asymptotic field extension of $K$, then $(\Gamma_L,\psi_L)$ extends the asymptotic couple~$(\Gamma,\psi)$,
with $\Psi_L=\Psi$ if  $L\supseteq K$ is algebraic.
Note: $K$ is monotone iff $\Psi\subseteq\Gamma^{\ge}$.

\begin{example} Let $C$ be a field of characteristic zero and $x$   an indeterminate. Then ${K=C(x)}$ with derivation $\der=\frac{d}{dx}$ and valuation $v\colon K^\times \to \Z$ given by $v(C^\times)=\{0\}$ and~$v(x)=-1$
is $\d$-valued  with constant field $C$,
and $\Psi=\{1\}$, so $K$ is monotone.
(Asymptotic fields we consider later are typically not monotone.)
\end{example}

\noindent
An {\it asymptotic couple}\/ (without mentioning any asymptotic field) is a pair $(\Gamma,\psi)$ consisting 
of an ordered abelian group $\Gamma$ and a map $\psi\colon\Gamma^{\neq}\to\Gamma$  subject to some axioms
obeyed by the asymptotic couples of asymptotic fields; see~[ADH, 6.5]. 
Let~$(\Gamma,\psi)$ be an asymptotic couple.
We  
extend~$\psi\colon\Gamma^{\neq}\to\Gamma$ to a map~${\Gamma_\infty\to\Gamma_\infty}$
by~$\psi(0):=\psi(\infty):=\infty$.   
If $(\Gamma,\psi)$ is understood from the context and $\gamma\in\Gamma$ we write~$\gamma^\dagger$ and $\gamma'$ instead of $\psi(\gamma)$ and~${\gamma+\psi(\gamma)}$, respectively. We also  set $\Psi:=\psi(\Gamma^{\neq})$; then
$\Psi<(\Gamma^>)'$.
The following  ``contractive'' property of $\psi$ is from~[ADH, 6.5.4(i)]:

\begin{lemma}\label{lem:6.5.4}
For all  $\alpha,\beta< (\Gamma^>)'$ in $\Gamma$ we have
$\psi(\alpha-\beta)>\min\{ \alpha,\beta\}$. %\textup{(}In particular, if~$\gamma,\delta\in\Gamma^{\ne}$, then $\psi\big(\psi(\gamma)-\psi(\delta)\big)>\min\big\{\psi(\gamma),\psi(\delta)\big\}$.\textup{)}
\end{lemma}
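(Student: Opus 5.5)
The plan is a direct argument from the axioms of an asymptotic couple in [ADH, 6.5]. I will only need the definition $\gamma'=\gamma+\psi(\gamma)$, the convention $\psi(0)=\infty$, and the symmetry $\psi(-\gamma)=\psi(\gamma)$ for $\gamma\ne 0$. The last is the case $k=-1$ of the axiom $\psi(k\gamma)=\psi(\gamma)$ for $k\in\Z^{\ne}$.

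First I dispose of the trivial case $\alpha=\beta$. Then $\psi(\alpha-\beta)=\psi(0)=\infty$, which exceeds $\min\{\alpha,\beta\}\in\Gamma$. So assume $\alpha\ne\beta$. Since $\psi(\alpha-\beta)=\psi(\beta-\alpha)$ and the right-hand side $\min\{\alpha,\beta\}$ is symmetric in $\alpha,\beta$, the statement is symmetric. Hence we may assume $\alpha>\beta$. Put $\delta:=\alpha-\beta\in\Gamma^>$; the goal becomes $\psi(\delta)>\beta=\min\{\alpha,\beta\}$.

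The key step is a contradiction argument. Suppose $\psi(\delta)\le\beta$. Then
\[
\alpha \;=\; \beta+\delta \;\ge\; \psi(\delta)+\delta \;=\; \delta'.
\]
Because $\delta>0$, the element $\delta'$ lies in $(\Gamma^>)'$. This contradicts the hypothesis $\alpha<(\Gamma^>)'$, so $\psi(\delta)>\beta$, as required.

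I do not expect a real obstacle. The only points needing care are the reduction to $\alpha>\beta$ via the symmetry of $\psi$, and noticing that only the hypothesis on the larger element $\alpha$ is actually used. The hypothesis on $\beta$ is needed only so that the statement is symmetric and the reduction is legitimate.
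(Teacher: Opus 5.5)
Your proof is correct: once you dispose of $\alpha=\beta$ via $\psi(0)=\infty$ and use $\psi(-\gamma)=\psi(\gamma)$ to assume $\alpha>\beta$, the desired inequality $\psi(\alpha-\beta)>\beta$ is just a rewriting of $\alpha<(\alpha-\beta)'$. The paper gives no proof of its own; it cites [ADH, 6.5.4(i)], and your argument is essentially that same direct unwinding of the definition of $(\Gamma^>)'$.
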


\noindent
Call $\gamma\in\Gamma$  a {\it gap}\/ in $(\Gamma,\psi)$
if~$\Psi<\gamma<(\Gamma^>)'$. (There is at most one such $\gamma$.  For this and other facts about gaps, see [ADH, p. 388].)
We  say that~$(\Gamma,\psi)$ is {\it grounded}\/ if~$\Psi$ has a largest element, 
and $(\Gamma,\psi)$ has {\it asymptotic integration}\/ if~$(\Gamma^{\neq})'=\Gamma$.
%The set $\Gamma\setminus (\Gamma^{\neq})'$ has at most one element; if $(\Gamma,\psi)$  is grounded, then $\Gamma\setminus (\Gamma^{\neq})'=\{\max\Psi\}$ [ADH, 9.2.1].
%ee [ADH, 9.1, 9.2] for more on this, in particular for  the following important trichotomy: 
An asymptotic field is said to have a gap if its asymptotic couple does; likewise with ``grounded'' or ``asymptotic integration'' in place of ``has a gap''.  Any $\d$-valued field closed under integration  is closed under asymptotic integration.
A gap in an asymptotic field $K$ remains a gap in an algebraic asymptotic field extension of $K$.

An {\it $H$-asymptotic couple}\/ is 
an asymptotic couple $(\Gamma,\psi)$ such that for all~${\gamma,\delta\in\Gamma^>}$ we have:
$\gamma\leq\delta\Rightarrow \gamma^\dagger\geq\delta^\dagger$. 
%Then $(\Gamma^<)'$ is closed downward and $(\Gamma^>)'$ is closed upward.
Every $H$-asymptotic couple  either
 has a gap, or  is grounded, or has asymptotic integration   [ADH, 9.2.16]. 
An asymptotic field is {\it $H$-asymptotic}\/ (or {\it of $H$-type}\/), if its asymptotic couple is $H$-asymptotic. % (or an asymptotic field of {\it $H$-type}\/).

\subsection*{Dominant part}
Let $K$ be a valued differential field with small derivation, and set~$\k:=\res(K)$.
We extend the residue morphism $\mathcal O\to\k$
to the differential ring morphism~$P\mapsto\bar{P}\colon\mathcal O\{Y\}\to\k\{Y\}$  with $Y\mapsto Y$.
 Let $P$, $Q$ range over $K\{Y\}^{\ne}$.
We extend the valuation $v\colon K^\times \to \Gamma$,  first to $K\{Y\}^{\ne}$ by
%to the so-called {\em gaussian valuation}\/~$v\colon K\<Y\>\to \Gamma_{\infty}$ on the fraction field $K\<Y\>$ of $K\{Y\}$ by requiring that,  in the notation used above,
$$v(P)\ :=\  \min \{ va: \text{$a\in K^\times$ is a nonzero coefficient of $P$},$$
and then (uniquely) to a valuation $v\colon K\<Y\>^\times \to  \Gamma$. 
For each $P$ we
  pick~$\mathfrak d_P\in K^\times$ such that $P\asymp \mathfrak d_P$, with $\mathfrak d_P=\mathfrak d_Q$ if
 $P\sim Q$. Then 
$\mathfrak d_P^{-1}P\in\mathcal O\{Y\}$,   
 and we define the 
{\em dominant part}\/  of~$P$ by~$D_P:=\overline{\mathfrak d_P^{-1}P}\in\k\{Y\}^{\ne}$ and the
{\em dominant degree\/} of $P$ by
$\ddeg P:=\deg D_P$. 
(Another choice of $\mathfrak d_P$ multiplies $D_P$  by an element of $\k^\times$, so $\ddeg P$ does not depend on this choice.)

\subsection*{The map $v_A$}
With $K$   as in the previous subsection, let~$A=\sum_i a_i\der^i\in  K[\der]^{\ne}$ (all $a_i\in K$). In~[ADH, 5.6] we   defined
$v(A) := \min_i v(a_i) \in \Gamma$. 
For $y\in K^\times$, the quantity  $v(Ay)$ only depends on~$vy$, not on~$y$, and we can thus define $v_A(\gamma):=v(Ay)$ for $\gamma:=vy$. 
The map $v_A\colon\Gamma\to\Gamma$ is strictly increasing by [ADH, 4.5.1]. Much less obvious is that
$v_A$ is   surjective; see [ADH, 6.0.1]. We give a proof for the easy case that is used later:

\begin{lemma} \label{lem:vA onto}
Let $A\in K[\der]^{\ne}$ have order $1$ and~$b\in K^\times$. Then there is a $y\in K^\times$ such that $v(A y)=vb$.
\end{lemma}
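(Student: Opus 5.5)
We reduce to a monic operator, compute $v(Ay)$ directly, and make one explicit choice of $y$ according to the size of a single auxiliary element.

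\textbf{Reduction.} Write $A=a_1\der+a_0$ with $a_1\ne 0$, and note that $Ay$ denotes the product $A\cdot y$ in $K[\der]$. Put $a:=a_0/a_1$, so $A=a_1(\der+a)$. For $y\in K^\times$ we have
$$(\der+a)y\ =\ y\der+(y'+ay)\ =\ y\der + y(y^\dagger+a),$$
hence
$$v(Ay)\ =\ va_1+\min\bigl\{vy,\ vy+v(y^\dagger+a)\bigr\}\ =\ va_1+vy+\min\bigl\{0,\ v(y^\dagger+a)\bigr\}.$$
Replacing $b$ by $b/a_1$, it therefore suffices to find $y\in K^\times$ with
$$vy+\min\bigl\{0,\,v(y^\dagger+a)\bigr\}\ =\ vb.$$

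\textbf{Choice of $y$.} Set $d:=b^\dagger+a$ and distinguish two cases.

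If $d\preceq 1$, take $y:=b$. Then $\min\{0,vd\}=0$, so the left-hand side equals $vb$, as required.

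If $d\succ 1$, take $y:=b/d$. Then $y^\dagger+a=b^\dagger-d^\dagger+a=d-d^\dagger$. The key claim is that $d^\dagger\prec d$. Granting it, we get $v(d-d^\dagger)=vd<0$, so the left-hand side becomes
$$(vb-vd)+vd\ =\ vb.$$

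\textbf{The key claim.} This is the only place where smallness of the derivation is used. Since $d\succ 1$ we have $1/d\in\smallo$, so $\der\smallo\subseteq\smallo$ gives $(1/d)'=-d'/d^2\prec 1$. Thus $d'\prec d^2$, which is exactly $d^\dagger\prec d$.

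I expect this to be the main (though mild) obstacle: seeing that the correct correction factor is $1/d$, and that smallness of the derivation is precisely what makes $d^\dagger$ negligible compared to $d$. Everything else in the argument is bookkeeping.
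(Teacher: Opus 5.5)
Your proof is correct and follows the paper's argument: the same reduction to a monic operator of order $1$, the same case split on whether $b^\dagger+a$ (which is, up to sign, the paper's auxiliary element $s-b^\dagger$) is $\preceq 1$, the same choice $y=b/d$, and the same use of small derivation via $(1/d)'\prec 1$ to get $d^\dagger\prec d$. The only difference is that you carry out explicitly the normalization by $a_1$ that the paper leaves as ``we may assume.''
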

\begin{proof}
We may assume $A=\der-s$ where $s\in K$.
Then  for $y\in K^\times$ we have
$A y =   y \big( \der-(s-y^\dagger) \big)$. Put $a:=s-b^\dagger$.
If $a\preceq 1$,  then $y:=b$ clearly works, so suppose $a\succ 1$. 
We claim that then $y:=b/a$ works:
from  $a^\dagger/a = -(1/a)^\dagger\cdot (1/a)=-(1/a)'\prec 1$ (since $K$ has small derivation)
we obtain $a^\dagger \prec a$ 
and so
$s-y^\dagger = a + a^\dagger \sim a$.
\end{proof}

\subsection*{Newtonianity}
Let $K$ be an ungrounded  $H$-asymptotic field with 
%asymptotic couple $(\Gamma,\psi)$, where 
${\Gamma\ne \{0\}}$. 
 An element $\phi$ of~$K$ is said to be {\em active in $K$\/} if $v\phi\in\Psi^\downarrow$; in that case the derivation~$\phi^{-1}\der$ of the compositional conjugate~$K^\phi$ is small. %, cf.~[ADH, 11.1]. 
{\em In this subsection $\phi$ ranges over the active elements of $K$}.
A property $S(\phi)$ of (active) elements~$\phi$ is said to hold {\em eventually}\/  if there is an active $\phi_0$ in $K$ such that~$S(\phi)$ holds for all~$\phi\preceq \phi_0$; cf.~[ADH, p.~479].  
Given a differential polynomial $P\in K\{Y\}^{\ne}$, its compositional conjugate $P^\phi$ tends to become more transparent eventually, in particular,
  $\ddeg P^\phi$ is eventually constant, as shown    in~[ADH, 11.1]. The eventual value of $\ddeg P^\phi$ is
  called the   {\it Newton degree}\/ of $P$,    denoted by  $\ndeg P$.
For $r\in\N$ we  say that $K$ is   {\em $r$-newtonian}\/ if  every~$P\in K\{Y\}$ of order $\le r$ with $\ndeg P=1$ has a zero in $\mathcal{O}$,
and   {\em $r$-linearly newtonian}\/ if  every~$P\in K\{Y\}$ of order $\le r$ with $\deg P=\ndeg P=1$ has a zero in $\mathcal{O}$.
We call~$K$   {\em newtonian}\/ if it is $r$-newtonian for all $r\in\N$, and likewise we define ``linearly newtonian''.
By [ADH, 14.2.5], if $K$ is $1$-linearly newtonian and has asymptotic integration, then $K$ is $\d$-valued.
By [ADH, 14.2.2], if $K$ is $r$-linearly newtonian,  then~$K$ is $r$-linearly surjective. We include a proof for~$r=1$:

\begin{lemma}\label{lem:newt=>linsurj}
Suppose $K$ is $1$-linearly newtonian. Then $K$ is $1$-linearly surjective \textup{(}and thus closed under integration\textup{)}. 
\end{lemma}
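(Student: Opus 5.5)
To prove the lemma, I will take an order-one operator $A\in K[\der]^{\ne}$ and $b\in K$, and produce $y\in K$ with $A(y)=b$. I may divide by the leading coefficient, so $A=\der-s$ with $s\in K$, and I may assume $b\neq 0$. The idea is to use Lemma~\ref{lem:vA onto} to rescale the unknown so that the relevant differential polynomial has Newton degree~$1$. Then $1$-linear newtonianity supplies a zero in $\mathcal O$.

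First, Lemma~\ref{lem:vA onto} gives $u\in K^\times$ with $v(Au)=vb$. Substituting $y=uz$, the equation becomes $A_{\ltimes u}(z)=b/u$, where $A_{\ltimes u}=\der+(u^\dagger-s)$. Consider
$$P(Z)\ :=\ \frac{1}{b}\big(uZ'+(u'-su)Z-b\big)\ \in K\{Z\},$$
which has order~$1$ and degree~$1$. By construction the coefficient $(u'-su)/b=A(u)/b$ is $\asymp 1$, and the constant term is $-1$. The coefficient $u/b$ of $Z'$ is a priori uncontrolled, but $v(Au)=vb$ means $v_A(vu)=vb$. By the definition of $v(A\cdot u)$ as the minimum over the coefficients of the operator $Au=u\der+A(u)$, this gives $u\preceq b$ as well, so $P\asymp 1$ and $P\in\mathcal O\{Z\}$.

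Next I will show $\ndeg P=1$. For active $\phi$ the compositional conjugate is
$$P^\phi\ =\ (u\phi/b)\,Z'+(A(u)/b)\,Z-1,$$
so $P^\phi$ has linear part of valuation $0$ coming from the $Z$-coefficient, while $u\phi/b\preceq\phi$. For active $\phi\prec 1$ (these exist eventually, since $K$ is ungrounded with $\Gamma\ne\{0\}$, so $\Psi^\downarrow$ contains positive elements), the $Z'$-coefficient becomes $\prec 1$. Hence $D_{P^\phi}=\bar c\,Z-1$ with $\bar c\ne 0$, so $\ddeg P^\phi=1$ eventually and $\ndeg P=1$. If instead $P\asymp 1$ were dominated by the $Z'$-coefficient, conjugation could only shrink it relative to the $Z$-coefficient, so the conclusion is the same.

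Since $\deg P=\ndeg P=1$ and $P$ has order~$1$, $1$-linear newtonianity gives $z\in\mathcal O$ with $P(z)=0$. Then $y:=uz$ satisfies $A(y)=b$. Closure under integration is the special case $A=\der$. The step I expect to be delicate is verifying $\ndeg P=1$, in particular that the $Z'$-coefficient really becomes negligible under conjugation by small active $\phi$ and does not interfere with the dominant part; the rest is bookkeeping with Lemma~\ref{lem:vA onto}.
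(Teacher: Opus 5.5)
There is a genuine gap at the claim that ``by construction'' $A(u)/b\asymp 1$. The operator $Au=u\der+A(u)$ has $v(Au)=\min\{vu,\,v(A(u))\}$. So $v(Au)=vb$ only yields $u\preceq b$ and $A(u)\preceq b$, with at least one of them $\asymp b$; nothing excludes $u\asymp b$ together with $A(u)\prec b$.

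In that case, for active $\phi\prec 1$ the $Z'$-coefficient $u\phi/b\asymp\phi$ and the $Z$-coefficient $A(u)/b$ of $P^\phi$ are both $\prec 1$. Hence $D_{P^\phi}$ is a nonzero constant, $\ndeg P=0$, and $1$-linear newtonianity gives nothing. Your fallback sentence gets this backwards: shrinking the $Z'$-coefficient leaves the constant term as the only dominant term.

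This case really occurs, and precisely in the integration case you call special. For $A=\der$ and $b=1$, the proof of Lemma~\ref{lem:vA onto} returns $u=b=1$, so $A(u)=0$, $P=Z'-1$ and $P^\phi=\phi Z'-1$.

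The missing idea is a second rescaling. Take $e\succ 1$ in $K$ with $e^\dagger\asymp 1$ and use $ue$ instead of $u$. Then $A(ue)=e\big(A(u)+ue^\dagger\big)\asymp be$. After dividing by $be$, the $Z$-coefficient is $\asymp 1$, the $Z'$-coefficient $u/b$ is $\asymp 1$, and the constant term $-1/e$ is $\prec 1$. So eventually $D_{P^\phi}=\bar c\,Z$ and $\ndeg P=1$. This is the paper's second case, phrased there as passing to $A_{\ltimes e}=a_1\der+(a_0+a_1e^\dagger)$.

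There is a second, smaller gap. Such an $e$ exists only if $0\in\Psi$, and Lemma~\ref{lem:vA onto} is stated and proved only for $K$ with small derivation. Your justification for the existence of active $\phi\prec 1$ is also false: ungroundedness and $\Gamma\ne\{0\}$ do not force $\Psi$ to contain positive elements. For instance, a compositional conjugate $K^\theta$ with $v\theta>\Psi$ has $\Psi_{K^\theta}<0$, so all its active elements are $\succ 1$.

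The paper therefore begins by replacing $A$, $K$ by $A^\phi$, $K^\phi$ for some $\phi$ with $v\phi\in\Psi$. This arranges $0\in\Psi$, which gives small derivation, active elements $\prec 1$ (since $\Psi$ has no maximum), and an $e$ as above. It changes neither $A(K)$ nor the hypothesis of $1$-linear newtonianity. With that preliminary step and the case split, your rescaling argument goes through.
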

\begin{proof} 
Let $A\in K[\der]^{\ne}$ have order~$1$ and $b\in K^\times$; to show: $b\in A(K)$.
Replace~$A$,~$K$ by $A^\phi$, $K^\phi$ for some $\phi$ with $v\phi\in \Psi$ to arrange $0\in\Psi$ (so~$K$ has small derivation).
Lemma~\ref{lem:vA onto} yields $g\in K^\times$ with~${v(Ag)=b}$. Replace~$A$,~$b$ by $b^{-1}Ag$, $1$, to arrange~$v(A)=0 \le vb$. Then~$A=a_1\der+a_0$ where~$a_0,a_1\in\mathcal O$ with $a_0\asymp 1$ or~$a_1\asymp 1$.
Consider first the case~$a_0\asymp 1$, and set $P:=a_1Y'+a_0Y-b\in K\{Y\}$.
For $\phi\prec 1$ in~$K$
we have~$P^\phi = a_1\phi Y'+a_0Y-b \sim a_0Y-b$. Hence~$\ndeg P=1$, so $P$   has a zero~$y\in K$, 
that is,~$A(y)=b$. In the remaining case we have $a_0\prec 1$ and $a_1\asymp 1$. Now $0\in \Psi$ gives 
 $e\succ 1$ in~$K$ with~$e^\dagger\asymp 1$. 
 %(think of $e$ as $\ex^x$).
 Then  $A_{\ltimes e}=a_1\der+(a_0+a_1e^\dagger)$, with~$a_0+a_1e^\dagger\asymp 1$, which gives a reduction to the previous case.
% replace~$A$,~$b$ by~$A_{\ltimes e}$,~$b/e$. 
\end{proof}

\noindent
Newtonianity for $H$-asymptotic fields can be viewed as the the appropriate analogue  of henselianity for valued fields. In fact, $K$ is henselian iff it is $0$-newtonian. More relevant for us
is the following [ADH, 14.2.12], where~$\k:=\res(K)$.

\begin{lemma}\label{lem:14.2.12}
Suppose $K$ has asymptotic integration and is $r$-newtonian, $r\ge 1$,  and let $P\in K\{Y\}$,
$u\in\mathcal O$, and~$A\in\k[Y]$ be such that $\order P\le r$, $A(\bar{u})=0$, $\left(\frac{\partial A}{\partial Y}\right)(\bar{u})\ne 0$,
and $D_{P^\phi}\in\k^\times  A$, eventually. Then $P$ has a zero in $u+\smallo$.
\end{lemma}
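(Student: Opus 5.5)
The plan is to reduce to the case $u=0$, rescale so that $P$ acquires Newton degree~$1$, and then invoke $r$-newtonianity.

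\emph{Normalization.} First, $K$ is $\d$-valued: it has asymptotic integration and is $1$-linearly newtonian, so [ADH, 14.2.5] applies. Hence the derivation induced on $\k$ is trivial, and $\k$ is identified with $C$.

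Replace $P$ by $P_{+u}$ and $A$ by $A(\bar u+Y)$. For active $\phi\preceq 1$ we have $(P_{+u})^\phi=(P^\phi)_{+u}$. Additive conjugation by an element of $\mathcal O$ preserves the gaussian valuation, because it is invertible via conjugation by $-u$. Moreover $\phi^{-1}u^{(k)}$-type terms reduce to derivatives of $\bar u$ in $\k$, and these vanish. Consequently $D_{(P_{+u})^\phi}=(D_{P^\phi})_{+\bar u}$, so the hypothesis survives the shift. This reduces us to $u=0$, $A(0)=0$ and $A'(0)\neq 0$. We must then find a zero of $P$ in $\smallo$. If $P(0)=0$ we are done, so assume $P(0)\neq0$.

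\emph{Rescaling.} Eventually $D_{P^\phi}=c\,(aY+\text{higher powers of }Y)$ with $a\ne0$. Two consequences follow for all small enough active $\phi$:
\begin{itemize}
\item the constant term satisfies $P(0)\prec P^\phi$;
\item the homogeneous linear part $P_1^\phi$ satisfies $P_1^\phi\asymp P^\phi$.
\end{itemize}
Note also that $P(0)$ does not depend on $\phi$.

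Fix such a $\phi_0$ and set $\epsilon:=P(0)/\mathfrak d_{P^{\phi_0}}$, so that $\epsilon\prec1$. Now consider $Q:=P_{\times\epsilon}$. Its terms of degree $\ge2$ are multiplied by $\epsilon^{\ge 2}$, hence become negligible against the degree-$1$ part, which is multiplied by $\epsilon$. The constant term $P(0)$ becomes comparable to that degree-$1$ part. So at $\phi_0$ we get $D_{Q^{\phi_0}}=\alpha Y+\beta$ with $\alpha\ne0$.

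\emph{Main obstacle.} The difficulty is to show that $\ndeg Q=1$, which requires $\ddeg Q^\phi=1$ for all sufficiently small $\phi$ and not just at $\phi_0$. The issue is that $v(P^\phi)$ may drift with $\phi$.

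To handle this, I will use that the eventual form $D_{P^\phi}\in\k^\times A$ forces $v(P_1^\phi)$ to be eventually governed by its $Y$-coefficient, so the drift is controlled. I will then compare $v(P_1^\phi)$ with $v(P_1^{\phi_0})$ using the behavior of $v_{L}$ under compositional conjugation ([ADH, 11.1]). If the drift cannot be controlled directly, the fallback is to pass to the Newton degree in the ball $\smallo$ and use that $\ndeg$ is eventually constant. Concretely, $\ddeg Q^\phi\le1$ always holds, since degree $\ge2$ terms stay dominated. Also $\ddeg Q^\phi\ge1$, since the constant term cannot dominate: otherwise $P(0)\succ P_1^\phi\epsilon$, contradicting $P(0)\prec P^\phi$ eventually together with the choice of $\epsilon$, after adjusting $\epsilon$ if needed.

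\emph{Conclusion.} With $\ndeg Q=1$ and $\order Q\le r$, $r$-newtonianity yields $z\in\mathcal O$ with $Q(z)=0$. Then $y:=\epsilon z\in\smallo$ satisfies $P(y)=0$, and undoing the shift gives a zero of the original $P$ in $u+\smallo$.
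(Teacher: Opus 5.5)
The paper gives no proof of this lemma; it quotes [ADH, 14.2.12]. Your reduction to $u=0$ is essentially fine. The ``drift'' you worry about does not occur: compositional conjugation rewrites each $Y^{(k)}$, $k\ge 1$, using only $\derdelta Y,\dots,\derdelta^k Y$. So the coefficient $p_1$ of $Y$ in $P^\phi$ is the same as in $P$, and eventually $v(P^\phi)=v(p_1)$.

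\textbf{The gap is in the rescaling step.} The linear part of $Q^\phi=(P^\phi)_{\times\epsilon}$ is $(P^\phi_1)_{\times\epsilon}$, not $\epsilon P^\phi_1$. Its coefficients involve $\derdelta^k(\epsilon)$, and in general one only has $v\big((P^\phi_1)_{\times\epsilon}\big)=v(p_1)+v\epsilon+o(v\epsilon)$, where the error term can be positive. At your scale $v\epsilon=v\big(P(0)/p_1\big)$ this error is decisive. Hence neither ``$D_{Q^{\phi_0}}=\alpha Y+\beta$ with $\alpha\ne 0$'' nor ``$\ddeg Q^\phi\ge 1$'' follows, and both can fail.

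For a counterexample, let $K$ be a maximal Hardy field (newtonian, with asymptotic integration) and $P=xY'+Y+x^{-1}$. For active $\phi\preceq 1/(x\log x)$ we have $P^\phi=x\phi\,\derdelta Y+Y+x^{-1}$ with $x\phi\prec 1$. So $D_{P^\phi}=Y$, and the hypotheses hold with $u=0$, $A=Y$. Your $\epsilon$ is $x^{-1}$, and $Q=P_{\times x^{-1}}=Y'+x^{-1}$, because the $Y$-coefficient $P_1(x^{-1})=(x\cdot x^{-1})'$ vanishes. Then $Q^\phi=\phi\,\derdelta Y+x^{-1}$ with $\phi\prec x^{-1}$, so $\ndeg Q=0$. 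Indeed $Q$ has no zero in $\mathcal O$, since its zeros are $c-\log x$. Meanwhile $P$ does have zeros $(c-\log x)/x\in\smallo$, but at a coarser scale than $\epsilon$. Your phrase ``adjusting $\epsilon$ if needed'' is exactly the argument that is missing.

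\textbf{How to repair it.} Stay strictly inside the critical scale.
\begin{itemize}
\item Asymptotic integration makes $K$ ungrounded, so $\Gamma^{>}$ has no least element. Hence you can pick $\mathfrak m\in K$ with $0<2v\mathfrak m\le\delta:=v\big(P(0)/p_1\big)$.
\item In $K^\phi$, which is $H$-asymptotic with small derivation, one has $v(R_{\times\mathfrak m})=v(R)+d\,v\mathfrak m+o(v\mathfrak m)$ for $R$ homogeneous of degree $d$. Here $o(v\mathfrak m)$ is archimedean-smaller than $v\mathfrak m$.
\item So eventually the linear part of $(P_{\times\mathfrak m})^\phi$ has valuation $v(p_1)+v\mathfrak m+o(v\mathfrak m)$.
\item This is strictly below $v\big(P(0)\big)=v(p_1)+\delta\ge v(p_1)+2v\mathfrak m$.
\item It is also strictly below $v\big((P^\phi_d)_{\times\mathfrak m}\big)\ge v(p_1)+d\,v\mathfrak m+o(v\mathfrak m)$ for each $d\ge 2$.
\item Thus $\ndeg P_{\times\mathfrak m}=1$, and $r$-newtonianity gives $z\in\mathcal O$ with $P(\mathfrak m z)=0$ and $\mathfrak m z\in\smallo$.
\end{itemize}
In the example above, $\mathfrak m=x^{-1/2}$ works.
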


\noindent
We use this lemma to isolate an argument in the proof of [ADH, 14.2.18]. Let
$$f\in K, \qquad S\ :=\ \sigma(Y)-f\ =\ \omega(-Y^\dagger)+Y^2-f \in K\<Y\>.$$
Then for $y\in K^\times$ we have $\sigma(y)=f$ iff $S(y)=0$.
Let $b\in K^\times$, and set $Q:=Y^2 S(bY)$. The computation preceding [ADH, 14.2.18]
shows that $Q\in K\{Y\}$ and $\order Q=2$; in fact
\begin{align*}
Q &\ =\  b^2Y^4 + \big(\omega(-b^\dagger)-f\big) Y^2 + R\quad\text{ where $R\in K\{Y\}$,} \\
\text{with}\quad R^\phi &\ =\ \phi\cdot \big(
2\phi Y''Y +  2(\phi/b)^\dagger Y'Y - 3\phi (Y')^2 \big).
\end{align*}
We shall also use the following estimate from [ADH, 11.7.6]:
\begin{equation}\label{eq:11.7.6}
\omega(-\phi^\dagger) - \omega(-\theta^\dagger) \prec \phi^2\qquad \text{for active $\theta$ in $K$ with $\phi\succ \theta$.}
\end{equation}
This leads to a result used in the proof of Lemma~\ref{lem:asymptotics of phi'}:

\begin{lemma}\label{lem:ADH 14.2.18}
Suppose $K$ has asymptotic integration and is $2$-newtonian, and let~$b\in K^\times$, $f\in K$, and $\upg$ be active in~$K$ with $b^2=f-\sigma(\upg)\succ\upg^2$.
Then there is a~$y\in K^\times$ with $\sigma(y)=f$ and~$y\sim b$.\end{lemma}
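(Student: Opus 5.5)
We reduce to Lemma~\ref{lem:14.2.12} applied to $Q:=Y^2S(bY)$, with $u:=1$ and $A:=\bar c(Y^4-Y^2)$ for suitable $c$. A zero $z\in 1+\smallo$ of $Q$ is nonzero (so $S(bz)=0$), and then $y:=bz$ satisfies $\sigma(y)=f$ and $y\sim b$. The polynomial $Y^4-Y^2=Y^2(Y-1)(Y+1)$ has $1$ as a simple zero, since its derivative at $1$ is $4-2=2\ne 0$. So it remains to show that $D_{Q^\phi}\in\k^\times(Y^4-Y^2)$ eventually.

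From the displayed formula, $Q^\phi=b^2Y^4+\big(\omega(-b^\dagger)-f\big)Y^2+R^\phi$. The main step is to compare the coefficient of $Y^2$ with $b^2$. Write
\[
\omega(-b^\dagger)-f\ =\ \big(\omega(-b^\dagger)-\omega(-\upg^\dagger)\big)+\big(\omega(-\upg^\dagger)+\upg^2-f\big)-\upg^2 .
\]
Here $\omega(-\upg^\dagger)+\upg^2-f=\sigma(\upg)-f=-b^2$, and $\upg^2\prec b^2$ by hypothesis. For the first difference we want \eqref{eq:11.7.6} with $b$ in the role of $\phi$ and $\upg$ in the role of $\theta$. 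Since $b^2\succ\upg^2$ we have $b\succ\upg$, but $b$ need not be active. If $b$ is active, \eqref{eq:11.7.6} gives $\omega(-b^\dagger)-\omega(-\upg^\dagger)\prec b^2$. If $b$ is not active, then $vb\notin\Psi^\downarrow$; I expect that in this case the contractive property (Lemma~\ref{lem:6.5.4}) yields $b^\dagger\preceq$ something small relative to $b$. Then $\omega(-b^\dagger)=-(b^\dagger)^2+2b^{\dagger\prime}\prec b^2$, while $\omega(-\upg^\dagger)\prec b^2$ can be handled by comparing with some active $\theta$ close to $\Psi$ and using \eqref{eq:11.7.6} for $\upg$ versus $\theta$. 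Either way, the coefficient of $Y^2$ is $\sim -b^2$.

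It remains to see that $R^\phi\prec b^2$ eventually, i.e.\ for all sufficiently small active $\phi$. The coefficients of $R^\phi$ are $2\phi^2$, $-3\phi^2$ and $2\phi(\phi/b)^\dagger$. We have $\phi^2\preceq\upg^2\prec b^2$ once $\phi\preceq\upg$. For the last coefficient, $(\phi/b)^\dagger=\phi^\dagger-b^\dagger$. Eventually $\phi^\dagger\prec\phi$ (active elements in the ungrounded case) and $b^\dagger$ is controlled as above, so $\phi(\phi/b)^\dagger\prec b^2$.

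Hence $Q^\phi\sim b^2(Y^4-Y^2)$ eventually, i.e.\ $D_{Q^\phi}\in\k^\times(Y^4-Y^2)$. Since $\order Q=2$ and $K$ is $2$-newtonian with asymptotic integration, Lemma~\ref{lem:14.2.12} gives the zero $z\in 1+\smallo$ and we are done. The main obstacle is the bound on $\omega(-b^\dagger)-\omega(-\upg^\dagger)$ and on $b^\dagger$ when $b$ is not active; I would first try to reduce that case to the active one via a suitable active $\theta$ with $\upg\prec\theta\preceq b$.
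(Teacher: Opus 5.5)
Your overall plan is the paper's: apply Lemma~\ref{lem:14.2.12} to $Q=Y^2S(bY)$ with $u=1$ and $A=Y^4-Y^2$, after showing $Q^\phi\sim b^2Y^2(Y^2-1)$ eventually. Your worry that $b$ need not be active is unfounded. $\Psi^\downarrow$ is downward closed and $vb<v\upg\in\Psi^\downarrow$, so $b$ is active. Your ``non-active case'' is therefore empty. Your active-case treatment of the $Y^2$-coefficient, via \eqref{eq:11.7.6} with $b,\upg$ in the roles of $\phi,\theta$, is exactly what the paper does.

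The genuine gap is your estimate of the $Y'Y$-coefficient $2\phi(\phi/b)^\dagger$ of $R^\phi$. The claim that eventually $\phi^\dagger\prec\phi$ is false. More seriously, splitting $(\phi/b)^\dagger=\phi^\dagger-b^\dagger$ and bounding the two pieces separately cannot work.

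Here is a counterexample. Take $K$ a maximal Hardy field, $\upg:=1/(\ell_0\ell_1\ell_2)$, $b:=1/(\ell_0\ell_1)$ and $f:=\sigma(\upg)+b^2$; all hypotheses hold. Let $\phi$ be any active element with $\phi\preceq 1/(\ell_0\ell_1\ell_2\ell_3)$.
\begin{itemize}
\item Since $\psi$ is decreasing on $\Gamma^>$, we get $\phi^\dagger\succeq 1/x$.
\item Directly, $b^\dagger\sim -1/x$.
\item Since $v\big(1/(x\ell_1^2)\big)=v\big((1/\ell_1)'\big)\in(\Gamma^>)'$ lies above $\Psi^\downarrow$, we get $\phi\succ 1/(x\ell_1^2)$.
\end{itemize}
Hence both $\phi\phi^\dagger$ and $\phi b^\dagger$ are $\succeq\phi/x\succ b^2$. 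Only their difference is $\prec b^2$.

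The missing idea is to apply the contractive property, Lemma~\ref{lem:6.5.4}, to $\phi/b$ directly. Both $v\phi$ and $vb$ lie in $\Psi^\downarrow<(\Gamma^>)'$, and eventually $\phi\prec b$. So $v\big((\phi/b)^\dagger\big)=\psi(v\phi-vb)>\min\{v\phi,vb\}=vb$, that is, $(\phi/b)^\dagger\prec b$. Therefore $\phi(\phi/b)^\dagger\prec\phi b\prec b^2$.

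With this replacement, and your bound $\phi^2\prec b^2$ for the other two coefficients, $Q^\phi\sim b^2Y^2(Y^2-1)$ eventually and the rest of your argument goes through as written.
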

\begin{proof}
Since $b\succ\upg$, $b$ is active in $K$.
Using \eqref{eq:11.7.6} for the last step below, we have
$$\sigma(b)-f\ =\ \sigma(b)-\sigma(\upg)-b^2\  =\  \omega(-b^\dagger) - \omega(-\upg^\dagger) - \upg^2\ \prec\  b^2,$$
and so $\omega(-b^\dagger)-f\sim-b^2$. Eventually $\phi\prec b$, so $(\phi/b)^\dagger\prec b$ by Lemma~\ref{lem:6.5.4}.
Hence with $R$, $Q$ as   above, eventually   $R^\phi\prec b^2$, and thus~$Q^\phi \sim b^2 Y^2 (Y^2-1)$.   Lemma~\ref{lem:14.2.12} yields $u\in K$ with $u\sim 1$ and~$Q(u)=0$, thus~$\sigma(y)=f$ for $y:=bu\sim b$.
\end{proof}

\subsection*{$\upl$-freeness and $\upo$-freeness}
{\it In   this subsection $K$ is  an $H$-asymptotic field.}\/
In [ADH, 11.6] we   defined~$K$ to be {\it $\upl$-free}\/ if it is ungrounded, $\Gamma\ne\{0\}$,   and
for all~$\upl\in K$ there is a  ${g\succ 1}$ in~$K$ with $\upl+g^{\dagger\dagger}  \succeq g^\dagger$. For pre-$\d$-valued $K$ the condition   ``$K$ is ungrounded and~$\Gamma\ne\{0\}$''     is actually superfluous:

\begin{lemma}\label{lem:upl-free}
Suppose $K$ is pre-$\d$-valued. If for all   $\upl\in K$ there is a  ${g\succ 1}$ in~$K$ with $\upl+g^{\dagger\dagger}  \succeq g^\dagger$,
then $K$ has asymptotic integration.
\end{lemma}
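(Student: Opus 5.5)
Since $K$ is $H$-asymptotic, its asymptotic couple $(\Gamma,\psi)$ either has a gap, or is grounded, or has asymptotic integration [ADH, 9.2.16]. So the plan is to check that $\Gamma\ne\{0\}$ and then rule out the grounded case and the gap case. For $\Gamma\ne\{0\}$, apply the hypothesis with $\upl=0$: it yields some $g\succ 1$ in $K$, so $vg<0$.

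In both remaining cases I would take an element $b\in K^\times$ with $\beta:=vb$ satisfying two conditions: $\beta\ge\Psi$, and $\beta<(\Gamma^>)'$. Then I would set $\upl:=-b^\dagger$. For $g\succ 1$ we get
$$\upl+g^{\dagger\dagger}\ =\ g^{\dagger\dagger}-b^\dagger\ =\ (g^\dagger/b)^\dagger,$$
and I will show this is $\prec g^\dagger$. That contradicts the hypothesis.
\begin{itemize}
\item In the gap case, take $b$ with $vb$ equal to the gap. Then $\Psi<\beta<(\Gamma^>)'$.
\item In the grounded case, take $b:=h^\dagger$ with $h\succ 1$ and $v(h^\dagger)=\max\Psi$. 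Then $\beta=\max\Psi$, and $\Psi<(\Gamma^>)'$ gives $\beta<(\Gamma^>)'$.
\end{itemize}

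Now fix $g\succ 1$ and put $\alpha:=v(g^\dagger)\in\Psi$. Then $\alpha\le\beta$, so $u:=g^\dagger/b\succeq 1$. There are two cases.

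If $u\succ 1$, then $v(u^\dagger)=\psi(\alpha-\beta)$. Both $\alpha$ and $\beta$ are $<(\Gamma^>)'$, so Lemma~\ref{lem:6.5.4} gives $\psi(\alpha-\beta)>\min\{\alpha,\beta\}=\alpha$. That is, $u^\dagger\prec g^\dagger$.

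If $u\asymp 1$, then $\psi$ says nothing about $u^\dagger$, and here pre-$\d$-valuedness is needed. By \eqref{eq:pdv}, applied to $u\preceq 1\prec g$, we get $u'\prec g^\dagger$. Since $u\asymp 1$, we have $u^\dagger\asymp u'$, so again $u^\dagger\prec g^\dagger$.

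(In the gap case only the first case can occur, since then $\alpha<\beta$.) So for all $g\succ 1$ we have $\upl+g^{\dagger\dagger}\prec g^\dagger$, contradicting the hypothesis. Hence $(\Gamma,\psi)$ has asymptotic integration.

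The main delicate point is the case $u\asymp 1$ in the grounded situation. It is exactly where the pre-$\d$-valued assumption enters, through \eqref{eq:pdv}; everything else is the contraction property of $\psi$ from Lemma~\ref{lem:6.5.4}.
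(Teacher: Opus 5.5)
Your proof is correct and is essentially the paper's argument: in the grounded or gap case you take $\upg$ (your $b$) with $\Psi\le v\upg<(\Gamma^>)'$, set $\upl:=-\upg^\dagger$, and show $\upl+g^{\dagger\dagger}=(g^\dagger/\upg)^\dagger\prec g^\dagger$ for all $g\succ 1$ via Lemma~\ref{lem:6.5.4}, invoking \eqref{eq:pdv} exactly when $g^\dagger\asymp\upg$. The only addition is your preliminary check that $\Gamma\neq\{0\}$, which the paper does not make but which does no harm.
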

\begin{proof}
Suppose $K$ does not have asymptotic integration. Then $K$ is grounded or has a gap.
Either case yields $\upg\in K^\times$ with $\Psi \le v\upg<(\Gamma^>)'$.  Then for each~${\gamma\in \Gamma^{<}}$
we have~$\psi\big({\psi(\gamma)-v\upg}\big)>\psi(\gamma)$, by Lemma~\ref{lem:6.5.4}. 
 Putting $\upl:=-\upg^\dagger$,  we obtain for all~${g\in K}$ with~$g\succ 1$ that~$\upl+g^{\dagger\dagger}=(g^\dagger/\upg)^\dagger \prec g^\dagger$, using \eqref{eq:pdv} when~$g^\dagger\asymp\upg$.    
 \end{proof}

\noindent
The property of $\upl$-freeness helps to preserve asymptotic integration under certain $\d$-algebraic $H$-asymptotic field extensions:
by \cite[Proposition~1.3.12]{ADH4}, $K$ is $\upl$-free iff every $H$-asymptotic field extension $L$ of $K$ with~${\operatorname{trdeg}(L|K)\le 1}$ has asymptotic integration. 
%By Lemma~\ref{lem:newt=>linsurj} and the next lemma, if $K$ is pre-$\d$-valued and $1$-linearly newtonian, then it is $\upl$-free. 
Next a variant of~[ADH, 14.2.3]:

\begin{lemma}\label{lem:1-ls => upl-free}
Suppose  $K$ is $\d$-valued and $1$-linearly surjective. Then $K$ is $\upl$-free.  
\end{lemma}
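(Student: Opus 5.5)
The plan is to verify the three clauses of the definition of $\upl$-freeness in turn: $\Gamma\ne\{0\}$, ungroundedness, and the condition on $\upl$. The first two are routine. The third is handled by using $1$-linear surjectivity to produce an element whose logarithmic derivative nearly cancels $\upl$.

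\emph{Preliminaries.} By $1$-linear surjectivity, $\der(K)=K$, so $K$ is closed under integration.
\begin{enumerate}
\item[(i)] If $\Gamma=\{0\}$, then $d$-valuedness gives $K=\mathcal O=C+\smallo=C$. Then $\der=0$, which contradicts $\der(K)=K\ni 1$. Hence $\Gamma\ne\{0\}$.
\item[(ii)] A $\d$-valued field closed under integration has asymptotic integration. In particular $K$ is ungrounded.
\end{enumerate}

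\emph{Main step.} Fix $\upl\in K$ and suppose, towards a contradiction, that
$$\upl+g^{\dagger\dagger}\ \prec\ g^\dagger\qquad\text{for all } g\succ 1 \text{ in } K.$$
Apply $1$-linear surjectivity to the order-$1$ operator $\der+\upl$ to get $y\in K$ with $y'+\upl y=1$. Then $y\ne 0$, and with $h:=1/y$ we have
$$y^\dagger+\upl\ =\ h,\qquad\text{equivalently}\qquad h^\dagger\ =\ \upl-h .$$
The idea is that $h$ plays the role of a "$g^\dagger$ for an integral that is missing". By hypothesis, every $g^\dagger$ with $g\succ 1$ satisfies
$$(g^\dagger)^\dagger\ =\ -\upl+o(g^\dagger),$$
and $h$ satisfies the exact analogue $h^\dagger=-(-\upl)\ldots$ up to the extra term. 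Concretely, for $g\succ 1$ we get
$$(h/g^\dagger)^\dagger\ =\ h^\dagger-g^{\dagger\dagger}\ =\ 2\upl-h-\big(\upl+g^{\dagger\dagger}\big)+\upl-\upl,$$
so $(h/g^\dagger)^\dagger$ equals $-h$ up to terms controlled by $g^\dagger$ and by the hypothesis. (Normalizing so the signs of $\upl$ cancel is the bookkeeping I would redo carefully. It may be cleaner to set up the equation as $y'-\upl y=1$ instead, so that $h^\dagger=-\upl-h$ and then $(h/g^\dagger)^\dagger=-h+o(g^\dagger)$.)

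\emph{Case analysis.} Compare $h$ with the elements $g^\dagger$, $g\succ 1$, using the $H$-asymptotic couple facts.
\begin{enumerate}
\item[(a)] If $h\succeq g^\dagger$ for some $g\succ1$, the identity $(h/g^\dagger)^\dagger\sim -h$ (in the cleaned-up normalization) would give an element $u=h/g^\dagger$ with $u^\dagger\asymp h\succeq g^\dagger$. Using Lemma~\ref{lem:6.5.4} and \eqref{eq:pdv}, I would show this is impossible when $u\not\asymp 1$ and absurd when $u\asymp 1$.
\item[(b)] If $h\prec g^\dagger$ for all $g\succ 1$, then $vh>\Psi$. Take $t$ with $t'=h$ using closure under integration; then $t\succ 1$ or $t\preceq 1$. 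In either case one gets $t^\dagger$ or $h$ sitting at a value incompatible with $\Psi<vh$ and with $K$ having asymptotic integration, i.e.\ $vh$ would behave like a gap.
\end{enumerate}
Both cases then contradict (ii).

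\emph{Main obstacle.} I expect the hardest part to be this final valuation-theoretic comparison: choosing the right normalization of the auxiliary equation and making the two cases airtight with Lemma~\ref{lem:6.5.4}. If the direct argument stalls, my fallback is to mimic the proof of Lemma~\ref{lem:upl-free}. That proof shows that failure of the condition forces $\upl$ to behave like $-\upg^\dagger$ for a $\upg$ with $\Psi\le v\upg<(\Gamma^>)'$. Here the solution $y$ of the order-$1$ equation should supply exactly such a $\upg$ (namely $\upg\approx h$), again contradicting asymptotic integration.
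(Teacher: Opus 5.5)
Your preliminaries (i) and (ii) are correct and coincide with the paper's first step. The paper then simply cites [ADH, 11.6.17], whereas you try to prove that step directly. Drop the first normalization: with $y'+\upl y=1$ an uncontrolled $2\upl$ remains. With $y'-\upl y=1$ and $h:=1/y$ you get $h^\dagger=-\upl-h$, hence $(h/g^\dagger)^\dagger=-h-(\upl+g^{\dagger\dagger})$. Case~(a) then goes through with the tools you name:
\begin{itemize}
\item if $h\succ g^\dagger$, Lemma~\ref{lem:6.5.4} with $\alpha=vh$, $\beta=v(g^\dagger)$ gives $(h/g^\dagger)^\dagger\prec h$;
\item if $h\asymp g^\dagger$, \eqref{eq:pdv} gives $(h/g^\dagger)^\dagger\prec g^\dagger$.
\end{itemize}
Both contradict $(h/g^\dagger)^\dagger\sim -h$.

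The genuine gap is case~(b). There is no conflict between $vh>\Psi$ and asymptotic integration. Together with $\der K=K$ and $\d$-valuedness, $vh>\Psi$ just says $h=t'$ for some $t\ne 0$ with $t\prec 1$. In your dichotomy only $t\succ 1$ is absurd, since then $vh=vt+\psi(vt)<\psi(vt)\in\Psi$. The subcase $t\preceq 1$ (after subtracting a constant, $t\prec 1$) is perfectly consistent, so the claim that "$vh$ would behave like a gap" is false.

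The contradiction must come from applying the standing hypothesis on $\upl$ once more, now to $g:=1/t\succ 1$. Then $g^\dagger=-t^\dagger=-hg$, so $g^{\dagger\dagger}=h^\dagger+g^\dagger=-\upl-h+g^\dagger$, that is,
\[\upl+g^{\dagger\dagger}\ =\ g^\dagger-h\ \sim\ g^\dagger,\]
because $v(g^\dagger)\in\Psi<vh$ gives $h\prec g^\dagger$. This contradicts $\upl+g^{\dagger\dagger}\prec g^\dagger$ and completes the argument.

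Your fallback would not have closed the gap. The proof of Lemma~\ref{lem:upl-free} runs in the opposite direction: a $\upg$ with $\Psi\le v\upg<(\Gamma^>)'$ forces failure of the condition. Failure of the condition does not produce such a $\upg$ inside $K$; in Lemma~\ref{lem:11.5.14} the gap appears only in the extension $H(\upg)$. Indeed, non-$\upl$-free fields with asymptotic integration exist. So asymptotic integration alone gives nothing here. The element $h$ supplied by $1$-linear surjectivity has to be combined with the hypothesis on $\upl$, as above.
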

\begin{proof} The $H$-asymptotic field $K$ has asymptotic integration, as a consequence of~$K$ being $\d$-valued and $\der K=K$. It remains to apply [ADH, 11.6.17]. 
\end{proof} 
%First, $K$ is ungrounded: otherwise we have $g\succ 1$ in $K$ with $vg^\dagger=\max \Psi$, and then $f\in K$ with $f'=g^\dagger$ gives $f\preceq  1$, contradicting that $K$ is pre-$\d$-valued.
%The rest of the proof is also by contradiction: Assume $K$ is not $\upl$-free. Then we have $\upl\in K$ such that $\upl+g^{\dagger\dagger} \prec g^\dagger$ for all~$g\succ 1$ in $K$.
%Take $y\in K$ with $y'-\upl y=1$, and put $a:=-1/y$, so~${\upl+a^\dagger}=\upl-y^\dagger=a$.
%Suppose $g\succ 1$ in $K$ and $a\succeq g^\dagger$.
%Then~$\upl+g^{\dagger\dagger}\prec g^\dagger\preceq a=\upl+a^\dagger$ and so $(a/g^\dagger)^\dagger=a^\dagger-g^{\dagger\dagger}\sim \upl+a^\dagger=a$.
%If $a\succ g^\dagger$, then $(a/g^\dagger)^\dagger \prec a$ by Lemma~\ref{lem:6.5.4}, and if
%$a\asymp g^\dagger$, then $(a/g^\dagger)^\dagger \prec g^\dagger$ by \eqref{eq:pdv}.
%In both cases~$(a/g^\dagger)^\dagger \prec a$, contradicting $(a/g^\dagger)^\dagger \sim a$. 

%Thus $a \prec g^\dagger$ for all~$g\succ 1$ in~$K$, that is, $va>\Psi$. 
%Since $K$ is closed under integration, we have~$h\in K^\times$ with~$a = (1/h)'$.
%Put $b:=h^\dagger$. Then~$b/a = -h\succ (?) 1$ and thus~$\upl+h^{\dagger\dagger}=\upl+a^\dagger+(b/a)^\dagger=a+(b/a)^\dagger \asymp (b/a)^\dagger  = h^\dagger$, again a contradiction.
%\end{proof}

\noindent 
Theorem~\ref{thm:weakly d-closed} below exemplifies that 
newtonianity works best when combined with $\upo$-freeness, a stronger property than $\upl$-freeness.
To introduce it,
recall that in Section~\ref{sec:diffalg}   we defined~${\omega(z)=-(2z'+z^2)}$ for~$z\in K$.
We say that~$K$ is {\it $\upo$-free}\/ if  
 it is ungrounded with~$\Gamma\ne\{0\}$, and for all~$\upo\in K$ there is a $g\succ 1$ in $K$  with~${\upo-\omega(-g^{\dagger\dagger}) \prec (g^\dagger)^2}$; cf.~[ADH, 11.7].
If~$K$ is $\upo$-free, then it is $\upl$-free, by~[ADH, 11.7.3, 11.7.7].
An important use of $\upo$-freeness is to
 prevent  deviant behavior among   $\d$-algebraic pre-$\d$-valued $H$-asymptotic extensions:

 \begin{theorem}[{[ADH, 13.6.1]}]\label{thm:13.6.1}
Suppose $K$ is $\upo$-free and $L$ is a  $\d$-algebraic pre-$\d$-valued $H$-asymptotic field extension
 of~$K$. Then $L$ is also $\upo$-free $($hence has asymptotic integration$)$ and $\Gamma_K^{<}$ is cofinal in $\Gamma_L^{<}$.
 \end{theorem}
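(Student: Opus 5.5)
The plan is to argue by contradiction through pseudo-Cauchy sequences, using the characterization of $\upo$-freeness from [ADH, 11.7] together with the fact that a pseudolimit of a pc-sequence of $\d$-transcendental type is $\d$-transcendental. Fix a logarithmic sequence $(\ell_\rho)$ in $K$ and the associated sequences $(\upl_\rho)$, $(\upo_\rho)$ in $K$. Three facts from [ADH, 11.5--11.7] drive the argument:
\begin{enumerate}
\item[(i)] An ungrounded $H$-asymptotic field with $\Gamma\ne\{0\}$ fails to be $\upo$-free exactly when $(\upo_\rho)$ has a pseudolimit in it.
\item[(ii)] $\upo$-freeness of $K$ forces $(\upo_\rho)$ to be of $\d$-transcendental type over $K$, in the sense of [ADH, 11.4]. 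For this step I would quote [ADH, 13.6.3], or reprove it from the Newton-polygon estimates of [ADH, 11.7]. The idea is that a nonzero $P\in K\{Y\}$ cannot have $v(P(\upo_\rho))$ eventually strictly increasing, because after compositional conjugation $P^\phi$ the dominant part stabilizes.
\item[(iii)] Any pseudolimit of a pc-sequence of $\d$-transcendental type over $K$ is $\d$-transcendental over $K$ [ADH, 11.4].
\end{enumerate}

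The first step is to place $L$ where (i) applies. Pass to a $\d$-valued extension of $L$ that is $\d$-algebraic (for instance its $\d$-valued hull, [ADH, 10.3]). This is harmless: $\upo$-freeness descends to subfields with the same cofinal $\Psi$, and cofinality of $\Gamma^<$ transfers. So assume $L$ is $\d$-valued.

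The second step is to show that $L$ is ungrounded and that $\Psi_K$ is cofinal in $\Psi_L$, which is where the cofinality claim comes from.
\begin{enumerate}
\item[(1)] Suppose some $\gamma\in\Gamma_L^{\ne}$ had $\psi_L(\gamma)>\Psi_K$. Then, since $H$-asymptotic couples have monotone $\psi$, I would take $g\succ 1$ in $L$ with $vg=\gamma$.
\item[(2)] Show that $g^\dagger$, or an element obtained from $g$ by the usual $\upl$-construction, is a pseudolimit of $(\upl_\rho)$ and hence also yields a pseudolimit of $(\upo_\rho)$ in $L$. By (ii) and (iii), that pseudolimit is $\d$-transcendental over $K$, contradicting that $L$ is $\d$-algebraic over $K$.
\item[(3)] Hence $\Psi_K$ is cofinal in $\Psi_L$. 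For $H$-asymptotic couples this gives that $\Gamma_K^<$ is cofinal in $\Gamma_L^<$: if $\delta\in\Gamma_L^<$ lay below all of $\Gamma_K^<$, then $\psi(\delta)$ would lie above all of $\Psi_K$.
\end{enumerate}
The same contradiction rules out $\Psi_L$ having a maximum, and it rules out a gap, so $L$ has asymptotic integration and the sequences $(\upl_\rho)$, $(\upo_\rho)$ remain the relevant ones in $L$.

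The third step proves $\upo$-freeness of $L$. If $L$ were not $\upo$-free, then by (i), applied in $L$ now that cofinality shows $(\ell_\rho)$ is still a logarithmic sequence for $L$, there is $\upo\in L$ with $\upo_\rho\leadsto\upo$. By (ii) and (iii), this $\upo$ is $\d$-transcendental over $K$, contradicting that $L$ is $\d$-algebraic over $K$.

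I expect the main obstacle to be (ii), that $(\upo_\rho)$ is of $\d$-transcendental type over an $\upo$-free $K$. It requires showing that for each $P\in K\{Y\}^{\ne}$ the valuation $v(P(\upo_\rho))$ is eventually constant. I would attempt this by expanding $P_{+\upo_\rho}$ after compositional conjugation by active $\phi$, using the estimate $\upo_{\rho}-\upo_{\rho'}\sim$ a square of an active element and \eqref{eq:11.7.6} to control the error terms. I would then show that the dominant part of $P^\phi_{+\upo_\rho}$ eventually has nonzero constant term.
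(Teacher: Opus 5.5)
The paper does not prove this statement; it imports it from [ADH, 13.6.1]. Your reduction is logically sound and natural in that framework. The paper runs the same pattern (a pseudolimit of $(\upo_\rho)$ combined with [ADH, 13.6.3]) in the proof of Lemma~\ref{lem:upo-freeness of the perfect hull, (ii)=>(iii)}.

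Granted (ii) and (iii), each of the following produces a pseudolimit of $(\upo_\rho)$ in $L$: non-cofinality of $\Gamma_K^{<}$ in $\Gamma_L^{<}$, a gap $v\upg$ in $L$, or failure of $\upo$-freeness of $L$. In the first two cases, $-g^{\dagger\dagger}$ (respectively $-\upg^\dagger$) is a pseudolimit of $(\upl_\rho)$ by Lemma~\ref{lem:6.5.4}, and applying $\omega$ gives a pseudolimit of $(\upo_\rho)$. Groundedness is then excluded by cofinality.

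There is one slip in step~(3). The witness of non-cofinality is a $\delta$ with $\Gamma_K^{<}<\delta<0$, that is, \emph{above} $\Gamma_K^{<}$, not below it. It is this that forces $\psi(\delta)>\Psi_K$.

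The genuine gap is (ii). You do not prove it, and it is not an auxiliary lemma but essentially the theorem itself. Suppose $(\upo_\rho)$ were of $\d$-algebraic type over $K$. A minimal differential polynomial would give, Kaplansky-style as in [ADH, 11.4], an immediate $\d$-algebraic extension $K\langle\upo\rangle$ with $\upo_\rho\leadsto\upo$. This extension has asymptotic integration, hence is pre-$\d$-valued, so the theorem would make it $\upo$-free, which it is not. Citing [ADH, 13.6.3] therefore only moves the content into a black box, and you would have to check that its proof does not itself route through 13.6.1.

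Your fallback sketch does not fill this gap, for two reasons.
\begin{itemize}
\item Its stated target is false. For $P=Y$ over $\R(\ell_0,\ell_1,\dots)$, the polynomial $P^\phi_{+\upo_n}=Y+\upo_n$ has $\upo_n\sim x^{-2}\prec 1$, so its dominant part is $Y$, with zero constant term. You must work at the scale of the increments $\upo_{\rho'}-\upo_\rho\asymp\upg_{\rho+1}^2$. What you would need is that $P_{+\upo_\rho,\times\upg_{\rho+1}^2}$ has Newton degree $0$ for all large $\rho$.
\item More seriously, your ingredients never use the hypothesis. Eventual stabilization of $D_{P^\phi}$, the asymptotics of $\upo_{\rho'}-\upo_\rho$, and \eqref{eq:11.7.6} all hold in every ungrounded $H$-asymptotic field. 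The conclusion, however, fails without $\upo$-freeness: if $\upo_\rho\leadsto\upo\in K$, then $P:=Y-\upo$ has $v\big(P(\upo_\rho)\big)$ strictly increasing, and $P_{+\upo_\rho,\times\upg_{\rho+1}^2}$ has Newton degree $1$.
\end{itemize}

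What is missing is an argument showing how $\upo$-freeness of $K$ excludes Newton degree $\geq 1$ in the cut of $(\upo_\rho)$, for differential polynomials of arbitrary order and degree. That is the substantive Newton-polygon work of [ADH, Ch.~13], and it is absent from your proposal.
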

 
 \noindent 
 In the context of Hardy fields, $\upo$-freeness is related to oscillation criteria for
  linear differential equations of order~$2$. (See the discussion following Lemma~\ref{lem:7.11}  below.)  
An important source of $\upo$-free $H$-asymptotic fields is the following:

\begin{prop}[{[ADH, 11.7.15]}]\label{prop:11.7.15} If $K$ is ungrounded and a union of ground\-ed $H$-asymptotic subfields, then $K$ is $\upo$-free.
\end{prop}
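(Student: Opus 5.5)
The plan is to reduce to a single grounded subfield and then use ungroundedness of $K$ to go one "logarithmic level" beyond it. As in [ADH, 11.7], I will work with the characterization of $\upo$-freeness as: $K$ is ungrounded with $\Gamma\neq\{0\}$, and no $\upo\in K$ is approximated arbitrarily well by the $\omega(-g^{\dagger\dagger})$, i.e.\ for each $\upo\in K$ some $g\succ 1$ in $K$ satisfies $\upo-\omega(-g^{\dagger\dagger})\succeq (g^\dagger)^2$. The inequality in the definition displayed above should be read this way; I will check this against [ADH, 11.7] before relying on it.

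First, $K$ is ungrounded by hypothesis, and $\Gamma\ne\{0\}$ because a grounded subfield already has nontrivial value group. Fix $\upo\in K$ and take a grounded $H$-asymptotic subfield $E\ni\upo$. I assume the union is directed, so that any finitely many elements lie in a common grounded $E$. Take $\ell\succ 1$ in $E$ with $v(\ell^\dagger)=\max\Psi_E$, and set
$$a\ :=\ \upo-\omega(-\ell^{\dagger\dagger})\in E.$$
If $a\succeq(\ell^\dagger)^2$, then $g:=\ell$ works, and we are done.

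So assume $a\prec(\ell^\dagger)^2$. Since $\Psi_K$ has no largest element, there are $h\succ 1$ in $K$ with $v(h^\dagger)>v(\ell^\dagger)$, so $h^\dagger\prec\ell^\dagger$. Think of such $h$ as lying at a strictly lower logarithmic level than $\ell$, like $\log\ell$. The key computation is an asymptotic for
$$\omega(-\ell^{\dagger\dagger})-\omega(-h^{\dagger\dagger}).$$
I expect it to be $\asymp (h^\dagger)^2$, indeed $\sim c\,(h^\dagger)^2$ for a fixed sign, in analogy with $\upo_{n+1}-\upo_n=(\ell_0\cdots\ell_{n+1})^{-2}$. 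To prove this I would write $-h^{\dagger\dagger}=-\ell^{\dagger\dagger}-(h^\dagger/\ell^\dagger)^\dagger$ and expand $\omega$. The cross terms would then be controlled by Lemma~\ref{lem:6.5.4} and the estimate \eqref{eq:11.7.6}, together with the grounded structure of $E$ (no element of $\Psi$ strictly between the relevant levels). This yields
$$\upo-\omega(-h^{\dagger\dagger})\ =\ a+\big(\omega(-\ell^{\dagger\dagger})-\omega(-h^{\dagger\dagger})\big).$$

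Finally I choose $h$ to avoid cancellation. If $a\succeq (h^\dagger)^2$, the sum is $\succeq(h^\dagger)^2$ unless the two summands cancel at the leading level, and such cancellation can only happen when $va=2v(h^\dagger)$. If $a\prec(h^\dagger)^2$, the second summand dominates and the sum is $\asymp (h^\dagger)^2$. Cancellation therefore requires $2v(h^\dagger)=va$. Since $\Psi_K$ has no maximum, I can pick $h$ with $v(h^\dagger)$ different from $va/2$ (for instance by going down one more level), and then $g:=h$ works.

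The main obstacle is the asymptotic estimate for $\omega(-\ell^{\dagger\dagger})-\omega(-h^{\dagger\dagger})$, in particular showing it is not $\prec (h^\dagger)^2$. This is exactly where groundedness of $E$ is needed. My first attempt will be to compute in the compositional conjugate by $\ell^\dagger$, so that $\ell$ behaves like $\ex^x$, and reduce to a computation in which the valuations are transparent.
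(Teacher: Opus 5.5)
There is a genuine gap, and it sits exactly at the step you flagged. The estimate $\omega(-\ell^{\dagger\dagger})-\omega(-h^{\dagger\dagger})\asymp(h^\dagger)^2$ is false for general $h\succ 1$ with $h^\dagger\prec\ell^\dagger$. Groundedness of $E$ cannot repair it: the quantity involves only $\ell,h\in K$, and $\Psi_K$ can have many values strictly between $v(\ell^\dagger)$ and $v(h^\dagger)$. With $s:=(h^\dagger/\ell^\dagger)^\dagger$ one has $\omega(-h^{\dagger\dagger})-\omega(-\ell^{\dagger\dagger})=2s\,(s/\ell^\dagger)^\dagger-s^2$, whose size depends on where $h$ sits relative to ``$\log\ell$'', not on $v(h^\dagger)$ alone.

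Concretely, use the notation $\ell_n,\upg_n,\upo_n$ of Section~\ref{sec:prelims} and work in $K=\Li(\R)$ (an ungrounded union of grounded Hardy subfields) with $E=\R(x)$ and $\ell=x$.
\begin{itemize}
\item For $h=\ell_2$ the difference is $\upo_0-\upo_2=-(\upg_1^2+\upg_2^2)$, which is $\succ(h^\dagger)^2$.
\item For $h>\R$ in $K$ with $h^\dagger=1/(x\ell_2)$, the difference is $-(2\upg_1\upg_2+\upg_2^2)$, which is $\prec(h^\dagger)^2$.
\end{itemize}
The second case breaks your final step. Take $\upo:=x^{-2}+x^{-3}\in E$. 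Then you are in the case $a=x^{-3}\prec(\ell^\dagger)^2$, and this $h$ has $2v(h^\dagger)\neq va$. Yet $\upo-\omega(-h^{\dagger\dagger})\sim-2\upg_1\upg_2\prec(h^\dagger)^2$.

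Structurally, once $\ell$ is fixed your argument never uses $\upo\in E$, i.e.\ $va\in\Gamma_E$; it uses only that $\Psi_K$ has no maximum. Run with $\ell=x$ on the pseudolimit $\upo_{(1)}$ of $(\upo_n)$ in the non-$\upo$-free field $H_{(1)}$ of Section~\ref{sec:E(H) upo-free}, it would produce a $g$ witnessing $\upo$-freeness at $\upo_{(1)}$, which is impossible. So no rule of the form ``choose $h$ with $2v(h^\dagger)\neq va$'' can work.

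The missing idea is to use one specific $h$ and to spend groundedness on excluding cancellation, not on the estimate. Take $\ell_1\succ 1$ in $K$ with $\ell_1'\asymp\ell^\dagger$. It exists because $\Psi_K$ has no maximum, so $v(\ell^\dagger)\in(\Gamma_K^<)'$.

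For this $h$ the estimate does hold: $\omega(-\ell_1^{\dagger\dagger})-\omega(-\ell^{\dagger\dagger})\sim(\ell_1^\dagger)^2$, by the computation behind [ADH, 11.7.1]. The factor $\ell_1'/\ell^\dagger\asymp 1$ is harmless because ungrounded asymptotic fields are pre-$\d$-valued.

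Now $v(\ell_1^\dagger)$ is forced, so there is no room to dodge cancellation, and you must use $a\in E$ instead. Suppose $2v(\ell_1^\dagger)\in\Gamma_E$. Then $2v(\ell_1)=2v(\ell^\dagger)-2v(\ell_1^\dagger)\in\Gamma_E^{\neq}$. Hence $v(\ell_1^\dagger)=\psi(2v\ell_1)\in\Psi_E$, contradicting $\ell_1^\dagger\prec\ell^\dagger$ and $v(\ell^\dagger)=\max\Psi_E$. So $va\neq 2v(\ell_1^\dagger)$, and therefore
\[
\upo-\omega(-\ell_1^{\dagger\dagger})\;=\;a-\big(\omega(-\ell_1^{\dagger\dagger})-\omega(-\ell^{\dagger\dagger})\big)\;\succeq\;(\ell_1^\dagger)^2 .
\]
Thus $g:=\ell_1$ works, with no case distinction at all. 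Your reading of the defining inequality as $\succeq$ is correct. Directedness of the union is not needed, since only $\upo$ has to lie in $E$.
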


\begin{remark}[for readers familiar with model theory; not used later]  
Proposition~\ref{prop:11.7.15} has a sort of converse: {\it Suppose $K$ is $\upo$-free. Then $K$ has an elementary extension which is a union of grounded $H$-asymptotic subfields.}\/
To see this, fix an $\abs{K}^+$-saturated elementary extension $K^*$ of $K$. Take $y\in K^*$ with $\Gamma^< < vy <0$. Then the $H$-asymptotic subfield $K\langle y\rangle$ of $K^*$ is grounded, by [ADH, 13.6.7].
The downward L\"owenheim-Skolem Theorem [ADH, B.5.10] yields an elementary substructure $K_1$ of $K^*$ with $K\<y\>\subseteq K_1$
and $\abs{K_1}=\abs{K\langle y\rangle}=\abs{K}$. Repeating this process with $K_1$ in place of $K$,
we construct an elementary chain $(K_n)$ of $H$-asymptotic subfields of $K^*$ with $K_0=K$
such that each $K_n$ is contained in a grounded $H$-asymptotic subfield of $K_{n+1}$. Then $\bigcup_n K_n$ is the
desired elementary extension of~$K$, by [ADH, B.5.13].
\end{remark}
 
\noindent
The newtonian property only postulates the existence of zeros for differential polynomials of Newton degree~$1$.
Under   additional hypotheses on $K$ we get more:

\begin{theorem}[{[ADH, 3.1.17, 14.2.5, 14.5.3]}]\label{thm:weakly d-closed}
If $K$ is  algebraically closed, $\upo$-free, and newtonian, then each $P\in K\{Y\}\setminus K$ has a zero in $K$.
\end{theorem}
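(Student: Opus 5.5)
The plan is to assemble the statement from the three cited ingredients. The first ingredient supplies the residue-field input: since $K$ is algebraically closed, its residue field $\k:=\res(K)$ is algebraically closed (this is the content of [ADH, 3.1.17] that I would use). The second gives the structure of $K$: by Theorem~\ref{thm:13.6.1}, or directly from [ADH, 11.7.3, 11.7.7], the $\upo$-free field $K$ is $\upl$-free and in particular has asymptotic integration. Being newtonian, $K$ is then $\d$-valued by [ADH, 14.2.5], so $\k\cong C$, and $K$ is $1$-linearly surjective by Lemma~\ref{lem:newt=>linsurj}. The third ingredient, [ADH, 14.5.3], carries the real work and upgrades Newton degree~$1$ to arbitrary positive Newton degree.

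Fix $P\in K\{Y\}\setminus K$, so $P$ has positive degree. The first step is to arrange positive Newton degree. I would look for a multiplicative conjugate $P_{\times g}$ with $g\in K^\times$ such that $\ndeg P_{\times g}\ge 1$. For $P$ of order $0$ this is ordinary valuation theory: choose $vg$ to be a slope of the Newton polygon. For general $P$ I would use $\upo$-freeness to control the eventual dominant part. Under $\upo$-freeness, $D_{P^\phi}$ eventually lies in $\k[Y](Y')^{w}$, so the eventual Newton polygon in $vg$ behaves like the Newton polygon of an algebraic polynomial. Because $\Gamma$ is divisible, which follows from algebraic closedness, some $vg$ in $\Gamma$ realizes a non-constant eventual dominant part. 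Once $\ndeg P_{\times g}\ge 1$, it suffices to find a zero of $P_{\times g}$ in $\mathcal O$.

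The second step handles the case $\ndeg P=d\ge1$, arranged after this conjugation. Eventually $D_{P^\phi}\in\k^\times A$ for some $A\in\k[Y](Y')^w$ with $\deg A=d$. Since $\k$ is algebraically closed, the part of $A$ in $\k[Y]$ has a root $\bar u$; if $\bar u$ is a simple root, Lemma~\ref{lem:14.2.12} directly gives a zero of $P$ in $u+\smallo$. If no simple root is available, I would replace $P$ by $P_{+u}$ and then $P_{+u,\times\epsilon}$ with $\epsilon\prec1$ chosen at the new Newton-polygon slope. This is Newton's refinement procedure, which keeps the Newton degree at most the multiplicity of $\bar u$. When the multiplicity strictly drops, I would induct on $d$.

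The main obstacle is the case where the multiplicity never drops, that is, where the refinements form an infinite sequence of the same Newton degree. This is exactly what [ADH, 14.5.3] addresses through unravelling: $\upo$-freeness rules out pathological divergent behaviour and makes the refinement sequence define a pseudocauchy sequence. Newtonianity then supplies a pseudolimit $a$ that is a zero, or at least lowers the Newton degree of $P_{+a}$, from which the induction finishes. I would therefore spend the effort on invoking the unravelling machinery of [ADH, 14.3--14.5] correctly: checking that the hypotheses of 14.5.3 hold ($\upo$-free, newtonian, $\d$-valued with divisible $\Gamma$), and that the degree-reduction output feeds back into the induction on $\ndeg$.
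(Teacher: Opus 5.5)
Your proposal takes essentially the paper's approach. The paper gives no proof beyond combining the three citations you use: algebraic closedness of $K$ gives a divisible value group and an algebraically closed residue field [ADH, 3.1.17]; $\upo$-freeness (hence asymptotic integration) together with newtonianity makes $K$ $\d$-valued, so $C\cong\k$ [ADH, 14.2.5]; and [ADH, 14.5.3] then does the real work.

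Your description of what happens inside 14.5.3 is only heuristic and in places inaccurate. For instance, Lemma~\ref{lem:14.2.12} needs the eventual dominant part to lie in $\k^\times\k[Y]$, so a simple root of the $\k[Y]$-factor is not enough when the factor $(Y')^w$ has $w\ge 1$. Since you defer that part to [ADH, 14.5.3] anyway, this does not affect the argument.
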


\noindent
This yields a result which is at the basis of the main theorems of this paper:  

\begin{cor}\label{cor:weakly d-closed}
Suppose   $K$ is  algebraically closed, $\upo$-free, and newtonian. Then~$K$ is linearly closed and linearly surjective.
\end{cor}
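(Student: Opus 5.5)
Both conclusions come from Theorem~\ref{thm:weakly d-closed}, which says that under the present hypotheses every $P\in K\{Y\}\setminus K$ has a zero in $K$. The only work is to encode splitting and solvability as such zeros.

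\emph{Linear closedness.} We show by induction on $r$ that every monic $A\in K[\der]$ of order $r$ splits over $K$. The cases $r\le 1$ are trivial. For $r\ge 2$, write $A=\der^r+a_{r-1}\der^{r-1}+\cdots+a_0$ and put
$$P:=\Ric(A)=R_r+a_{r-1}R_{r-1}+\cdots+a_0R_0\in K\{Z\}.$$
Since $R_n=Z^n+\text{lower degree terms}$, $P$ has degree $r\ge 1$, so $P\notin K$. Theorem~\ref{thm:weakly d-closed} then yields $b\in K$ with $P(b)=0$. Next take $u\ne 0$ in a differential field extension of $K$ with $u^\dagger=b$ (e.g.\ by [ADH, 4.1.9]). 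Then $A(u)=u\,\Ric(A)(b)=0$, so \eqref{eq:5.1.21} gives $A=B(\der-b)$ with $B\in K[\der]$. Comparing leading coefficients, $B$ is monic of order $r-1$, so it splits by the inductive hypothesis, and hence $A$ splits.

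\emph{Linear surjectivity.} By the remark at the end of the subsection on differential fields, an $r$-linearly closed, $1$-linearly surjective $K$ is $r$-linearly surjective. So it suffices to show $K$ is $1$-linearly surjective. Let $A=a\der+c$ with $a\in K^\times$, $c\in K$, and let $g\in K$. The differential polynomial $P:=aY'+cY-g$ has degree $1$, so $P\notin K$, and Theorem~\ref{thm:weakly d-closed} gives $y\in K$ with $A(y)=g$. The order-$0$ case $A=c\in K^\times$ is trivial, since then $A(g/c)=g$.

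I expect no real obstacle. The only point needing care is that $\Ric(A)$ is not an element of $K$, which holds because it has degree $r\ge 1$.
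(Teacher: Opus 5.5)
Your proof is correct and matches the paper's: induction on the order, using a zero of $\Ric(A)$ from Theorem~\ref{thm:weakly d-closed} and the factorization \eqref{eq:5.1.21}, then combining linear closedness with $1$-linear surjectivity. The only small difference is the source of $1$-linear surjectivity. You apply Theorem~\ref{thm:weakly d-closed} to $aY'+cY-g$, while the paper cites Lemma~\ref{lem:newt=>linsurj}, which needs only $1$-linear newtonianity; both are valid.
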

\begin{proof}
By induction on $r\in\N$ we show that $K$ is $r$-linearly closed. The case $r=0$ being trivial, suppose  
  $A\in K[\der]^{\ne}$ has order $r\ge 1$. Then $R:=\Ric(A)\in K\{Z\}\setminus K$ has a zero $z\in K$, by Theorem~\ref{thm:weakly d-closed}.
Take $y\ne 0$ in a differential field extension of~$K$ with $y^\dagger=z$. Then $A(y)=0$ and hence
$A=B(\der-z)$ where $B\in K[\der]^{\ne}$ has order~${r-1}$, by \eqref{eq:5.1.21}.
By inductive hypothesis, $B$ splits over $K$, and hence so does~$A$. Thus $K$ is linearly closed,
and since  $K$ is $1$-linearly surjective by Lemma~\ref{lem:newt=>linsurj},  
 $K$ is linearly surjective.
\end{proof}

%\noindent
%We shall also use:

\begin{prop}[{[ADH, 11.7.23, 14.2.5, 14.5.7]}]\label{prop:newt alg ext}
If $K$ is $\upo$-free, newtonian, and has divisible value group,
then each  $H$-asymptotic field extension of~$K$ which is algebraic over $K$  is $\upo$-free and newtonian.
\end{prop}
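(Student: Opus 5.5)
The statement to prove is: if $K$ is $\upo$-free, newtonian, and has divisible value group, then every $H$-asymptotic field extension $L$ of $K$ that is algebraic over $K$ is again $\upo$-free and newtonian. I would reduce to the algebraic closure, for which the required facts are recorded in [ADH], and then descend to arbitrary $L$.

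\emph{Step 1: $\upo$-freeness of $L$.} Since $L$ is algebraic over $K$, it is $\d$-algebraic over $K$, and $\Gamma_L/\Gamma_K$ is torsion. Divisibility of $\Gamma_K$ then gives $\Gamma_L=\Gamma_K$, so $\Psi_L=\Psi_K$ and $L$ is ungrounded with $\Gamma_L\ne\{0\}$. I would then invoke [ADH, 11.7.23], which says that $\upo$-freeness passes to algebraic $H$-asymptotic extensions. If one prefers to route this through Theorem~\ref{thm:13.6.1}, the hypothesis that $L$ is pre-$\d$-valued has to be checked separately; [ADH, 14.2.5] gives that $K$ is $\d$-valued, and algebraic extensions of $\d$-valued fields are pre-$\d$-valued.

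\emph{Step 2: newtonianity of the algebraic closure.} Let $K^{\mathrm a}$ be an algebraic closure of $K$ carrying the unique $H$-asymptotic structure extending that of $K$. This structure is unique up to isomorphism over $K$, and $L$ embeds into $K^{\mathrm a}$ over $K$. The key cited input is [ADH, 14.5.7]: a newtonian $\upo$-free $K$ has newtonian algebraic closure. Its proof rests on the fact that newtonianity is a statement about eventual Newton degrees, and these are controlled via compositional conjugation by active elements. Since $\Psi$ does not change, the active elements of $K$ remain cofinal among the active elements of $K^{\mathrm a}$.

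\emph{Step 3: descent from $K^{\mathrm a}$ to $L$.} This is where I expect the main obstacle. Take $P\in L\{Y\}$ with $\ndeg P=1$; by Step 2 it has a zero $y\in\mathcal O_{K^{\mathrm a}}$, and we need a zero in $L$. I would first reduce to $L=K[\imag]$ or $L=K$ using that $K$ is real closed or algebraically closed up to degree 2 extensions; this is the case needed later in the paper. In that situation, write $P=P_1+\imag P_2$ and $y=a+b\imag$, and the equation becomes a system over $K$. I would handle it through the fact from [ADH, 14.5] that newtonianity is preserved under algebraic extensions precisely when Newton degree is compatible with the norm, using $[K^{\mathrm a}:L]$ finite in the relevant case together with a Galois-averaging argument. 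If that becomes technical, the fallback is simply to cite [ADH, 14.5.7], which is the statement as given.
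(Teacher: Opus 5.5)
Your Steps 1 and 2 follow the paper, which gives no argument beyond the citation [ADH, 11.7.23, 14.2.5, 14.5.7]. There, 11.7.23 gives $\upo$-freeness of $L$, and 14.2.5 makes $K$ $\d$-valued: being $\upo$-free it has asymptotic integration, and being newtonian it is $1$-linearly newtonian. Then 14.5.7 gives newtonianity of the algebraic $H$-asymptotic extension $L$ itself. So 14.2.5 is mainly there to supply the $\d$-valuedness the results of [ADH, 14.5] need, not just for your side remark about Theorem~\ref{thm:13.6.1}.

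The gap is Step 3, which in your structure carries the whole newtonianity claim for intermediate $L$. None of its ingredients works.
\begin{enumerate}
\item You may not reduce to $L\in\{K,K[\imag]\}$, since $K$ is not assumed real closed. If it were, $K^{\mathrm a}=K[\imag]$ and there would be nothing to descend.
\item Averaging over $\operatorname{Aut}(K^{\mathrm a}|L)$ fails. For $P\in L\{Y\}$ the conjugates $\tau(y)$ of a zero $y$ are again zeros, but for nonlinear $P$ no average or norm of them is a zero.
\item Writing $P=P_1+P_2\imag$, $y=a+b\imag$ produces a system of two equations over $K$. Newtonianity, a statement about a single differential polynomial of Newton degree $1$, says nothing about such systems.
\item The ``norm compatibility'' criterion you attribute to [ADH, 14.5] is not a result you can cite.
\end{enumerate}
There is also no formal reason for newtonianity to pass from $K^{\mathrm a}$ down to $L$. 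Already henselianity ($0$-newtonianity) does not descend, because the algebraic closure of any valued field is henselian. A correct descent must therefore use the hypotheses on $K$ itself.

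With those hypotheses the valuation theory is simple. $K$ is henselian, $\res(K)$ is the image of $C$, and $\Gamma_K$ is divisible. Hence every finite extension of $K$ is unramified. It is generated by a constant algebraic over $C$, obtained by Hensel's lemma from a lift to $C[Y]$ of the minimal polynomial of a residue generator. So $L=K(C_L)$. Showing that $K(C_L)$ is newtonian still needs the actual work of [ADH, 14.5], not averaging.

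Your fallback is therefore the correct proof, but state it consistently. Cite [ADH, 14.5.7] for $L$ directly, after noting via [ADH, 14.2.5] that $K$ is $\d$-valued, as the paper does. Do not cite it only for $K^{\mathrm a}$ and then attempt a descent.
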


\noindent
To use this, let $L$ be a field extension of $K$ that is algebraic over $K$. Then $L$ with the unique derivation on $L$ extending that of $K$ and any valuation ring of $L$ lying over $\mathcal{O}$ is $H$-asymptotic, by [ADH, 9.5]. As a special case, suppose 
$-1$ is not a square in $\res(K)$ and $L=K[\imag]$ with $\imag^2=-1$. Then we construe $L$ accordingly as an $H$-asymptotic field with respect to the unique valuation ring (namely $ \mathcal{O}+\mathcal{O}\imag$) of $L$ that lies over $\mathcal{O}$. 
%If in this situation
%$K$ is real closed, $\upo$-free, and newtonian, then $L$ is algebraically closed, $\upo$-free, and newtonian, by Proposition~\ref{prop:newt alg ext}.  

\subsection*{$H$-fields}   
  An  {\em ordered differential field\/} is a differential field $H$ with an ordering on~$H$ making $H$ an ordered field. Likewise, an  {\em ordered valued differential field\/} is a  valued differential field $H$ equipped with an ordering on $H$ making~$H$ an ordered field (no relation between derivation, valuation, or ordering
being assumed).  Let~$H$ be an ordered differential field and $C=C_H$. Then $H$ has the convex subring 
$$\mathcal{O}\ :=\ \big\{h\in H:\text{$\abs{h} \le c$ for some $c\in C^{>}$}\big\},$$ 
which is a valuation ring of $H$ and has maximal ideal
$$\smallo\ =\ \big\{h\in H:\ \text{$\abs{h} \le c$ for all   $c\in C^>$}\big\}.$$ We call $H$ an {\it $H$-field} 
 if for all $h\in H$ with $h\succ 1$ we have $h^\dagger>0$, and
  $\mathcal{O}=C+\smallo$.

  {\em  In the rest of this subsection $H$ is an $H$-field, construed as an ordered valued differential field with valuation ring $\mathcal{O}$}. 
%{\it Pre-$H$-fields}\/\index{pre-H-field@pre-$H$-field} are the ordered valued differential subfields of $H$-fields. 
Then $H$ is  $\d$-valued of $H$-type and $H^\phi$ is an $H$-field for $\phi\in H^{>}$; 
%and $\res(H)$ is made an ordered field by requiring for all $a,b\in \mathcal{O}$ that $a\le b\Rightarrow \bar{a}\le \bar{b}$;
 see [ADH, 10.5] for this and other basic facts about $H$-fields. 
Call $H$  {\it Liouville closed}\/
 if it is real closed,  closed under integration, and~$H^\dagger=H$. Liouville closed $H$-fields are  $\upl$-free, by Lemma~\ref{lem:1-ls => upl-free}.
  A {\em Liouville closure of $H$} is a Liouville closed $H$-field extension of $H$ which as a differential field is a Liouville extension of~$H$~[ADH, 10.6].  By \cite[Theorem~12.1(1)]{Gehret}, the following are equivalent:

\begin{enumerate}
\item $H$ has a unique Liouville closure up to isomorphism  over~$H$;
\item $H$ is grounded or $\upl$-free.
\end{enumerate}
A key ingredient for the proof of (1)~$\Rightarrow$~(2) is that if   $v\upg$ \textup{(}$\upg\in H^\times$\textup{)} is
a gap in~$H$, then 
 there is $y$ in an $H$-field extension of $H$ with $y\prec 1$ and~$y'=\upg$, by [ADH, 10.2.1, 10.5.10], and there is also $z$ in an $H$-field extension of $H$ with~$z\succ 1$ and~$z'=\upg$, by~[ADH, 10.2.2, 10.5.11].  Here is another ingredient:
 %  way of obtaining gaps  from non-$\upl$-freeness; see~[ADH, 10.4.5(i), remark after 11.5.14, and 11.6.1].

\begin{lemma}\label{lem:11.5.14}
If $H$ is real closed with asymptotic integration and $\upl\in H$ satisfies~$\upl+g^{\dagger\dagger} \prec g^\dagger$ for all $g\succ 1$
in~$H$, then for any $\upg\neq 0$ in any $H$-field extension of~$H$  with
$\upg^\dagger=-\upl$, the $H$-asymptotic field $H(\upg)$ has gap $v\upg$, and~$H$ is cofinal in~$H(\upg)$.
\end{lemma}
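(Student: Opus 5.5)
The plan is to show that $v\upg$ lies strictly between $\Psi_H$ and $(\Gamma_H^>)'$ and is not in $\Gamma_H$. The gap statement and the cofinality statement then both follow from valuation bookkeeping.

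Throughout, $L$ is the given $H$-field extension containing $\upg$. It is pre-$\d$-valued of $H$-type, so its subfield $H(\upg)$ is $H$-asymptotic. The basic identity, valid for $g\succ 1$ in $H$, is
$$(g^\dagger/\upg)^\dagger\ =\ g^{\dagger\dagger}+\upl\ \prec\ g^\dagger,$$
and all the work goes into exploiting it.

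\emph{Step 1: the position of $v\upg$.} Fix $g\succ 1$ in $H$ and put $u:=g^\dagger/\upg$. I would first rule out $u\asymp 1$. In that case $u\sim c$ with $c\in C_L^\times$, so $u^\dagger=(u/c)^\dagger$ with $u/c-1\prec 1$. By \eqref{eq:pdv} in $L$ this makes $u^\dagger$ smaller than every $h^\dagger$ with $h\succ 1$ in $H$, that is, $v(g^{\dagger\dagger}+\upl)>\Psi_H$. Since $H$ has asymptotic integration, $\Psi_H$ has no maximum, so we can pick $h\succ 1$ in $H$ with $v(h^\dagger)>v(g^\dagger)$. Then
$$\upl+h^{\dagger\dagger}\ =\ (\upl+g^{\dagger\dagger})+(h^\dagger/g^\dagger)^\dagger,$$
where $h^\dagger/g^\dagger\prec 1$, so $v\big((h^\dagger/g^\dagger)^\dagger\big)=\psi\big(v(h^\dagger)-v(g^\dagger)\big)$. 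By Lemma~\ref{lem:6.5.4} this value is $>v(g^\dagger)$, and I expect a comparison with $v(h^\dagger)$ to show that $\upl+h^{\dagger\dagger}\succeq h^\dagger$, contradicting the hypothesis. With $u\asymp 1$ excluded, the sign of $vu$ cannot change as $g$ varies: if it did, monotonicity of $\psi$ in $H$-type would force $u\asymp 1$ for some intermediate $g$. One then rules out $u\prec 1$ for all $g$ by the same kind of estimate, since that would make $u^\dagger\asymp$ an element of $\Psi$ that is too large. This yields $\Psi_H<v\upg$.

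\emph{Step 2: the upper bound and $v\upg\notin\Gamma_H$.} To get $v\upg<(\Gamma_H^>)'$, suppose $v\upg\ge v(a')$ for some $a\prec 1$ in $H$. Then $\upg/a'\preceq 1$, and the same pre-$\d$-valued argument as in Step~1 gives $\upl+a'^\dagger\prec\Psi_H$. Since $a'\asymp a\,a^\dagger$, the element $g:=1/a\succ 1$ satisfies $\upl+g^{\dagger\dagger}\approx\upl+a'^\dagger+(\text{terms of size }g^\dagger)$. I would derive a contradiction there too. Because $\Psi_H<v\upg<(\Gamma_H^>)'$ and $H$ has asymptotic integration, so that $\Gamma_H=(\Gamma_H^{\neq})'$, the value $v\upg$ cannot lie in $\Gamma_H$. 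Hence $\Gamma_{H(\upg)}\supseteq\Gamma_H\oplus\Z v\upg$. Since $\upg$ is transcendental over $H$ (otherwise $v\upg$ would be torsion over $\Gamma_H$, and $\Gamma_H$ is divisible because $H$ is real closed), the standard valuation theory of simple transcendental extensions gives equality. In this group $\Psi_{H(\upg)}$ is computed from $\psi$ on $\Gamma_H$ together with $\psi(k v\upg+\delta)$. Checking that these values all stay below $v\upg$, again via Lemma~\ref{lem:6.5.4}, shows that $v\upg$ is a gap of $H(\upg)$.

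\emph{Step 3: cofinality.} An element of $\Gamma_{H(\upg)}$ has the form $k v\upg+\delta$ with $\delta\in\Gamma_H$. The element $v\upg$ realizes a cut near $0$: it is above $\Psi_H$ and below $(\Gamma_H^>)'$, and both of these sets have elements of size $o(\Gamma_H^{>})$ in the archimedean sense. Hence $\lvert k v\upg+\delta\rvert$ is bounded by an element of $\Gamma_H$, and so $\Gamma_H$ is cofinal in $\Gamma_{H(\upg)}$, which is what "$H$ is cofinal in $H(\upg)$" amounts to.

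I expect the main obstacle to be Step~1 and Step~2: deriving the contradiction from $\upg\asymp g^\dagger$, or $\upg\preceq a'$, for some element of $H$ using only the hypothesis $\upl+g^{\dagger\dagger}\prec g^\dagger$. The choice of the auxiliary $h$ via asymptotic integration and the contractive Lemma~\ref{lem:6.5.4} is where I would concentrate the effort.
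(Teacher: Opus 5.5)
Your overall plan is a legitimate direct alternative to the paper, which only cites [ADH, 11.5.6, 11.5.14, 11.5.13, 10.4.5]. The plan is to place $v\upg$ in the cut $\Psi_H^{\downarrow}\mid(\Gamma_H^>)'$ of $\Gamma_H$, deduce $v\upg\notin\Gamma_H$ and $\Gamma_{H(\upg)}=\Gamma_H\oplus\Z v\upg$, and then read off the gap and cofinality. However, Steps~1 and~2, which carry all the content, are not proved, and parts of the sketch are wrong.

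\begin{itemize}
\item In Step~1, an arbitrary $h\succ1$ with $h^\dagger\prec g^\dagger$ does not give the contradiction. You would need $\psi\big(v(h^\dagger)-v(g^\dagger)\big)\le v(h^\dagger)$, and this fails for $g=\ex^{\ex^x}$, $h=\ex^{\ex^x/x}$: there $h^\dagger/g^\dagger\sim 1/x$, whose logarithmic derivative is $\prec h^\dagger$. Lemma~\ref{lem:6.5.4} only gives the useless bound $>v(g^\dagger)$.
\item Your claim that the sign of $v(g^\dagger/\upg)$ cannot change as $g$ varies is unfounded. $\Psi_H$ is not convex, so a priori $v\upg$ could lie strictly between two elements of $\Psi_H$ without $\upg\asymp g^\dagger$ for any $g$. $H$-type monotonicity does not exclude this.
\item Step~2 treats only $\upg\asymp a'$, where \eqref{eq:pdv} applies. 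It does not treat $\upg\prec a'$, and it ends with ``I would derive a contradiction there too''.
\item For the gap in $H(\upg)$ you must also show $v\upg<(\Gamma_{H(\upg)}^>)'$ for the new positive values $\delta+kv\upg$. You do not address this.
\end{itemize}

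The missing idea is the right choice of auxiliary element; with it, no case split and no pre-$\d$-valuedness is needed.

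For the lower bound, let $g\succ1$ in $H$. Asymptotic integration gives $h\succ1$ in $H$ with $h'\asymp g^\dagger$, so $h^\dagger/g^\dagger\asymp 1/h$. If $\upg\succeq g^\dagger$, then $v(h^\dagger/\upg)\ge v(1/h)>0$. $H$-type monotonicity then gives $v(\upl+h^{\dagger\dagger})=v\big((h^\dagger/\upg)^\dagger\big)\le\psi(vh)=v(h^\dagger)$, contradicting the hypothesis. Hence $\Psi_H<v\upg$.

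For the upper bound, suppose $\upg\preceq a'$ with $0\ne a\prec1$ in $H$, and put $g:=1/a$. Then $a'=-g^\dagger/g$, so $v(g^\dagger/\upg)\le vg<0$. Again $v(\upl+g^{\dagger\dagger})\le\psi(vg)=v(g^\dagger)$, a contradiction. Hence $v\upg<(\Gamma_H^>)'$.

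For the gap, note that $\Psi_H^{\downarrow}$ has no largest element and $(\Gamma_H^>)'$ has no smallest. So every nonzero $\gamma=\delta+kv\upg$ satisfies $|\gamma|>\delta_0$ for some $\delta_0\in\Gamma_H^>$. This gives $\psi(\gamma)\le\psi(\delta_0)\in\Psi_H$, and for $\gamma>0$ also $\gamma'>\delta_0'\in(\Gamma_H^>)'$. Together these show $v\upg$ is a gap in $H(\upg)$.

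Your Step~3 conclusion is right, but the ``cut near $0$'' justification is not. It suffices that $v\upg$ lies between an element of $\Psi_H$ and an element of $(\Gamma_H^>)'$.
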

\begin{proof} Let $H$, $\upl$ be as in the hypothesis. Then the first part of the conclusion follows from [ADH, 11.5.6, (iv)~$\Rightarrow$~(i), and 11.5.14 with the remark after its proof]. 
The cofinality then follows from [ADH,   11.5.13 and   10.4.5(i)].
\end{proof} 

\noindent
The proof of (2)~$\Rightarrow$~(1) rests on the following:

\begin{prop}[{\cite[Proposition~1.3.15]{ADH4}}]\label{prop:Gehret} 
If $H$  is a $\upl$-free and $E$ is a Liou\-ville $H$-field extension of $H$, 
then $E$ is $\upl$-free and $\Gamma_H^<$ is cofinal in~$\Gamma_E^<$. 
\end{prop}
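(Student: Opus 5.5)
Since $E$ is a Liouville extension of $H$, each element of $E$ lies in $H(t_1,\dots,t_n)$ for some Liouville sequence $(t_1,\dots,t_n)$ in $E$ over $H$. Both conclusions are local in this sense: $\upl$-freeness of $E$ says that each $\upl\in E$ has a witness $g\succ 1$ in $E$; cofinality says that each $\gamma\in\Gamma_E^<$ is bounded below by $\Gamma_H^<$, and $\gamma$ comes from an element of such a finitely generated subfield. So it suffices to treat $E=H(t_1,\dots,t_n)$, with its induced valuation. By induction on $n$, it then suffices to handle a single step $E=F(t)$ where $F$ is $\upl$-free with $\Gamma_H^<$ cofinal in $\Gamma_F^<$, and $t$ is algebraic over $F$, or $t'\in F$, or $t^\dagger\in F$.

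In the algebraic case, $\Gamma_E/\Gamma_F$ is torsion. Cofinality is then immediate, since $n\gamma\in\Gamma_F$ for some $n\geq 1$. For $\upl$-freeness I would use the characterization quoted above from \cite[Proposition~1.3.12]{ADH4}: $F$ is $\upl$-free iff every $H$-asymptotic extension of transcendence degree $\le 1$ has asymptotic integration. An $H$-asymptotic extension $L$ of $E$ with $\operatorname{trdeg}(L|E)\le 1$ also satisfies $\operatorname{trdeg}(L|F)\le 1$, so $L$ has asymptotic integration, and hence $E$ is $\upl$-free.

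In the transcendental cases the same characterization is the tool. Since $\operatorname{trdeg}(E|F)=1$, the extension $E$ itself, and every algebraic extension of it, has asymptotic integration. To get $\upl$-freeness of $E$, the argument above does not apply directly: an extension $L$ of $E$ of transcendence degree $1$ has transcendence degree $2$ over $F$. So I would instead show directly that $\Psi_E$ has no supremum problem and verify the witness condition. Take $\upl\in E$ and suppose towards a contradiction that $\upl+g^{\dagger\dagger}\prec g^\dagger$ for all $g\succ 1$ in $E$. Pass to the real closure of $E$, which is still an $H$-field and still has asymptotic integration. Lemma~\ref{lem:11.5.14} then yields $\upg$ with $\upg^\dagger=-\upl$ such that $E(\upg)$ has a gap.

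Now the $H$-field $E(\upg)$ lies in a Liouville-type extension of $F$ of transcendence degree $2$. I would aim to contradict the $\upl$-freeness of $F$ by replacing $\upl$ with an element of $F$. The key estimate is to show that the pseudo-cauchy sequence $(-g^{\dagger\dagger})$ over $F$ has the same width in $E$ as in $F$, namely that $\Psi_E$ and $\Psi_F$ are cofinal. This uses that adjoining an integral or exponential integral creates no new values between $\Psi_F$ and $(\Gamma_F^>)'$, since $F$ has asymptotic integration (\emph{cf.}\ [ADH, 10.2, 10.4]). Then $\upl$ would be a pseudolimit of $(-g^{\dagger\dagger})_{g\in F}$ in $E$. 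But since $F$ is $\upl$-free this sequence has no pseudolimit in any extension with $\operatorname{trdeg}\le 1$ [ADH, 11.6], which is the desired contradiction.

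Cofinality in the transcendental step follows from the same analysis of $\Gamma_{F(t)}$. The new values are $vt$ (or $v(t-a)$ for a best approximation $a\in F$). For $t'=s\in F$ one gets $v(t-a)$ from an asymptotic integral of $s-a'$, which lies in $(\Gamma_F^{\ne})'=\Gamma_F$ or is a new value of the form $\psi$-bounded near $\Gamma_F$. For $t^\dagger\in F$, $\upl$-freeness excludes a new value between $\Gamma_F^<$ and $0$. I expect the main obstacle to be controlling these new values, that is, excluding that $v(t-a)$ sits in a cut just below $0$.
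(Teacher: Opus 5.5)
Your reduction to a single step $E=F(t)$ is fine, but the transcendental step does not go through, and that step is where all the content lies. (The paper itself only cites \cite{ADH4} for this result.) The contradiction you aim for rests on a false claim. $\upl$-freeness of $F$ does \emph{not} prevent the pc-sequence $(\upl_\rho)$ of $F$ from having a pseudolimit in an $H$-asymptotic extension of transcendence degree $\le 1$. What \cite[Proposition~1.3.12]{ADH4} gives for such extensions is only asymptotic integration, which is strictly weaker. The end of Section~\ref{sec:E(H) upo-free} gives an explicit instance. There $H_{(1)}=H\langle\upo_{(1)}\rangle$ is $\upl$-free and immediate over $H=\R(\ell_0,\ell_1,\dots)$, while $\upl=-\upg^\dagger\in M$ satisfies $\upl_n\leadsto\upl$ and $\omega(\upl)=\upo_{(1)}$. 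Hence the Hardy field $H_{(1)}(\upl)=H_{(1)}\langle\upl\rangle$ has transcendence degree $\le 1$ over $H_{(1)}$ and contains a pseudolimit of $(\upl_n)$. So even granting that $\Psi_F$ is cofinal in $\Psi_E$, an element $\upl\in E$ with $\upl_\rho\leadsto\upl$ is not excluded by $\operatorname{trdeg}(E|F)=1$. You would have to use that $t'\in F$ or $t^\dagger\in F$, and the proposal never does. You rightly noted that $E(\upg)$ has transcendence degree $2$ over $F$, so \cite[Proposition~1.3.12]{ADH4} does not help there either.

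Second, you never prove that $\Gamma_F^<$ is cofinal in $\Gamma_E^<$ (equivalently, $\Psi_F$ in $\Psi_E$); you call it the main obstacle, yet your $\upl$-freeness argument presupposes it. The justification you sketch is false. You claim that adjoining an integral or exponential integral creates no new values between $\Psi_F$ and $(\Gamma_F^>)'$ because $F$ has asymptotic integration. Let $F$ be a real closed $H$-field with asymptotic integration that is not $\upl$-free, witnessed by $\upl\in F$. By Lemma~\ref{lem:11.5.14}, adjoining $\upg$ with $\upg^\dagger=-\upl$ creates exactly such a new value, the gap $v\upg$. Adjoining next an integral $y\succ 1$ of $\upg$ gives $\psi(vy)=v\upg-vy>\Psi_F$, hence $\Gamma_F^{<}<vy<0$, and cofinality is lost. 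So $\upl$-freeness of $F$ must enter essentially into the one-step analysis of $v(s-\der F)$ and $v(s-F^\dagger)$, where $s:=t'$ or $s:=t^\dagger$. It is needed both for cofinality and for excluding pseudolimits of $(\upl_\rho)$ in $F(t)$, and this analysis is the missing core of the argument.

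A smaller slip occurs in the algebraic case. From $n\gamma\in\Gamma_F$ you only get $n\gamma\le\gamma$, which is coinitiality rather than cofinality toward $0$. Instead use ungroundedness of $F$: take $\delta\in\Gamma_F^<$ with $\psi(\delta)>\psi(n\gamma)=\psi(\gamma)$. Then $\gamma<\delta<0$, because the asymptotic couple of $E$ is of $H$-type.
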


\noindent
By [ADH, 16.4.1, 16.4.8 and its proof],  $H$ has an  $\upo$-free newtonian Liouville closed $H$-field extension whose constant field is a real closure of~$C$. %\marginpar{still  to check the real closure claim} 
Theorem~\ref{thm:maxhardymainthm} below is a Hardy field version of this fact, and is used in proving the results stated in the introduction.  
The next lemma is extracted from the proof of \cite[Lemma~1.3.18]{ADH4}:

\begin{lemma}\label{lem:1.3.18}
Suppose $H$ has no asymptotic integration, $\Gamma_H\ne \{0\}$, and $E$ is an  $H$-field extension of $H$ closed under integration.
Then $H$ is cofinal in some $\upo$-free $H$-subfield of $E$ that contains $H$ and has constant field $C$.
\end{lemma}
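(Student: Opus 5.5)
We build an increasing chain of $H$-subfields of $E$, each grounded, whose union is ungrounded; Proposition~\ref{prop:11.7.15} then makes the union $\upo$-free. The target is a field $H_\infty=\bigcup_n H_n\subseteq E$ with $H_0\supseteq H$, each $H_n$ a grounded $H$-field with constant field $C$, $H$ cofinal in each $H_n$, and $\max\Psi_{H_n}<\max\Psi_{H_{n+1}}$. Then $H_\infty$ is an $H$-field with constant field $C$, ungrounded, with $\Gamma\ne\{0\}$, and a union of grounded $H$-asymptotic subfields, hence $\upo$-free. It is also cofinal over $H$, since cofinality is preserved under unions.

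\emph{Step 1: reduce to the grounded case.} Since $H$ has no asymptotic integration and $\Gamma_H\ne\{0\}$, by the trichotomy $H$ is either grounded or has a gap $v\upg$. In the grounded case put $H_0:=H$. In the gap case, $E$ is closed under integration, so we take $y\in E$ with $y'=\upg$. By the facts quoted after Gehret's theorem ([ADH, 10.2.1, 10.2.2, 10.5.10, 10.5.11]), the field $H_0:=H(y)$ is an $H$-field with constant field $C$ and is grounded. Here $y\succ 1$ (or $y\prec 1$ after subtracting a constant, depending on the sign behaviour), and $H_0$ gets $\max\Psi=v\upg$. We keep $\Gamma_{H_0}=\Gamma_H\oplus\Z vy$ or $\Gamma_{H_0}=\Gamma_H$ under control so that $H$ remains cofinal in $H_0$; this cofinality check is the delicate point in this step.

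\emph{Step 2: the inductive step.} Given $H_n$ grounded, take $g\succ1$ in $H_n$ with $v(g^\dagger)=\max\Psi_{H_n}$. Since $E$ is closed under integration, pick $\ell\in E$ with $\ell'=g^\dagger$ (think $\ell=\log\log$-type), and put $H_{n+1}:=H_n(\ell)$. We invoke the standard fact [ADH, 10.2.3, 10.5.11]: integrating an element with valuation $\max\Psi$ yields $\ell\succ1$ transcendental with $v(\ell^\dagger)>\max\Psi_{H_n}$. It also gives that $H_{n+1}$ is a grounded $H$-field with constant field $C$ and $\max\Psi_{H_{n+1}}=v(\ell^\dagger)$. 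Moreover, $v\ell$ lies strictly between $\Gamma^<_{H_n}$-elements near $0$, namely $0>v\ell>\Gamma_{H_n}^<$, so $\Gamma_{H_n}^<$ is cofinal in $\Gamma_{H_{n+1}}^<$. Hence $H$ stays cofinal in $H_{n+1}$.

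\emph{Main obstacle.} The main difficulty is verifying that adjoining these integrals does not enlarge the constant field and keeps the extensions grounded with the stated $\Psi$. We would get this from [ADH, 10.2.3/10.5] on adjoining integrals at $\max\Psi$. Cofinality then follows from the explicit description of the value group of $H_n(\ell)$.
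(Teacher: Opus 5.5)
Your proposal is correct and follows the paper's argument. In the gap case you adjoin an integral $a\not\asymp 1$ of the gap element to get a grounded $H(a)\subseteq E$ with constant field $C$ (via [ADH, 10.2.1, 10.2.2]), and in the grounded case your tower $H_0\subseteq H_1\subseteq\cdots$ of repeated integrals of $g^\dagger$ with $v(g^\dagger)=\max\Psi$ is exactly the extension $H_{\upo}$ of [ADH, 11.7.17] that the paper cites, which is $\upo$-free by Proposition~\ref{prop:11.7.15}.

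There are three small slips, none of which affects the proof:
\begin{itemize}
\item $\max\Psi_{H(y)}$ is $v\upg-vy$, not $v\upg$. This is harmless, since only groundedness is used.
\item From $\Gamma^<_{H_n}<v\ell<0$ you conclude that $\Gamma^<_{H_n}$ is cofinal in $\Gamma^<_{H_{n+1}}$, which is false in the paper's sense of cofinal. What it does show is that $\Gamma_{H_n}$ is cofinal in $\Gamma_{H_{n+1}}$, and that is enough for $H_n$ to be cofinal in $H_{n+1}$.
\item The cofinality you left open in Step~1 follows from the analogous fact $\Gamma_H^<<vy<\Gamma_H^>$.
\end{itemize}
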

\begin{proof} %\marginpar{to check cofinality in two [[..]] passages of the proof} 
If $H$ is grounded, then the  extension $H_{\upo}$ of $H$ from~[ADH, 11.7.17 and surrounding remarks, with $H$ instead of $F$] is an $\upo$-free $H$-field and   
embeds into $E$ over $H$, and $H$ is cofinal in~$H_{\upo}$ with $C_{H_{\upo}}=C$, by the construction of $H_{\upo}$.
Next, suppose~$H$ has a gap~$vb$, $b\in H^\times$. Take~$a\in  E$
with~$a'=b$ and~$a\nasymp 1$. Then~$H(a)$ is a grounded $H$-subfield of $E$ with  $C_{H(a)}=C$, by~[ADH,  10.2.1, 10.2.2 and remarks following their proofs], and $H$ is cofinal in $H(a)$ by~[ADH, 9.8.2 and subsequent remarks].
 Now apply the previous case to~$H(a)$ in place of~$H$.
\end{proof}

\subsection*{The functions $\omega$ and $\sigma$ in $H$-fields}
Let $H$ be an $H$-field. In [ADH, 11.8] we introduced some special subsets of $H$:   
\begin{align*}
\Upg(H)	&\ :=\  \big\{ h^{\dagger}:\ h\in H,\, h\succ 1\big\}\ \subseteq\  H^{>}, \\
\Upl(H)	&\ :=\  \big\{ {-h^{\dagger\dagger}}:\ h\in H,\, h\succ 1\big\}, \\
\Upd(H)	&\ :=\  \big\{ {-h^{\prime\dagger}}:\  h\in H,\, 0\neq h\prec 1\big\}.
%		&\ \ = \  \big\{ {-h^{\dagger\dagger}}+h^\dagger:\ h\in H,\, h\succ 1\big\}.
\end{align*}
Here $\Upl(H)= - \Upg(H)^\dagger$ and $\Upd(H)$ are disjoint; in fact, $\Upl(H)<\Upd(H)$. 
If $H$ has asymptotic integration, then by~[ADH, 11.6.1, 11.8.16],
\begin{equation}\label{eq:uplfree}
\text{$H$ is $\upl$-free}\quad \Longleftrightarrow\quad  
H\ =\ \Upl(H)^{\downarrow}\cup \Upd(H)^\uparrow.
\end{equation}
If $H$ is Liouville closed, then by [ADH,  11.8.13], $\Upl(H)$ is downward closed,   $\Upd(H)$ is upward closed, 
$H=\Upl(H)\cup\Upd(H)$, and by [ADH, 11.8.19],
\begin{equation}\label{eq:11.8.19}
\Upg(H)\ =\ \{h':\ h\in H^>,\ h\succ 1\}, \quad \Upg(H)\  \text{ is upward closed}.
\end{equation}
By [ADH, 11.8.20, remarks before 11.8.21 and 11.8.30, 11.8.29, 11.8.31]:

\begin{lemma}\label{lem:11.8.29}
The functions $\omega\colon H\to H$ and $\sigma\colon H^\times\to H$ are strictly increasing on $\Upl(H)^\downarrow$ and $\Upg(H)^\uparrow$, respectively,
with $\omega(H)<\sigma\big(\Upg(H)\big)$.
If $H$ is Liouville closed, then $\omega(H)=\omega\big(\Upl(H)\big)=\omega\big(\Upd(H)\big)$ and
$\sigma\big(H^>\setminus\Upg(H)\big)\subseteq\omega(H)^\downarrow$.
\end{lemma}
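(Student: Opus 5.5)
The plan is to derive each clause from two algebraic identities together with the valuation-theoretic estimates available in an $H$-field. The first identity, valid for $z,w\in H$ with $d:=z-w$, is
$$\omega(z)-\omega(w)\ =\ -2d'-d(z+w)\ =\ -d\,\big(2d^\dagger+z+w\big)\qquad(d\ne 0).$$
The second, valid for $y_1,y_2\in H^\times$, is
$$\sigma(y_1)-\sigma(y_2)\ =\ \big(\omega(-y_1^\dagger)-\omega(-y_2^\dagger)\big)+(y_1-y_2)(y_1+y_2).$$

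\textbf{Monotonicity of $\omega$.} For $w<z$ in $\Upl(H)^\downarrow$ we have $d>0$, so it suffices to show $2d^\dagger+z+w<0$. Choose $h\succ 1$ with $z,w\le -h^{\dagger\dagger}$. I would distinguish two cases according to $d$.
\begin{enumerate}
\item[(a)] If $d\succeq h^\dagger$, write $d=h^\dagger u$ with $u\succeq 1$. Then $2d^\dagger+z+w\le 2h^{\dagger\dagger}+2u^\dagger-2h^{\dagger\dagger}=2u^\dagger$, up to lower-order corrections that are controlled by $\Upl(H)<\Upd(H)$.
\item[(b)] If $d\prec h^\dagger$, replace $h$ by a slower-growing $g\succ 1$ with $d\asymp g^\dagger$. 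This uses that $\Upg(H)$ is cofinal in the relevant region.
\end{enumerate}
In both cases the sign should come out right because $u^\dagger\le 0$ for $u\prec 1$, and because of the $H$-field axiom $h^\dagger>0$ for $h\succ 1$. I expect this careful bookkeeping of signs to be the main obstacle. The fallback is to use the contractive property (Lemma~\ref{lem:6.5.4}) to show that the error terms are $\prec h^\dagger$, so that they cannot flip the sign.

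\textbf{Monotonicity of $\sigma$.} For $y_1>y_2$ in $\Upg(H)^\uparrow$, the term $(y_1-y_2)(y_1+y_2)$ is positive and $\asymp (y_1-y_2)\,y_1$. I then want $\omega(-y_1^\dagger)-\omega(-y_2^\dagger)\prec (y_1-y_2)y_1$. Applying the first identity with $d=y_2^\dagger-y_1^\dagger=-(y_1/y_2)^\dagger$ bounds this difference by $d\cdot\max(d^\dagger,y_i^\dagger)$. Since $y_i\succeq$ some $g^\dagger$ with $g\succ 1$, the logarithmic derivatives $y_i^\dagger$ are $\prec y_i$, by \eqref{eq:pdv} and Lemma~\ref{lem:6.5.4}. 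This is an estimate of the same nature as \eqref{eq:11.7.6}.

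\textbf{The inequality $\omega(H)<\sigma(\Upg(H))$.} For $y=g^\dagger\in\Upg(H)$ we have $\sigma(y)=\omega(-g^{\dagger\dagger})+(g^\dagger)^2$. For $z\in\Upd(H)$, write $z=-h'^\dagger$ and compute $\omega(z)$ directly. For $z\in\Upl(H)^\downarrow$, monotonicity gives $\omega(z)\le\omega(-g_1^{\dagger\dagger})$ for suitable $g_1$, and comparing $g_1$ with $g$ (either $g_1\succeq g$ or the reverse) yields $\omega(-g_1^{\dagger\dagger})-\omega(-g^{\dagger\dagger})\prec (g^\dagger)^2$, again via \eqref{eq:11.7.6}. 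This shows $\omega(z)<\sigma(y)$.

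\textbf{Liouville closed case.} Use $H=\Upl(H)\cup\Upd(H)$. For $z\in\Upd(H)$, solve $2u^\dagger=z$ in $H$, which is possible since $H^\dagger=H$. Then $\ker(4\der^2+\omega(z))$ contains $u$; a second, independent solution $u\int u^{-2}$, available since $H$ is closed under integration, has $2$-fold logarithmic derivative in $\Upl(H)$ with the same $\omega$-value. This gives $\omega(\Upd(H))=\omega(\Upl(H))$. For $y\in H^>\setminus\Upg(H)$, we have $y\preceq$ every element of $\Upg(H)$, since $\Upg(H)$ is upward closed by \eqref{eq:11.8.19}. Then $\sigma(y)=\omega(-y^\dagger)+y^2$, and the term $y^2$ is absorbed by a slight shift of $-y^\dagger$ within $\Upl(H)$, which places $\sigma(y)$ below some value of $\omega$.
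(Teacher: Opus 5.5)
Your approach (a direct computation from the two identities) differs from the paper, which gives no argument of its own and cites [ADH, 11.8.20, 11.8.29--11.8.31]. A direct route can work, but both monotonicity steps fail as written.

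The serious gap is in the monotonicity of $\sigma$: the estimate $\omega(-y_1^\dagger)-\omega(-y_2^\dagger)\prec (y_1-y_2)y_1$ is false. In the Hardy field $H=\R(x,\ex^x)$ take $y_2:=1/x=x^\dagger\in\Upg(H)$ and $y_1:=(1+\ex^{-x})/x$. Then $\omega(-y_1^\dagger)-\omega(-y_2^\dagger)\sim 2\ex^{-x}$, whereas $(y_1-y_2)y_1\sim \ex^{-x}/x^2$. Your supporting claim $y^\dagger\prec y$ on $\Upg(H)^\uparrow$ also fails: $y=1/x$ has $y^\dagger=-y$, and $y=1/(x\log x)$ has $y^\dagger\succ y$. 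So when $y_1/y_2\to 1$ the $\omega$-part dominates, and what you need is its sign, not a size bound. Concretely, with $u:=y_1/y_2>1$ and $t:=u^\dagger/y_2$,
\[ \sigma(y_1)-\sigma(y_2)\ =\ y_2^2\big(u^2-1-t^2\big)+2y_2t'. \]
If $u=1+\epsilon$ with $0<\epsilon\prec 1$, then $t<0$ and $t\prec 1$, since $\epsilon'\prec g^\dagger\preceq y_2$ for suitable $g\succ 1$. Hence $t'>0$. Also $t^2\prec\epsilon$ because $\Psi<(\Gamma^>)'$, so both terms are positive. In the remaining cases $u\succ 1$ and $u\sim c>1$, the term $2y_2t'-y_2^2t^2$ is indeed negligible against $y_2^2(u^2-1)$, as you intended.

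For $\omega$ the case analysis is inverted. Since $d^\dagger=h^{\dagger\dagger}+u^\dagger$ exactly, there are no lower-order corrections to appeal to. Your bound $2d^\dagger+z+w\le 2u^\dagger$ settles the sign only when $u\prec 1$. That is your case (b), which needs no reduction; and the reduction to $d\asymp g^\dagger$ is impossible anyway when $vd>\Psi$. For $u\succ 1$ the bound is useless, because $u^\dagger>0$. The loss comes from bounding $z+w$ by $-2h^{\dagger\dagger}$. Using instead $z+w=2z-d\le -2h^{\dagger\dagger}-d$ for the larger element $z$ gives $2d^\dagger+z+w\le 2u^\dagger-uh^\dagger$. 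This is negative in all cases: $u^\dagger<0$ if $u\prec1$; $u^\dagger\prec h^\dagger$ if $u\asymp 1$; and $u^\dagger\prec uh^\dagger$ if $u\succ 1$, by $\Psi<(\Gamma^>)'$.

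For $\omega(H)<\sigma(\Upg(H))$, your case split does not exhaust $H$: for instance $x\in\R(x)$ lies in neither $\Upl\big(\R(x)\big)^\downarrow$ nor $\Upd\big(\R(x)\big)$. Moreover, \eqref{eq:11.7.6} presupposes $H$ ungrounded. Instead, write an arbitrary $z\in H$ as $z=-g^{\dagger\dagger}+ug^\dagger$ and use $\sigma(g^\dagger)-\omega(z)=g^\dagger\big((1+u^2)g^\dagger+2u'\big)>0$. Positivity holds since $u'\prec g^\dagger$ for $u\preceq 1$ and $u'\prec u^2g^\dagger$ for $u\succ 1$.

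In the Liouville closed part, your argument for $\omega(\Upd(H))=\omega(\Upl(H))$ is fine in outline. You must choose the antiderivative of $u^{-2}$ to be $\prec 1$, and argue the reverse inclusion symmetrically. The last inclusion, however, is left unproved. The missing fact is that $y\in H^>\setminus\Upg(H)$ has, by \eqref{eq:11.8.19}, an antiderivative $Y\prec 1$, whence $\sigma(y)-\omega(-y^\dagger-yY)=-y^2(1-Y^2)<0$. Note that $-y^\dagger$ itself need not lie in $\Upl(H)$: for $y=\ex^{-x}$ it equals $1\in\Upd(H)$.
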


\noindent
 If $H$ has asymptotic integration, then by [ADH, 11.7.7, 11.8.30]:
\begin{equation}\label{eq:upofree}
\text{$H$ is $\upo$-free}\quad \Longleftrightarrow\quad
H\ =\ \omega\big(\Upl(H)\big){}^{\downarrow}\cup \sigma\big(\Upg(H)\big){}^\uparrow,
\end{equation}
in analogy to \eqref{eq:uplfree}. We use this to obtain a variant of [ADH, 11.8.33]:

\begin{cor}\label{cor:Schwarz closed}
Suppose $H$ is Liouville closed. Then the following are equivalent:
\begin{enumerate}
\item[\textup{(i)}] $H=\omega\big(\Upl(H)\big)\cup \sigma\big(\Upg(H)\big)$;
\item[\textup{(ii)}] $H$ is $\upo$-free, $\omega\big(\Upl(H)\big)$ is downward closed, and $\sigma\big(\Upg(H)\big)$ is  upward closed;
\item[\textup{(iii)}] $H=\omega(H)\cup\sigma(H^\times)$, and $\omega(H)$ is downward closed;
\item[\textup{(iv)}] every $A\in H[\der]$ of order $2$ splits over $H[\imag]$, and $\omega(H)$ is downward closed; here $H[\imag]$ with $\imag^2=-1$ is an algebraic closure of $H$.
\end{enumerate}
\end{cor}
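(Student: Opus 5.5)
Throughout, $H$ is Liouville closed, so by [ADH, 11.8.13] we have $H=\Upl(H)\cup\Upd(H)$ with $\Upl(H)$ downward closed and $\Upd(H)$ upward closed, and $H$ is $\upl$-free (Lemma~\ref{lem:1-ls => upl-free}). The plan is to prove (i)$\Rightarrow$(ii)$\Rightarrow$(i), then (i)$\Leftrightarrow$(iii), and finally (iii)$\Leftrightarrow$(iv). Lemma~\ref{lem:11.8.29} supplies the ordering and monotonicity facts; \eqref{eq:upofree} handles $\upo$-freeness; \eqref{eq:sigma} and \eqref{eq:omega} handle splitting.

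\emph{(i)$\Rightarrow$(ii).} Since $\omega(H)<\sigma(\Upg(H))$, and since (i) says the union of the two sets is $H$, the two sets partition $H$ into a lower part and an upper part. Hence $\omega(\Upl(H))$ is downward closed and $\sigma(\Upg(H))$ is upward closed. The equality in \eqref{eq:upofree} then holds trivially, so $H$ is $\upo$-free; note that $H$ has asymptotic integration because it is Liouville closed.

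\emph{(ii)$\Rightarrow$(i).} By \eqref{eq:upofree}, $H=\omega(\Upl(H))^\downarrow\cup\sigma(\Upg(H))^\uparrow$, and the closedness assumptions remove the arrows.

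\emph{(i)$\Leftrightarrow$(iii).} Lemma~\ref{lem:11.8.29} gives $\omega(H)=\omega(\Upl(H))$. Also $\sigma(y)=\sigma(-y)$, and for $y\in H^>\setminus\Upg(H)$ we have $\sigma(y)\in\omega(H)^\downarrow$. Hence $\sigma(H^\times)\subseteq\sigma(\Upg(H))\cup\omega(H)^\downarrow$.

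For (i)$\Rightarrow$(iii): $H=\omega(H)\cup\sigma(H^\times)$ is immediate, and $\omega(H)=\omega(\Upl(H))$ is downward closed, as shown in the first step.

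For (iii)$\Rightarrow$(i): since $\omega(H)$ is downward closed, $\omega(H)^\downarrow=\omega(H)$. Therefore
\[
H\ =\ \omega(H)\cup\sigma(H^\times)\ \subseteq\ \omega(H)\cup\sigma\big(\Upg(H)\big)\ =\ \omega\big(\Upl(H)\big)\cup\sigma\big(\Upg(H)\big).
\]

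\emph{(iii)$\Leftrightarrow$(iv).} An order-$2$ operator $A\in H[\der]$ can be made monic by dividing by its leading coefficient, and twisting then brings it to the form $4\der^2+f$ with $f\in H$. This uses a $u\in H^\times$ with $u^\dagger=-\tfrac12 a_1$, which exists because $H^\dagger=H$. By the remarks in Section~\ref{sec:diffalg}, splitting over $K=H[\imag]$ is preserved under these reductions. By \eqref{eq:sigma}, $4\der^2+f$ splits over $K$ iff $f\in\omega(H)\cup\sigma(H^\times)$. Note that the hypothesis of \eqref{eq:sigma} holds, since $H$ is real closed with $H^\dagger=H$. So "every order-$2$ operator splits over $K$" is equivalent to $H=\omega(H)\cup\sigma(H^\times)$, and the downward-closedness clause is the same in (iii) and (iv).

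The only delicate point is (i)$\Rightarrow$(ii). There we must get downward closedness of $\omega(\Upl(H))$ from the partition, which relies on the strict inequality $\omega(H)<\sigma(\Upg(H))$ from Lemma~\ref{lem:11.8.29}. Everything else is bookkeeping with the cited facts.
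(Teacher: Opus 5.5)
Your proof is correct and follows essentially the same route as the paper's. Both rest on the same three ingredients:
\begin{enumerate}
\item[\textup{(1)}] Lemma~\ref{lem:11.8.29}, giving $\omega(\Upl(H))=\omega(H)<\sigma(\Upg(H))$ and $\sigma(H^\times)\subseteq\omega(H)^\downarrow\cup\sigma(\Upg(H))$;
\item[\textup{(2)}] \eqref{eq:upofree}, for $\upo$-freeness;
\item[\textup{(3)}] \eqref{eq:sigma}, together with the twisting reduction to $4\der^2+f$, for (iii)$\Leftrightarrow$(iv).
\end{enumerate}
The paper simply orders the implications as the cycle (i)$\Rightarrow$(iii)$\Rightarrow$(ii)$\Rightarrow$(i), whereas you prove (i)$\Leftrightarrow$(ii) and (i)$\Leftrightarrow$(iii).
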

\begin{proof}  Lemma~\ref{lem:11.8.29} gives
$\omega(\Upl(H))=\omega(H) < \sigma\big(\Upg(H)\big)\subseteq\sigma(H^\times)$, from which we obtain
(i)~$\Rightarrow$~(iii). 
That lemma also yields
$$\sigma(H^\times)\ =\ \sigma(H^{>})\ =\ \sigma\big(H^{>}\setminus \Upg(H)\big)\cup \sigma\big(\Upg(H)\big)\ \subseteq\ \omega(H)^{\downarrow}\cup \sigma(\Upg(H)),$$
thus (iii)~$\Rightarrow$~(ii) by \eqref{eq:upofree}.  
%(iii) holds, then
%$$H\setminus \omega\big(\Upl(H)\big){}^{\downarrow}  = H\setminus\omega(H)=  \sigma(H^\times)\setminus\omega(H) =  \sigma\big( \Upg(H) \big)$$
%by Lemma~\ref{lem:11.8.29}, and this yields (ii), using \eqref{eq:upofree}. 
The latter also gives (ii)~$\Rightarrow$~(i).
The equivalence~(iii)~$\Leftrightarrow$~(iv) follows from \eqref{eq:sigma} 
and remarks on linear differential operators in Section~\ref{sec:diffalg}.
\end{proof}

\noindent
We say that $H$ is {\it Schwarz closed}\/ if it is Liouville closed and  satisfies
the equivalent conditions in Corollary~\ref{cor:Schwarz closed}.
From [ADH, 14.2.16, 14.2.18] we obtain:
 
\begin{prop}\label{prop:14.2.16+18}
Suppose $H$ is $2$-newtonian and real closed, and has asymptotic integration. Then $\omega\big(\Upl(H)^\downarrow\big)$ is downward closed  and
$\sigma\big(\Upg(H)^\uparrow\big)$ is upward closed.
\end{prop}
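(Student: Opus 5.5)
We argue directly, with Lemma~\ref{lem:ADH 14.2.18} for $\sigma$ and the analogous Newton-degree argument for $\omega$; both rely on Lemma~\ref{lem:14.2.12}, which applies since $H$ is $2$-newtonian and has asymptotic integration. The two parts are parallel, so we treat $\sigma$ first.

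\emph{The $\sigma$-part.} Let $y\in\Upg(H)^\uparrow$ and $f\in H$ with $f>\sigma(y)$; we must find $z\in\Upg(H)^\uparrow$ with $\sigma(z)=f$. Since $y\in\Upg(H)^\uparrow$, take $g\succ 1$ with $y\ge g^\dagger$, and put $\upg:=g^\dagger$. Then $\upg$ is active, and $\sigma(\upg)\le\sigma(y)<f$ because $\sigma$ is increasing on $\Upg(H)^\uparrow$ (Lemma~\ref{lem:11.8.29}).

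We want some active $\upg$ with $\upg\in\Upg(H)$, $\sigma(\upg)<f$ and $b^2:=f-\sigma(\upg)\succ\upg^2$. Here $b$ exists because $H$ is real closed. To obtain such $\upg$, move $\upg$ down to a smaller active element $\theta\prec\upg$ in $\Upg(H)$; this is possible since $H$ has asymptotic integration, so $\Psi$ has no maximum. By \eqref{eq:11.7.6}, $\sigma(\upg)-\sigma(\theta)=\omega(-\upg^\dagger)-\omega(-\theta^\dagger)+\upg^2-\theta^2$, which is $\sim\upg^2$ up to $\prec\upg^2$. Hence $f-\sigma(\theta)=(f-\sigma(\upg))+(\sigma(\upg)-\sigma(\theta))\succeq\upg^2\succ\theta^2$.

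Lemma~\ref{lem:ADH 14.2.18}, applied with $\theta$ in place of $\upg$, then gives $z\sim b$ with $\sigma(z)=f$. We may take $z>0$. Since $b\succ\theta$ and $\theta\in\Upg(H)$, we get $z>\theta$, and the interval property gives $z\in\Upg(H)^\uparrow$. The step I expect to need most care is the asymptotic estimate $f-\sigma(\theta)\succ\theta^2$, which is where \eqref{eq:11.7.6} enters.

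\emph{The $\omega$-part.} Let $\upl\in\Upl(H)^\downarrow$ and $f<\omega(\upl)$; we seek $z\in\Upl(H)^\downarrow$ with $\omega(z)=f$. Write $\upl\le-g^{\dagger\dagger}$ and pass to an active $\phi$ with $\omega(-\phi^\dagger)\ge\omega(\upl)$ and $\omega(-\phi^\dagger)-f\succ\phi^2$, using \eqref{eq:11.7.6} in the same way as above. Put $c:=\sqrt{\omega(-\phi^\dagger)-f}$ and look for $z=-\phi^\dagger+c\,u$ with $u\sim 1$, so that $z$ solves
$$\omega(-\phi^\dagger+cY)-f\ =\ c^2-c^2Y^2-2(cY)'+2\phi^\dagger cY\ =\ 0.$$
After compositional conjugation by an active $\phi_0\prec c$, this equation has dominant part $\sim c^2(1-Y^2)$, so Lemma~\ref{lem:14.2.12} at $\bar u=1$ gives the zero $u$. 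Choose the sign of $c$ negative, so that $z<-\phi^\dagger$; then $z\in\Upl(H)^\downarrow$. Monotonicity of $\omega$ on $\Upl(H)^\downarrow$ keeps track of the ordering throughout.
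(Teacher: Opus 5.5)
Your $\sigma$-part is correct. It is exactly the argument behind [ADH, 14.2.18]; the paper gives no proof of this proposition beyond citing [ADH, 14.2.16, 14.2.18], and it isolates the core of 14.2.18 as Lemma~\ref{lem:ADH 14.2.18}. The lower bound $f-\sigma(\theta)\succeq\upg^2\succ\theta^2$ works there because $\sigma(\upg)-\sigma(\theta)$ contains the term $\upg^2-\theta^2\sim\upg^2$. That term dominates the $\omega$-difference, which is $\prec\upg^2$ by \eqref{eq:11.7.6}.

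The $\omega$-part has a genuine gap at the step you pass over quickly. You need an active $\phi$ with $\omega(-\phi^\dagger)-f\succ\phi^2$, and you justify this by ``using \eqref{eq:11.7.6} in the same way as above''. ($\phi$ should moreover lie in $\Upg(H)$, so that $-\phi^\dagger\in\Upl(H)$ at the end.) For $\theta\prec\upg$ in $\Upg(H)$ you have $\omega(-\theta^\dagger)-f=\big(\omega(-\theta^\dagger)-\omega(-\upg^\dagger)\big)+d$, where $d:=\omega(-\upg^\dagger)-f>0$. Here there is no analogue of the $\upg^2-\theta^2$ term: \eqref{eq:11.7.6} gives only the \emph{upper} bound $\omega(-\theta^\dagger)-\omega(-\upg^\dagger)\prec\upg^2$. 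Since $f$ is arbitrary below $\omega(\upl)$, $d$ can be so small that $vd>2\Psi$, that is, $d\prec\theta^2$ for every $\theta\in\Upg(H)$. In that case you need a \emph{lower} bound on the increment of $\omega$ along $\Upl(H)$, and \eqref{eq:11.7.6} cannot provide one.

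The missing ingredient is how $\omega$ behaves along a logarithmic sequence. Let $(\ell_\rho)$ be a logarithmic sequence for $H$, with $\upg_\rho:=\ell_\rho^\dagger$ and $\upl_\rho:=-\upg_\rho^\dagger$. Then $\omega(\upl_{\rho'})-\omega(\upl_\rho)\sim\upg_{\rho+1}^2$ for $\rho<\rho'$ [ADH, 11.7.1].
\begin{itemize}
\item By coinitiality of $(\upg_\rho)$ in $\Upg(H)$, pick $\rho$ with $\upg_\rho\prec\upg$.
\item Then $\upl\le-\upg^\dagger<\upl_\rho$, so $f<\omega(\upl_\rho)$ by monotonicity of $\omega$ on $\Upl(H)^\downarrow$.
\item The choice $\phi:=\upg_{\rho+2}$ then gives $\omega(-\phi^\dagger)-f>\omega(\upl_{\rho+2})-\omega(\upl_\rho)\sim\upg_{\rho+1}^2\succ\phi^2$.
\end{itemize}

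With this choice the rest of your argument goes through. You should also record why the dominant part is eventually $c^2(1-Y^2)$. The coefficient of $Y$ in your equation is $-2c\,(c/\phi)^\dagger$, and $(c/\phi)^\dagger\prec c$ by Lemma~\ref{lem:6.5.4}, since $c\succ\phi$. This is exactly as in the proof of Lemma~\ref{lem:ADH 14.2.18}. Only $1$-newtonianity is used in this half.
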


\noindent
Thus every $2$-newtonian $\upo$-free
Liouville closed $H$-field is Schwarz closed.

\section{Functions and Germs}\label{sec:germs}
 
\noindent
After introducing notation to deal with germs and asymptotic relations we
 review the basic facts about second-order linear differential equations, and discuss the composition of germs. Throughout, $a,b$ ranges over $\R$, and $r$ over~${\N\cup\{\infty, \omega\}}$.
 
 \subsection*{Differential rings of germs}
Let 
 $\c_a^r$ be the  $\R$-algebra  of functions $[a,+\infty)\to\R$ which,
 for some open $U\supseteq [a,+\infty)$, extend to a $\c^r$-function~${U\to\R}$; here~$\c^\omega$  means ``analytic''. Hence
 $$ \c_a\ :=\ \c_a^0 \ \supseteq \c_a^1 \ \supseteq\ \c_a^2  \ \supseteq\  \cdots  \ \supseteq\ \c_a^\infty  \ \supseteq\ \c_a^\omega.$$
Note that  $\c_a^r[\imag]=\c_a^r+\c_a^r\imag$  is a subalgebra of the $\C$-algebra~$\c_a[\imag]$
of continuous functions $[a,+\infty)\to\C$. For $f=g+h\imag\in \c_a[\imag]$, $g,h\in\c_a$,  
$$\Re f\ :=\ g,\quad \Im f\ :=\ h,\quad
\bar{f}\ :=\ g - h\imag \ \in\  \c_a[\imag],\qquad |f|\ :=\ \sqrt{g^2+h^2}\ \in\ \c_a.$$  
Let $\c^r$ be the $\R$-algebra of germs at $+\infty$ of functions in $\bigcup_a\c_a^r$. Thus~$\c:=\c^0$ consists of the germs
at $+\infty$ of continuous   functions $[a,+\infty)\to\R$ ($a\in\R$), and
 $$ \c\ =\ \c^0 \ \supseteq \c^1 \ \supseteq\ \c^2  \ \supseteq\  \cdots  \ \supseteq\ \c^\infty  \ \supseteq\ \c^\omega.$$
We also consider the  $\C$-subalgebra  $\Gr[\imag]=\Gr+\Gr\imag$ of $\c[\imag]$.
For~$n\ge 1$ we have a derivation $g\mapsto g'\colon \Gn[\imag]\to\c^{n-1}[\imag]$
such that $\text{(germ of $f$)}'=\text{(germ of $f'$)}$ for $f\in\bigcup_a \Can[\imag]$, and
this derivation restricts to a derivation  $\Gn\to\c^{n-1}$.  
Therefore~$\Gi[\imag]  := \bigcap_{n}\, \Gn[\imag]$ 
is naturally a differential ring with ring of constants $\C$, and
$\Gi  := \bigcap_{n}\, \Gn$
is a differential subring of $\Gi[\imag]$ with ring of constants $\R$.
Thus $\Ginf[\imag]$ is a differential subring of $\Gi[\imag]$,  $\Ginf$ of $\Gi$, and $\Gom$ of $\Ginf$. 

\subsection*{Asymptotic relations} We often use the same notation for a $\C$-valued function 
on a subset of~$\R$ containing an interval $(a, +\infty)$  as for its germ if the resulting ambiguity is harmless.
With this convention, given a property~$P$ of complex numbers
and~${g\in \c[\imag]}$ we say that {\em $P\big(g(t)\big)$ holds eventually\/} if~$P\big(g(t)\big)$ holds for all sufficiently large real $t$.
We equip $\c$ with the partial ordering given by~$f\leq g:\Leftrightarrow f(t)\leq g(t)$, eventually,
and equip $\c[\imag]$ with the asymptotic relations~$\preceq$,~$\prec$,~$\sim$ defined as follows: for $f,g\in \c[\imag]$,
\begin{align*} f\preceq g\quad &:\Longleftrightarrow\quad \text{there exists $c\in \R^{>}$ such that $|f|\le c|g|$,}\\
f\prec g\quad &:\Longleftrightarrow\quad \text{$g\in \c[\imag]^\times$ and $\lim_{t\to \infty} f(t)/g(t)=0$} \\
 &\phantom{:} \Longleftrightarrow\quad \text{$g\in \c[\imag]^\times$ and $\abs{f}\leq c\abs{g}$ for all $c\in\R^>$},\\
f\sim g\quad &:\Longleftrightarrow\quad \text{$g\in \c[\imag]^\times$ and
$\lim_{t\to \infty} f(t)/g(t)=1$}\\ 
\quad&\phantom{:} \Longleftrightarrow\quad f-g\prec g.
\end{align*}
%For $f,g\in \c[\imag]$ we also set
%$$f\asymp g:\ \Leftrightarrow\ f\preceq g \ \&\ g\preceq f,\qquad f\succeq g:\ \Leftrightarrow\ g\preceq f,\qquad f\succ g:\ \Leftrightarrow\ g\prec f,$$
%so $\asymp$ is an equivalence relation on $\c[\imag]$, and $f\sim g\Rightarrow f\asymp g$.  

\subsection*{Oscillation}
We say that $f\in\c_a$ {\it oscillates}\/ if $f(t)=0$ for arbitrarily large~$t\ge a$ and~${f(t)\ne 0}$ for
arbitrarily large $t\ge a$. Hence $f\in\c_a$ is non-oscillating (that is, does not oscillate) if either $f(t)<0$  eventually, or $f(t)=0$  eventually, or $f(t)>0$  eventually.
We also say that a germ   $g\in\c$ oscillates if some (equivalently, every) representative of~$g$ in~$\bigcup_a\c_a$ oscillates. So $g\in\c$ is non-oscillating iff $g=0$ or~$g\in\c^\times$.

\subsection*{Second-order linear differential equations}
Let $f\in \c_a$. We shall  consider the associated differential equation
\begin{equation}\label{eq:2nd order}\tag{L}
 Y'' + fY\ =\ 0. 
 \end{equation}
The solutions~${y\in \Cat}$ to \eqref{eq:2nd order} form
an $\R$-linear subspace $\Sol(f)$ of $\Cat$.   The
solutions~${y\in \Cat[\imag]}$ to \eqref{eq:2nd order} are the $y_1+y_2\imag$ with
$y_1, y_2\in \Sol(f)$ and form
a $\C$-linear subspace $\Sol_{\C}(f)$ of~$\Cat[\imag]$. For any pair $(c,d)\in\C^2$ there is a unique solution~$y\in \Cat[\imag]$ to~\eqref{eq:2nd order} with~${y(a)=c}$ and $y'(a)=d$, and the map that assigns to~$(c,d)$
this unique solution is an isomorphism $\C^2\to \Sol_{\C}(f)$ of 
$\C$-linear spaces, which restricts to an isomorphism~${\R^2\to \Sol(f)}$ of $\R$-linear spaces. 
We have $f\in \Can\Rightarrow\Sol(f)\subseteq \c^{n+2}_a$ (hence~$f\in \c^\infty_a\Rightarrow {\Sol(f)\subseteq \c^\infty_a}$)  and $f\in \Caom\Rightarrow \Sol(f)\subseteq \Caom$. (See~\cite[(10.5.3)]{Dieudonne}.)    

\medskip
\noindent
Let 
$y\in \Sol(f)^{\neq}$, and  let $Z:=y^{-1}(0)$ be the set of zeros of~$y$, so $Z\subseteq [a,+\infty)$ is closed in $\R$, and  $Z$ has no limit point (by Rolle).
In particular, $Z\cap[a,b]$  is finite for every $b\ge a$. Thus
$$\text{$y$ does not oscillate}
\quad \Longleftrightarrow\quad   \text{$Z$ is finite}\quad \Longleftrightarrow\quad \text{$Z$ is bounded.}$$
If $t_0\in Z$    is not the largest element of $Z$, then $Z\cap (t_0,t_1)=\emptyset$ for some $t_1> t_0$ in~$Z$.
Call a pair of zeros~$t_0<t_1$ of $y$   {\it consecutive}\/ if $Z\cap(t_0,t_1)=\emptyset$. Sturm's Separation Theorem says that if $y,z\in\Sol(f)$ are $\R$-linearly independent and   $t_0<t_1$ are consecutive zeros of $z$,  then $(t_0,t_1)$ contains a unique 
zero  of~$y$~\cite[Chapter~2, \S{}6, Theorem~7]{BR}.  Thus
some $y\in\Sol(f)^{\neq}$ oscillates iff every $y\in\Sol(f)^{\neq}$ oscillates.
 We say that {\it $f$ generates oscillation}\/ if some $y\in\Sol(f)^{\ne}$  oscillates. 
If $b\ge a$, then~$f$ generates oscillation iff $f|_{b}\in\c_b$ does. %; cf.~\cite[Lemma~3.8]{ADH5}.
Hence whether $f$ generates oscillation depends only on its germ in $\c$, and this induces the notion of an element of $\c$ generating oscillation. 
By Sturm's Comparison Theorem~\cite[\S{}27, VI]{Walter}, if $g\in\c_a$  generates oscillation and   $f(t)\geq g(t)$, eventually, then~$f$ also generates oscillation.

\begin{exampleNumbered}\label{ex:harmonic osc}
Let $k\in\R^\times$. Then $y\in \Cat$ satisfies $y'' + k^2 y\ =\ 0$  iff for some $c,d\in\R$ and all $t\ge a$,
$$y(t)\ =\  c\cos k(t+d).$$
For $c\ne 0$, any   $y\in \Cat$ as displayed oscillates.  
Thus
if~${f(t)\geq \varepsilon}$, eventually, for some~$\varepsilon\in\R^>$,  then $f$ generates oscillation. 
\end{exampleNumbered}

\noindent
Let now $g\in \Cao$, $h\in\Caz$. The remarks about solutions to  \eqref{eq:2nd order} above also yield information about solutions in $\Cat$ to the differential equation
\begin{equation}\label{eq:2nd order, gen}\tag{$\tilde{\operatorname{L}}$}
Y''+gY'+hY\ =\ 0.
\end{equation}
To see how,  put $f:=-\frac{1}{2}g'-\frac{1}{4}g^2+h\in \c_a$, and let $G\in (\Cat)^\times$ be given by~$G(t):=\exp\!\left(-\frac{1}{2}\int_a^t g(s)\,ds\right)$. Then $y\in\Cat$ is a solution to \eqref{eq:2nd order} iff $Gy$ is a solution to \eqref{eq:2nd order, gen}. Hence the   solutions~${y\in \Cat}$ to \eqref{eq:2nd order, gen} form an $\R$-linear subspace $V$ of $\Cat$ of dimension~$2$, where $V\subseteq\c^\infty_a$ if $g,h\in \c^\infty_a$, and likewise with $\c^\omega_a$ in place of $\c^\infty_a$.  Thus:
$$ \text{ some $y\in V^{\ne}$  oscillates}\ \Longleftrightarrow\ \text{all $y\in V^{\ne}$ oscillate}\ \Longleftrightarrow\  
f \text{ generates oscillation}.
$$
The solutions to~\eqref{eq:2nd order, gen} in $\Cat[\imag]$ form the $2$-dimensional  subspace $V+V\imag$ of the $\C$-linear space~$\Cat[\imag]$. Moreover,

\subsection*{Theorems of Sonin-P\'olya and Trench}
Let $f\in\c_a$. For $y\in\c_a^1$, call $t\ge a$
a {\it critical point}\/ of $y$ if $y'(t)=0$, and 
an {\it extremal point}\/ of $y$ if $y$ has a local extremum at $t$.
Let now~$y\in\Sol(f)^{\ne}$. We already noted that the set $y^{-1}(0)$ of zeros of $y$ has no limit point. 
If $f\in\c_a^\times$ and $y'(t)=0$, $t\ge a$, then $y''(t)\ne 0$ (in view of~$y''(t)=-f(t)y(t)$), so $t$ is isolated in the set  $(y')^{-1}(0)$  of critical points of $y$ and~$t$ is an extremal point of $y$.  Moreover:

\begin{lemma}\label{uniquestat}  Suppose  $f\in\c_a^\times$, $y\in\Sol(f)^{\ne}$,  and  $s_0<s_1$ are consecutive zeros of~$y$. Then there is exactly one critical point of $y$ in the interval $(s_0,s_1)$.
\end{lemma}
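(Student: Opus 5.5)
Existence comes from Rolle's theorem, and uniqueness from the sign of $y''=-fy$ on the open interval. Since $f\in\c_a^\times$ is continuous and has no zeros on $[a,+\infty)$, it has constant sign there. Likewise $y$ has no zeros in $(s_0,s_1)$, because $s_0<s_1$ are consecutive zeros, so $y$ has constant sign on this interval. Hence $y''=-fy$ has constant, nonzero sign on $(s_0,s_1)$.

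\emph{Existence.} Since $y(s_0)=y(s_1)=0$ and $y$ is differentiable, Rolle's theorem gives $t\in(s_0,s_1)$ with $y'(t)=0$.

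\emph{Uniqueness.} Because $y''$ has constant nonzero sign on $(s_0,s_1)$, the derivative $y'$ is strictly monotone on $[s_0,s_1]$. Here we use that $y''$ is continuous and nonzero on the open interval, so $y'$ is strictly monotone on the closed interval as well. A strictly monotone function has at most one zero, so $y'$ vanishes at most once in $(s_0,s_1)$.

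For completeness we check that the constant-sign case actually occurs. If $f>0$, this follows directly from the argument above. If instead $f<0$ on $[a,+\infty)$, then $y''=-fy$ has the same sign as $y$ on $(s_0,s_1)$. Taking, say, $y>0$ there, $y$ is strictly convex and vanishes at both endpoints, which forces $y\le 0$ on the interval, a contradiction. So for $f<0$ two consecutive zeros cannot exist, and the statement holds vacuously. Either way uniqueness follows.

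The whole argument is elementary. The only point needing care is that the constant sign of $y''$ on the open interval implies strict monotonicity of $y'$; this is immediate from the mean value theorem applied to $y'$.
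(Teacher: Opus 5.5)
Your proof is correct and is essentially the paper's argument. The paper applies Rolle to $y'$ to get a zero of $y''=-fy$, hence of $y$, between two critical points; this is the contrapositive of your observation that $y''$ has constant nonzero sign on $(s_0,s_1)$, so that $y'$ is strictly monotone there. Your extra paragraph on the case $f<0$ is correct but unnecessary, since the monotonicity argument never uses the sign of $f$.
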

\begin{proof} If $t_0< t_1$ were critical points of $y$ in the interval $(s_0, s_1)$, then $y''$ (by Rolle) and thus $y$ (in view of $y''=-fy$) would have a zero in the interval $(t_0, t_1)$. 
\end{proof}

\begin{lemma}[Sonin-P\'olya, see {\cite[\S{}27, XI]{Walter}}]\label{lem:extremal pts}
Suppose $f\in(\c_a^1)^\times$, $y\in\Sol(f)^{\ne}$, and
let $t_0<t_1$ be critical points of $y$.
If $f$ is increasing, then~$|y(t_0)|\geq |y(t_1)|$. If $f$ is decreasing, then~$|y(t_0)|\leq |y(t_1)|$.
 If $f$ is strictly increasing, respectively strictly decreasing, then these inequalities are strict. 
\end{lemma}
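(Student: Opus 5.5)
The plan is to use the classical Sonin function
$$S\ :=\ y^2+\frac{(y')^2}{f},$$
which is continuously differentiable on $[a,+\infty)$ because $f\in(\c_a^1)^\times$, so $f$ has constant sign and $1/f\in\c_a^1$. At a critical point $t$ of $y$ we have $y'(t)=0$, hence $S(t)=y(t)^2$. Therefore it suffices to compare $S(t_0)$ and $S(t_1)$, that is, to determine the monotonicity of $S$ on $[t_0,t_1]$.

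Differentiating and using $y''=-fy$ gives
$$S'\ =\ 2yy'+\frac{2y'y''}{f}-\frac{(y')^2f'}{f^2}\ =\ 2yy'-2yy'-\frac{(y')^2f'}{f^2}\ =\ -\frac{f'}{f^2}(y')^2.$$
If $f$ is increasing, then $f'\ge 0$, so $S'\le 0$ and $S$ is decreasing. This yields $y(t_0)^2=S(t_0)\ge S(t_1)=y(t_1)^2$. The decreasing case is symmetric: $S'\ge0$ gives the reverse inequality.

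The main point requiring care is strictness. If $f$ is strictly increasing, suppose $S(t_0)=S(t_1)$. Then $S'=0$ on $[t_0,t_1]$, so $f'(y')^2=0$ there. Since $f'\ge0$, the open set $\{f'>0\}\cap(t_0,t_1)$ is nonempty; otherwise $f$ would be constant on $[t_0,t_1]$. On this open set $y'=0$, so $y$ is constant on some open interval and $y''=0$ there. Then $fy=0$ there, and since $f\neq 0$ we get $y=0$ on that interval. Together with $y'=0$ there, uniqueness of solutions to the initial value problem forces $y=0$ identically, contradicting $y\ne0$. Hence $S(t_0)>S(t_1)$, and the strictly decreasing case is handled the same way.
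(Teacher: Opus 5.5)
Your proposal is correct and matches the paper's proof: the paper uses the same function $u=y^2+(y')^2/f$ with $u'=-f'(y'/f)^2$ and compares its values at the critical points $t_0,t_1$. For strictness the paper only cites the fact that the critical points of $y$ have no limit point. Your argument that $y'$ cannot vanish on an open interval (otherwise $y=0$ there, and uniqueness gives $y\equiv 0$) is that same fact, written out explicitly.
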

\begin{proof}
Put $u:=y^2+ \big((y')^2/f\big)\in\c_a^1$. Then $u'=-f'(y'/f)^2$,
so if $f$ is increasing, then  $u$ is decreasing, and
as $u(t_i)=y(t_i)^2$ for $i=0,1$, we get $|y(t_0)|\geq |y(t_1)|$.
The other cases are similar, using for the strict inequalities  that    $(y')^{-1}(0)$  has no limit point.
\end{proof}

\noindent
The function $u$ in the proof of Lemma~\ref{lem:extremal pts} agrees with the function $y^2$ at the critical points of $y$.
The function $v$ in the next proposition serves a similar purpose, but turns out to be more convenient
for us in Section~\ref{sec:zeros}. To state this proposition, let~$y_1,y_2\in\Sol(f)$, with 
Wronskian $w=y_1y_2'-y_1'y_2$. Then $w\in\R$, and
$$w\neq 0\ \Longleftrightarrow\ \text{$y_1$,~$y_2$ are $\R$-linearly independent.}$$ 
Let $c_1,c_2\in\R$ with $c_1^2+c_2^2=1$ and $y:=c_1y_1+c_2y_2\in\Sol(f)$.
The following observation is due to Trench~\cite[Theorem~1]{Trench}:

\begin{prop} \label{prop:Trench}
Suppose $w\ne 0$, and consider    
$$v:=\frac{\abs{w}}{\sqrt{(y_1')^2+(y_2')^2}}\in\c^1_a.$$ %\qquad u:= \frac{\abs{w}}{\sqrt{(y_1)^2+(y_2)^2}}$$
Then for $t\ge a$ we have:
$y'(t)=0\ \Longrightarrow\ \abs{y(t)}=v(t)$. %\qquad y(t)=0\ \Longrightarrow\ \abs{y'(t)}=u(t).$$
\end{prop}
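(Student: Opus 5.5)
The plan is a direct computation at a critical point $t$ of $y$. Since $y'(t)=0$, we have $c_1y_1'(t)+c_2y_2'(t)=0$. So the unit vector $(c_1,c_2)$ is orthogonal to the vector $\big(y_1'(t),y_2'(t)\big)$.

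First I check that this vector is nonzero. If $y_1'(t)=y_2'(t)=0$, then $w(t)=y_1(t)y_2'(t)-y_1'(t)y_2(t)=0$, contradicting $w\neq 0$ (recall that $w$ is a nonzero constant). Hence $v$ is well defined on all of $[a,+\infty)$. It lies in $\c^1_a$ because $y_1',y_2'\in\c^1_a$ and the square root is applied to a function that is everywhere positive. Set $N:=\sqrt{y_1'(t)^2+y_2'(t)^2}>0$. Since $(c_1,c_2)$ is a unit vector orthogonal to $\big(y_1'(t),y_2'(t)\big)$ in $\R^2$, there is $\epsilon\in\{1,-1\}$ with
$$(c_1,c_2)\ =\ \epsilon\,\big({-y_2'(t)},\,y_1'(t)\big)/N.$$

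Substituting, I get
$$y(t)\ =\ c_1y_1(t)+c_2y_2(t)\ =\ \epsilon\,\big(y_1(t)y_2'(t)-y_2(t)y_1'(t)\big)/N\ =\ \epsilon\, w/N.$$
Taking absolute values gives $\abs{y(t)}=\abs{w}/N=v(t)$.

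There is no real obstacle here. The only point needing care is the nonvanishing of $(y_1',y_2')$, which is exactly where $w\ne 0$ is used; the rest is two-dimensional linear algebra.
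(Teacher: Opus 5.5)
Your proof is correct and is essentially the paper's argument, which isolates the same two-dimensional linear algebra in an elementary lemma: it solves for $(c_1,c_2)$ via the adjugate of the matrix with rows $\big(y_1'(t),y_2'(t)\big)$ and $\big(y_1(t),y_2(t)\big)$, then uses $c_1^2+c_2^2=1$. You also make explicit the nonvanishing of $\big(y_1'(t),y_2'(t)\big)$, which the paper leaves implicit. There is one harmless sign slip: your formula for $(c_1,c_2)$ actually gives $y(t)=-\epsilon w/N$ rather than $\epsilon w/N$, which does not affect $\abs{y(t)}$.
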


\noindent
This follows immediately from the next elementary lemma applied to 
$a_i=y_i'(t)$, $b_i=y_i(t)$ ($i=1,2$) where $t\ge a$,  and $c_1$, $c_2$    as above. % and $a_i=y_i(t)$, $b_i=y_i'(t)$ ($i=1,2$), respectively. 

\begin{lemma}
Let $a_1,a_2,b_1,b_2\in\R$ with $a_1^2+a_2^2>0$,   
and set
$$r:=c_1a_1+c_2a_2,\qquad s:=c_1b_1+c_2b_2.$$
If $r=0$, then
$\abs{s} = \abs{a_1b_2-a_2b_1}/\sqrt{a_1^2+a_2^2}$.
\end{lemma}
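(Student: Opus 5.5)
This is a short computation in $\R^2$, done with the vectors $a=(a_1,a_2)$, $b=(b_1,b_2)$ and $c=(c_1,c_2)$, where $c$ is a unit vector because $c_1^2+c_2^2=1$. In this notation $r=\langle c,a\rangle$ and $s=\langle c,b\rangle$.

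First, use the hypothesis $r=0$. It says that $c$ is orthogonal to $a$. Since $a\neq 0$ (as $a_1^2+a_2^2>0$) and we are in $\R^2$, the only unit vectors orthogonal to $a$ are
$$c\ =\ \pm\,(-a_2,a_1)/\sqrt{a_1^2+a_2^2}.$$
To justify this, note that $(-a_2,a_1)$ is a nonzero vector orthogonal to $a$. The orthogonal complement of $a$ in $\R^2$ is a line, so $c$ is a real multiple of $(-a_2,a_1)$. Normalizing to length $1$ forces the displayed form.

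Next, substitute this $c$ into $s$:
$$s\ =\ \pm\,(-a_2b_1+a_1b_2)/\sqrt{a_1^2+a_2^2}.$$
Taking absolute values gives $\abs{s}=\abs{a_1b_2-a_2b_1}/\sqrt{a_1^2+a_2^2}$, as required.

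For an alternative without the case description, one can use Lagrange's identity. Applied to $c$ and $(a,b)$ it gives $(c_1a_1+c_2a_2)^2+(c_1a_2-c_2a_1)^2=a_1^2+a_2^2$, and the analogous expansion handles $b$. The route via the explicit form of $c$ is cleaner, though. There is no real obstacle here; the only point needing care is the step that $r=0$ together with $\abs{c}=1$ pins $c$ down up to sign, and this uses $a\ne 0$.
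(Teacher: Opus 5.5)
Your main argument is correct, but it takes a different route from the paper.

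\textbf{Your route.} You solve the constraint directly. From $r=0$ and $c_1^2+c_2^2=1$ you get $c=\pm(-a_2,a_1)/\sqrt{a_1^2+a_2^2}$, since the orthogonal complement of the nonzero vector $(a_1,a_2)$ in $\R^2$ is a line. You then evaluate $s$.

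\textbf{The paper's route.} The paper never solves for $c$. It multiplies the system $\begin{pmatrix} a_1 & a_2\\ b_1 & b_2\end{pmatrix}\begin{pmatrix} c_1\\ c_2\end{pmatrix}=\begin{pmatrix} r\\ s\end{pmatrix}$ by the adjugate matrix. With $d=a_1b_2-a_2b_1$ this gives $dc_1=b_2r-a_2s$ and $dc_2=-b_1r+a_1s$. Squaring, adding, and using $c_1^2+c_2^2=1$ yields $d^2=(b_2r-a_2s)^2+(-b_1r+a_1s)^2$ for every $r$. Setting $r=0$ gives $d^2=s^2(a_1^2+a_2^2)$.

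\textbf{Comparison.} The paper's proof is a single polynomial identity. It needs no solving and no dimension argument, and it uses $a\neq0$ only in the final division. Your version is more geometric and shows why the formula holds: $c$ is the unit normal to $(a_1,a_2)$, so $s$ is $\pm$ the normalized determinant. With $r=0$, the paper's relations read $dc=s\,(-a_2,a_1)$, which is your parallelism statement in disguise.

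\textbf{Your Lagrange aside.} As written it is not a proof, because ``the analogous expansion handles $b$'' skips the key step. What is needed is the identity $d=r(c_1b_2-c_2b_1)-s(c_1a_2-c_2a_1)$, which is essentially the paper's adjugate computation. Your main argument is already complete, so this gap does not affect the result.
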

\begin{proof} For $d:= a_1b_2-a_2b_1$ we have
\[ \begin{pmatrix} a_1 & a_2\\
                           b_1 & b_2 \end{pmatrix}
     \begin{pmatrix} c_1\\
                              c_2 \end{pmatrix} \  =\  \begin{pmatrix} r\\
                                                                                           s \end{pmatrix}\ 
        \text{ and }\  
                             \begin{pmatrix} b_2 & -a_2\\ 
                                                      -b_1 & a_1\end{pmatrix}  \begin{pmatrix} a_1 & a_2\\
                           b_1 & b_2 \end{pmatrix}  = \begin{pmatrix}d & 0\\
                                                                                                 0 & d\end{pmatrix}, \]
so $dc_1=b_2r-a_2s$, $dc_2=-b_1r+a_1s$.  Hence $d^2(c_1^2+ c_2^2)=(b_2r-a_2s)^2+ (-b_1r+a_1s)^2$.   Using $c_1^2 +c_2^2=1$ gives the desired result for $r=0$. 
           \end{proof}

\subsection*{Composition of germs} 
Let $g\in \c$ satisfy~${\lim\limits_{t\to+\infty} g(t)=+\infty}$; equivalently,  $g\geq 0$ and $g\succ 1$. Then the 
composition operation
$$f\mapsto f\circ g\ :\ \c  \to \c, \qquad (f\circ g)(t)\ :=\  f\big(g(t)\big)\ \text{ eventually},$$
is an injective endomorphism of the ring $\c$
that is the identity on the subring~$\R$. For~${f_1, f_2\in \c}$ we have:  $f_1\le f_2 \Leftrightarrow f_1\circ g \le f_2\circ g$, and likewise with $\preceq$, $\prec$, $\sim$.  
Suppose next that $g$ is also eventually strictly increasing.
Then its compositional inverse $g^{\inv}\in \c$ is given by~$g^{\inv}\big(g(t)\big)=t$, eventually,
and~$g^{\inv}$ is also  eventually strictly increasing with~$g^{\inv}\ge 0$ and~$g^{\inv}\succ 1$.
Then~${f\mapsto f\circ g}$  is an automorphism of the ring  $\c$, with inverse~$f\mapsto f\circ g^{\inv}$. 
In particular, $g\circ g^{\inv}=g^{\inv}\circ g=x$ (where~$x$ is the germ of the identity function). Moreover, 
if  $h\in\c$ is eventually strictly increasing with  $g\leq h$, then~$h^{\operatorname{inv}}\leq g^{\operatorname{inv}}$.
For use in Section~\ref{sec:zeros} we note:

\begin{lemma}\label{lem:ginv}
Let $g,h\in\c$ be eventually strictly increasing with $g,h\succ 1$, and suppose~$h\in\c^1$, $h^\dagger\preceq 1/x$.
Then $g\sim h^{\operatorname{inv}}\Rightarrow g^{\operatorname{inv}}\sim h$.
\end{lemma}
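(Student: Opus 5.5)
Write $u:=h^{\operatorname{inv}}$. The hypothesis $g\sim u$ means $g=u(1+\varepsilon)$ with $\varepsilon\prec 1$, and the goal $g^{\operatorname{inv}}\sim h$ means $g^{\operatorname{inv}}(s)/h(s)\to 1$ as $s\to+\infty$. Both sides of $g\sim u$ tend to $+\infty$, so composing on the right with $h$ preserves $\sim$, by the remarks on composition above. Since $u\circ h=x$, this gives $g\circ h\sim x$. Put $\varepsilon_1 := (g\circ h)/x-1\prec 1$. Then for each $\delta\in\R^>$ we have, eventually,
$$(1-\delta)t\ \le\ g\big(h(t)\big)\ \le\ (1+\delta)t.$$
Apply the eventually strictly increasing map $g^{\operatorname{inv}}$ to these inequalities. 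This yields, eventually,
$$g^{\operatorname{inv}}\big((1-\delta)t\big)\ \le\ h(t)\ \le\ g^{\operatorname{inv}}\big((1+\delta)t\big).$$
Substituting $s=(1+\delta)t$, respectively $s=(1-\delta)t$, we get eventually in $s$:
$$h\big(s/(1+\delta)\big)\ \le\ g^{\operatorname{inv}}(s)\ \le\ h\big(s/(1-\delta)\big).$$

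The key step is to show that $h(\lambda s)/h(s)\to 1$ uniformly for $\lambda$ near $1$, as $\delta\to 0$. Here we use $h^\dagger\preceq 1/x$. Take $C\in\R^>$ with $\abs{h'(t)/h(t)}\le C/t$ eventually; note that $h>0$ eventually because $h\succ 1$ and $h$ is increasing. For $\lambda\in[1/2,2]$ and large $s$, integrating from $s$ to $\lambda s$ gives
$$\abs{\log h(\lambda s)-\log h(s)}\ \le\ \Big|\int_s^{\lambda s} C/t\,dt\Big|\ =\ C\abs{\log\lambda}.$$
Therefore eventually
$$(1+\delta)^{-C}\ \le\ \frac{h\big(s/(1+\delta)\big)}{h(s)},\qquad \frac{h\big(s/(1-\delta)\big)}{h(s)}\ \le\ (1-\delta)^{-C}.$$
Combining this with the previous sandwich, $g^{\operatorname{inv}}(s)/h(s)$ lies eventually in the interval $\big[(1+\delta)^{-C},(1-\delta)^{-C}\big]$. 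Letting $\delta\to 0$ gives $g^{\operatorname{inv}}\sim h$.

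The main point needing care is the bookkeeping with the word ``eventually''. The threshold for the sandwich depends on $\delta$, and the change of variable $s=(1\pm\delta)t$ must move that threshold along with it. The integral bound also needs $h\in\c^1$ with $h>0$ on $[s/2,2s]$. Both are routine, since each $\delta$ is fixed before its threshold is chosen.
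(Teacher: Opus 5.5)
Your proof is correct, but it takes a different route from the paper's. The paper gives no direct argument: it gets the lemma in one line by specializing a general result from \cite{ADH5} (its Corollary~2.6, applied with $f=h^{\operatorname{inv}}$).

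Your proof is self-contained. It runs in three steps:
\begin{enumerate}
\item Composing $g\sim h^{\operatorname{inv}}$ on the right with $h$ gives $g\circ h\sim x$.
\item Monotonicity of $g^{\operatorname{inv}}$ turns this into the sandwich $h\big(s/(1+\delta)\big)\le g^{\operatorname{inv}}(s)\le h\big(s/(1-\delta)\big)$ for large $s$.
\item The hypothesis $h^\dagger\preceq 1/x$, used in the form $\abs{\log h(\lambda s)-\log h(s)}\le C\abs{\log\lambda}$ for $\lambda\in[1/2,2]$, closes the sandwich.
\end{enumerate}
The same argument can be stated more compactly: $g^{\operatorname{inv}}=h\circ(g\circ h)^{\operatorname{inv}}$ with $(g\circ h)^{\operatorname{inv}}\sim x$, and $t\mapsto\log h(\ex^t)$ is eventually Lipschitz because its derivative is $(xh^\dagger)\circ\ex^t$.

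The citation buys brevity and treats the lemma as an instance of a general fact about how $\sim$ behaves under composition and inversion. Your route costs some bookkeeping with thresholds depending on $\delta$, which you handle correctly. In return it needs no outside reference, and it shows exactly where $h^\dagger\preceq 1/x$ enters: it guarantees that a multiplicative perturbation $1+o(1)$ of the argument of $h$ yields a multiplicative perturbation $1+o(1)$ of its value.
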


\noindent
This follows by taking $f=h^{\operatorname{inv}}$ in \cite[Corollary~2.6]{ADH5}.

\subsection*{Compositional  conjugation of differentiable germs}  
Let~$\ell\in \Go$, $\ell'(t)>0$ eventually (so $\ell$ is eventually strictly increasing) and $\ell(t)\to+\infty$ as $t\to+\infty$. Then~$\phi:=\ell'\in\c^\times$,  and the compositional inverse 
$\ell^{\inv}\in \Go$ of $\ell$ satisfies 
$$\ell^{\inv}\ge t \text{ for all }t\in \R, \qquad (\ell^{\inv})'\ =\ (1/\phi)\circ \ell^{\inv}\in \c.$$
The $\R$-algebra automorphism 
$f\mapsto f^\circ:= f\circ \ell^{\inv}$ of $\mathcal{C}$ (with inverse $g\mapsto g\circ\ell$) maps~$\Go$ onto itself and  
satisfies for $f\in\Go$ a useful identity:
\begin{equation}\label{eq:fcirc}
(f^\circ)'\ =\ (f\circ \ell^{\inv})'\ =\ (f'\circ \ell^{\inv})\cdot (\ell^{\inv})'\ =\ (f'/\ell')\circ \ell^{\inv}
\ =\ (\phi^{-1}f')^\circ.
\end{equation}
Hence if $n\ge 1$ and $\ell\in\c^n$, then $\ell^{\inv}\in \c^n$ and $f\mapsto f^\circ$ maps $\Caln$
onto itself, for each $n$. Therefore, if $\ell\in\Calinf$, then $\ell^{\inv}\in \Calinf$ and $f\mapsto f^\circ$   maps~$\Calinf$ onto itself;
likewise with $\Ginf$ or $\Gom$ in place of $\Calinf$. 

\section{Hardy Fields}\label{sec:prelims}

\noindent 
In this section we place Hardy fields into the algebraic framework established in Sections~\ref{sec:diffalg} and~\ref{sec:ADH}.
We then discuss composition and generation of oscillation in the setting of Hardy fields.
In preparation for the following sections, 
we also review the main results of \cite{ADH5, ADH2} and establish some lemmas.

\subsection*{Hardy fields}
A {\it Hardy field}\/ is a differential subfield of $\Calinf$. {\em Examples}: $\R$; any subfield of $\R$;  the field $\R(x)$ where $x$  is the germ of the identity function on $\R$; also $H_{\text{LE}}$, the Hardy field of (germs of) LE-functions: the smallest subfield of $\Calinf$ that extends $\R(x)$, contains $\ex^f$ whenever it contains $f$, and contains $\log f$ whenever it contains $f$ with $f(t)>0$ eventually.  
{\it In the rest of this section $H$ is a Hardy field.}\/
 The restriction of the partial ordering of $\c$ to $H$ is total and makes $H$ into an ordered field,
 so no germ in $H$ oscillates.
We have~$C_H\subseteq\R$, and by Section~\ref{sec:ADH}, subsection on $H$-fields, $H$ has the valuation ring
$$\mathcal{O}\ =\ \{f\in H:\ \text{$\abs{f}\le n$ for some $n$} \},$$
with maximal ideal
$$\smallo\ =\ \{f\in H:\ \text{$\abs{f}\le 1/n$ for all $n\ge 1$} \},$$
and we consider $H$ accordingly as an ordered and valued differential field. Then~$H$ is $H$-asymptotic with small derivation, 
and if $H\supseteq\R$, then $H$ is an $H$-field. Moreover, $K:=H[\imag]$ is a differential subfield of $\Calinf[\imag]$ with constant field~${K\cap\C}$, and~${\mathcal{O}+\mathcal{O}\imag} = \big\{f\in K: f\preceq 1\big\}$ is the unique valuation ring of $K$ whose intersection with $H$ is $\mathcal{O}$. In this way
$K$ is a valued differential field extension of~$H$, and is $H$-asymptotic   with small derivation and
the same asymptotic couple $(\Gamma,\psi)$ as $K$.
The asymptotic relations~$\preceq$,~$\prec$,~$\sim$ on~$\c[\imag]$
restricted to~$K$ are exactly the asymptotic relations~$\preceq$,~$\prec$,~$\sim$ on $K$ that~$K$ has as a valued field; likewise with $H$ in place of~$K$.  %Unlike for arbitrary $H$-asymptotic fields,
For $f,g\in K$ with $f,g\succ 1$ we have
\begin{equation}\label{eq:9.1.11}
\psi(vf)\ge\psi(vg)  \quad\Longleftrightarrow\quad f^\dagger \preceq g^\dagger  \quad\Longleftrightarrow\quad \text{$\abs{f}\le \abs{g}^n$  for some $n$.}
\end{equation}
Hence the set $\Psi=\big\{\psi(\gamma):\gamma\in\Gamma^{\ne}\big\}$ is finite iff the ordered abelian group $\Gamma$ has finite (archimedean) rank.
This is the case if $H$ is of finite transcendence degree over its constant field, by~\cite[Pro\-po\-sition~5]{Rosenlicht83} (see also~\cite[Lem\-ma~5.26]{ADH5}).

\subsection*{Hardian germs}
A germ $y\in\c$ is said to be {\it hardian}\/ if it lies in a Hardy field  (and thus $y\in\Calinf$), and {\it $H$-har\-dian}  if it lies in a Hardy field extension
of~$H$. (Thus~$y$ is hardian iff~$y$ is $\Q$-hardian.)
We say that $y\in\c$ is {\it perfectly $H$-hardian}\/ if~$y$ is $E$-hardian for each Hardy field extension $E$ of $H$,
and {\it perfectly hardian}\/ if it is perfectly $\Q$-hardian.
Note: $y\in\c$ is perfectly $H$-hardian iff $y$ is contained in each maximal Hardy field extension of $H$,
where a Hardy field is {\it maximal}\/ if it has no a proper Hardy field extension. (By Zorn, each Hardy field has a maximal
Hardy field extension.) Thus the perfectly $H$-hardian germs form a Hardy field extension~$\Ex(H)$ of $H$, the {\it perfect hull}\/ of $H$, which equals
the intersection of all maximal Hardy field extensions of $H$.  
We also consider the {\it $\d$-perfect hull}\/ of~$H$: the Hardy subfield
$$\Dx(H) := \big\{ f\in \Ex(H) : \text{$f$ is $\d$-algebraic over $H$}\big\}$$
of $\Ex(H)$, which contains $H$. 
We say that $H$ is  {\it perfect}\/ if~$H=\Ex(H)$, and {\it $\d$-perfect}\/ if~$H=\Dx(H)$.
If $H$ is $\d$-perfect, then $H\supseteq\R$ and $H$ is a Liouville closed $H$-field by~\cite[Proposition~4.2]{ADH5}.
In particular,  $\Dx(H)\supseteq\R$ is a Liouville closed $H$-field,
and   $H$ is pre-$\d$-valued.
Passing from $H$ to the Hardy subfield   $H(\R)$   of $\Dx(H)$ often allows us to arrange that~$H\supseteq\R$ and hence that~$H$ is an $H$-field.
Likewise, the algebraic closure~$H^{\operatorname{rc}}$ of~$H$ in $\Dx(H)$ is a real closure of the ordered field $H$, and we can often replace~$H$ by~$H^{\operatorname{rc}}$. In  this connection we   observe:

\begin{lemma}\label{lem:1.3.3+}
The value group $\Gamma_H$ of $H$ is cofinal in $\Gamma_{H(\R)}$, and
$\Gamma_{H^{\operatorname{rc}}}$ is the divisible hull of $\Gamma_H$.
If one of the Hardy fields~$H$,~$H(\R)$,~$H^{\operatorname{rc}}$ is $\upl$-free, then so are all three, and  in this case  $H(\R)$ has value group~$\Gamma_H$.
%, and likewise with ``$\upo$-free'' in place of ``$\upl$-free''. 
\end{lemma}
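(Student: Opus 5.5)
The plan is to treat the two extensions $H\subseteq H^{\operatorname{rc}}$ and $H\subseteq H(\R)$ separately. Both sit inside the Liouville closed $H$-field $\Dx(H)$, so all three fields are pre-$\d$-valued $H$-asymptotic fields with small derivation. This lets me quote the general facts from [ADH] about algebraic extensions and about adjoining constants. The argument is largely bookkeeping with those facts; the one genuinely delicate point is the value group of $H(\R)$, discussed last.

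\emph{The real closure.} $H^{\operatorname{rc}}$ is algebraic over $H$, so $\Gamma_{H^{\operatorname{rc}}}/\Gamma_H$ is torsion. Since $H^{\operatorname{rc}}$ is real closed, its value group is divisible. Hence $\Gamma_{H^{\operatorname{rc}}}$ is the divisible hull of $\Gamma_H$, and in particular $\Gamma_H$ is cofinal in it. For $\upl$-freeness I will use that it passes up and down along algebraic $H$-asymptotic extensions [ADH, 11.6.8 and the surrounding remarks]. That gives ``$H$ is $\upl$-free $\Leftrightarrow$ $H^{\operatorname{rc}}$ is $\upl$-free''. The same argument applies to $H(\R)$ and its real closure $H(\R)^{\operatorname{rc}}=H^{\operatorname{rc}}(\R)$.

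\emph{Adjoining constants.} I reduce to adjoining the real constants one at a time, or equivalently to finitely generated $H(c_1,\dots,c_n)$ with $c_i\in\R$, since all the properties involved are of finite character. I then use the [ADH] results on constant field extensions of pre-$\d$-valued $H$-asymptotic fields (the lemmas in [ADH, 9.6/10.5] on $K(C_L)$). These say that $\Psi$ does not change, and that every $\gamma\in\Gamma_{H(\R)}$ differs from an element of $\Gamma_H$ only by something of the form $v(a)$ with $a\asymp 1$ in $H^{\operatorname{rc}}(\R)$. Consequently $\Gamma_{H(\R)}$ lies in the convex hull of $\Gamma_H$, which is the cofinality claim.

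A more direct route to cofinality, which I would also check, goes through $\Psi$. Take $g\succ 1$ in $H(\R)$. Then $g^\dagger\in H(\R)$, and $v(g^\dagger)\in\Psi_{H(\R)}=\Psi_H$. So there is $h\succ 1$ in $H$ with $h^\dagger\asymp g^\dagger$. By \eqref{eq:9.1.11} this gives $\abs{g}\le\abs{h}^n$ for some $n$, which is again cofinality of $\Gamma_H$.

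\emph{$\upl$-freeness and the value group of $H(\R)$.} Suppose first that $H$ is $\upl$-free. Then $H$ has asymptotic integration by Lemma~\ref{lem:upl-free}. The cofinality just shown, together with $\Psi_{H(\R)}=\Psi_H$, lets me transfer the defining condition ``for every $\upl$ there is $g\succ1$ with $\upl+g^{\dagger\dagger}\succeq g^\dagger$'' to $H(\R)$. Alternatively I would cite the preservation of $\upl$-freeness under adjoining constants in \cite{ADH4}, in the spirit of Proposition~\ref{prop:Gehret}. For the converse, if $H$ is not $\upl$-free, take a witness $\upl\in H$ and apply Lemma~\ref{lem:11.5.14} over $H^{\operatorname{rc}}$. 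This gives a gap (or groundedness) that persists in $H(\R)$, because adjoining constants does not change $\Psi$. So $H(\R)$ is not $\upl$-free either.

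The main obstacle is the last claim, that $\Gamma_{H(\R)}=\Gamma_H$ when $H$ is $\upl$-free. Here one must exclude new values $v(P(c))$ coming from cancellation between coefficients of equal valuation. For that I will use the [ADH] fact that if $K$ is pre-$\d$-valued with asymptotic integration, then its $\d$-valued hull $\operatorname{dv}(K)$ is an immediate extension, so it has value group $\Gamma_K$ (this is where $\upl$-freeness, via asymptotic integration, enters). I then identify $H(\R)$ with a subfield of $\operatorname{dv}(H)(\R)$. Inside $\Dx(H)$, the field $\operatorname{dv}(H)$ has constant field $C_H$, and each $a\asymp 1$ in it satisfies $a\sim c$ for some $c\in C_H$. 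With this, a dominant-coefficient computation for $P(c_1,\dots,c_n)$, using that the $c_i$ are constants and the residue field embeds in $\R$, shows $v(P(c))\in\Gamma_H$.
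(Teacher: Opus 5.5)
Your treatment of $H^{\operatorname{rc}}$ is fine and matches the paper ([ADH, 3.1.9, 11.6.8]). The constants part, however, rests on a false general claim. You assert that adjoining constants ``does not change $\Psi$''. You also describe each $\gamma\in\Gamma_{H(\R)}$ as an element of $\Gamma_H$ plus $v(a)$ with $a\asymp 1$, i.e.\ plus $0$, which would give $\Gamma_{H(\R)}=\Gamma_H$ for every $H$. Both fail when $H$ has a gap.

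Suppose $vb$ is a gap in $H$ and some $a\asymp 1$ in $H$ has $a'=b$. This should occur, for instance with $a=r+1/\ell_\omega$ for a generic $r\in\R$ over a suitable Hardy field with gap $v\big((1/\ell_\omega)'\big)$. Put $c:=\lim a\in\R$ and $\delta:=v(a-c)>0$. Then $\delta+\psi(\delta)=vb$, so $\delta\notin\Gamma_H$ (otherwise $vb\in(\Gamma_H^{>})'$). In fact $\delta<\Gamma_H^{>}$, and $\psi(\delta)=vb-\delta>\Psi_H$ is a new $\Psi$-value.

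Consequences for your proof:
\begin{itemize}
\item Neither of your routes to cofinality is justified as written; the second uses $\Psi_{H(\R)}=\Psi_H$ explicitly.
\item Your claim in the converse that a gap ``persists because adjoining constants does not change $\Psi$'' is likewise unjustified.
\end{itemize}
Cofinality is true, but it has to come from what passing to the $\d$-valued ($H$-field) hull can do, which is what [ADH, 10.3.2, 10.5.15] supply. After that, use the easy fact that adjoining constants to a $\d$-valued field leaves the value group unchanged.

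The $\upl$-freeness transfers are also not actually proved.

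Going up: cofinality and $\Psi_{H(\R)}=\Psi_H$ only control the elements $g\succ 1$. But $\upl$-freeness of $H(\R)$ is a statement about every $\upl\in H(\R)$, including the new ones: none of them may be a pseudolimit of $(\upl_\rho)$. Excluding this is the content of \cite[Proposition~1.3.3]{ADH4}. Your fallback citation carries this step, not your transfer argument.

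Going down: Lemma~\ref{lem:11.5.14} yields a gap in $H^{\operatorname{rc}}(\upg)$, not in $H(\R)$. You would still need
\begin{itemize}
\item the characterization of $\upl$-freeness by extensions of transcendence degree $\le 1$ (\cite[Proposition~1.3.12]{ADH4}), applied to $H(\R)(\upg)$;
\item control over what adjoining constants does to that gap;
\item a separate treatment of the cases where $H$ is grounded or has a gap, where Lemma~\ref{lem:11.5.14} does not apply.
\end{itemize}
If $H$ has asymptotic integration and you already know $\Gamma_{H(\R)}=\Gamma_H$, there is a direct argument. Write $g=hu$ with $h\in H$ and $u\asymp 1$. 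Pre-$\d$-valuedness gives $g^\dagger\sim h^\dagger$ and $g^{\dagger\dagger}-h^{\dagger\dagger}\prec h^\dagger$, so a witness $\upl\in H$ remains a witness in $H(\R)$.

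Finally, in your last paragraph $\operatorname{dv}(H)$ does not have constant field $C_H$ unless $H$ is already $\d$-valued. What your dominant-coefficient computation needs is that $\operatorname{dv}(H)$ is $\d$-valued and linearly disjoint from $\R$ over its own constant field $C'$. Then, after normalizing, $\sum_i a_i d_i$ with $C'$-independent $d_i\in\R$ has a nonzero constant as residue. With that repair, your value-group argument in the $\upl$-free case is a reasonable alternative to the citation the paper uses. Also, $H(\R)^{\operatorname{rc}}\neq H^{\operatorname{rc}}(\R)$ in general, though you do not need this equality.
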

\begin{proof}
The cofinality of $\Gamma_H$ in $\Gamma_{H(\R)}$ follows from [ADH, 10.3.2, 10.5.15], and the claim about $\Gamma_{H^{\operatorname{rc}}}$ from [ADH, 3.1.9]. 
If $H$ is $\upl$-free, then so is~$H(\R)$, with the same value group, by 
\cite[Proposition~1.3.3]{ADH4}, and
if $H(\R)$ is  $\upl$-free, then so is $H$, by~\cite[Lemmas~1.3.13, 1.3.14]{ADH4}.
By~[ADH, 11.6.8], $H$ is $\upl$-free iff  $H^{\operatorname{rc}}$ is $\upl$-free.
\end{proof}

\noindent
Suppose~${H\supseteq\R}$. As in~[ADH, p.~460], the {\it Hardy-Liouville closure $\Li(H)$}\/ of $H$ is the
smallest  Hardy field extension~$L$ of $H$ that is a Liouville  closed $H$-field.
We have~$\Li(H)\subseteq\Dx(H)$, and $\Li(H)$ is a Liouville extension of its differential subfield~$H$. 
If $H$ is grounded or $\upl$-free, then by Section~\ref{sec:ADH}, $\Li(H)$ is the unique, up to isomorphism over $H$,  Liouville closure of the $H$-field $H$.  
Note that~$\Li(\R)\supseteq H_{\text{LE}}$; but
whereas~$\Li(\R)$ contains the $g\in\c^1$ with~$g'=\ex^{x^2}$, no such $g$ is in $H_{\text{LE}}$ (by Liouville).
For the next result, see~\cite[Proposition~6.9]{ADH5} or~\cite[Lemma 11.6(6)]{Boshernitzan82}:

\begin{prop}\label{prop:6.9}
If $\phi\in H$ and $\phi\preceq 1$, then $\cos\phi,\sin\phi\in\Dx(H)$.
\end{prop}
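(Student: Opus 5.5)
The plan is to reduce to an infinitesimal argument, express $\cos$ and $\sin$ of it rationally in terms of a solution of a Riccati equation, and then invoke the hardianity of such solutions over an arbitrary Hardy field extension of $H$. Throughout, $\d$-algebraicity over $H$ is the easy half and perfect $H$-hardianity the substantive half.

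\emph{Reduction to an infinitesimal.} Since $\phi\preceq 1$ and $\phi$ does not oscillate, $\phi$ has a limit $c\in\R$ as $t\to+\infty$. So $\phi=c+\varepsilon$ with $\varepsilon\prec 1$, and $\varepsilon\in H(\R)\subseteq\Dx(H)$. Then
$$\cos\phi\ =\ \cos c\cos\varepsilon-\sin c\sin\varepsilon,\qquad \sin\phi\ =\ \sin c\cos\varepsilon+\cos c\sin\varepsilon.$$
Since $\Dx(H)$ is a field containing $\R$, and $\Dx(H(\R))=\Dx(H)$, it suffices to show $\cos\varepsilon,\sin\varepsilon\in\Dx(H)$ for $\varepsilon\in H(\R)$ with $\varepsilon\prec 1$. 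So I may replace $H$ by $H(\R)$ and assume $\phi=\varepsilon\prec 1$.

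\emph{A Riccati parametrization.} Put $u:=\tan(\varepsilon/2)\in\Calinf$, which is defined eventually because $\varepsilon\to 0$. Then
$$u'\ =\ \tfrac12\varepsilon'(1+u^2),\qquad \cos\varepsilon\ =\ \frac{1-u^2}{1+u^2},\qquad \sin\varepsilon\ =\ \frac{2u}{1+u^2}.$$
So everything reduces to showing that $u$ is perfectly $H$-hardian and $\d$-algebraic over $H$. The $\d$-algebraicity is immediate from the displayed first-order equation, whose coefficients lie in $H$. It remains to show that for every Hardy field extension $E$ of $H$, the germ $u$ generates a Hardy field $E(u)$ over $E$.

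\emph{Hardianity over arbitrary $E$.} This is the main obstacle. Since $u'\in E[u]$, the ring $E[u]$ is closed under derivation, so $E(u)$ is a differential subring of $\Calinf$ once every nonzero $Q(u)$ with $Q\in E[Y]$ is eventually nonzero. Hence one needs exactly the non-oscillation of all $Q(u)$, $Q\in E[Y]^{\ne}$.

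\begin{itemize}
\item[(a)] First I would try the classical fact (Boshernitzan, Singer) that a $\c^1$ solution of a first-order equation $y'=a+by+cy^2$ with $a,b,c$ in a Hardy field $E$ is $E$-hardian. This applies to every $E\supseteq H$ and gives perfect $H$-hardianity at once.
\item[(b)] If I must argue directly, I would pass to the real closure of $E$ and factor $Q$ into linear and irreducible quadratic factors. It then suffices to treat $u-e$ for $e$ in that real closure, since irreducible quadratic factors are eventually nonzero anyway.
\item[(c)] For such a factor, $w:=u-e$ satisfies $w'=\alpha+\beta w+\gamma w^2$ with $\alpha=\tfrac12\varepsilon'(1+e^2)-e'$ in the real closure of $E$. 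At a zero of $w$ one has $w'=\alpha$, whose sign is eventually constant. So either $\alpha=0$, in which case $w$ satisfies a Bernoulli-type equation and is identically $0$ or never $0$ by uniqueness, or all sufficiently large zeros of $w$ are crossings in the same direction. Two consecutive zeros of a $\c^1$ function cannot both be crossings in the same direction, so $w$ has at most one zero eventually and is therefore non-oscillating.
\end{itemize}

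This yields $u\in E(u)$ with $E(u)$ a Hardy field for every $E$, hence $u\in\Dx(H)$. Therefore $\cos\varepsilon$ and $\sin\varepsilon$ lie in $\Dx(H)$, and so do $\cos\phi$ and $\sin\phi$.
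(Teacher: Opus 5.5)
Your proof is correct. It takes a different route from the one the paper points to.

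The paper does not prove the statement itself. It cites \cite[Proposition~6.9]{ADH5} and \cite[Lemma~11.6(6)]{Boshernitzan82}. Within the paper's own framework the natural derivation goes through the second-order theory. For $\phi\notin\R$, the germs $\cos\phi$ and $\sin\phi$ span the solution space of $Y''-(\phi')^\dagger Y'+(\phi')^2Y=0$, an equation over $H$. Since $\phi\to c\in\R$, the solution $\cos(\phi-c)=\cos c\cos\phi+\sin c\sin\phi$ tends to $1$, so it does not oscillate. Corollary~\ref{cor:char osc}, which rests on Boshernitzan's non-oscillation theorem (Proposition~\ref{prop:2nd order Hardy field}), then puts all solutions in $\Dx(H)$.

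You instead pass to the first-order Riccati equation $u'=\tfrac12\varepsilon'(1+u^2)$ for the half-angle tangent $u=\tan(\varepsilon/2)$. You then verify directly that $E(u)$ is a Hardy field for every Hardy field $E\supseteq H$:
\begin{itemize}
\item you factor over the real closure of $E$;
\item irreducible quadratic factors are eventually positive;
\item at zeros of a linear factor $u-e$ the derivative equals an eventually signed element $\alpha$, with the uniqueness argument when $\alpha=0$.
\end{itemize}

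What each route buys:
\begin{itemize}
\item Your argument is elementary and self-contained. It shows the statement needs only the hardianity of Riccati solutions over arbitrary Hardy fields. Your reduction to $\varepsilon\prec 1$ keeps $u$ free of poles, which is the role non-oscillation plays in the general theorem (whose proof also passes to a Riccati equation, for $y^\dagger$).
\item The second-order route is a two-line consequence once Boshernitzan's theorem is available, and it matches how the paper later uses such facts.
\end{itemize}

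One small correction. The limit $c=\lim\phi$ exists because $\phi$ is bounded and eventually monotone ($\phi'\in H$ has eventually constant sign). Non-oscillation of $\phi$ alone would not give this.
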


\subsection*{Smooth and analytic Hardy fields}
We also consider Hardy fields with added regularity requirements:
we call $H$ 
a {\em $\Ginf$-Hardy field\/} (or a {\em smooth Hardy field}\/) if~$H\subseteq \Ginf$, and a {\em $\Gom$-Hardy field\/} (or an {\em analytic Hardy field\/}) if~${H\subseteq \Gom}$.
If $H$ is a $\Ginf$-Hardy field and~$y\in\Ginf$ is $H$-hardian, then $y$ generates a $\Ginf$-Hardy field extension~$H\langle y\rangle$ of $H$; likewise with $\Gom$ in place of $\Ginf$.
 Moreover, if~$H\subseteq\Ginf$ 
 and~$E$ is a $\d$-algebraic Hardy field extension of $H$, 
 then~$E\subseteq\Ginf$;
similarly with~$\Gom$ in place of~$\Ginf$; see~\cite[Corollary~7.8]{ADH2}.
In particular, if $H\subseteq\Ginf$, then~$\Dx(H)\subseteq\Ginf$; likewise
with $\Gom$ in place of~$\Ginf$.

\subsection*{Hardy fields and composition}
 Let~$\ell\in \Go$ be such that $\ell>_{\ev}\R$ and~$\ell'\in H$. Then $\ell\in \Calinf$,  $\phi:=\ell'$
is active in $H$, $\phi>0$, and we have a Hardy field~$H(\ell)$. 
%and the compositional inverse 
%$\ell^{\inv}\in \Go$ of $\ell$ satisfies 
%$$\ell^{\inv}>\R, \qquad (\ell^{\inv})'\ =\ (1/\phi)\circ \ell^{\inv}\in H\circ \ell^{\inv}.$$
 The
$\R$-algebra automorphism~$f\mapsto f^\circ:= f\circ \ell^{\inv}$ of~$\mathcal{C}$  restricts to an ordered field isomorphism
$H \to H^\circ:=H\circ \ell^{\inv}$.
The identity~${(f^\circ)'  = (\phi^{-1}f')^\circ}$ from~\eqref{eq:fcirc}, valid for each $f\in\Go$, shows
that~$H^\circ$ is again a Hardy field,  and $h\mapsto h^\circ\colon H^\phi\to H^\circ$ is an isomorphism of ordered and
valued   differential fields. Moreover,  if $H\subseteq \Ginf$ and~$\ell\in \Ginf$, then $H^\circ\subseteq \Ginf$ ; likewise with~$\Gom$ instead of~$\Ginf$.

\begin{exampleNumbered}\label{ex:Hcircx+d}
Let  $d\in\R$. We have the 
Hardy field~$H\circ (x+d)$, with~$H\circ (x+d)\subseteq\Ginf$ if $H\subseteq\Ginf$ and $H\circ (x+d)\subseteq\Gom$ if $H\subseteq\Gom$. The map $h\mapsto h\circ(x+d)\colon H\to H\circ(x+d)$ is an isomorphism of ordered valued differential fields. (Take~$\ell=x-d$ above.)   
\end{exampleNumbered}

\noindent
The next lemma is
 \cite[Corollary~6.5]{Boshernitzan81}. %; see also \cite[Theo\-rem~1.7]{AvdD4}.
 
\begin{lemma}\label{lem:Bosh6.5}
The germ $\ell^{\inv}$ is hardian. Moreover,
if $\ell$ is $\Ginf$-hardian, then $\ell^{\inv}$ is also $\Ginf$-hardian, and
likewise with  $\Gom$  in place of $\Ginf$. 
\end{lemma}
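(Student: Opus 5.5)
The plan is to realize $\ell^{\inv}$ as the image of the germ $x$ under the composition map $f\mapsto f^\circ=f\circ\ell^{\inv}$, applied to a suitable Hardy field containing both $x$ and $\ell'$. The subsection ``Hardy fields and composition'' shows that this map carries any Hardy field containing $\ell'$ onto a Hardy field. Since $x\circ\ell^{\inv}=\ell^{\inv}$, this gives a Hardy field containing $\ell^{\inv}$.

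\emph{Choosing the Hardy field.} Since $\ell\in\Go$, $\ell'\in H$ and $\ell>_{\ev}\R$, the discussion before the lemma gives a Hardy field $H(\ell)$. By the fact recalled in the introduction, $H(\ell)$ extends to a Liouville closed Hardy field $E\supseteq\R$, and any such $E$ contains $\R(x)$. Thus $E$ contains both $x$ and $\ell'$.

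\emph{Hardianity of $\ell^{\inv}$.} Apply the subsection ``Hardy fields and composition'' with $E$ in place of $H$; this is allowed since $\ell'\in E$. By the identity~\eqref{eq:fcirc}, $E^\circ=E\circ\ell^{\inv}$ is a Hardy field, and it contains $x^\circ=x\circ\ell^{\inv}=\ell^{\inv}$. Hence $\ell^{\inv}$ is hardian. I would also note that $\ell\in\Calinf$ (because $\ell'\in H\subseteq\Calinf$), so $\ell^{\inv}\in\Calinf$ by the remarks after~\eqref{eq:fcirc}.

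\emph{The $\Ginf$ and $\Gom$ cases.} Suppose $\ell\in\Ginf$. By the remarks following~\eqref{eq:fcirc}, $\ell^{\inv}\in\Ginf$. In the analytic case, $\ell^{\inv}\in\Gom$ by the real-analytic inverse function theorem, since $\ell'(t)>0$ eventually. A hardian germ that is $\Ginf$ generates the Hardy field $\Q(\ell^{\inv},(\ell^{\inv})',\dots)$, and all its elements are rational expressions in $\Ginf$ germs, so this field lies in $\Ginf$. Thus $\ell^{\inv}$ is $\Ginf$-hardian, and the same argument works with $\Gom$.

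The only step needing care is the existence of a Hardy field that contains both $\ell'$ and $x$. If one prefers not to invoke Liouville closure, one can take $E:=\Li\big(H(\ell)(\R)\big)$, which contains $x$ because $x'=1$. Alternatively, one can compose instead with the Hardy field $\R(x)\circ\ell^{\inv}$. The rest is a direct application of the composition machinery already developed.
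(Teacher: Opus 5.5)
Your argument is correct, and for the main claim it is the paper's argument. The paper replaces $H$ by $\Li\big(H(\R)\big)$ to get a Hardy field containing $x$ and $\ell$, then notes $\ell^{\inv}=x\circ\ell^{\inv}\in H\circ\ell^{\inv}$. You do the same with your Liouville closed $E$.

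For the $\Ginf$/$\Gom$ clause the routes differ slightly. The paper's proof leaves this clause to the earlier remark that $H\subseteq\Ginf$ and $\ell\in\Ginf$ give $H^\circ\subseteq\Ginf$. Using that remark requires first choosing the auxiliary Hardy field inside $\Ginf$, for instance the Liouville closure of $\R\langle\ell\rangle$, which stays in $\Ginf$ by the remarks on smooth Hardy fields. Your shortcut avoids that step: $\ell^{\inv}$ is hardian by the first part and lies in $\Ginf$ (resp.\ $\Gom$), so $\Q\langle\ell^{\inv}\rangle$ is already a $\Ginf$- (resp.\ $\Gom$-) Hardy field. This is valid and slightly more direct.

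Drop the closing suggestion to compose with $\R(x)\circ\ell^{\inv}=\R(\ell^{\inv})$. Since $(\ell^{\inv})'=(1/\ell')\circ\ell^{\inv}$, this field is closed under derivation only when $\ell'\in\R(x)$, so in general it is not a Hardy field. The composition machinery needs the Hardy field you compose with to contain $\ell'$, which is exactly why the main construction takes $E\supseteq H(\ell)$.
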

\begin{proof}
Replacing $H$ by $\Li\!\big(H(\R)\big)$ we   arrange that our Hardy field $H$ contains both~$\ell$ and~$x$.
Then $\ell^{\inv}=x\circ \ell^{\inv}$ is an element of the Hardy field $H^\circ=H\circ\ell^{\operatorname{inv}}$.
\end{proof}

\noindent
The proof of Lemma~\ref{lem:Bosh6.5} shows that  $\ell$ and
$\ell^{\operatorname{inv}}$ are both $\R(x)$-hardian; moreover (cf.~\cite[Lem\-ma~14.10]{Boshernitzan82}):

\begin{lemma}\label{lem:trdegellinv}
We have
$$\operatorname{trdeg}\!\big(\R\langle x, \ell^{\operatorname{inv}}\rangle|\R\big)\  =\ 
\operatorname{trdeg}\!\big(\R\langle x, \ell\rangle|\R\big),$$
hence if $\ell$ is $\d$-algebraic over $\R$, then so is 
$\ell^{\operatorname{inv}}$,  with
$$\operatorname{trdeg}\!\big(\R\langle \ell^{\operatorname{inv}}\rangle|\R\big)\  \leq\
1+\operatorname{trdeg}\!\big(\R\langle \ell\rangle|\R\big).$$
\end{lemma}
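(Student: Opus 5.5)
The plan is to use the composition isomorphism $f\mapsto f^\circ=f\circ\ell^{\operatorname{inv}}$ from the preceding subsection, which preserves transcendence degrees. Since $\ell$ is $\R(x)$-hardian, first pass to a Hardy field containing $\R$, $x$ and $\ell$. For instance, take $H_0:=\R\langle x,\ell\rangle$; it is a Hardy field because $\ell$ is hardian over $\R(x)$ by the proof of Lemma~\ref{lem:Bosh6.5}. Its derivative $\ell'$ lies in $H_0$, and $\ell>\R$. Then $f\mapsto f^\circ$ maps $H_0$ isomorphically onto the Hardy field $H_0^\circ$, as an isomorphism of fields over $\R$. It sends $x\mapsto x^\circ=\ell^{\operatorname{inv}}$ and $\ell\mapsto \ell\circ\ell^{\operatorname{inv}}=x$.

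The key steps are as follows. First I would show that $H_0^\circ=\R\langle x,\ell^{\operatorname{inv}}\rangle$ as differential fields. The image contains $x$ and $\ell^{\operatorname{inv}}$, and by \eqref{eq:fcirc} we have $(f^\circ)'=(\phi^{-1}f')^\circ$ with $\phi=\ell'\in H_0$. So $H_0^\circ$ is closed under derivation and is generated as a differential field over $\R$ by $x,\ell^{\operatorname{inv}}$. Here one uses that $H_0$ is generated as a field over $\R$ by the $\phi^{-k}$-twisted derivatives $(\phi^{-1}\der)^n$ of $x$ and $\ell$. This holds because $H_0$ and its compositional conjugate $H_0^\phi$ have the same underlying field, and $H_0^\phi$ is generated over $\R$ by $x,\ell$ as a differential field. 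The reason is that $\der=\phi\derdelta$ and $\phi=\derdelta(\ell)\cdot\phi$, or more simply $\derdelta(x)=1/\phi$. So $\phi\in\R\langle x,\ell\rangle^{\phi}$, and then $\der^n$ expressed via $\derdelta$ stays inside. The reverse inclusion is similar. Hence $\R\langle x,\ell\rangle=\R\langle x,\ell\rangle^{\phi}$ as fields, and applying ${}^\circ$ gives $H_0^\circ=\R\langle \ell^{\operatorname{inv}},x\rangle$. A field isomorphism over $\R$ preserves transcendence degree over $\R$, which gives the displayed equality.

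For the consequences: if $\ell$ is $\d$-algebraic over $\R$, then $\operatorname{trdeg}\R\langle x,\ell\rangle|\R\le 1+\operatorname{trdeg}\R\langle\ell\rangle|\R<\infty$, since $x$ adds at most $1$ (as $x'=1$). Hence $\R\langle x,\ell^{\operatorname{inv}}\rangle$ has finite transcendence degree, so $\ell^{\operatorname{inv}}$ is $\d$-algebraic. Moreover $\operatorname{trdeg}\R\langle\ell^{\operatorname{inv}}\rangle|\R\le\operatorname{trdeg}\R\langle x,\ell^{\operatorname{inv}}\rangle|\R=\operatorname{trdeg}\R\langle x,\ell\rangle|\R\le 1+\operatorname{trdeg}\R\langle\ell\rangle|\R$.

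The main obstacle is the bookkeeping in the first step: checking carefully that the field generated by $x,\ell$ and all their $\der$-derivatives coincides with the one generated using $\derdelta=\phi^{-1}\der$. It rests on $\phi=\ell'$ and $1/\phi=\derdelta x$ each lying in both versions, and I expect this to be routine.
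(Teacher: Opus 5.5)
Your proposal is correct and is essentially the paper's argument. The paper proves the slightly more general Lemma~\ref{lem:trdeghcirc} (with an extra $h\in H$) by exactly this reasoning: $H^\phi$ has the same underlying field as $H$ and is generated as a differential field over $\R$ by $x,\ell$ (because $\phi=1/\derdelta(x)$ and each $\der^n$ is a polynomial in $\phi,\derdelta(\phi),\dots$ and the $\derdelta$-derivatives), and $f\mapsto f^\circ$ transports this to $H^\circ$. It then gets the present lemma by taking $h=x$, and your derivation of the consequences via $\R\langle x,\ell\rangle=\R\langle\ell\rangle(x)$ and $\R\langle \ell^{\operatorname{inv}}\rangle\subseteq\R\langle x,\ell^{\operatorname{inv}}\rangle$ matches as well.
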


\noindent
This follows by taking $H(x)$, $x$ for $H$, $h$ in the following lemma:

\begin{lemma}\label{lem:trdeghcirc}
Let $h\in H$. Then 
\begin{equation}
\operatorname{trdeg}\!\big(\R\langle x, \ell^{\operatorname{inv}}, h^\circ\rangle|\R\big)\  =\ 
\operatorname{trdeg}\!\big(\R\langle x, \ell, h\rangle|\R\big), \label{eq:trdegellinv}
\end{equation}
hence if $\ell$, $h$ are $\d$-algebraic over $\R$, then so is $h^\circ$, with
$$\operatorname{trdeg}\!\big(\R\langle h^\circ\rangle|\R\big)\  \leq\ 
\operatorname{trdeg}\!\big(\R\langle x,h\rangle|\R\big)+\operatorname{trdeg}\!\big(\R\langle \ell\rangle|\R\big).$$
\end{lemma}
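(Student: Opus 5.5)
The plan is to use that composition with $\ell^{\operatorname{inv}}$ is a ring automorphism of $\mathcal C$ that intertwines differentiation up to the factor $\phi^{-1}$, by~\eqref{eq:fcirc}. First note $x^\circ=\ell^{\operatorname{inv}}$ and $\ell^\circ=\ell\circ\ell^{\operatorname{inv}}=x$. Hence the map $f\mapsto f^\circ$ sends $x,\ell,h$ to $\ell^{\operatorname{inv}},x,h^\circ$, respectively. I would enlarge $H$ to a Hardy field $E\supseteq H(\R)$ containing $x$ and $\ell$; for instance $\Li\!\big(H(\R)\big)$ works, as in the proof of Lemma~\ref{lem:Bosh6.5}. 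Then $E^\circ$ is again a Hardy field, and $f\mapsto f^\circ\colon E^\phi\to E^\circ$ is an isomorphism of differential fields that is the identity on~$\R$.

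The key step is to show that this isomorphism maps $\R\langle x,\ell,h\rangle$, computed in $E$ with the derivation $\der$, onto $\R\langle \ell^{\operatorname{inv}},x,h^\circ\rangle$, computed in $E^\circ$. Since $\phi=\ell'$ and $\phi^{-1}=1/\ell'$ lie in $F:=\R\langle x,\ell,h\rangle$, the field $F$ is closed under both $\der$ and $\derdelta=\phi^{-1}\der$. Conversely, any differential subfield for $\derdelta$ containing $\ell$ contains $\derdelta(\ell)=1$. It does not obviously contain $\phi$, however. To get $\phi$, use $x$: we have $\derdelta x=\phi^{-1}$, so $\phi$ lies in the $\derdelta$-field generated by $x$. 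Therefore $F$ equals the $\derdelta$-differential field generated over $\R$ by $x,\ell,h$. Its image under ${}^\circ$ is then the differential subfield of $E^\circ$ generated by $\ell^{\operatorname{inv}},x,h^\circ$, using $(f^\circ)'=(\derdelta f)^\circ$. A field isomorphism preserves transcendence degree over $\R$, which gives~\eqref{eq:trdegellinv}.

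For the consequences, suppose $\ell$ and $h$ are $\d$-algebraic over $\R$. Then $\R\langle x,\ell,h\rangle$ has transcendence degree at most $1+\operatorname{trdeg}\R\langle\ell\rangle+\operatorname{trdeg}\R\langle h\rangle<\infty$. By~\eqref{eq:trdegellinv}, $\R\langle h^\circ\rangle\subseteq\R\langle x,\ell^{\operatorname{inv}},h^\circ\rangle$ has finite transcendence degree, so $h^\circ$ is $\d$-algebraic. For the sharper bound, I would apply the same isomorphism argument to the smaller generating set $\{x,h\}$. The derivation $\derdelta$ needs $\phi^{-1}$, so take $M:=\R\langle x,h\rangle\langle\ell\rangle$. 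This is closed under $\derdelta$, and its image contains $h^\circ$. Moreover,
$$\operatorname{trdeg} M\ \le\ \operatorname{trdeg}\R\langle x,h\rangle+\operatorname{trdeg}\R\langle\ell\rangle,$$
since adjoining the $\der$-field generated by $\ell$ adds at most $\operatorname{trdeg}\R\langle\ell\rangle$. The image $M^\circ$ is a differential field containing $h^\circ$, so $\operatorname{trdeg}\R\langle h^\circ\rangle\le\operatorname{trdeg}M^\circ=\operatorname{trdeg}M$.

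The main obstacle is the bookkeeping in the key step: checking that the $\der$-closure and the $\derdelta$-closure of the generating set coincide. That is exactly why $x$ must be among the generators, so that $\phi^{\pm1}$ is available in both directions. I would also check that the ambient field $E$ can be chosen so that ${}^\circ$ is defined on all the relevant elements.
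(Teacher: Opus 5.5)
Your proposal is correct and is essentially the paper's proof: there too one arranges $H=\R\langle x,\ell,h\rangle$, uses $\phi=1/\derdelta(x)$ to see that $H^\phi$ is generated as a differential field over $\R$ by $x$, $\ell$, $h$, and transports this along the differential field isomorphism $f\mapsto f^\circ\colon H^\phi\to H^\circ$ to get $H^\circ=\R\langle \ell^{\operatorname{inv}},x,h^\circ\rangle$. One small simplification: your auxiliary field $M=\R\langle x,h\rangle\langle\ell\rangle$ is just $\R\langle x,\ell,h\rangle$ again, so the final bound is simply \eqref{eq:trdegellinv} combined with additivity of transcendence degree, namely $\operatorname{trdeg}\!\big(\R\langle x,\ell,h\rangle|\R\big)\le\operatorname{trdeg}\!\big(\R\langle x,h\rangle|\R\big)+\operatorname{trdeg}\!\big(\R\langle \ell\rangle|\R\big)$.
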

\begin{proof}
Arrange   $H=\R\langle x, \ell, h\rangle=\R(x)\langle \ell, h\rangle$ and set $\phi:=\ell'$.
With $\der$ and~$\derdelta=\phi^{-1}\der$ the derivations of~$H$ and $H^\phi$  we have $\phi=1/\derdelta(x)$, and for $f\in H$ and $n\geq 1$,
$$\der^n(f)\in \Q\big[\derdelta(f),\derdelta^2(f),\dots,\phi,\derdelta(\phi),\derdelta^2(\phi),\dots\big]$$ 
by the subsection on compositional conjugation in Section~\ref{sec:diffalg}.  The differential fields~$H$ and $H^\phi$ have the same underlying field, and the former is generated as a field over $\R$ by $x$ and the $\ell^{(n)}$, $h^{(n)}$, so  applying the above to $f=\ell$ and $f=h$ shows that $H^\phi$ is generated as a differential field~over~$\R$ by $x$, $\ell$, and $h$.
We also have a differential field isomorphism
$h\mapsto h^\circ\colon H^\phi\to H^\circ=H\circ \ell^{\operatorname{inv}}$.
This yields~$H^\circ=\R\langle\ell^{\operatorname{inv}},x,h^\circ\rangle$ and  
\eqref{eq:trdegellinv}. 
%Suppose now that~$\ell$, $h$ are $\d$-algebraic over~$\R$; then
%by additivity of~$\operatorname{trdeg}$,      
%\begin{align*}
%\operatorname{trdeg}\!\big(\R\langle x, \ell, h\rangle|\R\big) &\ =\ 
%\operatorname{trdeg}\!\big(\R\langle \ell,x,h\rangle|\R\langle\ell\rangle\big)+
%\operatorname{trdeg}\!\big(\R\langle \ell\rangle|\R\big) \\ &\ \leq\ 
%\operatorname{trdeg}\!\big(\R\langle x,h\rangle|\R\big)+\operatorname{trdeg}\!\big(\R\langle \ell\rangle|\R\big),
%\end{align*}
%and so by \eqref{eq:trdegellinv}:
%$$\operatorname{trdeg}\!\big(\R\langle h^\circ\rangle|\R\big)\ \leq\ 
%\operatorname{trdeg}\!\big(\R\langle x, \ell^{\operatorname{inv}},h^\circ\rangle|\R\big)\ \leq\
%\operatorname{trdeg}\!\big(\R\langle x,h\rangle|\R\big)+\operatorname{trdeg}\!\big(\R\langle \ell\rangle|\R\big),$$
%hence  $h^\circ$ is $\d$-algebraic over $\R$.
\end{proof}
 
\begin{cor}
If $H\supseteq\R$ is $\d$-algebraic, then so is $H^\circ\supseteq\R$.
\end{cor}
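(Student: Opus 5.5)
The corollary says that if $H\supseteq\R$ is $\d$-algebraic, then so is $H^\circ\supseteq\R$. I read "$H$ is $\d$-algebraic" as "$H$ is $\d$-algebraic over $\R$", and I will deduce the claim from Lemma~\ref{lem:trdeghcirc}.

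First, $H^\circ\supseteq\R$. The map $f\mapsto f^\circ$ is the identity on $\R$, so $\R^\circ=\R\subseteq H^\circ$.

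Next, fix an element of $H^\circ$. It has the form $h^\circ$ with $h\in H$, and we must show that $h^\circ$ is $\d$-algebraic over $\R$. The plan is to apply Lemma~\ref{lem:trdeghcirc}, which requires $\ell$ and $h$ to be $\d$-algebraic over $\R$.
\begin{itemize}
\item For $h$ this is the hypothesis on $H$.
\item For $\ell$: we have $\ell'=\phi\in H$, and $\phi$ is $\d$-algebraic over $\R$, say $P(\phi)=0$ with $P\in\R\{Y\}^{\ne}$. Then $Q(Y):=P(Y')$ is a nonzero differential polynomial over $\R$ with $Q(\ell)=0$, so $\ell$ is $\d$-algebraic over $\R$.
\end{itemize}
A subtlety is that $\ell$ need not lie in $H$. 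The lemma's proof handles this by passing to a bigger Hardy field: it replaces $H$ by $\R\langle x,\ell,h\rangle$. That field is a Hardy field because $\ell'\in H$ gives the Hardy field $H(\ell)$, and $x$ is hardian over $H(\ell)$ by passing to the Liouville closure, as in the proof of Lemma~\ref{lem:Bosh6.5}. Lemma~\ref{lem:trdeghcirc} then applies and gives that $h^\circ$ is $\d$-algebraic over $\R$, with the bound $\operatorname{trdeg}\!\big(\R\langle h^\circ\rangle|\R\big)\le \operatorname{trdeg}\!\big(\R\langle x,h\rangle|\R\big)+\operatorname{trdeg}\!\big(\R\langle \ell\rangle|\R\big)<\infty$.

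Since $h\in H$ was arbitrary, every element of $H^\circ$ is $\d$-algebraic over $\R$. As $H^\circ$ is a Hardy field containing $\R$, this means $H^\circ\supseteq\R$ is $\d$-algebraic.

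The main obstacle is the point just addressed: making sure Lemma~\ref{lem:trdeghcirc} is invoked in a Hardy field that contains $x$, $\ell$ and $h$, with the same composition map $f\mapsto f^\circ$. Everything else is immediate.
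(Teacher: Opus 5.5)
Your proof is correct and is essentially the paper's route: the corollary is stated as an immediate consequence of Lemma~\ref{lem:trdeghcirc}, applied to each $h\in H$ after noting that $\ell$ is $\d$-algebraic over $\R$ because $\ell'\in H$. Your remark that one may enlarge $H$ to a Hardy field containing $x$, $\ell$, $h$ without changing $f\mapsto f^\circ$ is exactly what that lemma's proof does.
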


\subsection*{Addition of sinusoids}  
Recall that for $\phi,\theta\in \R$ we have
\begin{align*} \cos(\phi+\theta)\  &=\ \cos(\phi)\cos(\theta)-\sin(\phi)\sin(\theta), \\  
\cos(\phi-\theta)\  &=\ \cos(\phi)\cos(\theta)+\sin(\phi)\sin(\theta).
\end{align*} 
Recall also the bijection $\arccos\colon [-1,1]\to [0,\pi]$, the inverse of the cosine function on $[0,\pi]$. It follows that for any $a,b\in \R$ we have $d\in \R$ such that $$a\cos(\phi)+b\sin(\phi)\ =\ \sqrt{a^2+b^2}\cdot \cos(\phi+d) \text{ for all $\phi\in \R$:}$$ 
for $a$, $b$ not both $0$ this holds
with $d=\arccos\!\big(a/\sqrt{a^2+b^2}\big)$ when $b\leqslant 0$, and with~$d=-\arccos\!\big(a/\sqrt{a^2+b^2}\big)$
when $b\geqslant 0$. For later use we record some consequences:

\begin{lemma}\label{addsin} 
Let $y\in\c$. Then
$$y=a\cos x+b\sin x \text{ for some $a,b\in\R$} \quad \Longleftrightarrow\quad y=c\cos(x+d) \text{ for some $c,d\in\R$.}$$ 
\end{lemma}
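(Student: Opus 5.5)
Both directions follow from the identity just recorded before the lemma, applied pointwise to the germs. The identity says that for all real $a,b$ there is $d\in\R$ with $a\cos(\phi)+b\sin(\phi)=\sqrt{a^2+b^2}\cdot\cos(\phi+d)$ for all $\phi\in\R$.

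For ``$\Rightarrow$'', suppose $y=a\cos x+b\sin x$ with $a,b\in\R$. Choose $d$ as in that identity, namely $d=\pm\arccos\!\big(a/\sqrt{a^2+b^2}\big)$ according to the sign of $b$, and any $d$ if $a=b=0$. Put $c:=\sqrt{a^2+b^2}$. Evaluating the identity at $\phi=t$ for every sufficiently large $t$ gives $y(t)=c\cos(t+d)$ eventually. Hence $y=c\cos(x+d)$ as germs.

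For ``$\Leftarrow$'', suppose $y=c\cos(x+d)$ with $c,d\in\R$. Expand with the addition formula $\cos(\phi+\theta)=\cos\phi\cos\theta-\sin\phi\sin\theta$ at $\phi=t$, $\theta=d$. This gives $y=a\cos x+b\sin x$ with $a:=c\cos d$ and $b:=-c\sin d$. These are real constants, and the equality holds for every $t$, hence as germs.

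Nothing here is a real obstacle. The only care needed is the degenerate case $a=b=0$, where $c=0$ and any $d$ works, and the sign convention for $d$ in the arccos formula; that convention was already settled in the preceding paragraph, so I would simply cite it.
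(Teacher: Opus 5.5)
Your proof is correct and follows the paper's route. The paper records this lemma as an immediate consequence of the identity $a\cos\phi+b\sin\phi=\sqrt{a^2+b^2}\cos(\phi+d)$, used for ``$\Rightarrow$'', together with the cosine addition formula for ``$\Leftarrow$'', both applied pointwise to representatives, which is exactly what you do.
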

%\begin{proof}
%Suppose $y=a\cos x+b\sin x$. Then with $z=a-b\imag\in\mathbb C$ we have
%$y=\Re(z\ex^{x\imag})$; taking $c,d\in\R$ with $z=c\ex^{d\imag}$ we then have
%$y=\Re(c\ex^{(x+d)\imag})=c\cos(x+d)$. Conversely, if $c,d\in\R$
%satisfy $y=c\cos(x+d)$, then $y=\Re(z\ex^{x\imag})$ for $z=c\ex^{d\imag}$, so $y=a\cos x+b\sin x$ for $a=\Re z$, $b=-\Im z$.
%\end{proof}

\begin{cor}\label{cor:sinusoids}
Let $\phi\in\c$. Then  
$\R \cos\phi + \R \sin\phi\, =\,\big\{  c\cos(\phi+d) :\, c,d\in\R \big\}$.
\end{cor}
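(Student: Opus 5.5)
The identity is pointwise, so Corollary~\ref{cor:sinusoids} follows from Lemma~\ref{addsin} by composing germs; it is simplest, though, to argue directly from the real identities recorded just before the lemma.

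For the inclusion ``$\supseteq$'', let $c,d\in\R$. The addition formula for the cosine, applied pointwise at each $t$ in the domain of a representative of $\phi$, gives
$$c\cos(\phi+d)\ =\ (c\cos d)\cos\phi-(c\sin d)\sin\phi$$
as germs. Hence $c\cos(\phi+d)\in\R\cos\phi+\R\sin\phi$.

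For ``$\subseteq$'', let $a,b\in\R$. We showed before Lemma~\ref{addsin} that there is a $d\in\R$ with
$$a\cos(s)+b\sin(s)\ =\ \sqrt{a^2+b^2}\cdot\cos(s+d)\quad\text{for all real $s$.}$$
For $a=b=0$ any $d$ works. Substituting $s=\phi(t)$ for each $t$ in the domain of a representative of $\phi$, and then passing to germs, gives $a\cos\phi+b\sin\phi=c\cos(\phi+d)$ with $c:=\sqrt{a^2+b^2}$.

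Here $\cos\phi$, $\sin\phi$ and $\cos(\phi+d)$ denote the germs of the compositions of continuous functions, so they lie in $\c$, and identities that hold for all real arguments carry over to these germs. There is no real obstacle; the only point needing care is that $d$ is independent of $s$, which is exactly what the displayed real identity provides.
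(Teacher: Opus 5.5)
Your proof is correct and follows the paper's own route. The paper records this corollary, without separate proof, as an immediate pointwise consequence of the addition formula and the real identity $a\cos s+b\sin s=\sqrt{a^2+b^2}\cos(s+d)$ displayed before Lemma~\ref{addsin}, which are exactly what you use for the two inclusions. Your opening remark about deriving it from Lemma~\ref{addsin} ``by composing germs'' would need $\phi\to+\infty$, since composition of germs is only set up for such $\phi$; but your actual argument is directly pointwise, so nothing depends on that remark.
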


\noindent
Next two facts to be used in the proofs of Lemmas~\ref{lem:param V'} and~\ref{lem:param zeros of derivatives}: 

\begin{cor}\label{arccosH} Suppose $H\supseteq \R$ is real closed and closed under integration, and let $g,h\in H$. Then there is $u\in H$ such that $-\pi \leqslant u \leqslant \pi$ and $g\cos \phi + h\sin \phi = \sqrt{g^2+h^2}\cdot \cos (\phi+ u)$ for all $\phi\in \c$:  if $h < 0$ this holds for
$u=\arccos \big(g/\sqrt{g^2+h^2}\big)$, and if $h>0$ it holds for $u=-\arccos \big(g/\sqrt{g^2+h^2}\big)$.
\end{cor}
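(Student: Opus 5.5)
We aim to show, for $H\supseteq\R$ real closed and closed under integration and $g,h\in H$, that $u$ with the stated properties exists in $H$ and satisfies the identity. First consider the degenerate case $g=h=0$: then $u=0$ works trivially. If $h=0$ and $g\neq 0$, take $u=0$ when $g>0$ and $u=\pi$ when $g<0$. In what follows we may therefore assume $h\neq 0$; since $H$ is an ordered field and $h\in H^{\ne}$, either $h<0$ or $h>0$ holds eventually.

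Put $r:=\sqrt{g^2+h^2}$. Real closedness gives $r\in H^{>}$, and hence $a:=g/r\in H$. Moreover $a^2<1$, because $h^2>0$, so $-1<a<1$ eventually. The main point is that $\arccos a\in H$. Here we use that $\arccos$ is smooth on $(-1,1)$ with $(\arccos)'(t)=-1/\sqrt{1-t^2}$. The germ $w:=\arccos\circ a$ (eventually defined, since $a$ eventually takes values in $(-1,1)$) lies in $\c^1$ and satisfies $w'=-a'/\sqrt{1-a^2}$. This derivative lies in $H$: indeed $1-a^2=h^2/r^2>0$, so $\sqrt{1-a^2}=|h|/r\in H$. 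Since $H$ is closed under integration, we obtain $w_0\in H$ with $w_0'=w'$, and then $w-w_0$ is a real constant $c$. Because $H\supseteq\R$, it follows that $w=w_0+c\in H$. Finally, $0\le w\le\pi$, so $u=\pm w$ satisfies $-\pi\le u\le\pi$.

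It remains to verify the identity for all $\phi\in\c$. This is a pointwise identity between germs. Eventually the sign of $h$ is constant, and at each large $t$ we apply the real-number fact recorded before Lemma~\ref{addsin} with $a=g(t)$, $b=h(t)$. That fact gives $d(t)=\arccos\!\big(g(t)/r(t)\big)$ when $h(t)\le 0$ and $d(t)=-\arccos\!\big(g(t)/r(t)\big)$ when $h(t)\ge0$, with $g(t)\cos\theta+h(t)\sin\theta=r(t)\cos(\theta+d(t))$ for all real $\theta$. Substituting $\theta=\phi(t)$ yields the identity for the corresponding sign of $h$.

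The only step needing care is showing that $\arccos(g/r)$ lies in $H$ rather than merely being hardian. The derivative computation handles this: it reduces the question to integration in $H$ and the inclusion $\R\subseteq H$, and those are exactly the hypotheses available.
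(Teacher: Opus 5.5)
Your proof is correct and follows the paper's argument. The paper also gets $\arccos\big(g/\sqrt{g^2+h^2}\big)\in H$ from the derivative $-a'/\sqrt{1-a^2}$ lying in $H$, using real closedness, closure under integration and $\R\subseteq H$, and the identity comes from the pointwise real-number fact; you additionally spell out the trivial case $h=0$, which the paper leaves implicit.
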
 
\begin{proof} On the interval $(-1,1)$ the function $\arccos$ is real analytic with derivative~$t\mapsto -1/\sqrt{1-t^2}$. 
Thus $\arccos\!\big(g/\sqrt{g^2+h^2}\big)\in H$ for $h\ne 0$. \end{proof}

\begin{cor} \label{gphicos} Let $a\in \R$ and let $g,\phi\in \c_a^1$ have germs in $H$ such that $g(t)\ne 0$ eventually, and
$\phi(t)\to +\infty$ as $t\to +\infty$.  Then there is a real $b\geqslant a$ with the property that if
$s_0, s_1\in [b,+\infty)$ with $s_0 < s_1$ are any successive zeros of $y:= g\cos\phi$, then~$y'$ has exactly one zero in the interval $(s_0,s_1)$.  
\end{cor}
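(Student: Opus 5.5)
The idea is to show that on $[b,+\infty)$ the zeros of $y'$ correspond to the solutions of a single equation $\tan\phi = k$, where $k$ is a germ in $H$ with a finite limit; monotonicity of $\phi$ then separates these solutions.

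\emph{Computing $y'$.} Since $g,\phi$ have germs in $H$, the germs $g'$ and $\phi'$ lie in $H$, so they are non-oscillating. As $\phi\to+\infty$, we have $\phi'>0$ eventually. Take $b\ge a$ so large that on $[b,+\infty)$ the functions $g$ and $\phi'$ have no zeros and $\phi$ is strictly increasing. On this interval
$$y' \;=\; g'\cos\phi - g\phi'\sin\phi \;=\; g\phi'\big(\theta\cos\phi - \sin\phi\big), \qquad \theta:=\frac{g^\dagger}{\phi'} \in H.$$
The zeros of $y$ in $[b,+\infty)$ are exactly the points where $\cos\phi=0$. Consecutive zeros $s_0<s_1$ therefore satisfy $\phi(s_0)=\pi/2+k\pi$ and $\phi(s_1)=\pi/2+(k+1)\pi$ for some integer $k$, and $\phi$ maps $(s_0,s_1)$ bijectively onto the open interval between these values. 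On that interval $\cos\phi\ne0$, so $y'(t)=0$ if and only if $\tan\phi(t)=\theta(t)$.

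\emph{Existence of a zero.} Existence is immediate from Rolle's theorem.

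\emph{Uniqueness.} Set $u:=\phi$ restricted to $(s_0,s_1)$ and $F(t):=\tan\phi(t)-\theta(t)$. Then
$$F'(t) \;=\; \frac{\phi'(t)}{\cos^2\phi(t)} - \theta'(t),$$
and the question is whether $F'>0$ throughout. It suffices to have $\theta'<\phi'$ eventually, since $\phi'/\cos^2\phi\ge\phi'$. This follows from $\theta'\prec\phi'$, or at least $\theta'\le\phi'/2$ eventually. We get it because $\theta\in H$ while $\phi\in H$ with $\phi\succ1$:
\begin{enumerate}
\item[(a)] If $\theta\preceq1$, then $\theta'\preceq\phi'$ up to small terms, by the pre-$\d$-valued property \eqref{eq:pdv}. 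More precisely, \eqref{eq:pdv} gives $\theta'\prec\phi^\dagger$. Since $\phi\succ1$, also $\phi^\dagger\preceq\phi'/\phi\prec\phi'$, so $\theta'\prec\phi'$.
\item[(b)] If $\theta\succ1$, then $F$ could fail to be monotone. In this case we instead use $G:=\cot\phi-1/\theta$, which vanishes exactly where $F$ does. Here $1/\theta\prec1$, so the same argument gives $(1/\theta)'\prec\phi'$, while $\bigl(\cot\phi\bigr)'=-\phi'/\sin^2\phi\le-\phi'$. Hence $G$ is strictly decreasing, and for $G$ the relevant interval is $(k\pi,(k+1)\pi)$.
\end{enumerate}

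\emph{Case (b) in detail.} The zeros of $y'$ in $(s_0,s_1)$ then lie where $\phi\ne k\pi$, where $\cot$ takes every value; $G$ is monotone on each of at most two subintervals split by the point with $\phi\in\pi\Z$. This is the main obstacle. To handle it, I would instead use that $\theta\succ1$ together with the non-oscillation of $\theta$ forces $\theta\to\pm\infty$. Then a zero of $y'$ with $\phi\in(\pi/2+k\pi,\pi/2+(k+1)\pi)$ satisfies $\tan\phi=\theta$. On $(\pi/2+k\pi,\pi/2+(k+1)\pi)$ the map $\phi\mapsto\tan\phi$ is increasing and bijective onto $\R$, and $F$ is strictly increasing there since $F'>0$.

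\emph{Why $F'>0$ holds in both cases.} This needs $\theta'\prec\phi'$ in both cases. In case (b) it follows from $\theta'/\phi'=\theta\cdot(\theta^\dagger/\phi')$ and the estimate $\theta^\dagger\preceq\ldots$, which is exactly the delicate point. To avoid that estimate, I would fall back on Lemma~\ref{uniquestat}. The germ $y$ satisfies $y''+f_1y'+f_0y=0$ with coefficients formed from $g$, $\phi$ and their derivatives, all in $H$. After normalizing via \eqref{eq:2nd order, gen} to $Y''+fY=0$, Lemma~\ref{uniquestat} applies once the resulting $f\in H$ is nonzero. Its proof adapts to the non-normalized equation, since $y''=-f_0y$ at critical points and $f_0\in H$ is eventually of constant sign (nonzero, as $y$ oscillates).
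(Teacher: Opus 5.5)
\paragraph{Case $\theta\preceq1$.} This part is correct: \eqref{eq:pdv} gives $\theta'\prec\phi^\dagger\prec\phi'$, so $F=\tan\phi-\theta$ is strictly increasing on $(s_0,s_1)$.

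\paragraph{The gap: case $\theta\succ1$.} Your proposal never closes this case.
\begin{itemize}
\item The monotonicity argument needs $\theta'<\phi'$ eventually, and that is false in general. Take $g=\ex^{x^2}$, $\phi=\log x$. Then $\theta=2x^2$ and $\theta'=4x\succ\phi'=1/x$. Moreover $F'=1/(x\cos^2\phi)-4x<0$ wherever $\cos^2\phi>1/(4x^2)$, so $F$ is not monotone on $(s_0,s_1)$. The statement still holds in this example; it is your method that fails. Your sentence ``$F$ is strictly increasing there since $F'>0$'' is circular.
\item The $\cot$ variant, as you wrote it, only bounds the number of zeros by two.
\item The substitution $y=G\tilde y$ behind \eqref{eq:2nd order, gen} preserves zeros but not critical points: the zeros of $y'$ are those of $\tilde y'-\frac12 f_1\tilde y$, not those of $\tilde y'$. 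So applying Lemma~\ref{uniquestat} to $\tilde y$ says nothing about $y'$.
\item The proof of Lemma~\ref{uniquestat} does not carry over to $y''+f_1y'+f_0y=0$. The Rolle zero $\tau$ of $y''$ is not a critical point, so $y''(\tau)=-f_0(\tau)y(\tau)$ is unavailable. Your observation that $y''=-f_0y$ at critical points can be turned into a proof, but only by a different argument. On $(s_0,s_1)$ all critical points would be nondegenerate extrema of one type, and two consecutive nondegenerate local maxima would force a critical point between them.
\end{itemize}

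\paragraph{The missing idea.} What is missing is the right monotone quantity. The paper arranges $g>0$ and writes $y'=q\cos(\phi+u)$ with $q=\sqrt{g'^2+(g\phi')^2}$ and $u=\arccos(g'/q)\in(0,\pi)$. By Corollary~\ref{arccosH}, $u$ is a bounded $H$-hardian germ. Hence $\phi+u\to+\infty$, so $\phi'+u'>0$ eventually. Any zero $t\in(s_0,s_1)$ of $y'$ must satisfy $\phi(t)+u(t)=\phi(s_0)+\pi$, which determines $t$ uniquely.

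In your notation $\cos u=\theta/\sqrt{1+\theta^2}$ and $u'=-\theta'/(1+\theta^2)$. So the sharp requirement is $\theta'<\phi'(1+\theta^2)$, not $\theta'<\phi'$.

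You can repair your own argument with this. It suffices that $F'>0$ at every zero of $F$: since $F$ runs from $-\infty$ to $+\infty$ on $(s_0,s_1)$, it can then only cross $0$ upward, hence only once. At a zero of $F$ we have $1/\cos^2\phi=1+\theta^2$, so $F'=\phi'(1+\theta^2)-\theta'$. For $\theta\succ1$ this is eventually positive, because $\theta'/(1+\theta^2)\sim\theta'/\theta^2=-(1/\theta)'\prec\phi'$. The last step is \eqref{eq:pdv} applied to $1/\theta\preceq1\prec\phi$, which is the estimate you already had.

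Alternatively, in your $\cot$ approach, note that $\tan\phi=\theta$ forces $\tan\phi$ to have the eventually constant sign of $\theta$. This confines all zeros of $y'$ to one half of $(s_0,s_1)$, where $G$ is strictly decreasing.
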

\begin{proof} By increasing $a$ we arrange $g(t)\ne 0$ and $\phi'(t)>0$ for all $t\ge a$. Replacing~$g$ by~$-g$ if necessary we
further arrange $g(t)>0$ for all $t\geqslant a$. Let $s_0, s_1\in [a,+\infty)$ with $s_0<s_1$ be successive zeros of $y$. Later we impose a suitable lower bound $b\geqslant a$ on $s_0$.  Then $\phi(s_1)=\phi(s_0)+\pi$, since $s_1$ is the next zero of $\cos \phi$ after $s_0$. Also
\begin{align*} y'\ &=\ g'\cos\phi - g\phi'\sin \phi\  =\  \sqrt{g'^2+(g\phi')^2}\cos(\phi+u),\ \text{ where}\\
u\ &=\ \arccos\big(g'/\sqrt{g'^2+(g\phi')^2}\big),\ \text{ so }0 < u(t)< \pi \text{ for all $t\geqslant a$.}
\end{align*} 
By Rolle, $y'$ has a zero in $(s_0, s_1)$. Let $t\in (s_0, s_1)$ be a zero of $y'$. Then 
$$\phi(s_0) < \phi(t)< \phi(s_0)+ \pi,\quad \phi(t)+u(t)\in\phi(s_0)+ \Z\pi,$$
so $\phi(t)+ u(t)=\phi(s_0)+ \pi$.  Take $b\geqslant a$ in $\R$ so large that $u$ is differentiable on $[b,+\infty)$ and
$\phi'(t)+u'(t)>0$ for all $t\geqslant b$; this is possible because $u\preccurlyeq 1$ is $H$-hardian by Corollary~\ref{arccosH}, and
$\phi(t)+u(t)\to +\infty$ as $t\to +\infty$. Assuming now that $b\leqslant s_0$, we conclude that $t\in (s_0, s_1)$ is uniquely determined by $\phi(t)+u(t)=\phi(s_0)+\pi$. 
\end{proof} 

\subsection*{Oscillation over Hardy fields}
In this subsection we assume $f\in H$ and consider the 
linear differential equation 
\begin{equation}\label{eq:2nd order, 2} \tag{4L}
4Y''+fY\ =\ 0
\end{equation}
over $H$.  By the facts about \eqref{eq:2nd order} in Section~\ref{sec:germs}, each solution $y\in \Gt$  to \eqref{eq:2nd order, 2} lies in
 $\c^{<\infty}$.
 The solutions to \eqref{eq:2nd order, 2} form a $2$-dimensional subspace~$V$ of the $\R$-linear space~$\c^{<\infty}$. 
 With $A:=4\der^2+f\in H[\der]$, we have
$V=\ker_{\c^{<\infty}} A$ and~$V+V\imag=\ker_{\c^{<\infty}[\imag]}A$. Moreover,
if~$f\in\c^\infty$, then $V\subseteq\c^{\infty}$, and likewise with $\c^\omega$ in place of $\c^\infty$.
By~\cite[Theorem~16.7]{Boshernitzan82} (or \cite[Proposition~6.1]{ADH5}):
 
\begin{prop}\label{prop:2nd order Hardy field}
If $f/4$ does not generate oscillation, then $V\subseteq\Dx(H)$.  
\end{prop}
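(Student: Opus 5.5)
The plan is the classical Riccati argument; the point is that the $H$-hardian germs come from a \emph{single} Riccati solution, whose hardianness is the step that needs outside input.

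First, note that the statement depends only on germs, and that $\Dx(H)\supseteq H(\R)$. So I may replace $H$ by the real closed, Liouville closed $H$-field $L:=\Dx(H)$. Its elements are all perfectly $H$-hardian and $\d$-algebraic over $H$, and it contains $f$. It then suffices to show $V\subseteq L$.

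Take $y\in V^{\ne}$. Since $f/4$ does not generate oscillation, $y$ has no zeros eventually, so $y\in(\c^{<\infty})^\times$. Put $z:=2y^\dagger$. From $4y''+fy=0$ we get $\omega(z)=-(2z'+z^2)=f$, a first-order Riccati equation over $H$.

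\textbf{Main step (obstacle).} Show that $z$ is perfectly $H$-hardian, i.e.\ $z\in L$. A general non-oscillating germ need not be hardian, so I would invoke the Hardy field result from \cite{ADH5} (or Boshernitzan's Riccati lemma \cite[Theorem~16.7]{Boshernitzan82}): a solution $z\in\c^1$ of $z'=-\frac12(z^2+f)$ with $f$ in a Hardy field $E\supseteq H$ is $E$-hardian. The argument runs as follows. For any rational function $P$ over $E$, the germ $P(z)$ has derivative $\partial_z P\cdot\bigl(-\frac12(z^2+f)\bigr)+P^{\der}(z)$. Between consecutive zeros of $P(z)$, a sign argument using the Hardy field ordering rules out infinitely many sign changes. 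This is the classical Hardy--Boshernitzan result that solutions of first-order algebraic ODEs $z'=R(z)$ over a Hardy field generate Hardy fields, provided they are non-oscillating after algebraic substitution. Since $E$ is arbitrary, $z$ is perfectly $H$-hardian. It is also $\d$-algebraic over $H$, so $z\in L$.

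Given $z\in L$, integration finishes the proof. Liouville closedness of $L$ gives $u\in L$ with $u'=z/2$. Then $e^u\in L$ and $(y/e^u)'=0$, so $y\in\R^\times e^u\subseteq L$. This handles every nonzero solution, so $V\subseteq L=\Dx(H)$.
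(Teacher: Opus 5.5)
Your proof is correct, and it is essentially the standard argument behind the results the paper simply cites (\cite[Theorem~16.7]{Boshernitzan82}, \cite[Proposition~6.1]{ADH5}): pass to the Riccati equation $\omega(z)=f$, use that $\c^1$-solutions of an explicit first-order equation $z'=R(z)$ with $R\in E(Y)$ are $E$-hardian for every Hardy field $E\supseteq H$, and integrate in the Liouville closed field $\Dx(H)$. Note, though, that \cite[Theorem~16.7]{Boshernitzan82} is the proposition itself, not a Riccati lemma, so the non-circular input is this first-order fact.

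If you write that fact out, run the sign argument on linear factors $z-h$ with $h\in E^{\mathrm{rc}}$, not on an arbitrary $P(z)$. At a zero of $z-h$, the derivative takes the value of the Hardy-field element $-\frac12(h^2+f)-h'$. So if that element is nonzero, all eventual crossings go the same way and there is at most one. If it is $0$, then $h$ is itself a solution and uniqueness applies. With this, your caveat ``provided they are non-oscillating after algebraic substitution'' is unnecessary.
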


\noindent
Proposition~\ref{prop:2nd order Hardy field} also has consequences for more general linear differential equations of
order $2$: Let $g,h\in H$, and consider the linear differential equation
\begin{equation}\label{eq:2nd order, 3}\tag{$\tilde{\operatorname{L}}$}
Y''+gY'+hY\ =\ 0
\end{equation}
over $H$. Again by facts about \eqref{eq:2nd order} in Section~\ref{sec:germs} the solution $y\in \Gt$ of \eqref{eq:2nd order, 3} lie in~$\Calinf$. To reduce \eqref{eq:2nd order, 3}
to an equation~\eqref{eq:2nd order, 2} we take 
$$f\ :=\ \omega(g)+4h\ =\ -2g'-g^2+4h\in H,$$ some $a\in \R$ and   a representative of~$g$ in~$\Cao$, also denoted by $g$, and we let~${G\in (\Gt)^\times}$ be 
the germ of 
$t\mapsto \exp\!\left(-\frac{1}{2}\int_a^t g(s)\,ds\right)\colon \R^{\ge a}\to\R$, so $G\in\Li\!\big(H(\R)\big)\subseteq\Dx(H)$.
Then $y\mapsto Gy$ is an isomorphism from the $\R$-linear space of 
solutions to \eqref{eq:2nd order, 2} in~$\Gt$ onto the
$\R$-linear space of solutions to \eqref{eq:2nd order, 3} in~$\Gt$,
and $y\in \Gt$ oscillates iff~$Gy$ does.
Using $\frac{f}{4}=-\frac{1}{2}g'-\frac{1}{4}g^2+h$ we  thus obtain from Section~\ref{sec:germs} and Proposition~\ref{prop:2nd order Hardy field}:
 
\begin{cor}[{cf.~\cite[Theorem~16.8]{Boshernitzan82}}]\label{cor:char osc}
The following are equivalent:
\begin{enumerate}
\item[\textup{(i)}] some solution in $\Gt$ of \eqref{eq:2nd order, 3}  oscillates;
\item[\textup{(ii)}] all nonzero solutions in $\Gt$ of \eqref{eq:2nd order, 3} oscillate;
\item[\textup{(iii)}]  $-\frac{1}{2}g'-\frac{1}{4}g^2+h$ generates oscillation.
\end{enumerate}
Moreover, if $-\frac{1}{2}g'-\frac{1}{4}g^2+h$ does not generate oscillation, then all solutions of~\eqref{eq:2nd order, 3} in $\Gt$ belong to $\Dx(H)$. 
\end{cor}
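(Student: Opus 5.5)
The plan is to reduce everything to the self-adjoint case already handled by Proposition~\ref{prop:2nd order Hardy field}, via the transformation $y\mapsto Gy$ set up just before the statement.

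First, fix a representative of $g$ on $[a,+\infty)$ and define $G$ as above. Put $f:=-2g'-g^2+4h\in H$, so that $f/4=-\frac12 g'-\frac14 g^2+h$. A direct computation, which is the same as the one in Section~\ref{sec:germs} for \eqref{eq:2nd order, gen}, gives the following: for $y\in\Gt$, $y$ solves $4Y''+fY=0$ iff $Gy$ solves \eqref{eq:2nd order, 3}. Since $G$ has no zeros, $y\mapsto Gy$ is an $\R$-linear isomorphism between the two solution spaces, and $y$ oscillates iff $Gy$ does. Also, $4Y''+fY=0$ is just $Y''+(f/4)Y=0$, so its solutions are exactly $\Sol(f/4)$ on germs.

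The equivalences follow from this. By Sturm's Separation Theorem (Section~\ref{sec:germs}), some nonzero element of $\Sol(f/4)$ oscillates iff all of them do, and this is by definition ``$f/4$ generates oscillation''. Transporting along $y\mapsto Gy$ gives (i)$\Leftrightarrow$(ii)$\Leftrightarrow$(iii). For (i)$\Rightarrow$(ii), note that an oscillating solution is nonzero, so it corresponds to a nonzero oscillating element of $\Sol(f/4)$.

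For the final claim, suppose $f/4$ does not generate oscillation. Proposition~\ref{prop:2nd order Hardy field} then gives $V\subseteq\Dx(H)$. Every solution of \eqref{eq:2nd order, 3} has the form $Gy$ with $y\in V$, so it remains to show $G\in\Dx(H)$, since $\Dx(H)$ is a field. We have $G^\dagger=-g/2\in H$. Now $\Dx(H)\supseteq\R$ is a Liouville closed $H$-field containing $H$, so it contains some $E>0$ with $E^\dagger=-g/2$. Then $G/E$ has logarithmic derivative $0$, so it is a nonzero real constant, and hence $G\in\Dx(H)$.

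The only delicate point is to make sure Liouville closedness of $\Dx(H)$ really supplies an element with prescribed logarithmic derivative. This is where I would rely on \cite[Proposition~4.2]{ADH5}, as recorded in the subsection on hardian germs. Everything else is routine bookkeeping.
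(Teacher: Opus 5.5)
Your proposal is correct and follows essentially the paper's own argument. The paper likewise transports along $y\mapsto Gy$ to reduce to $4Y''+fY=0$, uses Sturm separation for the equivalences, and uses Proposition~\ref{prop:2nd order Hardy field} for the final claim. The only cosmetic difference is in showing $G\in\Dx(H)$: the paper notes $G\in\Li\big(H(\R)\big)\subseteq\Dx(H)$, whereas you use Liouville closedness of $\Dx(H)$ directly and compare $G$ with an element $E$ having the same logarithmic derivative. These amount to the same reasoning.
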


\noindent
Set $A:=\der^2+ g\der + h\in H[\der]$, and let $f=\omega(g)+4h$ and~$G$ be as above.  Then by~\eqref{eq:5.1.21} and Proposition~\ref{prop:2nd order Hardy field}: 
 
\begin{cor}\label{cor:char osc, 2}
The following are equivalent:
\begin{enumerate}
\item[\textup{(i)}] $f/4$ does not generate oscillation;
\item[\textup{(ii)}] $A$ splits over some Hardy field extension of $H$;
\item[\textup{(iii)}] $A$ splits over $\Dx(H)$.
\end{enumerate}
\end{cor}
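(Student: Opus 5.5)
The plan is to prove (i)$\Rightarrow$(iii)$\Rightarrow$(ii)$\Rightarrow$(i), after the usual reduction to $4\der^2+f$. Put $\tilde A:=4A_{\ltimes G}$ with $G\in\Dx(H)^\times$ as above, so $G^\dagger=-\frac12 g$. By the order~2 discussion in Section~\ref{sec:diffalg}, $\tilde A=4\der^2+f$, and for any differential ring $R\supseteq H(G)$, $\tilde A$ splits over $R$ iff $A$ does. Hence for Hardy field extensions $E$ of $H(G)$, in particular for $\Dx(H)$, the operators $A$ and $4\der^2+f$ split over $E$ simultaneously. If $E$ is an arbitrary Hardy field extension of $H$, I first pass to a Hardy field extension containing both $E$ and $G$. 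This is possible because $G=\exp(-\frac12\int g)$ is $E$-hardian: it lies in $\Li\!\big(E(\R)\big)$. It therefore suffices to treat $A=4\der^2+f$.

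For (i)$\Rightarrow$(iii): if $f/4$ does not generate oscillation, Proposition~\ref{prop:2nd order Hardy field} gives $V\subseteq\Dx(H)$, where $V$ is the solution space of \eqref{eq:2nd order, 2}. Pick $y\in V^{\ne}$. Since $\Dx(H)$ is a field and $y\neq 0$ is non-oscillating, $y\in\Dx(H)^\times$. Then \eqref{eq:5.1.21} with $u=y$, $K=\Dx(H)$ gives $A\in K[\der](\der-y^\dagger)$, so $A$ splits over $\Dx(H)$. The implication (iii)$\Rightarrow$(ii) is trivial.

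For (ii)$\Rightarrow$(i), which I expect to be the only real step, suppose $A=4(\der-b_1)(\der-b_2)$ with $b_1,b_2$ in a Hardy field $E\supseteq H$. Enlarge $E$ to $\Li\!\big(E(\R)\big)$, which is closed under exponentials of integrals. Take $u\in E^\times$ with $u^\dagger=b_2$. Then $(\der-b_2)(u)=0$, so $A(u)=0$. Thus $u$ is a nonzero solution of \eqref{eq:2nd order, 2} lying in a Hardy field, hence non-oscillating. By Sturm separation (the discussion in Section~\ref{sec:germs}), no nonzero solution oscillates, i.e.\ $f/4$ does not generate oscillation. The only thing to check is that such a $u$ with $u^\dagger=b_2$ exists as a germ in a Hardy field. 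It does: $u=\exp\int b_2$ is the germ of an actual $\c^2$ function, and it lies in $\Li\!\big(E(\R)\big)$. So $u$ is a genuine solution in $\Gt$ of the germ equation.
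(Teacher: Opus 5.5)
Your proof is correct and rests on the same two ingredients the paper cites: \eqref{eq:5.1.21} and Proposition~\ref{prop:2nd order Hardy field}. The only difference is in how you pass between $A$ and $4\der^2+f$. You use the operator-level twist $4A_{\ltimes G}$, which forces you to adjoin $G$ to $E$. The paper instead uses the solution-level correspondence $y\mapsto Gy$ from Corollary~\ref{cor:char osc} and applies \eqref{eq:5.1.21} to $A$ directly, so it never needs that enlargement (your detour is harmless).
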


\noindent
Recall that $K=H[\imag]$. We have $A_{\ltimes G}=\der^2 + \frac{f}{4}$ and $G^\dagger=-\frac{1}{2}g\in H$, so: 

\begin{cor}\label{cor:char osc, 3}
$A$ splits over $K$ $\Longleftrightarrow$   $\der^2+\frac{f}{4}$ splits over $K$.
\end{cor}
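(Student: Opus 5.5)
The claim follows directly from the identity $A_{\ltimes G}=\der^2+\frac{f}{4}$ once we know that $G$ is a unit of $K$ with $G^\dagger\in H$. Splitting over $K$ is preserved under twisting by units of $K$, so no oscillation or Hardy field theory is needed.

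One point needs care first: $G$ need not lie in $H$ or $K$, only in $\Dx(H)$. So I would not twist by $G$ inside $K[\der]$. Instead I would work in the ring $E[\der]$, where $E:=\Dx(H)[\imag]$ is a differential field extending $K$. There $G\in E^\times$ and $A_{\ltimes G}=G^{-1}AG$. Expanding $G^{-1}(\der^2+g\der+h)G$ with $G'=-\frac{1}{2}gG$ gives
$$\der^2+(g+2G^\dagger)\der+\bigl(h+gG^\dagger+G''/G\bigr).$$
The coefficient of $\der$ vanishes because $2G^\dagger=-g$. The constant term equals $h-\frac{1}{2}g^2+(-\frac{1}{2}g)'+\frac{1}{4}g^2=h-\frac{1}{2}g'-\frac{1}{4}g^2=f/4$, using $G''/G=(G^\dagger)'+(G^\dagger)^2$. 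So $A_{\ltimes G}=\der^2+\frac{f}{4}$, and this operator has coefficients in $K$.

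The key observation is that conjugation by $G$ maps $K[\der]$ into itself, even though $G\notin K$. For $b\in K$ we have $G^{-1}(\der-b)G=\der-(b-G^\dagger)$, and $b-G^\dagger=b+\frac{1}{2}g\in K$. Now suppose $A=(\der-b_1)(\der-b_2)$ with $b_1,b_2\in K$; since $A$ is monic, any splitting can be normalized to this form. Then
$$\der^2+\tfrac{f}{4}\ =\ G^{-1}(\der-b_1)G\cdot G^{-1}(\der-b_2)G\ =\ \bigl(\der-(b_1+\tfrac{g}{2})\bigr)\bigl(\der-(b_2+\tfrac{g}{2})\bigr),$$
which is a splitting over $K$. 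For the converse, start from a splitting $\der^2+\frac{f}{4}=(\der-c_1)(\der-c_2)$ with $c_1,c_2\in K$. Conjugating back by $G^{-1}$, that is, twisting by $G^{-1}$, gives $A=\bigl(\der-(c_1-\frac{g}{2})\bigr)\bigl(\der-(c_2-\frac{g}{2})\bigr)$. All of these identities hold in $E[\der]$, which contains $K[\der]$ as a subring, so they are factorizations in $K[\der]$.

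There is no real obstacle. The only subtle point is the one handled above: $G$ need not lie in $K$, and the fix is to observe that twisting by $G$ only shifts each linear factor by $\frac{1}{2}g\in H$. One could avoid $G$ entirely and verify directly that $(\der-b_1)(\der-b_2)=\der^2+g\der+h$ if and only if the shifted factors multiply to $\der^2+\frac{f}{4}$. Doing this by comparing coefficients is a short computation.
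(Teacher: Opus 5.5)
Your argument is correct and is essentially the paper's. The paper derives the corollary in one line from $A_{\ltimes G}=\der^2+\frac{f}{4}$ together with $G^\dagger=-\frac{1}{2}g\in H$, which is exactly your observation that twisting by $G$ (a unit of $\Dx(H)[\imag]$, not necessarily of $K$) replaces each monic factor $\der-b$ by $\der-(b+\frac{1}{2}g)$ and so preserves splitting over $K$. Your explicit treatment of the fact that $G\notin K$ in general just spells out what the paper leaves implicit.
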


\subsection*{$\upo$-freeness of Hardy fields}  
Let $f\in H$ and $A:=4\der^2+f$.  By \eqref{eq:omega} we have:
$$\ker_H A\ne\{0\}\quad \Longrightarrow\quad \text{$A$ splits over $H$} \quad \Longleftrightarrow\quad f\in\omega(H),$$
and if $H\supseteq\R$ is Liouville closed and 
$A$ splits over $H$, then~$\dim_{\R}\ker_HA=2$, by Lemma~\ref{lem:2.4.5}, and $f/4$ does not generate oscillation.
Hence   
$$\bar{\omega}(H)\ :=\   \big\{ f\in H: \text{$f/4$ does not generate oscillation} \big\}, $$
a downward closed subset of~$H$,  contains $\omega(H)$. If $E$ is a Hardy field extension of~$H$, then $\bar\omega(E)\cap H=\bar\omega(H)$.
 Proposition~\ref{prop:2nd order Hardy field} yields:

\begin{cor}  \label{cor:omega(H) downward closed}
If $H$ is $\d$-perfect \textup{(}in particular, if $H$ is maximal\textup{)}, then 
\[\omega(H)\	=\ \bar\omega(H) 
		\	=\ \big\{ f\in H :\ \text{$4y''+fy=0$ for some $y\in H^\times$} \big\},\]
and so $\omega(H)$ is  downward closed in $H$.
\end{cor}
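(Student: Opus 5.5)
We prove the two equalities by a chain of inclusions; downward closedness then comes for free. Set
$$S\ :=\ \big\{ f\in H :\ \text{$4y''+fy=0$ for some $y\in H^\times$} \big\}.$$
The plan is to show $S\subseteq\omega(H)\subseteq\bar\omega(H)\subseteq S$, which gives $\omega(H)=\bar\omega(H)=S$. Since $\bar\omega(H)$ was noted above to be downward closed in $H$, so is $\omega(H)$.

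\emph{The first two inclusions.} Let $f\in S$, with $y\in H^\times$ and $A:=4\der^2+f$. Then $\ker_H A\ne\{0\}$, so $A$ splits over $H$ by \eqref{eq:5.1.21}. By \eqref{eq:omega} this gives $f\in\omega(H)$. Next let $f\in\omega(H)$. Since $H$ is $\d$-perfect, $H\supseteq\R$ and $H$ is a Liouville closed $H$-field. Thus the remark preceding the definition of $\bar\omega(H)$ applies: $A$ splits over $H$, so $\dim_\R\ker_H A=2$ by Lemma~\ref{lem:2.4.5}. The nonzero elements of $\ker_H A$ lie in a Hardy field and hence do not oscillate, so $f/4$ does not generate oscillation, that is, $f\in\bar\omega(H)$.

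\emph{The third inclusion.} Let $f\in\bar\omega(H)$, so $f/4$ does not generate oscillation. By Proposition~\ref{prop:2nd order Hardy field}, the solution space $V$ of $4Y''+fY=0$ satisfies $V\subseteq\Dx(H)$. Because $H$ is $\d$-perfect, $\Dx(H)=H$, so $V\subseteq H$. Pick any $y\in V^{\ne}$; it lies in $H$, which is a field, so $y\in H^\times$ and $f\in S$.

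\emph{The parenthetical claim.} It remains to check that a maximal Hardy field is $\d$-perfect. If $H$ is maximal, then $H$ is its own unique maximal Hardy field extension. Hence $\Ex(H)=H$, and therefore $H\subseteq\Dx(H)\subseteq\Ex(H)=H$.

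No step is a real obstacle. The only point needing care is using the \emph{Liouville closed} consequence of $\d$-perfectness to pass from splitting over $H$ to a $2$-dimensional kernel inside $H$; without that, one would only know that a single solution is non-oscillating.
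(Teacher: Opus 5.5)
Your proof is correct and follows the paper's route: the remarks just before the definition of $\bar\omega(H)$ give your inclusions $S\subseteq\omega(H)\subseteq\bar\omega(H)$ for Liouville closed $H\supseteq\R$, via \eqref{eq:5.1.21}, \eqref{eq:omega} and Lemma~\ref{lem:2.4.5}, and Proposition~\ref{prop:2nd order Hardy field} together with $\Dx(H)=H$ gives $\bar\omega(H)\subseteq S$. One small correction to your closing remark: by Sturm's separation theorem a single nonzero non-oscillating solution already shows $f\in\bar\omega(H)$, so what Liouville closedness actually provides is a nonzero solution inside $H$ at all (from $f=\omega(z)$ one needs $y$ with $y^\dagger=z/2$), not the $2$-dimensionality of the kernel.
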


\noindent
In \cite[Section~6]{ADH5} we determined the possibilities for $\bar{\omega}(H)$;
from this
we only need 
that
$\bar{\omega}(H) < \sigma\big(\Upg(H)\big){}^\uparrow$ 
by the remarks after \cite[Lemma~6.17]{ADH5}, as well as
the following consequences of Lemmas~6.18, 6.19, and Corollary~6.20, respectively, in loc.~cit.:

\begin{lemma}\label{lem:6.18}
Let $\upg\in H^>$ be such that $v\upg=\max\Psi_H$. Then, using [ADH, 9.2.1]:
$$\bar{\omega}(H)\  =\  \omega(-\upg^\dagger) + \upg^2 \smallo_H^\downarrow.$$
\end{lemma}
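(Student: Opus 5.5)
We prove the displayed equality by reducing, via a change of variables, to the case $\upg=1$, and then comparing directly with constant and Euler-type coefficients.

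\medskip\noindent
\emph{Normalization.} We may replace $H$ by $H(\R)$ and then by its real closure $H^{\operatorname{rc}}$ (Lemma~\ref{lem:1.3.3+}, groundedness persisting by cofinality). This leaves $\Psi$ and $\bar\omega(H)=\bar\omega(E)\cap H$ unchanged, and replaces $\smallo_H^\downarrow$ by a set with the same trace on $H$. So we arrange that $H\supseteq\R$ is real closed and that $\sqrt{\upg}\in H$.

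\medskip\noindent
\emph{Change of variables.} Take $\ell\in\c^1$ with $\ell'=\upg$. Since $v\upg=\max\Psi$, the germ $\upg$ is active, so $\ell>\R$. Consider $\circ$-conjugation by $\ell$ as in the subsection on Hardy fields and composition. For $y\in\c^2$ put
$$y\ =\ \upg^{-1/2}\,(z\circ\ell).$$
A direct computation, the classical Liouville transformation, should give
$$4y''+fy\ =\ \upg^{3/2}\Big(4z''+g\,z\Big)\circ\ell,\qquad g\ :=\ \Big(\frac{f-\omega(-\upg^\dagger)}{\upg^2}\Big)^{\circ}\in H^\circ.$$
This is the step where the normalizing term $\omega(-\upg^\dagger)$ has to come out exactly. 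I would check it by writing $\omega(-\upg^\dagger)=2\upg^{\dagger\prime}-(\upg^\dagger)^2$ and expanding $(\upg^{-1/2}w)''$ with $w=z\circ\ell$, $w''=\upg^2 z''\circ\ell+\upg' z'\circ\ell$.

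Since $y$ oscillates iff $z$ does, $f/4$ generates oscillation iff $g/4$ does. The map $h\mapsto h^\circ$ is an isomorphism of ordered valued differential fields $H^\upg\to H^\circ$, so $\upg^2\smallo_H$ corresponds to $\smallo_{H^\circ}$. Moreover, $\max\Psi_{H^\circ}=0$: indeed $\upg^\circ$ corresponds to $1$, and $\Psi$ shifts by $-v\upg$. Hence it suffices to prove, for a Hardy field $H$ with $\max\Psi=0$, that $\bar\omega(H)=\smallo^\downarrow$.

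\medskip\noindent
\emph{The normalized case $\max\Psi=0$.} We show both inclusions.

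If $g\notin\smallo^\downarrow$, then $g\ge\varepsilon$ eventually for some $\varepsilon\in\R^>$. So $g/4$ generates oscillation by Example~\ref{ex:harmonic osc}, that is, $g\notin\bar\omega(H)$.

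Conversely, let $g\in\smallo^\downarrow$; we must show $g/4$ does not generate oscillation. Since $\bar\omega(H)$ is downward closed, Sturm comparison reduces us to the case $0<g\prec1$. Now $\max\Psi=0$ gives $v(g^\dagger)\le0$, that is, $g^\dagger\succeq1$. Since $g\to0$ we have $g^\dagger<0$, so $g^\dagger\le-c$ eventually for some $c\in\R^>$. Integrating yields $0<g\le \ex^{-cx}$ eventually, and therefore $g\le 1/(4x^2)$ eventually. The Euler equation $4Y''+x^{-2}Y=0$ has the non-oscillating solution $x^{1/2}$, so $1/(16x^2)$ does not generate oscillation. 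By Sturm's Comparison Theorem neither does $g/4$. Hence $g\in\bar\omega(H)$, proving $\smallo^\downarrow\subseteq\bar\omega(H)$.

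\medskip\noindent
The only potential obstacle is verifying the Liouville-transformation identity with exactly the term $\omega(-\upg^\dagger)$. Everything else uses only results stated earlier in the paper.
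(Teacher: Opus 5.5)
Your proof is correct, and it takes a different route from the paper. The paper gives no argument here: it imports the statement from [ADH5, Lemma~6.18], where the grounded case is one piece of the general $H$-field-theoretic description of the possible shapes of $\bar\omega(H)$ (together with the cases recorded as Lemma~6.19 and Corollary~6.20 there).

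The identity you were unsure about holds exactly. Put $p:=\upg^{-1/2}$. The coefficient $8p'\upg+4p\upg'$ of $z'\circ\ell$ vanishes, and $4p''/p=-2\upg^{\dagger\prime}+(\upg^\dagger)^2=-\omega(-\upg^\dagger)$. Hence $4y''+fy=\upg^{3/2}\big(4z''+gz\big)\circ\ell$ with your $g$.

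Combined with $\Psi_{H^{\upg}}=\Psi_H-v\upg$ and the isomorphism $H^{\upg}\cong H\circ\ell^{\inv}$, this reduces everything to the case $\max\Psi=0$. There your two comparisons settle both inclusions: with a positive constant for one direction, and with the Euler equation for the other. For the latter you use that positive infinitesimals satisfy $g\le C\ex^{-cx}$; note that you dropped the constant $C$, which is harmless.

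What your route buys:
\begin{itemize}
\item It is a self-contained classical proof, using only the Liouville transformation, Sturm comparison and the composition machinery of Section~4.
\item It works directly for any $\upg>0$ with $v\upg=\max\Psi_H$.
\item It makes transparent why the oscillation threshold is $\omega(-\upg^\dagger)$ at scale $\upg^2$.
\end{itemize}
The citation instead buys uniformity with the other cases of the classification of $\bar\omega(H)$, which the paper invokes in the same breath.

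One cleanup: delete your normalization step. Nothing later uses it. You only need $\upg^{-1/2}$ as a germ, and $H\circ\ell^{\inv}$ is a Hardy field for every Hardy field $H$ once $\ell'\in H$ and $\ell>\R$. The justification you give for it is also incomplete. Cofinality of $\Gamma_H$ in $\Gamma_{H(\R)}$ only controls large elements, so it does not by itself show that $\max\Psi$ is unchanged in passing to $H(\R)$.
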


\begin{lemma}\label{lem:6.19}
If $H$ is $\upl$-free, then, using [ADH, 9.2.17, 11.6.8]: 
$$\bar{\omega}(H)\ =\ \omega\big(\Upl(H)\big){}^\downarrow\ =\ \omega(H)^\downarrow\quad\text{or}\quad \bar{\omega}(H)\ =\ H\setminus \sigma\big(\Upg(H)\big){}^\uparrow.$$
\end{lemma}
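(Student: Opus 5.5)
The plan is to squeeze the downward closed set $\bar\omega(H)$ between two explicit sets, and then show that the region between them is too small to leave room for anything other than the two stated alternatives. Throughout, assume $H$ is $\upl$-free.

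\emph{Step 1 (lower bound).} Since $H$ is $\upl$-free, it has asymptotic integration, and by \eqref{eq:uplfree} we have $H=\Upl(H)^\downarrow\cup\Upd(H)^\uparrow$. From the $\omega$-part of [ADH, 11.8] I expect $\omega(H)^\downarrow=\omega\big(\Upl(H)\big)^\downarrow$. The reason is that $\omega$ is increasing on $\Upl(H)^\downarrow$ (Lemma~\ref{lem:11.8.29}), and values on $\Upd(H)^\uparrow$ are dominated by values on $\Upl(H)$; this uses [ADH, 9.2.17, 11.6.8] as indicated in the statement. Because $\omega(H)\subseteq\bar\omega(H)$ and $\bar\omega(H)$ is downward closed, this gives $\omega(H)^\downarrow\subseteq\bar\omega(H)$.

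\emph{Step 2 (upper bound).} We already know $\bar\omega(H)<\sigma\big(\Upg(H)\big)^\uparrow$ from the remarks after \cite[Lemma~6.17]{ADH5}. Hence
$$\omega\big(\Upl(H)\big)^\downarrow\ \subseteq\ \bar\omega(H)\ \subseteq\ H\setminus\sigma\big(\Upg(H)\big)^\uparrow.$$

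\emph{Step 3 (the gap between the bounds).} This is the main obstacle. If $H$ is $\upo$-free, then by \eqref{eq:upofree} the two outer sets coincide, and both descriptions of $\bar\omega(H)$ hold. If $H$ is not $\upo$-free, I would invoke the $\upl$-free analysis of $\upo$-freeness in [ADH, 11.8] (around 11.8.30) to show that the complement
$$H\setminus\Big(\omega\big(\Upl(H)\big)^\downarrow\cup\sigma\big(\Upg(H)\big)^\uparrow\Big)$$
consists of a single element $\upo$. This would be the analogue of the uniqueness of a gap, via the contractive estimate \eqref{eq:11.7.6}. I expect this uniqueness to be the delicate point. The first thing to try is to take $\upo_1<\upo_2$ both in the complement and derive a contradiction from the estimate $\omega(-\phi^\dagger)-\omega(-\theta^\dagger)\prec\phi^2$ applied along $\omega\big(\Upl(H)\big)$ and $\sigma\big(\Upg(H)\big)$.

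\emph{Step 4 (conclusion).} Since $\bar\omega(H)$ is downward closed and lies between the two bounds, it is either $\omega\big(\Upl(H)\big)^\downarrow$, or that set together with $\upo$. The latter set equals $H\setminus\sigma\big(\Upg(H)\big)^\uparrow$, which yields the dichotomy. (Which case occurs depends on whether $\upo/4$ generates oscillation, but the lemma does not need to decide this.)
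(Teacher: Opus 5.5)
Steps 1 and 2 are fine: they give the sandwich $\omega\big(\Upl(H)\big)^\downarrow\subseteq\bar\omega(H)\subseteq H\setminus\sigma\big(\Upg(H)\big)^\uparrow$, modulo actually citing the fact from [ADH, 11.8] that $\omega(H)^\downarrow=\omega\big(\Upl(H)\big)^\downarrow$. Step 3, however, is false, and Step 4 collapses with it.

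The set $G:=H\setminus\big(\omega(\Upl(H))^\downarrow\cup\sigma(\Upg(H))^\uparrow\big)$ is never a single point when nonempty. If $\upo\in G$, then $\upo+\epsilon\in G$ for every $\epsilon\in H$ with $v\epsilon>2\Psi_H$. To see this, take a logarithmic sequence. Then $\upo_{\rho+1}<\upo<\sigma(\upg_{\rho+1})=\upo_{\rho+1}+\upg_{\rho+1}^2$ and $\upo_{\rho+1}-\upo_\rho\sim\upg_{\rho+1}^2$, so $\upo-\upo_\rho\asymp\upg_{\rho+1}^2$ and $\sigma(\upg_\rho)-\upo\sim\upg_\rho^2$. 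Nonzero such $\epsilon$ exist: take $v\epsilon=2\beta$ with $\beta\in(\Gamma_H^>)'$. So your plan to derive a contradiction from two points $\upo_1<\upo_2$ of $G$ via \eqref{eq:11.7.6} cannot work. Without uniqueness, knowing that $\bar\omega(H)$ is downward closed and lies between the two bounds does not rule out that it cuts $G$ in its interior. Ruling this out is the whole content of the lemma, and it needs information about oscillation, not just the structure of $H$.

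What is missing is an all-or-nothing argument for $G\cap\bar\omega(H)$. Here is one way to supply it.

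\emph{Gap elements are very close.} Let $f_0<f$ lie in $G$ with $f_0\in\bar\omega(H)$. Given $\upg\in\Upg(H)$, take $\upg_1=h_1^\dagger\prec\upg$ with $h_1\succ1$ in $H$, which exists since $H$ is ungrounded. Then $\omega(-\upg_1^\dagger)<f_0<f<\sigma(\upg_1)=\omega(-\upg_1^\dagger)+\upg_1^2$, so $0<f-f_0\prec\upg^2$. That is, $\delta:=\frac12 v(f-f_0)>\Psi_H$.

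\emph{Passing to the divisible hull.} $H^{\operatorname{rc}}$ is $\upl$-free by [ADH, 11.6.8], so its value group $\Q\Gamma_H\ni\delta$ has asymptotic integration, and $\Psi_{H^{\operatorname{rc}}}=\Psi_H$. An element above $\Psi$ in such a couple lies in $(\Gamma^>)'$. Since any maximal Hardy field $M\supseteq H$ contains $\Dx(H)\supseteq H^{\operatorname{rc}}$, we get $\delta>\Psi_M$.

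\emph{Using oscillation in $M$.} $M$ is $\d$-perfect and Liouville closed. So Corollary~\ref{cor:omega(H) downward closed} and Lemma~\ref{lem:11.8.29} give $f_0\in\bar\omega(M)=\omega(M)=\omega\big(\Upl(M)\big)$. Write $f_0=\omega(\upl)$ with $\upl=-\ell^{\dagger\dagger}$, where $\ell\succ1$ and $\ell>0$ in $M$. Put $\upg_*:=(\log\ell)^\dagger\in\Upg(M)$. Then $\upg_*^\dagger=-\upl-\upg_*$, which gives $\omega(\upl+\upg_*)=f_0+\upg_*^2$. Also $f-f_0\prec\upg_*^2$, because $\delta>v\upg_*$. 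Hence $f<f_0+\upg_*^2\in\omega(M)$, so $f\in\bar\omega(M)\cap H=\bar\omega(H)$.

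\emph{Conclusion.} Points of $G$ below $f_0$ lie in $\bar\omega(H)$ by downward closedness. So $G\cap\bar\omega(H)$ is either empty, giving the first alternative, or all of $G$, giving the second.
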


\begin{lemma}\label{omuplosc}  
If $H$ is $\upo$-free, then 
$$\bar{\omega}(H)\ =\ \omega\big(\Upl(H)\big){}^\downarrow\ =\ \omega(H)^\downarrow \ = \  H\setminus \sigma\big(\Upg(H)\big){}^\uparrow.$$ 
\end{lemma}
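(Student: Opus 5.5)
The plan is to combine Lemma~\ref{lem:6.19} with the $\upo$-free characterization \eqref{eq:upofree}, using along the way the fact that $\upo$-free implies $\upl$-free ([ADH, 11.7.3, 11.7.7]). Since $H$ is $\upo$-free, it is $\upl$-free, and in particular has asymptotic integration. Lemma~\ref{lem:6.19} then leaves two possibilities:
\[
\bar\omega(H)=\omega\big(\Upl(H)\big){}^\downarrow=\omega(H)^\downarrow
\quad\text{or}\quad
\bar\omega(H)=H\setminus\sigma\big(\Upg(H)\big){}^\uparrow .
\]
It therefore suffices to prove
\[
\omega\big(\Upl(H)\big){}^\downarrow=H\setminus\sigma\big(\Upg(H)\big){}^\uparrow ,
\]
after which whichever alternative holds gives all the displayed equalities. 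The equality $\omega\big(\Upl(H)\big){}^\downarrow=\omega(H)^\downarrow$ is already part of the first alternative in Lemma~\ref{lem:6.19}, so I take it as given.

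For the key equality, \eqref{eq:upofree} gives $H=\omega\big(\Upl(H)\big){}^\downarrow\cup\sigma\big(\Upg(H)\big){}^\uparrow$. Hence it remains to show that these two sets are disjoint. I would argue disjointness as follows. By Lemma~\ref{lem:11.8.29}, $\omega(H)<\sigma\big(\Upg(H)\big)$, and this is the statement of that lemma for an arbitrary $H$-field. So if $a\le\omega(\lambda)$ and $a\ge\sigma(\gamma)$ with $\lambda\in\Upl(H)$ and $\gamma\in\Upg(H)$, then $\sigma(\gamma)\le\omega(\lambda)$, a contradiction.

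One caveat is that Lemma~\ref{lem:11.8.29} is stated for $H$-fields, which requires $H\supseteq\R$. If $H\not\supseteq\R$, I would pass to $H(\R)$, which is again $\upo$-free: Lemma~\ref{lem:1.3.3+} handles $\upl$-freeness, and for $\upo$-freeness I would invoke Theorem~\ref{thm:13.6.1}, since $H(\R)$ is a $\d$-algebraic pre-$\d$-valued $H$-asymptotic extension of $H$. The inequality $\omega(H)<\sigma(\Upg(H))$ is preserved under this passage, because the sets for $H$ are contained in those for $H(\R)$ and the ordering is compatible.

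I expect no serious obstacle: the statement is essentially the conjunction of Lemma~\ref{lem:6.19} with \eqref{eq:upofree} and the strict inequality from Lemma~\ref{lem:11.8.29}. The only point needing care is that $\bar\omega(H)$ and the sets $\Upl(H)$, $\Upg(H)$ may be computed in $H$ without requiring $H\supseteq\R$. Alternatively, the disjointness can be read off directly from the remark before Lemma~\ref{lem:6.18}, $\bar\omega(H)<\sigma\big(\Upg(H)\big){}^\uparrow$, which immediately rules out the case where $\bar\omega(H)$ would be too large in Lemma~\ref{lem:6.19}.
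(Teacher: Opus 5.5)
The step that fails is your claim that ``whichever alternative holds gives all the displayed equalities.'' Suppose the second alternative of Lemma~\ref{lem:6.19} is the one that holds. Then your key equality gives $\bar{\omega}(H)=H\setminus\sigma\big(\Upg(H)\big){}^\uparrow=\omega\big(\Upl(H)\big){}^\downarrow$, but it says nothing about $\omega(H)^\downarrow$. Lemma~\ref{lem:6.19} asserts $\omega\big(\Upl(H)\big){}^\downarrow=\omega(H)^\downarrow$ only inside its first disjunct, so you cannot ``take it as given'' in the other case.

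The repair uses a tool you already quote. Lemma~\ref{lem:11.8.29} gives $\omega(H)<\sigma\big(\Upg(H)\big)$ for all of $\omega(H)$, not only for $\omega\big(\Upl(H)\big)$. Hence $\omega(H)^\downarrow$ is disjoint from $\sigma\big(\Upg(H)\big){}^\uparrow$, and so
\[
\omega(H)^\downarrow\ \subseteq\ H\setminus\sigma\big(\Upg(H)\big){}^\uparrow\ =\ \omega\big(\Upl(H)\big){}^\downarrow\ \subseteq\ \omega(H)^\downarrow .
\]

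Once this is in place, you do not need Lemma~\ref{lem:6.19} at all, and your closing remark can be made precise. Put $X:=\omega\big(\Upl(H)\big){}^\downarrow$ and $Y:=\sigma\big(\Upg(H)\big){}^\uparrow$. Three facts suffice:
\begin{enumerate}
\item $\omega(H)\subseteq\bar{\omega}(H)$, and $\bar{\omega}(H)$ is downward closed;
\item $\bar{\omega}(H)<Y$;
\item $H=X\cup Y$, by \eqref{eq:upofree}.
\end{enumerate}
Together they give the closed chain $H\setminus Y\subseteq X\subseteq\omega(H)^\downarrow\subseteq\bar{\omega}(H)\subseteq H\setminus Y$. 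As you wrote it, that remark does not rule out either alternative of Lemma~\ref{lem:6.19}. It yields the disjointness of $X$ and $Y$ only when combined with $X\subseteq\bar{\omega}(H)$.

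The paper gives no argument of its own here; it records the lemma as a consequence of \cite[Corollary~6.20]{ADH5}. So deriving it from results stated in the paper is a reasonable route once this slip is fixed.

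There is one further point. Your passage to $H(\R)$ transfers only the inequality, but \eqref{eq:upofree} is also stated for $H$-fields. So when $H\not\supseteq\R$ you must also pull the covering $H(\R)=X\cup Y$ back to $H$. This works as follows:
\begin{itemize}
\item $H(\R)$ has value group $\Gamma_H$ by Lemma~\ref{lem:1.3.3+}.
\item Hence $\Upg(H)$ is coinitial in $\Upg\big(H(\R)\big)$ and $\Upl(H)$ is cofinal in $\Upl\big(H(\R)\big)$.
\item $\sigma$ and $\omega$ are increasing on the relevant sets by Lemma~\ref{lem:11.8.29}.
\end{itemize}
This is the same kind of reduction the paper carries out in the proof of Lemma~\ref{lem:upo-freeness of the perfect hull}.
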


\noindent
Together with the characterization of $\upo$-freeness for $H$-fields in \eqref{eq:upofree},
this yields a version of \cite[Corollary~7.3]{ADH5}:

\begin{cor}\label{cor:char upofree}
If $H\supseteq\R$ is $\upo$-free, then  
$$ \bar{\omega}(H)\ =\ \omega\big(\Upl(H)\big){}^\downarrow\ \text{ and }\ H\setminus\bar{\omega}(H)\ =\ \sigma\big(\Upg(H)\big){}^\uparrow.$$
\end{cor}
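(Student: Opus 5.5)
The plan is to combine Lemma~\ref{omuplosc} with the characterization \eqref{eq:upofree} of $\upo$-freeness for $H$-fields. The first equality $\bar{\omega}(H)=\omega\big(\Upl(H)\big){}^\downarrow$ is already contained in Lemma~\ref{omuplosc}; only the description of the complement needs an argument.

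First, since $H\supseteq\R$, $H$ is an $H$-field. Being $\upo$-free, it is $\upl$-free and hence has asymptotic integration; alternatively Lemma~\ref{lem:upl-free} gives this, since $H$ is pre-$\d$-valued. So \eqref{eq:upofree} applies and yields
$$H\ =\ \omega\big(\Upl(H)\big){}^{\downarrow}\cup \sigma\big(\Upg(H)\big){}^\uparrow.$$

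Next we show this union is disjoint. Lemma~\ref{lem:11.8.29} gives $\omega(H)<\sigma\big(\Upg(H)\big)$. Hence every element of $\omega\big(\Upl(H)\big){}^\downarrow$ lies below every element of $\sigma\big(\Upg(H)\big)$, and therefore below every element of its upward closure. Alternatively, disjointness follows directly from Lemma~\ref{omuplosc}, which states $\bar\omega(H)=H\setminus\sigma\big(\Upg(H)\big){}^\uparrow$.

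Combining the two steps, $H\setminus\bar{\omega}(H)=H\setminus \omega\big(\Upl(H)\big){}^\downarrow=\sigma\big(\Upg(H)\big){}^\uparrow$.

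The only real obstacle is verifying the hypotheses of \eqref{eq:upofree}: that $H$ is an $H$-field and has asymptotic integration. Both are handled by the remarks above, namely $H\supseteq\R$ and the implication $\upo$-free $\Rightarrow$ $\upl$-free $\Rightarrow$ ungrounded with no gap. Given Lemma~\ref{omuplosc}, one could even skip \eqref{eq:upofree} altogether, but invoking it makes the complement description transparent.
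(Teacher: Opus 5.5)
Your proof is correct and takes the paper's route: the paper derives this corollary by combining Lemma~\ref{omuplosc} with the characterization \eqref{eq:upofree}, which applies here because $H\supseteq\R$ makes $H$ an $H$-field and $\upo$-freeness gives asymptotic integration; your disjointness argument via $\omega(H)<\sigma\big(\Upg(H)\big)$ from Lemma~\ref{lem:11.8.29} is the right way to pass to the complement. As you observe, since Lemma~\ref{omuplosc} as stated here already includes $\bar\omega(H)=H\setminus\sigma\big(\Upg(H)\big){}^\uparrow$, the second equality also follows directly by taking complements in $H$.
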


\noindent
Not every Liouville closed Hardy field $H\supseteq\R$   is   $\upo$-free: see \cite[Example~1.3.16]{ADH4}.
Let $L\supseteq \R$ be a Liouville closed $\d$-algebraic Hardy field extension of $H$.
If $H$ is $\upo$-free, then so is $L$, by Theorem~\ref{thm:13.6.1}.
But $L$ may also be $\upo$-free for other reasons:

\begin{lemma}[{\cite[Lemma~7.11]{ADH5}}]\label{lem:7.11}
Suppose $H$ is not $\upl$-free or  $\overline{\omega}(H)={H\setminus\sigma\big(\Upg(H)\big){}^\uparrow}$.
If  $\omega(L)=\bar{\omega}(L)$, then $L$ is $\upo$-free. \textup{(}In particular, $\Dx(H)$ is $\upo$-free.\textup{)}
\end{lemma}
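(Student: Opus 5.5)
The plan is to show that $L$ contains an $\upo$-free Hardy subfield over which $L$ is $\d$-algebraic. Theorem~\ref{thm:13.6.1} then makes $L$ itself $\upo$-free: $L\supseteq\R$ is a Hardy field, hence a pre-$\d$-valued $H$-asymptotic field. The hypothesis is used only to produce a \emph{non-$\upl$-free} Hardy subfield of $L$ containing $H$. So I first handle the non-$\upl$-free case, and then reduce the other case to it.

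\emph{Case 1: $H$ not $\upl$-free.} By Lemma~\ref{lem:1.3.3+} I may replace $H$ by $H(\R)^{\operatorname{rc}}\subseteq L$. Then $H$ is a real closed $H$-field, still not $\upl$-free, over which $L$ is still $\d$-algebraic.

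If $H$ lacks asymptotic integration, apply Lemma~\ref{lem:1.3.18} with $E:=L$, which is closed under integration. This gives an $\upo$-free $H$-subfield $H_1$ of $L$ containing $H$. Since $L$ is $\d$-algebraic over $H_1$, Theorem~\ref{thm:13.6.1} applies. (The degenerate case $\Gamma_H=\{0\}$ is handled by first passing to $H(x)\subseteq L$.)

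If $H$ has asymptotic integration, non-$\upl$-freeness gives $\upl\in H$ with $\upl+g^{\dagger\dagger}\prec g^\dagger$ for all $g\succ 1$ in $H$. Since $L$ is Liouville closed, there is $\upg\in L^>$ with $\upg^\dagger=-\upl$. By Lemma~\ref{lem:11.5.14}, the Hardy field $H(\upg)\subseteq L$ has a gap. Now apply the previous paragraph to $H(\upg)$ in place of $H$. Note that $L$ is $\d$-algebraic over $H(\upg)$. In this case the assumption $\omega(L)=\bar\omega(L)$ is not even needed.

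\emph{Case 2: $\bar\omega(H)=H\setminus\sigma\big(\Upg(H)\big){}^\uparrow$ and $H$ is $\upl$-free.} Again arrange $H\supseteq\R$ real closed, using Lemma~\ref{lem:1.3.3+}; I expect the hypothesis on $\bar\omega(H)$ to survive this, since $\bar\omega(E)\cap H=\bar\omega(H)$ for Hardy field extensions $E$. If $H$ is $\upo$-free, we are done by Theorem~\ref{thm:13.6.1}.

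Otherwise, by \eqref{eq:upofree} there is $\upo\in H$ with
\[
\omega\big(\Upl(H)\big)<\upo<\sigma\big(\Upg(H)\big).
\]
This $\upo$ is a pseudolimit of an $\upo$-sequence of $H$. Since $\upo\notin\sigma\big(\Upg(H)\big){}^\uparrow$, the hypothesis gives $\upo\in\bar\omega(H)\subseteq\bar\omega(L)=\omega(L)$. Hence $\upo=\omega(z)$ for some $z\in L$. By Lemma~\ref{lem:11.8.29} we may take $z\in\Upl(L)$, where $\omega$ is strictly increasing.

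The key step, which I expect to be the main obstacle, is to show that $z$ is a pseudolimit of the $\upl$-sequence $(\upl_\rho)$ of $H$, that is, $z-\upl_\rho\prec\upg_\rho$ in the notation of [ADH, 11.5--11.7]. I would try to derive this from the monotonicity of $\omega$ on $\Upl(L)^\downarrow$, together with the [ADH, 11.7] estimates relating $\omega(\upl_\rho)$ to $\upo_\rho$ and the identity
\[
\omega(a)-\omega(b)=-(a-b)\big(2(a-b)^\dagger+a+b\big).
\]
Granting this, $\upl:=z$ witnesses that the Hardy field $H(z)=H\langle z\rangle\subseteq L$ (real closed after adjoining algebraic elements) is not $\upl$-free. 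Indeed, for $g\succ1$ in $H(z)$ we get $z+g^{\dagger\dagger}\prec g^\dagger$, using that $\Gamma_H^<$ is cofinal in $\Gamma_{H(z)}^<$. Finally, $L$ is a Liouville closed $\d$-algebraic extension of $H(z)$, so Case~1 applies.

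The parenthetical claim about $\Dx(H)$ then follows by taking $L:=\Dx(H)$. It is Liouville closed and $\d$-algebraic over $H$, and it satisfies $\omega(L)=\bar\omega(L)$ by Corollary~\ref{cor:omega(H) downward closed}. By Lemma~\ref{lem:6.19}, either $H$ is not $\upl$-free, or $\bar\omega(H)=\omega(H)^\downarrow$, or $\bar\omega(H)=H\setminus\sigma\big(\Upg(H)\big){}^\uparrow$. In the middle alternative I would instead run the argument of Case~2 with $z\in\Upl(L)$ obtained from $\upo\in\omega(H)^\downarrow$ directly, via the downward closedness of $\omega(L)$.
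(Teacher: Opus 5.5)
The paper does not prove this lemma; it quotes it from \cite{ADH5}, so I am judging your argument on its own.

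\textbf{What is correct.} Case~1 is correct as written, and you are right that it does not use $\omega(L)=\bar\omega(L)$. In Case~2, producing $z\in\Upl(L)$ with $\omega(z)=\upo$ is also fine.

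\textbf{Gap 1: the key step is not proved.} The step you flag as the main obstacle is the entire content of Case~2, and you leave it open. It can be closed without the identity for $\omega(a)-\omega(b)$.
For each $\rho$ we have $\upl_\rho\in\Upl(H)\subseteq\Upl(L)$ and $\omega(\upl_\rho)<\upo=\omega(z)$. Since $\omega$ is strictly increasing on $\Upl(L)$ (Lemma~\ref{lem:11.8.29}), this gives $z>\upl_\rho$.
On the other side, $\upl_\rho+\upg_\rho=-\big((1/\ell_\rho)'\big)^\dagger\in\Upd(H)\subseteq\Upd(L)$. Since $\Upl(L)<\Upd(L)$, this gives $z<\upl_\rho+\upg_\rho$.
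Thus $0<z-\upl_\rho<\upg_\rho$ for all $\rho$. Combined with $\upl_{\rho'}-\upl_\rho\sim\upg_{\rho+1}$ for $\rho<\rho'$ [ADH, 11.5], this yields $z-\upl_\rho\sim\upg_{\rho+1}$, that is, $\upl_\rho\leadsto z$.

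\textbf{Gap 2: cofinality for $H(z)$.} You use that $\Gamma_H^<$ is cofinal in $\Gamma_{H(z)}^<$ without a reason. Here is one. $H$ is real closed, hence henselian, and $(\upl_\rho)$ has no pseudolimit in $H$ because $H$ is $\upl$-free. So $(\upl_\rho)$ is of transcendental type, and $H(z)$ is an immediate extension of $H$. Hence $(\ell_\rho)$ is still a logarithmic sequence for $H(z)$, and $z$ witnesses that $H(z)$ is not $\upl$-free.

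\textbf{Gap 3: transferring the hypothesis.} Passing the hypothesis on $\bar\omega(H)$ to $H(\R)^{\operatorname{rc}}$ does not follow from $\bar\omega(E)\cap H=\bar\omega(H)$ alone. The simplest fix is to take $\upo$ already in the original $H$, as a witness that $H$ is not $\upo$-free, and apply the hypothesis there. This $\upo$ is a pseudolimit of the $\upo$-sequence, which is also an $\upo$-sequence for $H(\R)^{\operatorname{rc}}$ by Lemma~\ref{lem:1.3.3+}. So $\upo$ still lies strictly between $\omega\big(\Upl\big)$ and $\sigma\big(\Upg\big)$ of that field.

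\textbf{Error in your last paragraph.} The parenthetical claim is asserted only under the lemma's hypothesis. Taking $L:=\Dx(H)$ and using Corollary~\ref{cor:omega(H) downward closed} already finishes it, so there is no ``middle alternative'' to treat.

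That alternative also cannot be treated. Suppose $H$ is $\upl$-free, not $\upo$-free, and $\bar\omega(H)=\omega(H)^\downarrow=\omega\big(\Upl(H)\big){}^\downarrow$. Then any $\upo\in H$ with $\omega\big(\Upl(H)\big)<\upo<\sigma\big(\Upg(H)\big)$ lies outside $\omega(H)^\downarrow$ and outside $\bar\omega(H)$. Nothing forces $\upo\in\omega(L)$, so you get no $z$.

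In fact the conclusion you are aiming for is false there. For $H=H_{(2)}$ in Section~\ref{sec:E(H) upo-free}, $\Dx(H)$ is not $\upo$-free by Theorem~\ref{thm:upo-freeness of the perfect hull}. That sentence should be deleted.
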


\noindent 
Next assume that $H$ is a $\d$-algebraic Hardy field extension of $\R(x)$ which is closed under logarithm, that is, $\log(H^>)\subseteq H$. (Example: $H_{\text{LE}}$.) Then $H$ is ungrounded.
Recursively define the germs $\ell_n\in H^{>\R}$ by~$\ell_0:=x$ and $\ell_{n+1}:=\log\ell_n$.
Then~$(\ell_n)$ is a logarithmic sequence for $H$ as defined in [ADH, p. 499], and $H$ is a union of grounded $H$-subfields containing~$\R(x)$,
 by~\cite[Proposition~14.11]{Boshernitzan82}; see also \cite[Corollary~5.28]{ADH5}.
Thus by Proposition~\ref{prop:11.7.15}, $H$ is $\upo$-free (in particular, has asymptotic integration).
 We introduce the following germs (all in $H$): 
 \begin{align*} \upg_n\ &:=\ \ell_n^\dagger\ =\ \frac{1}{\ell_0\cdots\ell_n},\qquad \upl_n\ :=\ -\upg_n^\dagger\ =\ \frac{1}{\ell_0}+\frac{1}{\ell_0\ell_1} +\cdots + \frac{1}{\ell_0\ell_1\cdots \ell_n}, \\ 
 \upo_n\ &:=\omega(\upl_n)\ =\ \upg_0^2+\upg_1^2+\cdots+\upg_n^2 \,=\, \frac{1}{\ell_0^2}+\frac{1}{(\ell_0\ell_1)^2}+\cdots+\frac{1}{(\ell_0\cdots\ell_{n})^2}, 
 \end{align*} 
 so $\sigma(\upg_n)\ =\ \upo_n+\upg_n^2$. 
These germs are important in oscillation criteria:
%by the remarks following Proposition~\ref{prop:14.2.16+18} and Corollary~\ref{cor:char upofree},
Let $f\in H$, and note that $f\notin \bar{\omega}(H)$ iff  $\frac{f}{4}$ generates oscillation. We have: 
\begin{equation}\label{eq:critosc}
  f\notin  \bar{\omega}(H)\quad\Longleftrightarrow\quad
f > \upo_n\text{ for all $n$} \quad\Longleftrightarrow\quad
f > \upo_n+\upg_n^2 \text{ for some $n$}.
\end{equation}
The first equivalence is due to Hartman~\cite{Hartman48} for $H=H_{\text{LE}}$, and to
Boshernitzan~\cite[Theorem~17.7]{Boshernitzan82} for all $H$ as above; see also \cite[Corollary~7.9]{ADH5}.
The second equivalence follows from \cite[Corollary~7.10]{ADH5} using also 
$ \bar{\omega}(H)<\sigma\big(\Upg(H)\big){}^\uparrow$. 
 
\subsection*{Complex exponentials}
By this we mean germs $\ex^f\in\c[\imag]^\times$ where $f\in\c[\imag]$. In the next subsections these arise naturally when
splitting differential operators over~$K$.
 Recall here that the differential subring $K = H[\imag]$ of $\Calinf[\imag]$ is an
  $H$-asymptotic field extension of~$H$, with valuation ring $\mathcal O=\{f\in K:f\preceq 1\}$.
As in~[ADH, 14.2] we define the $\mathcal O$-submodule
$$\I(K)\ :=\ \{y\in K:\, \text{$y\preceq f'$ for some $f\in\mathcal O$}\}$$ 
of $K$.
The condition  $\I(K)\subseteq K^\dagger$  is related to   {\em trigonometric closedness\/} of $H$  and is equivalent to it for Liouville closed $H$, see \cite[Section~1.2]{ADH4}. More precisely: 

\begin{prop}[{\cite[Proposition~6.11]{ADH5}}]\label{prop:cos sin infinitesimal, 2}
If $H\supseteq\R$ is closed under integration, then
the following conditions are equivalent:
\begin{enumerate}
\item[\textup{(i)}] $\I(K)\subseteq K^\dagger$;
\item[\textup{(ii)}] $\ex^{f}\in K$ for all $f\in K$ with $f\prec  1$;
\item[\textup{(iii)}] $\ex^\phi,\cos \phi, \sin\phi\in H$ for all $\phi\in H$ with $\phi\prec 1$.
\end{enumerate}
\end{prop}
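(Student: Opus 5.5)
The plan is to prove the three equivalences directly, using only that $K=H+H\imag$, that $H$ (hence $K$) is closed under integration, and the asymptotic-field facts from Section~\ref{sec:ADH}. We treat (ii)~$\Leftrightarrow$~(iii) and (i)~$\Leftrightarrow$~(ii) separately.

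\emph{(ii)~$\Leftrightarrow$~(iii).} For $f=a+b\imag\in K$ with $a,b\in H$ we have $f\prec 1$ iff $a,b\prec 1$. Moreover
$$\ex^f=\ex^a\cos b+\imag\,\ex^a\sin b.$$
Since $\c[\imag]=\c+\c\imag$ is a direct sum, $\ex^f\in K$ iff $\ex^a\cos b\in H$ and $\ex^a\sin b\in H$.
\begin{itemize}
\item[$\Rightarrow$] Given (ii) and $\phi\in H$ with $\phi\prec 1$, take $f=\phi$ to get $\ex^\phi\in H$. Take $f=\phi\imag$ to get $\cos\phi,\sin\phi\in H$.
\item[$\Leftarrow$] Given (iii) and $f=a+b\imag\prec 1$, the products $\ex^a\cos b$ and $\ex^a\sin b$ lie in $H$, hence $\ex^f\in K$.
\end{itemize}

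\emph{(i)~$\Rightarrow$~(ii).} Let $f\in K$ with $f\prec 1$. Then $f\in\mathcal O$, so $f'\in\I(K)$, and by (i) there is $g\in K^\times$ with $g^\dagger=f'$. In $\Calinf[\imag]$ we get
$$\big(\ex^f/g\big)^\dagger=f'-g^\dagger=0,$$
so $\ex^f/g=c$ for some constant $c\in\C^\times$. Since $H\supseteq\R$, we have $\C\subseteq K$, and therefore $\ex^f=cg\in K$.

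\emph{(ii)~$\Rightarrow$~(i).} Let $y\in\I(K)$, say $y\preceq f'$ with $f\in\mathcal O$; we may assume $y\ne 0$.
\begin{enumerate}
\item Normalize $f$. Since $K$ is $\d$-valued with constant field $\C$, we can subtract a constant from $f$ to arrange $f\prec 1$. This does not change $f'$, and $f\ne 0$ because $y\ne 0$.
\item Integrate. As $H$ is closed under integration, so is $K$, which gives $F\in K$ with $F'=y$.
\item Show $F\preceq 1$; this is the only step needing care. Suppose instead $F\succ 1$. Then $F\succ f$ with $F,f\nasymp 1$ and both nonzero, so Lemma~\ref{lem:9.1.4(ii)} yields $F'\succ f'$. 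This contradicts $y\preceq f'$.
\item Conclude. From $F\preceq 1$ and $\d$-valuedness, write $F=c+\varepsilon$ with $c\in\C$ and $\varepsilon\prec 1$. By (ii), $u:=\ex^{\varepsilon}\in K^\times$, and $u^\dagger=\varepsilon'=y$. Hence $y\in K^\dagger$.
\end{enumerate}
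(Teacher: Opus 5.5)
Your proof is correct. The paper gives no argument of its own here (it imports the statement from \cite[Proposition~6.11]{ADH5}), and your write-up is a complete, self-contained verification. The equivalence (ii)~$\Leftrightarrow$~(iii) is just $\ex^{a+b\imag}=\ex^a\cos b+\imag\,\ex^a\sin b$ together with $K\cap\c=H$, and (i)~$\Rightarrow$~(ii) does not even use closure under integration. The one genuinely asymptotic step is that closure under integration gives $\I(K)\subseteq\der\smallo_K$, i.e.\ every $y\in\I(K)$ equals $\varepsilon'$ for some $\varepsilon\prec 1$ in $K$; you derive this correctly from $\mathcal O_K=\C+\smallo_K$ and Lemma~\ref{lem:9.1.4(ii)}.
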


\noindent
Together with Proposition~\ref{prop:6.9} and remarks preceding it, this yields:

\begin{cor}[{\cite[Corollary~6.12]{ADH5}}]\label{cor:cos sin infinitesimal}
If $H$ is $\d$-perfect, then   $\I(K)\subseteq K^\dagger$. 
\end{cor}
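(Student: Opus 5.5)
The statement is Corollary~\ref{cor:cos sin infinitesimal}: if $H$ is $\d$-perfect, then $\I(K)\subseteq K^\dagger$ for $K=H[\imag]$. I will get it by checking condition~(iii) of Proposition~\ref{prop:cos sin infinitesimal, 2} and then invoking that proposition to obtain~(i).

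\emph{Hypotheses of the proposition.} Since $H$ is $\d$-perfect, we have $H\supseteq\R$, and $H$ is a Liouville closed $H$-field by \cite[Proposition~4.2]{ADH5}, as recalled in the subsection on hardian germs. In particular $H$ is closed under integration.

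\emph{Condition (iii).} Let $\phi\in H$ with $\phi\prec 1$. Then $\phi\preceq 1$, so Proposition~\ref{prop:6.9} gives $\cos\phi,\sin\phi\in\Dx(H)$, and $\Dx(H)=H$ by $\d$-perfectness. For $\ex^\phi$ there are two routes. Liouville closedness gives $H^\dagger=H$, so some $u\in H^\times$ has $u^\dagger=\phi'$. Then $u$ and $\ex^\phi$ have the same logarithmic derivative, so $\ex^\phi=cu$ for some nonzero real constant $c$. Hence $\ex^\phi\in H$, since $\R\subseteq H$. Alternatively, $\ex^\phi$ lies in any Liouville closed Hardy field containing $\phi$ and $\R$, hence in $\Li(H)\subseteq\Dx(H)=H$.

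\emph{Conclusion.} Condition~(iii) holds, so Proposition~\ref{prop:cos sin infinitesimal, 2} yields~(i), that is, $\I(K)\subseteq K^\dagger$. There is no real obstacle: the substantive content sits in Propositions~\ref{prop:6.9} and~\ref{prop:cos sin infinitesimal, 2}. The only points to verify are $\R\subseteq H$ and that $H$ is closed under integration, both given by \cite[Proposition~4.2]{ADH5}.
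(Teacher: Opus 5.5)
Your proof is correct and is essentially the paper's argument. The paper also obtains this corollary by combining Proposition~\ref{prop:6.9} with the facts that a $\d$-perfect $H$ contains $\R$ and is Liouville closed, so that condition~(iii) of Proposition~\ref{prop:cos sin infinitesimal, 2} holds and yields~(i); your treatment of $\ex^\phi$ via $H^\dagger=H$ fills in a detail the paper leaves implicit.
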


\noindent
The following fact~\cite[Lemma~1.10]{ADH6} yields
``polar coordinates'' for   elements of $K^\times$:

\begin{lemma}\label{lem:fexphii}
Suppose $H\supseteq\R$ is Liouville closed  and $f\in \c^1[\imag]^\times$. Then~$f^\dagger\in K$ iff~$\abs{f}\in H^>$ and  $f=\abs{f}\ex^{\phi\imag}$ for some $\phi\in H$.
If in addition $f\in K^\times$, then $f=\abs{f}\ex^{\phi\imag}$ for some $\phi\preceq 1$ in $H$.
\end{lemma}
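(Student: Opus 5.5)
We prove the easy direction ``$\Leftarrow$'' by computing $f^\dagger$ directly. For ``$\Rightarrow$'' we pass to polar coordinates for $f$ as a germ of a $\c^1$ function, and read off the modulus and the argument from the real and imaginary parts of $f^\dagger\in K=H[\imag]$, using that $H$ is Liouville closed.

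\emph{Direction ``$\Leftarrow$''.} Suppose $\abs{f}\in H^>$ and $f=\abs{f}\ex^{\phi\imag}$ with $\phi\in H$. Then
$$f^\dagger\ =\ \abs{f}^\dagger+\phi'\imag.$$
Both $\abs{f}^\dagger$ and $\phi'$ lie in $H$, since $H$ is a differential field. Hence $f^\dagger\in K$.

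\emph{Direction ``$\Rightarrow$'', step 1: polar coordinates.} Let $f\in\c^1[\imag]^\times$. Take a representative $f\colon[a,+\infty)\to\C$ that is $\c^1$ and has no zeros. Set $r:=\abs{f}$, which is $\c^1$ and positive. Fix $\theta_0$ with $f(a)=r(a)\ex^{\theta_0\imag}$, and define
$$\theta(t)\ :=\ \theta_0+\int_a^t \Im\big(f'(s)/f(s)\big)\,ds.$$
To check that $f=r\ex^{\theta\imag}$, consider $u:=f\,r^{-1}\ex^{-\theta\imag}$. Using $r^\dagger=\Re(f^\dagger)$, which follows from $r^2=f\bar f$, we get $u^\dagger=f^\dagger-r^\dagger-\theta'\imag=0$. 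So $u$ is constant, and $u(a)=1$ gives $u=1$. This is the only genuinely analytic step; everything after it is algebraic bookkeeping in $H$.

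\emph{Step 2: identify the modulus.} From $f=r\ex^{\theta\imag}$ we get $f^\dagger=r^\dagger+\theta'\imag$. If $f^\dagger\in K$, then $r^\dagger\in H$ and $\theta'\in H$, where $r^\dagger$ and $\theta'$ now denote germs. Since $H$ is Liouville closed, $H^\dagger=H$, so there is $r_0\in H^>$ with $r_0^\dagger=r^\dagger$. Then $(r/r_0)'=0$, so $r=cr_0$ for some real $c>0$. As $\R\subseteq H$, this gives $\abs{f}=r\in H^>$.

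\emph{Step 3: identify the argument.} Since $H$ is closed under integration, there is $\phi_0\in H$ with $\phi_0'=\theta'$. Then $\theta=\phi_0+d$ for some $d\in\R$, and $\phi:=\phi_0+d\in H$ satisfies $f=\abs{f}\ex^{\phi\imag}$.

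\emph{Second claim.} Assume in addition $f\in K^\times$. Then $\ex^{\phi\imag}=f/\abs{f}\in K$, so $\cos\phi$ and $\sin\phi$ lie in $H$. Suppose $\phi\not\preceq 1$. Since $\phi$ lies in the ordered field $H$, we have $\abs{\phi}\to+\infty$, and so $\cos\phi$ has zeros arbitrarily far out while not vanishing eventually; that is, $\cos\phi$ oscillates. This is impossible for an element of the Hardy field $H$. Hence $\phi\preceq 1$.
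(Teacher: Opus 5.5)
Your proof is correct. The paper does not prove this lemma; it imports it from \cite[Lemma~1.10]{ADH6}, so there is no in-paper argument to compare against. Your route is the natural self-contained one:
\begin{itemize}
\item write $f=r\ex^{\theta\imag}$ with $\theta$ continuous;
\item use $H^\dagger=H$ and closure under integration to show $r$ and $\theta$ differ from elements of $H$ by real constants;
\item for the second claim, observe that $\cos\phi\in H$ cannot oscillate. This step is the same argument the paper uses in the proof of Corollary~\ref{cor:osc => bded}.
\end{itemize}

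You could drop the analytic Step~1. Choose $u\in H^>$ and $\phi_0\in H$ with $u^\dagger=\Re f^\dagger$ and $\phi_0'=\Im f^\dagger$. Then $\big(f/(u\ex^{\phi_0\imag})\big)'=0$, so $f=cu\ex^{\phi_0\imag}$ with $c=\abs{c}\ex^{d\imag}\in\C^\times$. This gives $\abs{f}=\abs{c}u\in H^>$ and $\phi=\phi_0+d$ directly.

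In the second claim, state explicitly that $f\in K^\times$ gives $f^\dagger\in K$, because $K$ is a differential field; that is what lets you apply the first part.
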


\noindent
We record some consequences of these results:

\begin{cor}\label{cor:osc => bded}
Let $\phi\in H$, and  suppose $\ex^{\phi\imag}\sim f$ with $f\in E[\imag]^\times$ for some Hardy field extension~$E$ of~$H$.
Then  $\phi\preceq 1$.
\end{cor}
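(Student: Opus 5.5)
We argue by contradiction: assume $\phi\succ 1$. We first enlarge the Hardy fields so that the polar-coordinate machinery applies. Replace $E$ by a maximal Hardy field extension $M$ of $E$. Since $E\subseteq M$, we still have $f\in M[\imag]^\times$. Because $M$ is maximal, it is $\d$-perfect, so $M\supseteq\R$ is a Liouville closed $H$-field. Moreover $\phi\in H\subseteq M$, so nothing is lost by this replacement.

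Next we use Lemma~\ref{lem:fexphii} with $M$ in the role of $H$. It gives $f=\abs{f}\ex^{\theta\imag}$ for some $\theta\preceq 1$ in $M$, with $\abs{f}\in M^>$. From $\ex^{\phi\imag}\sim f$ we get $\abs{f}\sim 1$, because $\big|\ex^{\phi\imag}\big|=1$. We then compare the two expressions. The relation $\ex^{\phi\imag}\sim f$ means $\ex^{\phi\imag}/f\to 1$, and $\ex^{\phi\imag}/f=\abs{f}^{-1}\ex^{(\phi-\theta)\imag}$. Together with $\abs{f}\to 1$, this yields $\ex^{(\phi-\theta)\imag}\to 1$, that is, $\cos(\phi-\theta)\to 1$ as $t\to+\infty$.

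Now set $\psi:=\phi-\theta\in M$. Since $\theta\preceq 1$ and $\phi\succ 1$, we have $\psi\succ 1$, so (after replacing $\phi$ by $-\phi$ if needed) $\psi>\R$ in $M$. In particular $\psi$ is eventually continuous and tends to $+\infty$. By the intermediate value theorem, $\psi$ then takes every value $(2k+1)\pi$ for all large $k$. At such points $\cos\psi=-1$, which contradicts $\cos\psi\to 1$. Hence $\phi\preceq 1$.

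The main point needing care is the application of Lemma~\ref{lem:fexphii}, which requires a Liouville closed Hardy field containing $\R$. This is exactly why we pass to a maximal extension $M$ of $E$ rather than working in $E$ itself. The remaining steps are elementary.
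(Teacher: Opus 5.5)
Your proof is correct and is essentially the paper's argument. Like the paper, you pass to a Liouville closed Hardy field containing $\R$, use Lemma~\ref{lem:fexphii} to write $f=\abs{f}\ex^{\theta\imag}$ with $\theta\preceq 1$, and get a contradiction from the oscillation of $\cos(\phi-\theta)$; the paper plays its zeros against $\cos(\phi-\theta)\sim\abs{f}$, while you play the value $-1$ against $\cos(\phi-\theta)\to 1$. One small point: the sign normalization is cleaner as ``replace $\psi$ by $-\psi$, since $\cos$ is even'', because replacing $\phi$ by $-\phi$ also requires replacing $f$ by $\bar f$ at the start.
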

\begin{proof}
We can assume that $E=H$ is Liouville closed and contains $\R$. Towards a contradiction assume $\phi\succ 1$.
Lemma~\ref{lem:fexphii} yields $\theta\preceq 1$ in $H$ such that $f=\abs{f}\ex^{\theta\imag}$. Then
$\ex^{(\phi-\theta)\imag}\sim \abs{f}$ and $\phi-\theta \sim \phi$. Thus replacing $f$, $\phi$ by $\abs{f}$, $\phi-\theta$, respectively,
we  arrange~$f\in H^\times$. Then $\ex^{\phi\imag} = \cos\phi + \imag \sin\phi \sim f$ in $\Calinf[\imag]$ gives $\cos \phi\sim f$, contradicting that
$\cos \phi$ has arbitrarily large zeros.
\end{proof}

\begin{cor}\label{cor:phi preceq 1}
Let $f\in K^\times$, $\phi\in H$, so $y:=f\ex^{\phi\imag}\in\Calinf[\imag]^\times$. Then the following are equivalent:
\begin{enumerate}
\item[\textup{(i)}] $\phi\preceq 1$;
\item[\textup{(ii)}]  $y\in\operatorname{D}(H)[\imag]$;
\item[\textup{(iii)}]  $y\in E[\imag]$ for some Hardy field extension $E$ of $H$;
\item[\textup{(iv)}]  $y\sim g$ for some Hardy field extension $E$ of $H$ and $g\in E[\imag]^\times$.
\end{enumerate}
\end{cor}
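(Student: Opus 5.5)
We prove the cycle (i)~$\Rightarrow$~(ii)~$\Rightarrow$~(iii)~$\Rightarrow$~(iv)~$\Rightarrow$~(i).

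For (i)~$\Rightarrow$~(ii), assume $\phi\preceq 1$. By Proposition~\ref{prop:6.9}, $\cos\phi,\sin\phi\in\Dx(H)$, so
$$\ex^{\phi\imag}\ =\ \cos\phi+\imag\sin\phi\ \in\ \Dx(H)[\imag].$$
Also $f\in K=H[\imag]\subseteq\Dx(H)[\imag]$. Since $\Dx(H)[\imag]$ is a ring, $y=f\ex^{\phi\imag}\in\Dx(H)[\imag]$.

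The implication (ii)~$\Rightarrow$~(iii) is immediate with $E:=\Dx(H)$, a Hardy field extension of $H$.

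For (iii)~$\Rightarrow$~(iv), suppose $y\in E[\imag]$. Then $y\ne 0$ because $y\in\Calinf[\imag]^\times$. Since $E[\imag]$ is a field, $y\in E[\imag]^\times$, and $g:=y$ works.

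For (iv)~$\Rightarrow$~(i), which is the only step with content, let $E\supseteq H$ be a Hardy field and $g\in E[\imag]^\times$ with $y\sim g$.
\begin{enumerate}
\item[(1)] Since $f\in K^\times\subseteq E[\imag]^\times$ and $f$ is a unit in $\c[\imag]$, dividing by $f$ preserves $\sim$. Hence $\ex^{\phi\imag}\sim g/f$.
\item[(2)] Here $g/f\in E[\imag]^\times$ and $\phi\in H$.
\item[(3)] Corollary~\ref{cor:osc => bded} now gives $\phi\preceq 1$.
\end{enumerate}
The only point to check is that $\sim$ on $\c[\imag]$ is compatible with multiplication by units, and that is clear from the limit definition.
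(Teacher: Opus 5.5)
Your proof is correct and follows the paper's cycle (i)$\Rightarrow$(ii)$\Rightarrow$(iii)$\Rightarrow$(iv)$\Rightarrow$(i). As in the paper, the only substantive step (iv)$\Rightarrow$(i) is reduced to Corollary~\ref{cor:osc => bded} after dividing by the unit $f$. The one difference is in (i)$\Rightarrow$(ii): you cite Proposition~\ref{prop:6.9} directly, whereas the paper goes through Proposition~\ref{prop:cos sin infinitesimal, 2} and Corollary~\ref{cor:cos sin infinitesimal}, which themselves rest on Proposition~\ref{prop:6.9}. Your route is slightly more direct and covers $\phi\preceq 1$ without first splitting off a real constant from $\phi$.
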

\begin{proof} Use Proposition~\ref{prop:cos sin infinitesimal, 2} and Corollaries~\ref{cor:cos sin infinitesimal} and~\ref{cor:osc => bded} to  obtain the chain of implications (i)~$\Rightarrow$~(ii)~$\Rightarrow$~(iii)~$\Rightarrow$~(iv)~$\Rightarrow$~(i). 
\end{proof}

\subsection*{Splitting real operators of order $2$}
{\it In this subsection we assume that $H\supseteq \R$ is   Liouville closed.}\/ Then $K=H[\imag]$ is   algebraically closed  with constant field $\C$. We also let $A\in H[\der]^{\ne}$ be a linear differential operator of order~$2$. 
Here is  a key ingredient for the proof of Theorem~\ref{thm:Bosh} in the next section:  

{\sloppy
\begin{prop}\label{prop:2nd order kernel}
Suppose   $A$ splits over $K$ but not over $H$. Then there are~${g,\phi>0}$ in $H$  
such that   
\begin{equation}\label{eq:2nd order kernel}
\ker_{\Calinf[\imag]} A\ =\ \C g\ex^{\phi\imag}+\C g\ex^{-\phi\imag},\qquad
\ker_{\Calinf} A\ =\ \R g\cos\phi + \R g\sin\phi.
\end{equation}
If $\I(K)\subseteq K^\dagger$, then for all such $g$, $\phi$ we have $\phi\succ 1$.
\end{prop}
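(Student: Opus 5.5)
The plan is to reduce to the normal form $4\der^2+f$, realize $f$ as a value of $\sigma$, and write down an explicit basis of complex exponentials. First I would normalize $A$. Write $A=a(\der^2+a_1\der+a_0)$ with $a\in H^\times$, and use that $H$ is Liouville closed to take $u\in H^>$ with $u^\dagger=-\frac12 a_1$. By the order-$2$ remarks in Section~\ref{sec:diffalg}, $\tilde A:=4(a^{-1}A)_{\ltimes u}=4\der^2+f$ with $f\in H$. Moreover $\tilde A$ splits over $K$ but not over $H$, and $y\mapsto uy$ maps $\ker\tilde A$ isomorphically onto $\ker A$, both in $\Calinf$ and in $\Calinf[\imag]$. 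So it suffices to treat $A=4\der^2+f$ and then multiply the resulting $g$ by $u$.

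Next I identify $f$. By \eqref{eq:sigma} we have $f\in\omega(H)\cup\sigma(H^\times)$, and by \eqref{eq:omega} we have $f\notin\omega(H)$. Hence $f=\sigma(y)$ for some $y\in H^\times$, and since $\sigma(y)=\sigma(-y)$ we may take $y>0$.

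Closure under integration gives $\phi\in H$ with $\phi'=y/2>0$. Put $g:=y^{-1/2}\in H^>$ and $y_1:=g\ex^{\phi\imag}\in\Calinf[\imag]^\times$. Then $2y_1^\dagger=-y^\dagger+y\imag$, so
$$A(y_1)\ =\ y_1\big(f-\omega(2y_1^\dagger)\big)\ =\ y_1\big(f-\sigma(y)\big)\ =\ 0.$$
The same computation with $-y$ gives $y_2:=g\ex^{-\phi\imag}\in\ker A$. Since $(y_1/y_2)^\dagger=y\imag\ne0$, the ratio $y_1/y_2$ is not constant, so $y_1,y_2$ are $\C$-linearly independent.

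The one step needing care is identifying the full kernel. By the facts in Section~\ref{sec:germs}, $\ker_{\Calinf[\imag]}A$ is $2$-dimensional over $\C$ (it is the solution space of \eqref{eq:2nd order, 2} in $\Gt[\imag]$, and that space lies in $\Calinf[\imag]$). Hence $\ker_{\Calinf[\imag]}A=\C y_1+\C y_2$. Taking real parts, which is legitimate because $A$ has real coefficients, gives $\ker_{\Calinf}A=\R g\cos\phi+\R g\sin\phi$. To get $\phi>0$: if $\phi\succ1$ then $\phi>0$ already, since $\phi'>0$; otherwise add a large real constant to $\phi$, which only multiplies $y_1,y_2$ by unimodular constants. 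This proves \eqref{eq:2nd order kernel}.

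For the last claim, assume $\I(K)\subseteq K^\dagger$, let $g,\phi$ be any pair satisfying \eqref{eq:2nd order kernel}, and suppose towards a contradiction that $\phi\preceq1$. Write $\phi=c+\varepsilon$ with $c\in\R$ and $\varepsilon\prec1$. Proposition~\ref{prop:cos sin infinitesimal, 2} gives $\ex^{\varepsilon\imag}\in K$, so $g\ex^{\phi\imag}\in K^\times\cap\ker A$. Its real and imaginary parts lie in $H\cap\ker A$ and are not both zero, so $\ker_HA\ne\{0\}$. Then $A$ splits over $H$ by \eqref{eq:5.1.21} (first for $4\der^2+f$, then for the original $A$ by undoing the twist), a contradiction. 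Hence $\phi\succ1$.

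I expect the only real obstacle to be the bookkeeping of the twist by $u$ and the scalar $a$, making sure splitting over $K$ versus $H$ and positivity of $g$ transfer correctly. The analytic input, that the kernel has dimension $2$, is already available from Section~\ref{sec:germs}.
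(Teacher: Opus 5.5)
Your proposal is correct and follows essentially the paper's route. You reduce to $4\der^2+f$, write $f=\sigma(y)$ with $y\in H^{>}$ via \eqref{eq:omega} and \eqref{eq:sigma}, take $g=y^{-1/2}$ and $\phi'=y/2$, and use the $2$-dimensionality of the solution space; for the last claim you use Proposition~\ref{prop:cos sin infinitesimal, 2} to produce a nonzero element of $\ker_H A$. The differences are only cosmetic: you check $A(g\ex^{\phi\imag})=0$ directly from $\omega(2y_1^\dagger)=\sigma(y)$ instead of citing \cite{ADH5}, you get $\phi>0$ by adding a real constant rather than by replacing $\phi$ with $-\phi$, and you get $\C$-linear independence from $(y_1/y_2)^\dagger=y\imag\neq 0$ rather than from $\ker_K A=\{0\}$.
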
}
\begin{proof}
The remarks in the subsection ``Linear differential operators'' in Section~\ref{sec:diffalg}
allow us to reduce to the case $A=4\der^2+f$ where $f\in H$, which we assume from now on.
Now \eqref{eq:omega} and \eqref{eq:sigma} yield $u\in H^>$ with $\sigma(u)=f$.
Put $g:=1/\sqrt{u}\in H^>$, take~$\theta\in H$ with $\theta'=\frac{1}{2}u$. 
Arguments before  \cite[Lemma~6.8]{ADH5} show~$A(g\ex^{\theta\imag})=0$, and hence also
$A(g\ex^{-\theta\imag})=0$.
If $\theta > 0$, put $\phi:=\theta$; otherwise, put~$\phi:=-\theta$; then~${\phi> 0}$.
Note that $\dim_{\R}\ker_H A=0$ by \eqref{eq:omega 2} and \eqref{eq:omega},
 hence also $\dim_{\C}\ker_K A=0$. Thus~$\phi\notin\R$, and $y:=g\ex^{\phi\imag},\bar{y}=g\ex^{-\phi\imag}\in \ker_{\Calinf[\imag]} A$ are $\C$-linearly independent.
Now $\Re y = \textstyle\frac{1}{2}(y+\bar{y})=\g\cos\phi$ and $\Im y = \frac{1}{2\imag}(y-\bar{y}) = g\sin\phi$ lie in
$\ker_{\Calinf}A$, hence~\eqref{eq:2nd order kernel}.
For the rest, use Proposition~\ref{prop:cos sin infinitesimal, 2}  and~${\dim_{\R}\ker_H A=0}$.
\end{proof}

\begin{remark}
Let   $A$, $g$, $\phi$   be as in Proposition~\ref{prop:2nd order kernel}. If $\phi\succ 1$, then all $y\in \ker^{\ne}_{\Calinf} A$
oscillate, and so $A$ does not split over any Hardy field extension of $H$. If  $\phi\preceq 1$, then no $y\in\ker_{\Calinf} A$ oscillates, and $A$ splits over $\Dx(H)$. 
(See Corollary~\ref{cor:char osc, 2}.)
\end{remark}

\subsection*{Liouvillian zeros of differential operators} Recall that $K=H[\imag]$. 
{\it In the next lemma we assume~${A\in K[\der]^{\ne}}$, but $A$ can have any order.}\/
%If our Hardy field~$H$ is maximal, then the differential field $K$
%is linearly closed, by Corollary~\ref{cor:maxhardymainthm, 2} below. 
%In \cite{ADHld} \marginpar{result from [9] taken on faith for now} we shall strengthen this: if $H$ is $\upo$-free and $\d$-perfect, then $K$ is linearly closed. \ In particular, if  the $\d$-perfect hull~$\Dx(H)$ of~$H$ is $\upo$-free, then~$A$ splits over
%the algebraic closure~$\Dx(H)[\imag]$ of $\Dx(H)$.
%(Section~\ref{sec:E(H) upo-free} below characterizes in terms of $H$ when $\Dx(H)$  is $\upo$-free.)
%Now if $A$ splits over $\Dx(H)[\imag]$, then there are~$g,\phi\in\Dx(H)$, $g\neq 0$, such that~$A(g\ex^{\phi\imag})=0$.
%The next lemma helps to clarify when for $H\supseteq\R$ we may take here~$g$,~$\phi$ in the Hardy subfield~$\operatorname{Li}(H)$ of~$\Dx(H)$.
The proof of this lemma relies on Kolchin's theorem (Corollary~\ref{cor:Kolchin}).

\begin{lemma}\label{lem:Liouvillian zeros}
Suppose $H\supseteq\R$.   Then the following are equivalent:
\begin{enumerate}
\item[\textup{(i)}]  $A(y)=0$ for some $y\neq 0$ in a Liouville extension of the differential field $K$;
\item[\textup{(ii)}]  $A(\ex^{f})=0$ for some $f\in\Li(H)[\imag]$  with $f'$ is algebraic over $K$;
%are $g,\phi\in\Li(H)$, $g\neq 0$, such that $g^\dagger$, $\phi'$ are algebraic over $H$ and~$A(g\ex^{\phi\imag})=0$;
\item[\textup{(iii)}]  $A(\ex^{f})=0$ for some $f\in\Li(H)[\imag]$.
\end{enumerate}
\end{lemma}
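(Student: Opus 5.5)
The plan is to establish the cycle (ii)~$\Rightarrow$~(iii)~$\Rightarrow$~(i)~$\Rightarrow$~(ii). The implication (ii)~$\Rightarrow$~(iii) is trivial.

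For (iii)~$\Rightarrow$~(i), let $f\in\Li(H)[\imag]$ with $A(\ex^f)=0$, and put $L:=\Li(H)[\imag]$, a differential subfield of $\Calinf[\imag]$. Since $\Li(H)$ is a Liouville extension of $H$, the field $L$ is a Liouville extension of $K=H[\imag]$; this is clear because adjoining $\imag$ is algebraic, and we can use Lemma~\ref{lem:10.6.6}. The germ $y:=\ex^f$ is a unit of $\Calinf[\imag]$ with $y^\dagger=f'\in L$, so we consider the differential ring $L[y,y^{-1}]\subseteq\Calinf[\imag]$. I would pass to its fraction field via [ADH, 4.6.11, 4.6.12]: either $y$ is transcendental over $L$, in which case $L[y,y^{-1}]$ is an integral domain whose fraction field $L(y)$ has constant field $\C$, or $y$ is algebraic over $L$, in which case $L(y)$ is again an algebraic extension. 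In both cases $L(y)$ is a Liouville extension of $L$, since $y'\in yL$, and hence of $K$. Finally, $A(y)=0$ holds in $\Calinf[\imag]$ and therefore also in $L(y)$. The one point needing care is that the constant field of $L(y)$ is algebraic over $\C$, which holds because $\C$ is algebraically closed and the constants of $L(y)$ lie in the constants of $\Calinf[\imag]$ when $L(y)$ is realized inside that ring. To avoid any fuss over whether $L(y)$ embeds, I would instead argue abstractly: the differential subring $L[y,y^{-1}]$ of $\Calinf[\imag]$ is an integral domain, because $L[y]$ is a polynomial ring or a field by 4.6.11. Its constants are contained in $\C$.

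For (i)~$\Rightarrow$~(ii), Kolchin's theorem (Corollary~\ref{cor:Kolchin}) gives a $z$ algebraic over $K$ in some differential field extension, with $\Ric(A)(z)=0$. The key step is to realize $z$ inside $\Li(H)[\imag]$. Since $\Li(H)$ is real closed, $\Li(H)[\imag]$ is algebraically closed. By uniqueness of extension of derivations to algebraic extensions [ADH, 1.9.2], the algebraic closure of $K$ in $\Li(H)[\imag]$ is isomorphic over $K$, as a differential field, to any algebraic closure. Hence we may take $z\in\Li(H)[\imag]$ algebraic over $K$ with $\Ric(A)(z)=0$. Next, write $z=a+b\imag$ with $a,b\in\Li(H)$. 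Liouville closedness of $\Li(H)$ gives $f\in\Li(H)[\imag]$ with $f'=z$, so $(\ex^f)^\dagger=z$ and $A(\ex^f)=\ex^f\Ric(A)(z)=0$. This yields (ii).

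I expect the main obstacle to be the bookkeeping in (iii)~$\Rightarrow$~(i): checking that the germ $\ex^f$ really generates a Liouville extension of $K$ inside a differential field with constant field algebraic over $\C$. This means verifying that $L[\ex^f,\ex^{-f}]$ is an integral domain with constants $\C$. I would handle it with [ADH, 4.6.11, 4.6.12] as above.
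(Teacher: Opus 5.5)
Your route is the paper's: the cycle (i)$\Rightarrow$(ii)$\Rightarrow$(iii)$\Rightarrow$(i), with Kolchin's corollary driving the first step and $L:=\Li(H)[\imag]$ together with Lemma~\ref{lem:10.6.6} driving the last.

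Your (i)$\Rightarrow$(ii) is fine. Using $\Ric(A)(z)=0$ and $A(\ex^f)=\ex^f\Ric(A)(f')$ is equivalent to the paper's factorization $A=B(\der-g)$ via \eqref{eq:5.1.21}. Realizing $z$ in $H^{\operatorname{rc}}[\imag]\subseteq\Li(H)[\imag]$ by uniqueness of extended derivations is also fine.

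The weak spot is in (iii)$\Rightarrow$(i), exactly where you expected trouble.
\begin{enumerate}
\item Splitting into ``$y$ transcendental over $L$'' versus ``$y$ algebraic over $L$'' is not justified as stated. $\Calinf[\imag]$ has zero divisors, so a polynomial relation for $y$ over $L$ does not by itself make $L[y]$ a field, or even a domain.
\item In the transcendental case, the claim $C_{L(y)}=\C$ needs the hypothesis of [ADH, 4.6.11, 4.6.12], namely $mf'\notin L^\dagger$ for all $m\ge 1$. You never verify it.
\item Your fallback, that the constants of the ring $L[y,y^{-1}]$ lie in $\C$, does not by itself control the constants of its fraction field. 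That fraction field is what the definition of Liouville extension concerns, and it generally does not sit inside $\Calinf[\imag]$, since nonzero elements of $L[y]$ need not be units there.
\item Likewise, ``polynomial ring or a field by 4.6.11'' misattributes the field case, which 4.6.11 does not cover.
\end{enumerate}

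The missing observation, which is how the paper argues, is to split on whether $f'\in L^\dagger$.
\begin{itemize}
\item If $f'=z^\dagger$ with $z\in L^{\ne}$, then $y/z\in\Calinf[\imag]$ has derivative $0$. Hence $y\in\C z\subseteq L$, so $L[y]=L$ and there is nothing to prove.
\item If $f'\notin L^\dagger$, then $mf'\notin L^\dagger$ for all $m\ge 1$, because $L$ is algebraically closed: if $mf'=u^\dagger$, take $v\in L$ with $v^m=u$, so that $v^\dagger=f'$. Now [ADH, 4.6.11, 4.6.12] apply. They give that $y$ is transcendental over $L$, that $L[y]$ is an integral domain, and that its differential fraction field $L(y)$ has constant field $\C$.
\end{itemize}
In both cases $L(y)$ is a Liouville extension of $L$, hence of $K$, and $A(y)=0$ holds there. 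With this case split in place of yours, the rest of your argument goes through.
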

\begin{proof}
Suppose (i) holds. Then~Corollary~\ref{cor:Kolchin}
gives $y\neq 0$ in a differential field extension~$L$ of $K$   such that $A(y)=0$ and~$g:=y^\dagger$ is algebraic over~$K$. % of degree~$\leq I(r)$. 
We arrange that $L$ contains the algebraic closure $K^{\operatorname{a}}=H^{\operatorname{rc}}[\imag]$ of $K$, where $H^{\operatorname{rc}}\subseteq\Calinf$ is the real closure of the Hardy field $H$.
Thus~$g\in K^{\operatorname{a}}$, and
hence $A=B(\der-g)$ where~$B\in K^{\operatorname{a}}[\der]$ by \eqref{eq:5.1.21}.
%and $$\big[H(a,b):H\big] = \big[K(a,b):K\big]=\big[K(z,\overline{z}):K\big]\leq \big[K(z):K\big] \cdot \big[K(\overline{z}):K\big] \leq N,$$
%where for the first equality we used that $H(a,b)$, $K$ are linearly disjoint over $H$. 
Take $f\in\Li(H)[\imag]$ with~$f'=g$ and set~$z:=\ex^f\in\Calinf[\imag]^\times$. Then~$z^\dagger=g$ and thus
$A(z)=0$.
This shows~(i)~$\Rightarrow$~(ii), and
(ii)~$\Rightarrow$~(iii) is trivial. 

% --- From an old version:
%To prove~(iii)~$\Rightarrow$~(i), let $f$ be as in~(iii) and~$y:=\ex^{f}\in \Calinf[\imag]^\times$. By Lemma~\ref{lem:10.6.6} the differential field $L :=\operatorname{Li}(H)[\imag] \subseteq  \Calinf[\imag]$ is a Liouville extension of $K=H[\imag]$.  Now~$L[y]\subseteq \Univ_L:=L\big[\ex^{\Li(H)\imag}\big]  \subseteq   \Calinf[\imag]$ and~$y^\dagger=f'\in L$, so the differential fraction field $L(y)$ of $L$ is a Liouville extension of $L$, and hence of~$K$.

To prove~(iii)~$\Rightarrow$~(i),
let $f$ be as in~(iii) and~$y:=\ex^{f}\in \Calinf[\imag]^\times$.
By Lemma~\ref{lem:10.6.6} the differential field
$L :=\operatorname{Li}(H)[\imag] \subseteq  \Calinf[\imag]$ is a Liouville extension of $K=H[\imag]$. 
Now~$L[y]\subseteq \Calinf[\imag]$ and~$y^\dagger=f'\in L$. If $z\in L^{\ne}$ and
$z^\dagger=f'$, then $y\in \C z$, so~$L[y]=L$; if $f'\notin L^{\dagger}$, then also $mf'\notin L^\dagger$, for all $m\ge 1$, so $L[y]$ is an integral domain  
by~[ADH, 4.6.11]. In either case, $L[y]$ has a 
differential fraction field $L(y)$ which is a Liouville extension of $L$, and hence of~$K$.
\end{proof}

%\begin{remark} 
%Let  $I(r)\in\N$ be as in \marginpar{remark skipped}
%\cite[Proposition~4.18]{vdPS}. Using \cite[Proposition~4.19]{vdPS} one can   show that if one of the equivalent conditions in
%Lemma~\ref{lem:Liouvillian zeros} holds, then
%the germ $f$  in (ii) can be taken so that in addition~$\big[K(f'):K\big]\leq I(r)$.
%\end{remark}

%\begin{cor}\label{cor:Liouvillian zeros}
%Suppose $H\supseteq\R$ is real closed.
%If   $A(y)=0$ for some $y\neq 0$ in a Liouville extension of $K$,
%then $A(\ex^{f})=0$ for some $f \in \Li(H)[\imag]$ with~$f' \in K$. \end{cor}

\subsection*{Differential-algebraic   Hardy field extensions}
The following theorem from ~\cite{ADH2} is a fundamental result about Hardy field extensions:

\begin{theorem}\label{thm:maxhardymainthm}
Every Hardy field has a $\d$-algebraic Hardy field extension that contains $\R$ and is  Liouville closed, $\upo$-free, and newtonian.
\end{theorem}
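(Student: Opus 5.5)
The plan is to take a \emph{maximal} $\d$-algebraic Hardy field extension of $H$ and check that it has each of the four properties. Newtonianity is the only hard one.

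First, the $\d$-algebraic Hardy field extensions of $H$, ordered by inclusion, satisfy the hypothesis of Zorn's Lemma: the union of a chain of such extensions is again a Hardy field, and it is still $\d$-algebraic over $H$. Let $M$ be a maximal element. Since $\d$-algebraicity is transitive, every $\d$-algebraic Hardy field extension of $M$ is $\d$-algebraic over $H$, hence equals $M$. Since $\Dx(M)$ consists of germs $\d$-algebraic over $M$, we get $M=\Dx(M)$, so $M$ is $\d$-perfect. By the remarks after the definition of $\Dx(H)$ (\cite[Proposition~4.2]{ADH5}), $M\supseteq\R$ and $M$ is a Liouville closed $H$-field. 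By Lemma~\ref{lem:7.11} (last clause, applied to $M$), $\Dx(M)=M$ is $\upo$-free. This settles everything except newtonianity, using only results already recorded in Section~\ref{sec:prelims}.

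The main obstacle is showing that $M$ is newtonian: every $P\in M\{Y\}$ of order $r$ with $\ndeg P=1$ should have a zero in $\mathcal O_M$. By maximality it suffices to produce a zero $y\preceq 1$ that is $M$-hardian, because $M\langle y\rangle$ is then a $\d$-algebraic Hardy field extension of $M$, so $y\in M$. I would proceed in three steps.
\begin{enumerate}
\item Use the normalization machinery of [ADH, Ch.~13--14], which applies because $M$ is $\upo$-free. After compositional conjugation (realized analytically by a change of variables, as in the subsection on Hardy fields and composition), multiplicative conjugation, and refinement, reduce to the case where $P$ is in ``split-normal'' form, $P=Y\cdot A^\ast + R$ schematically: its linear part $A\in M[\der]$ splits over $M[\imag]$, and the remaining part $R$ is asymptotically small compared with $A$.
\item Obtain the splitting of $A$. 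By Lemma~\ref{lem:fexphii} and Proposition~\ref{prop:2nd order kernel}, each first-order factor $\der-g$ with $g\in M[\imag]$ has a right inverse given by an explicit integral operator of the form $h\mapsto \ex^{G}\int \ex^{-G}h$, with $G'=g$. With the integration endpoint chosen according to the sign of $\Re g$ (at $+\infty$ or at a finite point), these operators are bounded on suitable spaces of functions of controlled growth.
\item Compose these right inverses to get a right inverse $A^{-1}$ of $A$. Then solve $y=A^{-1}\big(-R(y)\big)$ by a contraction argument on a Banach space of $\c^r$ functions on $[a,+\infty)$ with weighted sup-norms, giving a solution $y\prec 1$ for $a$ large enough.
\end{enumerate}

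It then remains to show that this $y$ is $M$-hardian. Here I would use that $y$ is the unique small fixed point of a contraction, and compare $y$ against every $h\in M$ via the differential polynomial $P_{+h}$, also in normal form, to conclude that $y-h$ is eventually of constant sign. The most delicate points are the uniform control of the weights across the composed integral operators, and this final non-oscillation argument.
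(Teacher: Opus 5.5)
The paper does not prove this theorem. It imports it from \cite{ADH2}, and your outline (a $\d$-maximal extension $M$, Liouville closedness and $\upo$-freeness from \cite{ADH5}, then newtonianity via normal forms, integral right inverses of a split linear part, and a fixed point in weighted spaces) is, in outline, the architecture used there. Two steps, however, fail as written.

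First, Lemma~\ref{lem:7.11} does not give $\upo$-freeness of $M$. Its parenthetical ``$\Dx(H)$ is $\upo$-free'' is subject to the lemma's hypothesis that $H$ is not $\upl$-free or $\bar{\omega}(H)=H\setminus\sigma\big(\Upg(H)\big){}^\uparrow$. With $M$ in the role of $H$, the first disjunct fails ($M$ is Liouville closed, hence $\upl$-free), and the second is essentially the claim itself. Nor can $\d$-perfectness alone suffice: $\Dx(H_{(2)})$ from Section~\ref{sec:E(H) upo-free} is $\d$-perfect but not $\upo$-free. The repair uses $\d$-maximality. If $M$ were not $\upo$-free, then \eqref{eq:upofree}, Lemma~\ref{lem:11.8.29} and Corollary~\ref{cor:omega(H) downward closed} give $\upo\in M$ with $\bar{\omega}(M)<\upo<\sigma\big(\Upg(M)\big)$. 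Lemma~\ref{lem:7.15+7.17} then gives continuum many $M$-hardian $\upg>0$ with $\sigma(\upg)=\upo$. Each is $\d$-algebraic over $M$, so lies in $M$, contradicting that no Hardy field extension of $M$ contains two of them.

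Second, and decisively, the newtonianity part only names the steps that make up the proof in \cite{ADH2}.
\begin{enumerate}
\item The splitting of the linear part over $M[\imag]$ is not supplied by the normalization results of [ADH]. Lemma~\ref{lem:fexphii} and Proposition~\ref{prop:2nd order kernel} presuppose a splitting rather than produce one, and Corollary~\ref{cor:maxhardymainthm, 2} is a consequence of the theorem. You need an induction on the order, or a minimal-hole argument, making $M[\imag]$ algebraically closed, $\upo$-free and $(r-1)$-newtonian, so that $\Ric(A)$, of order $r-1$, has a zero there.
\item ``$R$ asymptotically small compared with $A$'' must be a quantitative bound relative to the norm of your right inverse of $A$ on the weighted $\mathcal{C}^r$-spaces, not mere domination of valuations.
\item Most seriously, the hardian-ness step fails as described. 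Once some factor $\der-g$ is inverted from a finite base point, small zeros are no longer unique: changing the base point changes the solution by small oscillating elements of $\ker A$. Already for $P=A(Y)-b$ with $A=\der^2+2\der+2$ these are the germs $c\,\ex^{-x}\cos(x+d)$, so at most one of the resulting zeros is $M$-hardian.
\end{enumerate}
Consequently, being the fixed point of your contraction carries no information. Also, $y-h$ need not be the fixed point attached to a normal form of $P_{+h}$, which lives at another scale with other weights. Nothing in your sketch provides the eventual sign of $y-h$.

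In \cite{ADH2} this is where the real work lies. One works with holes $(P,\mathfrak m,\hat f)$, where $\hat f$ is a zero of $P$ in an immediate extension. These are refined to repulsive-normal form, roughly so that every factor can be inverted by integrating from $+\infty$. One then shows, by an asymptotic-similarity argument, that $y-h$ behaves like $\hat f-h$ for every $h$.
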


\noindent
Here are the  consequences of this theorem which are most relevant  for us later:

\begin{cor}\label{cor:maxhardymainthm, 1}
Suppose $H$ is maximal. Then $H\supseteq\R$ is is Liouville closed, $\upo$-free, and newtonian $($and thus Schwarz closed$)$. 
\end{cor}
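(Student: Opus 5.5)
The plan is to apply Theorem~\ref{thm:maxhardymainthm} to the maximal Hardy field $H$ and use maximality to identify $H$ with the extension it produces. That theorem gives a $\d$-algebraic Hardy field extension $E$ of $H$ which contains $\R$ and is Liouville closed, $\upo$-free, and newtonian. Since $H$ is maximal, it has no proper Hardy field extension, so $E=H$. Hence $H\supseteq\R$, and $H$ is Liouville closed, $\upo$-free, and newtonian. This step is immediate.

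For the parenthetical claim that $H$ is Schwarz closed, I would pass through Proposition~\ref{prop:14.2.16+18} and the definition following Corollary~\ref{cor:Schwarz closed}.
\begin{enumerate}
\item Since $H$ is newtonian, it is in particular $2$-newtonian.
\item Being Liouville closed, $H$ is real closed, and it is closed under integration. As a Hardy field containing $\R$ it is $\d$-valued, so it has asymptotic integration. (Alternatively, this follows because $\upo$-freeness implies $\upl$-freeness, and Lemma~\ref{lem:upl-free} applies.)
\item Proposition~\ref{prop:14.2.16+18} then shows that $\omega\big(\Upl(H)^\downarrow\big)$ is downward closed and $\sigma\big(\Upg(H)^\uparrow\big)$ is upward closed.
\item Since $H$ is Liouville closed, $\Upl(H)$ is downward closed and $\Upg(H)$ is upward closed, by the facts recalled before Lemma~\ref{lem:11.8.29} and \eqref{eq:11.8.19}. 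So these sets are simply $\omega\big(\Upl(H)\big)$ and $\sigma\big(\Upg(H)\big)$.
\item Together with $\upo$-freeness, this is exactly condition~(ii) of Corollary~\ref{cor:Schwarz closed}. Therefore $H$ is Schwarz closed, which is the remark made right after Proposition~\ref{prop:14.2.16+18}.
\end{enumerate}

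There is no real obstacle here. The only point needing a little care is checking that $H$ has asymptotic integration, so that Proposition~\ref{prop:14.2.16+18} applies, and this follows from $H$ being a Liouville closed $H$-field.
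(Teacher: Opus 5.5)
Your proof is correct and follows the paper's route. Theorem~\ref{thm:maxhardymainthm} plus maximality gives $E=H$, and Schwarz closedness then comes from Proposition~\ref{prop:14.2.16+18} together with condition~(ii) of Corollary~\ref{cor:Schwarz closed}, exactly as the paper indicates, with your check that $H$ has asymptotic integration ($\d$-valued and closed under integration) being the right justification for invoking that proposition.
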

 
 \noindent
 Here ``Schwarz closed" follows from Proposition~\ref{prop:14.2.16+18} and the remarks before it.
Corollary~\ref{cor:perfect Schwarz closed, 2} below yields ``Schwarz closed" under a weaker assumption. 
%nessimproves on Corollary~\ref{cor:maxhardymainthm, 1}. 
%By Corollaries~\ref{cor:Schwarz closed} and~\ref{cor:maxhardymainthm, 1}, if $H$ is maximal, then each
%$A\in H[\der]$ of order $2$ splits over $K$; in fact:

\begin{cor}\label{cor:maxhardymainthm, 2}
Suppose $H$ is maximal. Then   $K$ is linearly surjective and   linearly closed. Moreover,  $H$ is linearly surjective, and
each monic $A\in H[\der]^{\ne}$ is a product of monic order $1$ and order $2$ operators in $H[\der]$.
\end{cor}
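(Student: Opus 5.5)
From Corollary~\ref{cor:maxhardymainthm, 1} we know that $H$ is a Liouville closed, $\upo$-free, newtonian $H$-field containing $\R$. The plan is to transfer these properties to $K=H[\imag]$ and then apply Corollary~\ref{cor:weakly d-closed}. Since $H$ is real closed and $\imag^2=-1$, the field $K$ is algebraically closed. We construe $K$ as an $H$-asymptotic field with valuation ring $\mathcal O+\mathcal O\imag$, as in the remarks after Proposition~\ref{prop:newt alg ext}. Because $H$ is real closed, $\Gamma_H$ is divisible. Proposition~\ref{prop:newt alg ext} then shows that the algebraic extension $K$ of $H$ is $\upo$-free and newtonian. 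Corollary~\ref{cor:weakly d-closed} now gives that $K$ is linearly closed and linearly surjective.

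Next we show that $H$ is linearly surjective. Let $A\in H[\der]^{\ne}$ and $b\in H$. Linear surjectivity of $K$ gives $y\in K$ with $A(y)=b$. Write $y=y_1+y_2\imag$ with $y_1,y_2\in H$. Since $A$ has coefficients in $H$, we have $A(y_1)+A(y_2)\imag=b$. Taking real parts yields $A(y_1)=b$.

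For the factorization, we use induction on the order $r$ of a monic $A\in H[\der]$; the cases $r\le 2$ are trivial. Suppose $r\ge 3$. Since $K$ is linearly closed, $A$ splits over $K$, so there is $g\in K$ with $A\in K[\der](\der-g)$. If $g\in H$, then right division in $H[\der]$ gives $A=B(\der-g)$ with $B\in H[\der]$ monic of order $r-1$, and we apply induction to $B$. If $g\notin H$, set $D:=(\der-\bar g)(\der-g)$, whose coefficients are fixed by conjugation and hence lie in $H[\der]$. I expect this step, showing that $D$ right-divides $A$, to be the main obstacle. Conjugating the relation $A\in K[\der](\der-g)$ gives $A\in K[\der](\der-\bar g)$. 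We need the left ideal $K[\der](\der-g)\cap K[\der](\der-\bar g)$ to equal $K[\der]D$ when $g\ne\bar g$. This is the standard least-common-left-multiple fact; it can be seen by a kernel computation in a Picard--Vessiot-type extension, or directly as follows. Write $A=E(\der-g)$. Since $A$ also vanishes on a zero $u$ of $\der-\bar g$ and $(\der-g)(u)=(\bar g-g)u\ne 0$ is a zero of the first-order operator $(\der-g)(\der-\bar g)(\der-g)^{-1}$-type twist, $E$ is divisible on the right by a first-order operator. Hence $A=B'D'$ with $D'$ of order $2$ annihilating both zeros.

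Alternatively, and more simply, we can argue by division. Divide $A$ on the right by $D$ in $H[\der]$ to get $A=BD+R$ with $\order R\le 1$. Then $R$ vanishes on nonzero zeros $u_1,u_2$ (in a differential ring extension) of $\der-g$ and $\der-\bar g$. The elements $u_1,u_2$ are linearly independent over the constants because $g\ne\bar g$. A nonzero operator of order $\le1$ cannot have two independent zeros, so $R=0$. Then $B\in H[\der]$ is monic of order $r-2$, and induction finishes the proof.
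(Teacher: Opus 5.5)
Your first two steps, linear closedness and linear surjectivity of $K$ via Corollary~\ref{cor:maxhardymainthm, 1}, Proposition~\ref{prop:newt alg ext} and Corollary~\ref{cor:weakly d-closed}, and linear surjectivity of $H$ by taking real parts, are the paper's argument. The paper then gets the factorization by citing [ADH, 5.1.35].

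Your direct proof of the factorization has a genuine gap at its key step: $D=(\der-\bar g)(\der-g)$ is in general \emph{not} fixed by conjugation. Indeed
\[ (\der-\bar g)(\der-g)\ =\ \der^2-(g+\bar g)\der+(g\bar g-g'). \]
Its conjugate is $(\der-g)(\der-\bar g)$, which differs from $D$ by $g'-\bar g'=2\imag(\Im g)'$. So $D\in H[\der]$ only when $\Im g$ is constant. For the same reason $D$ does not kill a zero $u_2$ of $\der-\bar g$: one computes $D(u_2)=(\bar g-g)'u_2$. Hence division by $D$ ``in $H[\der]$'' is not available, and the remainder $R$ need not vanish at $u_2$.

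Concretely, take $A:=\der^2+\frac14(x^2-3x^{-2})$ (cf.\ Example~\ref{ex:boundphi}(2) with $r=1$). Then $A=(\der+g)(\der-g)$ with $g=\frac12(-x^{-1}+x\imag)$. But the constant coefficient of $(\der-\bar g)(\der-g)$ is $\frac14x^2-\frac14x^{-2}-\frac12\imag\notin H$. Replacing $A$ by $\der A$ gives the same failure in order~$3$.

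The structure of your argument is right; you just need the actual least common left multiple.
\begin{itemize}
\item Since $g\notin H$ we have $\Im g\ne0$, so $h:=(g-\bar g)^\dagger=(\Im g)^\dagger\in H$ is defined. Set $D:=(\der-\bar g-h)(\der-g)$.
\item Using $h(g-\bar g)=(g-\bar g)'$ one checks $D=(\der-g-h)(\der-\bar g)$. Since $\bar h=h$, conjugation interchanges these two expressions, so $D\in H[\der]$.
\item From the two expressions, $D$ kills zeros of both $\der-g$ and $\der-\bar g$. In the example above this $D$ is $A$ itself.
\end{itemize}
Now your division argument goes through.
\begin{itemize}
\item Take $u_1:=\ex^{G}\in\Calinf[\imag]$ with $G'=g$, and $u_2:=\bar u_1$. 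Then $A(u_1)=0$, and $A(u_2)=\overline{A(u_1)}=0$ because $A$ is real.
\item So the remainder $R\in H[\der]$ of order $\le1$ in $A=BD+R$ kills $u_1$ and $u_2$.
\item These are $\C$-linearly independent, since $(u_1/u_2)^\dagger=g-\bar g\ne0$.
\item Nonzero elements of $H$ are units in $\Calinf$, so a nonzero such $R$ has kernel of dimension at most~$1$ in $\Calinf[\imag]$. Hence $R=0$, and $B\in H[\der]$ is monic of order $r-2$.
\end{itemize}
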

\begin{proof}
By Theorem~\ref{thm:maxhardymainthm} and the remark after Proposition~\ref{prop:newt alg ext}, the algebraic closure $K$ of $H$ is $\upo$-free and newtonian,
and thus $K$ is linearly surjective and linearly closed, by Corollary~\ref{cor:weakly d-closed}. Since $K$ is   linearly surjective, so is $H$.
The rest follows from linear closedness of $K$ and [ADH, 5.1.35].
\end{proof}

\noindent
In particular, each  inhomogeneous linear differential equation
$$y^{(n)}+a_{1}y^{(n-1)}+\cdots+a_n y\ =\  b\qquad (a_1,\dots,a_{n},b\in K)$$
has a solution $y=f+g\imag$ where $f$, $g$ lie  in some Hardy field extension  of $H$.  
%Boshernitzan~\cite[remark on p.~117]{Boshernitzan87} states a consequence of this for  inhomogeneous linear differential equations over $H$: if $a_1$, $a_2$, $b$ are elements of $H$, then some~$y$ in a Hardy field extension of $H$ satisfies~$y''+a_1y'+a_2y=b$.
In the next sections we are going to  apply the results above to homogeneous linear differential equations of order~$2$ over $H$.

\section{Second-Order Linear Differential Equations over Hardy Fields}\label{sec:perfect applications}

\noindent
In this section we analyze the oscillating solutions of second-order linear differential equations over Hardy fields in more detail.
In particular, we prove  Theorems~A and~B and Corollaries~1, 2, and~3  from the Introduction:
see the remarks after   Lemma~\ref{lem:g,phi unique} and Corollaries~\ref{cor:2nd order, f succ 1/x^2}, \ref{naicor},
and~\ref{cor:phi' quadratic}. 
%(Theorem~B is also connected to the $\upo$-freeness of the perfect hull of a Hardy field, which is characterized in Section~\ref{sec:E(H) upo-free}.) % We finish  with some remarks and conjectures about firm and flabby dents. 
{\it Throughout this section~$H$ is a Hardy field and $K:=H[\imag]\subseteq \Calinf[\imag]$.}\/

\subsection*{Parametrizing the solution space}
Let $a,b\in H$. We now continue the study of  the linear differential equation
\begin{equation}\label{eq:2nd order, app}\tag{$\tilde{\operatorname{L}}$}
Y''+aY'+bY\ =\ 0
\end{equation}
over $H$ from Section~\ref{sec:prelims}  (with slightly changed notation), and focus on the oscillating case viewed in the light of our main  result from \cite{ADH2} stated at the end of the last section as Theorem~\ref{thm:maxhardymainthm}. (Corollaries~\ref{cor:char osc} and~\ref{cor:char osc, 2}
  dealt with the non-oscillating case, and this didn't need \cite{ADH2}.)
%Most of the following theorem was claimed without proof by Boshernitzan~\cite[Theorem~5.4]{Boshernitzan87}:

\begin{theorem} \label{thm:Bosh}  
Suppose  \eqref{eq:2nd order, app} has an oscillating solution \textup{(}in $\Calinf$\textup{)}. Then there are $H$-hardian germs
$g>0$, $\phi>\R$  such that for all~$y\in\Calinf$,
$$\text{$y$ is a solution of    \eqref{eq:2nd order, app}} \quad\Longleftrightarrow\quad  \text{$y=cg\cos(\phi+d)$ for some $c,d\in\R$.}$$
Any such $H$-hardian germs~$g$,~$\phi$ are $\d$-algebraic over $H$ and lie in a common Hardy field extension of $H$. 
If~$\Dx(H)$ is $\upo$-free, then these properties force $g,\phi\in\Dx(H)$, and determine~$g$   uniquely  up to multiplying by a positive real constant and $\phi$  uniquely up to  
adding a real constant.
\end{theorem}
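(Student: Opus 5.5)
The plan is to reduce to the self-adjoint form, obtain existence from a maximal Hardy field via the splitting machinery of Section~\ref{sec:prelims}, and get uniqueness from Kummer's function $\sigma$. Put $f:=\omega(a)+4b\in H$ and let $G\in\Li\!\big(H(\R)\big)\subseteq\Dx(H)$ be as in the subsection on oscillation over Hardy fields. Then $y\mapsto Gy$ maps the solutions of $4Y''+fY=0$ onto those of \eqref{eq:2nd order, app}. Since $G>0$ lies in $\Dx(H)$, which is contained in every Hardy field extension of $H$, we can trade $g$ for $g/G$ throughout. So I will assume $a=0$ and $A=4\der^2+f$, with $f/4$ generating oscillation.

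\emph{Existence.} Take a maximal Hardy field $M\supseteq H$. By Corollary~\ref{cor:maxhardymainthm, 2}, $A$ splits over $M[\imag]$. It does not split over $M$, by Corollary~\ref{cor:char osc, 2}, since the solutions oscillate. Proposition~\ref{prop:2nd order kernel} then gives $g,\phi>0$ in $M$ with $\ker_{\Calinf}A=\R g\cos\phi+\R g\sin\phi$. Because $M$ is $\d$-perfect, Corollary~\ref{cor:cos sin infinitesimal} gives $\I(M[\imag])\subseteq M[\imag]^\dagger$, so $\phi\succ 1$ and hence $\phi>\R$. Corollary~\ref{cor:sinusoids} rewrites the kernel as $\{cg\cos(\phi+d):c,d\in\R\}$.

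\emph{Common Hardy field and $\d$-algebraicity, for arbitrary $H$-hardian $g,\phi$ with this property.} Both $g\cos\phi$ and $g\sin\phi$ are solutions, so their Wronskian $g^2\phi'$ is a nonzero real constant. Hence $g=c\,(\phi')^{-1/2}$ with $c\in\R^>$. Let $E\supseteq H$ be a Hardy field containing $\phi$. Then $\Li\!\big(E(\R)\big)$ contains $\phi'$ and so also $g$; this gives the common Hardy field. Next, $A(g\ex^{\phi\imag})=0$, and its real part, after substituting $g=c(\phi')^{-1/2}$, yields $\sigma(z)=f$ for $z:=2\phi'$, that is,
\[
2zz''-3(z')^2+z^4-fz^2=0 .
\]
So $\phi'$, and therefore $\phi$ and $g$, are $\d$-algebraic over $H$.

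\emph{Uniqueness when $\Dx(H)$ is $\upo$-free.} Since $f/4$ generates oscillation, $f\notin\bar\omega(\Dx(H))$. By Corollary~\ref{cor:char upofree}, $f\in\sigma\big(\Upg(\Dx(H))\big){}^\uparrow$. The key step is to upgrade this to $f=\sigma(u)$ for some $u\in\Upg(\Dx(H))$. For that I would show that $\Dx(H)$, being $\d$-perfect, Liouville closed and $\upo$-free, is Schwarz closed; this is the point where the paper's forthcoming Corollary~\ref{cor:perfect Schwarz closed, 2} should be used. I expect this to be the main obstacle: unlike a maximal Hardy field, $\Dx(H)$ is not known to be newtonian, so Proposition~\ref{prop:14.2.16+18} does not apply directly. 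Should that route fail, I would try to push the Newton-polygon argument of Lemma~\ref{lem:ADH 14.2.18} through in $\Dx(H)$ using its $\d$-perfectness.

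Granting such a $u$, finish as follows. Given $H$-hardian $g,\phi$ as in the theorem, let $E$ be a maximal Hardy field containing $\Dx(H)$ together with $g$ and $\phi$. Since $\phi>\R$, \eqref{eq:11.8.19} gives $z=2\phi'\in\Upg(E)$. Also $u\in\Upg(E)$, and $\sigma(z)=f=\sigma(u)$. Since $\sigma$ is strictly increasing on $\Upg(E)^\uparrow$ by Lemma~\ref{lem:11.8.29}, $z=u$. Hence $\phi'=u/2\in\Dx(H)$. As $\Dx(H)\supseteq\R$ is Liouville closed, $\phi\in\Dx(H)$ and $\phi$ is determined up to an additive real constant. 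Then $g=c(\phi')^{-1/2}\in\Dx(H)$ is determined up to a positive real factor.
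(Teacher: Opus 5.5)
Your reduction to $a=0$, the existence argument via a maximal Hardy field, the Wronskian giving $g^2\phi'\in\R^>$, the common field $\Li\big(E(\R)\big)$, and $\d$-algebraicity via $\sigma(2\phi')=f$ are all correct. They match Lemma~\ref{parphi}(i) and Lemma~\ref{lem:parametrization of ker A}. One minor slip: $\Dx(H)$ lies in every \emph{maximal} Hardy field extension of $H$, not in every one; what you actually use is that $G$ is perfectly $H$-hardian.

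The uniqueness part has a genuine gap, exactly where you suspected. You need $u\in\Upg(\Dx(H))$ with $\sigma(u)=f$, i.e.\ that $\sigma\big(\Upg(\Dx(H))\big)$ is upward closed, and you take this from Corollary~\ref{cor:perfect Schwarz closed, 2}. That corollary rests on Corollary~\ref{cor:perfect Schwarz closed, 1}. The proof of upward closedness there applies the last part of Lemma~\ref{lem:g,phi unique}, which is the uniqueness statement you are proving. So the appeal is circular, and bookkeeping cannot repair it: since every maximal Hardy field contains a parametrizing pair, the existence of such $u$ is equivalent to the conclusion you are after. The real obstacle is that two parametrizations may lie in different maximal Hardy fields $M_1$, $M_2$. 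Strict monotonicity of $\sigma$ on $\Upg(M_i)$ cannot compare $2\phi_1'$ with $2\phi_2'$ across them.

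The missing idea, from Lemmas~\ref{lem:asymptotics of phi'} and~\ref{lem:g,phi unique}, is to compare the two through an approximant lying in the common field $D:=\Dx(H)$.
\begin{enumerate}
\item Take $\upg_\rho$ from a logarithmic sequence for $D$ with $f>\sigma(\upg_\rho)$ and $f-\sigma(\upg_\rho)\succ\upg_\rho^2$, and take $e\in D^>$ with $e^2=f-\sigma(\upg_\rho)$.
\item As you note, $D$ is not known to be newtonian, but you do not need that. Apply Lemma~\ref{lem:ADH 14.2.18} inside each $M_i$, which is newtonian by Corollary~\ref{cor:maxhardymainthm, 1}. This gives $u_i\sim 1$ in $M_i$ with $\sigma(eu_i)=f$.
\item Monotonicity of $\sigma$ inside $M_i$ then gives $2\phi_i'=eu_i$. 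Since $e\in D$ is common to both, $\phi_1'\sim\phi_2'$.
\item Normalize $g_i^2\phi_i'=1$, so $g_1\sim g_2$. Write $g_1\cos\phi_1=\alpha g_2\cos(\phi_2+\beta)$ and $g_1\sin\phi_1=\gamma g_2\cos(\phi_2+\delta)$. This yields $g_1^2=g_2^2F(\phi_2)$ for the $2\pi$-periodic function $F(t)=\alpha^2\cos^2(t+\beta)+\gamma^2\cos^2(t+\delta)$.
\item Hence $F(t)\to 1$, so $F\equiv 1$, giving $g_1=g_2$ and $\phi_1-\phi_2\in\R$.
\end{enumerate}
From this one gets $g,\phi\in\Dx(H)$, since they lie in every maximal extension and are $\d$-algebraic. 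Only afterwards does one obtain the upward closedness your last paragraph presupposes; with it, your closing monotonicity argument is valid but no longer needed.
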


{\sloppy
\begin{remarks}
In Section~\ref{sec:E(H) upo-free} we  characterize $\Dx(H)$ being  $\upo$-free in terms of $H$, and show that
in the last sentence of Theorem~\ref{thm:Bosh}  we cannot drop the
condition that~$\Dx(H)$ is $\upo$-free: Theorem~\ref{thm:upo-freeness of the perfect hull}  and Remark~\ref{rem:non-uniqueness}.
We already note here that if
$H$ is not $\upl$-free or~$\overline{\omega}(H)=H\setminus\sigma\big(\Upg(H)\big){}^\uparrow$,
then $\Dx(H)$ is $\upo$-free, by Lemma~\ref{lem:7.11}. 
Recall  that $\upl$-freeness
includes having asymptotic integration.
If $H$ is $\upo$-free, then~$\overline{\omega}(H)={H\setminus\sigma\big(\Upg(H)\big){}^\uparrow}$ by Lemma~\ref{omuplosc},
hence $\Dx(H)$ is $\upo$-free;
likewise,  if~$H$ is
$\upl$-free and~$\bar{\omega}(H)\not\subseteq\omega\big(\Upl(H)\big){}^\downarrow$, then $\Dx(H)$ is $\upo$-free by Lemma~\ref{lem:6.19}.
\end{remarks}}

\noindent
 Let $V$ be an $\R$-linear subspace  of $\c$.  A pair  $(g,\phi)$ is said to {\bf parametrize $V$} if  
\begin{align*} 
 g\in \c^\times,\ g>_{\ev}0, \quad \phi\in \c,\ \phi>_{\ev}  \R,\qquad &
V\ =\  \big\{ cg\cos(\phi+d): c,d\in\R \big\};\\ 
\intertext{equivalently,  by Corol\-lary~\ref{cor:sinusoids}:}
g\in \c^\times,\ g>_{\ev}0, \quad \phi\in \c,\ \phi>_{\ev}  \R,\qquad &
V\ =\   \R g\cos\phi+\R g\sin\phi.
\end{align*}
If $(g,\phi)$ parametrizes~$V$, then so does $(cg,\phi+d)$ for any~$c\in\R^>$,~$d\in\R$.

\begin{example} 
 For $f\in \R^>$    the pair~$(1,\frac{\sqrt{f}}{2} x)$ para\-me\-tri\-zes~$\ker_{\Calinf} (4\der^2+f)$, as is clear from
Example~\ref{ex:harmonic osc}.
\end{example}

%\medskip
%\noindent
%(the commented out material that follows has been checked but is no longer needed)
%For later use we record a consequence of Theorem~\ref{thm:lindiff d-max}.
%(Recall from Section~\ref{sec:splitting}  that $\Sigma(A)\subseteq K/K^\dagger$ denotes the set of eigenvalues of  $A\in %K[\der]^{\neq}$.)
 
% \begin{cor}\label{paralpha} If $H$ is maximal and $(g,\phi)\in H^2$ parametrizes $\ker_{\Calinf} (\der^2+a\der +b)$,
% $then $\Sigma(\der^2+a\der + b)=\{-\alpha,\alpha\}$ for $\alpha:= \phi'\imag+K^\dagger$, and $\phi'\imag\notin K^\dagger$.  
 %\end{cor}
 
\noindent
For   use in Section~\ref{sec:zeros} we record the next lemma, where $V$ is an $\R$-linear subspace of~$\c^1$ and~$V':=\{y':y\in V\}$ (an $\R$-linear subspace of $\c$).

\begin{lemma}\label{lem:param V'}
Suppose $H\supseteq\R$ is real closed and closed under integration, and the pair~$(g,\phi)\in H\times H$ parametrizes $V$. Set 
$$q:=\sqrt{(g')^2+(g\phi')^2},\qquad u:=\arccos(g'/q).$$ Then $q, u\in H$ and $(q, \phi+u )\in H\times H$ parametrizes $V'$.
\end{lemma}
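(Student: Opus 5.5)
The plan is to differentiate the parametrizing family directly and then rewrite $y'$ using the addition formula from Corollary~\ref{arccosH}. Since $(g,\phi)$ parametrizes $V$, we have $V=\R g\cos\phi+\R g\sin\phi$. The derivative map is $\R$-linear, so
$$V'\ =\ \R\,(g\cos\phi)'+\R\,(g\sin\phi)'.$$
The next step is to compute the two derivatives:
$$(g\cos\phi)'\ =\ g'\cos\phi-g\phi'\sin\phi,\qquad (g\sin\phi)'\ =\ g'\sin\phi+g\phi'\cos\phi\ =\ g'\cos(\phi-\tfrac{\pi}{2})-g\phi'\sin(\phi-\tfrac{\pi}{2}).$$
In each case the result has the form $g'\cos\psi-g\phi'\sin\psi$, with $\psi=\phi$ or $\psi=\phi-\pi/2$.

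Next I check the hypotheses needed for Corollary~\ref{arccosH}. Because $g>0$ and $\phi>\R$ lie in $H$, we have $\phi'>0$, hence $g\phi'>0$. Since $H$ is real closed, $q=\sqrt{(g')^2+(g\phi')^2}$ lies in $H$ and $q>0$. Apply Corollary~\ref{arccosH} with $g'$ and $h:=-g\phi'<0$ in place of $g,h$; the case $h<0$ gives $u=\arccos(g'/q)\in H$. The identity there holds for all $\phi\in\c$, so
$$g'\cos\psi-g\phi'\sin\psi\ =\ q\cos(\psi+u)\qquad\text{for every $\psi\in\c$.}$$
Therefore $(g\cos\phi)'=q\cos(\phi+u)$ and $(g\sin\phi)'=q\cos(\phi+u-\pi/2)=q\sin(\phi+u)$. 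This yields $V'=\R q\cos(\phi+u)+\R q\sin(\phi+u)$.

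It remains to verify the side conditions in the definition of parametrizing: $q\in\c^\times$, $q>_{\ev}0$, and $\phi+u>_{\ev}\R$. The first two are clear. For the third, note that $0<u<\pi$ is bounded while $\phi>\R$, so $\phi+u>\R$. By Corollary~\ref{cor:sinusoids} this shows that $(q,\phi+u)$ parametrizes $V'$.

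The only delicate point is the membership $u\in H$, that is, that $\arccos$ of an element of $H$ strictly between $-1$ and $1$ again lies in $H$. Corollary~\ref{arccosH} already provides this, using that $H$ is closed under integration. Applying it requires $h=-g\phi'\neq 0$, which holds since $g\phi'>0$.
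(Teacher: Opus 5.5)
Your proposal is correct and is essentially the paper's argument: apply Corollary~\ref{arccosH} with $g'$, $-g\phi'$ in place of $g$, $h$ to rewrite derivatives of elements of $V$ as $q\cos(\,\cdot\,+u)$. The only difference is cosmetic. The paper differentiates a general element $cg\cos(\phi+d)$, whereas you differentiate the two spanning elements, handling $g\sin\phi$ via the shift $\phi-\pi/2$. You also make explicit the side conditions $q>0$, $g\phi'\neq 0$ and $\phi+u>\R$, which the paper leaves implicit.
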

\begin{proof}
Note that $u$  is as in Corollary~\ref{arccosH} with $g'$, $-g\phi'$ in place of~$g$,~$h$.
Let~$y\in V$, so $y=cg\cos(\phi+d)$ where $c,d\in\R$. Then
$$y'\ =\ cg'\cos(\phi+d)-cg\phi'\sin(\phi+d)\ =\ cq\cos(\phi+u+d).$$
Conversely, for $c,d\in \R$ we have $cq\cos(\phi+u+d)=y'$ for $y=cg\cos(\phi+d)\in V$.
\end{proof}

\noindent
In the next lemma $V:=\ker_{\Calinf} (\der^2+a\der+b)$, and $g\in \c^{\times}$, $g>_{\ev} 0$, and~$\phi \in \c$, $\phi>_{\ev} \R$. Then~$(g,\phi)$
parametrizes $V$  iff $g\ex^{\phi\imag}\in\ker_{\Calinf[\imag]}(\der^2+a\der+b)$. Moreover:

\begin{lemma}\label{parphi} Set $f:= -2a'-a^2+4b$. Let $h$ be an $H$-hardian germ such that~$h>0$ and 
$h^\dagger=-\frac{1}{2}a$. 
%Let $g\in \c^\times$, $g>0$ and $\phi\in \c$, $\phi>\R$. 
Then: 
\begin{enumerate}
\item[\rm(i)] $(g,\phi)$ parametrizes $\ker_{\Calinf}(4\der^2+f)$ iff $(gh,\phi)$ parametrizes $V$.
\end{enumerate}
Assume also that $\phi$ is hardian \textup{(}so $\phi' $ is hardian with  $\phi'>0$\textup{)}. Then: \begin{enumerate}
\item[\rm(ii)]  $(1/\sqrt{\phi'}, \phi)$ parametrizes $\ker_{\Calinf}\!\big(4\der^2+\sigma(2\phi')\big)$.
\end{enumerate}
\end{lemma}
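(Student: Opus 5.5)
Both parts come down to direct computation with the complex solution $g\ex^{\phi\imag}$. We use the criterion stated just before the lemma: for $g>_{\ev}0$ and $\phi>_{\ev}\R$, the pair $(g,\phi)$ parametrizes $\ker_{\Calinf}(B)$ iff $g\ex^{\phi\imag}$ lies in the complex kernel of $B$. (That criterion is stated for $\der^2+a\der+b$, but its proof, via Corollary~\ref{cor:sinusoids} and linear independence of $g\cos\phi$, $g\sin\phi$, applies verbatim to $4\der^2+f$.)

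For (i), we use the twisting remark from Section~\ref{sec:diffalg}. With $A:=\der^2+a\der+b$ and $h^\dagger=-\tfrac12 a$, we have $4A_{\ltimes h}=4\der^2+f$. This identity is a formal computation in $\Calinf[\der]$ and holds there since $h\in\Calinf$ is a unit. Hence $y\mapsto hy$ maps $\ker(4\der^2+f)$ bijectively onto $\ker A$, both over $\Calinf$ and over $\Calinf[\imag]$, because $A(hy)=h\,A_{\ltimes h}(y)$. Since $h>0$, the condition $gh>_{\ev}0$ is equivalent to $g>_{\ev}0$. Therefore $g\ex^{\phi\imag}\in\ker(4\der^2+f)$ iff $gh\ex^{\phi\imag}\in\ker A$, and the criterion gives (i). Alternatively, one can skip the criterion and note directly that the image of $\R g\cos\phi+\R g\sin\phi$ under multiplication by $h$ is $\R gh\cos\phi+\R gh\sin\phi$.

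For (ii), put $z:=\phi'>0$, which is hardian, and $y:=z^{-1/2}\ex^{\phi\imag}$. Then
\[
2y^\dagger \;=\; -z^\dagger+2z\imag \;=\; -(2z)^\dagger+(2z)\imag ,
\]
using $(2z)^\dagger=z^\dagger$. By the identity $\sigma(w)=\omega(-w^\dagger+w\imag)$ from Section~\ref{sec:diffalg} with $w:=2z$, this gives $\omega(2y^\dagger)=\sigma(2\phi')$. The formula $A(y)=y\big(f-\omega(2y^\dagger)\big)$ for $A=4\der^2+f$ holds in any differential ring where $y$ is a unit, in particular in $\Calinf[\imag]$. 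Taking $f=\sigma(2\phi')$, it yields $(4\der^2+\sigma(2\phi'))(y)=0$. The conjugate $\bar y$ is then also a solution, since the operator has real coefficients. Because $1/\sqrt{\phi'}>0$ and $\phi>\R$, the criterion (or directly: $\Re y$ and $\Im y$ span the real kernel, being independent and lying in a $2$-dimensional space) shows that $(1/\sqrt{\phi'},\phi)$ parametrizes the kernel.

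The only care point is that all identities are used in $\Calinf[\imag]$ rather than in a differential field. They are ring identities valid for units, so nothing further is needed. The requirement $\phi>\R$ is what forces $\phi\notin\R$, which in turn makes $y$ and $\bar y$ $\C$-linearly independent.
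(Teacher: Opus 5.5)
Your proof is correct and follows the paper's route. For (i), the paper cites the substitution $y\mapsto Gy$ made before Corollary~\ref{cor:char osc}; this is your twist by $h$, since $h$ and $G$ differ by a positive constant factor. For (ii), the paper does exactly your computation: $\sigma(2\phi')=\omega\big({-(2\phi')^\dagger}+2\phi'\imag\big)=\omega(2y^\dagger)$ for $y=(1/\sqrt{\phi'})\ex^{\phi\imag}$, which makes $y$ a zero of $4\der^2+\sigma(2\phi')$.
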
 
\begin{proof}  The arguments leading up to Corollary~\ref{cor:char osc}   yield (i). 
As to (ii), by the definition of $\sigma$ (see Section~\ref{sec:diffalg}):
$$ \sigma(2\phi')\, =\, \omega\big({-(2\phi')^\dagger} + 2\phi'\imag\big)\, =\,   \omega\big({-\phi'^\dagger}+2\phi'\imag\big)\, =\, 
\omega(2y^\dagger)\quad\text{ where $y:= (1/\sqrt{\phi'})\ex^{\phi\imag}$.}$$
Hence $A(y)=0$ for $A=4\der^2+\sigma(2\phi')$; thus $\big(1/\sqrt{\phi'}, \phi\big)$ parametrizes $\ker_{\Calinf} A$.
\end{proof} 

\noindent 
Item (i) in Lemma~\ref{parphi} reduces the proof of Theorem~\ref{thm:Bosh} to the case $a=0$, and~(ii) is about
that case.

\medskip
\noindent
Let  $A=4\der^2+f\in H[\der]$  where~$f\notin\bar{\omega}(H)$, and set $V:=\ker_{\Calinf} A$. If $H$ is $\upo$-free, then~$f\in \sigma\big(\Upg(H)\big){}^\uparrow$, by Lemma~\ref{omuplosc}. 
Theorem~\ref{thm:Bosh} now follows from Lemmas~\ref{lem:parametrization of ker A}, \ref{lem:asymptotics of phi'}, and \ref{lem:g,phi unique} below, which give more information.

\begin{lemma}\label{lem:parametrization of ker A}
There is a pair of  $H$-hardian germs para\-me\-tri\-zing $V$. For any such pair $(g,\phi)$ we have~$\sigma(2\phi')=f$ and $g^2\phi'\in \R^>$,  so~$g$,~$\phi$ are $\d$-algebraic over $\Q\langle f\rangle$ and lie in a common Hardy field extension of $H$.
If $f\in\Ginf$, then each pair of  $H$-hardian germs  para\-me\-tri\-zing $V$ is in $(\Ginf)^2$;
likewise with $\Gom$ in place of $\Ginf$.
\end{lemma}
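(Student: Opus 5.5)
For existence I would pass to a maximal Hardy field $M\supseteq H$. By Corollary~\ref{cor:maxhardymainthm, 1}, $M\supseteq\R$ is Liouville closed, $\upo$-free, newtonian and Schwarz closed, so by Corollary~\ref{cor:Schwarz closed}(iv) the operator $A=4\der^2+f$ splits over $M[\imag]$. Since $f\notin\bar\omega(H)=\bar\omega(M)\cap H$, we get $f\notin\bar\omega(M)\supseteq\omega(M)$, so by \eqref{eq:omega} $A$ does not split over $M$. Proposition~\ref{prop:2nd order kernel} then gives $g,\phi>0$ in $M$ with $\ker_{\Calinf}A=\R g\cos\phi+\R g\sin\phi$. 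Because $V$ consists of oscillating germs, $\phi\not\preceq 1$ by the Remark after that proposition; as $\phi>0$ we get $\phi>\R$. By Corollary~\ref{cor:sinusoids}, $(g,\phi)$ parametrizes $V$, and both germs lie in $M$, hence are $H$-hardian.

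For the properties of an arbitrary pair, let $(g,\phi)$ be any pair of $H$-hardian germs parametrizing $V$. Since $\phi$ is hardian with $\phi>\R$, we have $\phi'>0$ hardian, and Lemma~\ref{parphi}(ii) shows that $(1/\sqrt{\phi'},\phi)$ parametrizes $W:=\ker_{\Calinf}(4\der^2+\sigma(2\phi'))$. Both $g\cos\phi$ and $g\sin\phi$ lie in $V$. I would compute the Wronskian of $g\cos\phi,g\sin\phi$, which is $g^2\phi'$; it must be constant because the equation has no first-order term. Since it is nonzero and $g^2\phi'>0$, this gives $g^2\phi'\in\R^>$. Hence $g=c/\sqrt{\phi'}$ with $c>0$, so $V=W$. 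Both $4\der^2+f$ and $4\der^2+\sigma(2\phi')$ then annihilate the nonzero germ $y=g\cos\phi$, and subtracting yields $(f-\sigma(2\phi'))y=0$. As $y$ vanishes only on a discrete set and both coefficients are continuous, $\sigma(2\phi')=f$.

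Thus $z=2\phi'$ satisfies $2zz''-3(z')^2+z^4-fz^2=0$, a nonzero differential polynomial over $\Q\langle f\rangle$, so $\phi'$ is $\d$-algebraic, hence so is $\phi$. Also $g=c(\phi')^{-1/2}$ is algebraic over $\Q\langle\phi'\rangle$. For a common Hardy field: $\phi$ lies in some Hardy field $E\supseteq H$, and $g$, being algebraic over $E$ and positive, lies in the real closure of $E$ (the real closure $E^{\operatorname{rc}}$ in $\Dx(E)$). So $g,\phi\in E^{\operatorname{rc}}$.

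Finally, if $f\in\Ginf$, then $\phi'$ satisfies an algebraic ODE $\sigma(2\phi')=f$ over the $\Ginf$-Hardy field $\R\langle f\rangle$ (a smooth Hardy field). By \cite[Corollary~7.8]{ADH2}, $\d$-algebraic Hardy field extensions of smooth Hardy fields are smooth, so $\phi',\phi,g\in\Ginf$; the same argument applies with $\Gom$. The main subtlety I expect is the oscillation step that forces $\phi\succ1$, and making sure $\bar\omega$ behaves correctly under passing to $M$; the rest is routine.
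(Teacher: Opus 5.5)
Your proof is correct. The existence part is essentially the paper's: pass to a maximal Hardy field and apply Proposition~\ref{prop:2nd order kernel}. Two details differ, and both are fine. You get splitting over $M[\imag]$ from Schwarz closedness (Corollary~\ref{cor:Schwarz closed}(iv)) rather than from Corollary~\ref{cor:maxhardymainthm, 2}. You get $\phi\succ 1$ from oscillation via the remark after Proposition~\ref{prop:2nd order kernel}, rather than from $\I(K)\subseteq K^\dagger$.

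The real divergence is in the second half. The paper sets $y:=g\ex^{\phi\imag}$, uses $\omega(2y^\dagger)=f$ with $2y^\dagger=2g^\dagger+2\phi'\imag$, and reads off two things in a single computation. The imaginary part gives $2g^\dagger=-(\phi')^\dagger$, that is, $g^2\phi'\in\R^>$. The real part then gives $\sigma(2\phi')=f$.

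You instead use constancy of the Wronskian $g^2\phi'$ of $g\cos\phi$ and $g\sin\phi$. This is classical and avoids complex exponentials. You then reach $\sigma(2\phi')=f$ indirectly, via Lemma~\ref{parphi}(ii), the equality $V=W$, and a continuity argument at the isolated zeros of $g\cos\phi$. This is valid but longer.

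For regularity, the paper's argument is elementary. If $f\in\Ginf$ then $V\subseteq\Ginf$, so $g^2=\abs{y}^2\in\Ginf$ and $\phi'=c/g^2\in\Ginf$; likewise for $\Gom$. You invoke the deep fact that $\d$-algebraic Hardy field extensions of smooth (analytic) Hardy fields are smooth (analytic). That works, and you correctly apply it over $\R\langle f\rangle$ rather than over $H$, but it is far heavier than needed.

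One small slip: $g=c(\phi')^{-1/2}$ with $c\in\R^>$ arbitrary, so $g$ need not be algebraic over $E$ or over $\Q\langle\phi'\rangle$. For example, take $H=\Q(x)$, $f=4$, and $(g,\phi)=(\pi,x)$. The fix is to work over the Hardy field $E(\R)$ and use that real constants are $\d$-algebraic. With that change your common-Hardy-field and $\d$-algebraicity conclusions go through unchanged.
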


\begin{proof}
For the first claim we arrange that $H$ is maximal.
Then~$A$ splits over $K$ by Corollary~\ref{cor:maxhardymainthm, 2}.  Since $f/4$ generates oscillation,
$A$ does not split over $H$. Moreover,
$\I(K)\subseteq K^\dagger$ by Corollary~\ref{cor:cos sin infinitesimal}. By Proposition~\ref{prop:2nd order kernel} this proves the first claim.  Next, let~$(g,\phi)$ be a pair of  $H$-hardian germs parametrizing~$V$. 
Set~$y:=g\ex^{\phi\imag}\in\Calinf[\imag]^\times$; then~$A(y)=0$ 
and thus~$\omega(2y^\dagger)=f$ where~$y^\dagger=g^\dagger+\phi'\imag\in \Calinf[\imag]$. 
For $p,q\in \c^1$ we have~$\omega({p+q\imag})=\omega(p)+q^2-2(pq+q')\imag$, so $$\omega(p+q\imag)\in \c\ 
\Leftrightarrow\ pq+q'=0.$$ Therefore~$2g^\dagger=-(2\phi')^\dagger=-(\phi')^\dagger$ and so $g^2\phi'\in \R^>$,
and~$\sigma(2\phi')=f$. %~[ADH, p.~262]. 
If $f\in\Ginf$, then $y\in\Ginf[\imag]$, so $g^2=\abs{y}^2\in\Ginf$ and hence also $\phi\in\Ginf$ since~$g^2\phi'\in \R^>$;
likewise with $\Gom$ in place of $\Ginf$.
 \end{proof}

\begin{lemma}\label{lem:asymptotics of phi'}
Suppose that $H\supseteq\R$ is real closed with asymptotic integration,  
and that~$f\in\sigma\big(\Upg(H)\big){}^\uparrow$.
Then there is an active $e>0$ in $H$ such that $\phi'\sim e$ for all pairs~$(g,\phi)$ of $H$-hardian germs  parametrizing $V$.
\end{lemma}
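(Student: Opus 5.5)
Since $f\in\sigma\big(\Upg(H)\big){}^\uparrow$, I will first produce the candidate $e$ purely inside $H$, and then show that every hardian phase derivative is asymptotic to it. Take $\upg\in\Upg(H)$ with $\sigma(\upg)\le f$; then $\upg$ is active in $H$ (as $\upg=h^\dagger$ with $h\succ 1$). Two cases arise. If $f-\sigma(\upg)\succ\upg^2$ for some such $\upg$, set $b:=\sqrt{f-\sigma(\upg)}\in H^>$ (using that $H$ is real closed), and let $e:=b/2$. Otherwise $f-\sigma(\upg)\preceq \upg^2$ for all $\upg\in\Upg(H)$ with $\sigma(\upg)\le f$. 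In that case I would argue that, after possibly shrinking $\upg$ within $\Upg(H)$ (which is upward closed in the Liouville closed case, and in general contains arbitrarily small elements by asymptotic integration), we get $f\sim\sigma(\upg)$ in the sense that $f-\omega(-\upg^\dagger)\sim c\upg^2$ for a suitable $c\in\R^{\ge 1}$, and I take $e:=\sqrt{c}\,\upg/2$. In both cases $e$ is active, because $e\succeq\upg$.

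Next, let $(g,\phi)$ be any pair of $H$-hardian germs parametrizing $V$. By Lemma~\ref{lem:parametrization of ker A}, $z:=2\phi'$ satisfies $\sigma(z)=f$, and $g$, $\phi$ lie in a common Hardy field extension. I extend to a maximal Hardy field $M\supseteq H\langle\phi\rangle$. By Corollary~\ref{cor:maxhardymainthm, 1}, $M$ is Liouville closed, $\upo$-free, newtonian, and has asymptotic integration. Applying Lemma~\ref{lem:ADH 14.2.18} in $M$ with $b$ (first case) produces some $y\in M$ with $\sigma(y)=f$ and $y\sim b$. To conclude $z\sim b$, I use the strict monotonicity of $\sigma$ on $\Upg(M)^\uparrow$ from Lemma~\ref{lem:11.8.29}. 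Together with $\sigma(z)=\sigma(y)=f\notin\omega(M)^\downarrow$, this should force $z$ and $y$ into $\Upg(M)^\uparrow$, since $\sigma(M^>\setminus\Upg(M))\subseteq\omega(M)^\downarrow$, and hence $z=y$. So in fact $\sigma$ is injective on the relevant set, which gives $\phi'=y/2\sim e$.

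The main obstacle is the second case, where $f-\sigma(\upg)\preceq\upg^2$ for all admissible $\upg$. Here Lemma~\ref{lem:ADH 14.2.18} does not apply directly, and one must extract $c$ and compare $z$ with $\upg$. My first attempt will be to work in $M$: from $\sigma(z)=f$ and the monotonicity of $\sigma$, locate $z$ relative to elements $\upg\in\Upg(H)$. Then I will use \eqref{eq:11.7.6} to get $\sigma(z)-\sigma(\upg)=z^2-\upg^2+o(\max(z,\upg)^2)$, which yields $z\asymp\upg$ and $z^2\sim c\upg^2$ with $c$ determined by $f$ and $\upg$ in $H$. Hence $z\sim\sqrt{c}\,\upg$, independently of the choice of pair, because $z$ is unique by the injectivity argument above.
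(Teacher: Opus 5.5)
Your proof is correct, and your first case is exactly the paper's proof. The paper chooses $\upg=\upg_\rho$ from a logarithmic sequence with $e^2=f-\sigma(\upg_\rho)\succ\upg_\rho^2$. It applies Lemma~\ref{lem:ADH 14.2.18} in a maximal Hardy field $E\supseteq H\langle\phi\rangle$ to get $u\sim 1$ with $\sigma(eu)=f$. It then concludes $2\phi'=eu$ from strict monotonicity of $\sigma$ on $\Upg(E)$. (It places $eu$ and $2\phi'$ in $\Upg(E)$ via upward closedness and \eqref{eq:11.8.19}, rather than via $\sigma\big(E^>\setminus\Upg(E)\big)\subseteq\omega(E)^\downarrow$; this difference is immaterial.)

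What you missed is that your second case, the ``main obstacle'', never occurs, which is why the paper has only one case. Suppose $\upg\in\Upg(H)$ and $\sigma(\upg)\le f$, and take $\upg_1\in\Upg(H)$ with $\upg_1\prec\upg$ (possible by asymptotic integration). By \eqref{eq:11.7.6}, $\sigma(\upg)-\sigma(\upg_1)=\upg^2-\upg_1^2+o(\upg^2)\sim\upg^2$. Since $f-\sigma(\upg)\ge 0$, this gives $\sigma(\upg_1)<f$ and $f-\sigma(\upg_1)\succeq\upg^2\succ\upg_1^2$. This is precisely the paper's step ``increase $\rho$ so that $f-\sigma(\upg_\rho)\succ\upg_\rho^2$''.

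Your sketch for the second case is therefore only vacuously needed, though it is not wrong. To finish it you would get $z\asymp\upg$ from $z\ge\upg$ (by monotonicity) together with the case hypothesis. When $z\asymp\upg$ you would also need an auxiliary active element below $\upg$ to apply \eqref{eq:11.7.6}, since that estimate requires $\phi\succ\theta$.

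Independence of $e$ from the pair needs no uniqueness of $z$. Your $e$ is fixed in $H$ before any pair is chosen, and injectivity of $\sigma$ is only available inside a single Hardy field anyway.

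Finally, the direct comparison you planned for the second case also settles the first one. Take $z=2\phi'$, which is active in any Liouville closed Hardy field containing $H\langle\phi\rangle$. The estimate $f-\sigma(\upg)=z^2-\upg^2+o\big(\max(z,\upg)^2\big)$ together with $f-\sigma(\upg)\succ\upg^2$ forces $z\succ\upg$, and then $z^2\sim f-\sigma(\upg)$. So, once a pair is given, this lemma does not really need Lemma~\ref{lem:ADH 14.2.18} or newtonianity. The paper instead manufactures a second solution $eu$ of $\sigma(Y)=f$ near $e$ and identifies it with $2\phi'$.
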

\begin{proof} Take a  logarithmic sequence $(\ell_\rho)$ for $H$ and set $\upg_\rho:=\ell_\rho^\dagger$; see [ADH, 11.5]
about this. Then $(\upg_{\rho})$ is strictly increasing and coinitial in $\Upg(H)$ [ADH, p. 528]. 
Take $\rho$ so that $f>\sigma(\upg_\rho)$.  As in the proof of [ADH, 14.2.18]   increase $\rho$ so that~${f-\sigma(\upg_\rho)\succ\upg_\rho^2}$ and take  $e\in H^>$ with $e^2=f-\sigma(\upg_\rho)$. 
Then~$e\succ\upg_\rho$ and so~${e\in \Upg(H)^\uparrow}$.
Let~$(g,\phi)$ be a pair of elements in a Hardy field    $E\supseteq H$ parametrizing~$V$.
We claim that  $\phi'\sim e/2$ (so $e/2$ in place of $e$ has the property desired in the lemma). Arrange that $E$ is  maximal. Then~$E$ is Liouville closed and~$\phi>\R$, so~$e,2\phi'\in\Upg(E)$ by \eqref{eq:11.8.19}.
Moreover,~$E$ is newtonian by  Corollary~\ref{cor:maxhardymainthm, 1}, so  Lemma~\ref{lem:ADH 14.2.18} yields~$u \sim 1$ in~$E$ with~$\sigma(eu)=f$. The map~$y\mapsto \sigma(y)\colon\Upg(E)\to E$ is strictly increasing by
Lemma~\ref{lem:11.8.29}, so~$2\phi'=eu$ by Lemma~\ref{lem:parametrization of ker A},    thus~$\phi'\sim e/2$.
\end{proof}

\begin{lemma}\label{lem:g,phi unique}
Suppose $\Dx(H)$ is $\upo$-free or $f\in\sigma\big(\Upg(H)\big){}^\uparrow$.
Let $H_i$ be a Hardy field extension of $H$ with  $(g_i,\phi_i)\in H_i\times H_i$ parametrizing $V$, for $i=1,2$. 
Then
$${g_1/g_2}\in\R^>, \qquad {\phi_1-\phi_2}\in\R.$$
Thus $g,\phi\in \Dx(H)$ for any pair $(g,\phi)$ of  $H$-hardian germs parametrizing $V$.  
\end{lemma}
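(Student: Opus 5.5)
The plan is to prove the first case, $f\in\sigma\big(\Upg(H)\big){}^\uparrow$, by an asymptotic argument, and then reduce the case where $\Dx(H)$ is $\upo$-free to it.

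\emph{Case $f\in\sigma\big(\Upg(H)\big){}^\uparrow$.} First pass from $H$ to its real closure in $\Dx(H)$ containing $\R$; this is harmless, since each $H_i$ can be enlarged to a maximal Hardy field, which contains $\Dx(H)$. After this reduction, $H\supseteq\R$ is real closed, and we need asymptotic integration there. If that is not automatic, I would pass to the Liouville closure, noting that $\sigma\big(\Upg(H)\big){}^\uparrow$ only grows under extension. Lemma~\ref{lem:asymptotics of phi'} then gives an active $e>0$ in $H$ with $\phi_1'\sim e\sim\phi_2'$ (absorbing the factor $1/2$ into $e$). Lemma~\ref{lem:parametrization of ker A} gives $c_i:=g_i^2\phi_i'\in\R^>$. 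Hence
$$g_1^2/g_2^2 \;=\; (c_1/c_2)\,\phi_2'/\phi_1' \;\longrightarrow\; c_1/c_2 \quad\text{as } t\to+\infty.$$
Note that this quotient need not be hardian, since $g_1$ and $g_2$ live in possibly different Hardy fields.

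On the other hand, $g_1\cos\phi_1$ and $g_1\sin\phi_1$ lie in $V=\R g_2\cos\phi_2+\R g_2\sin\phi_2$. Summing their squares, $g_1^2$ is a quadratic form in $g_2\cos\phi_2,\ g_2\sin\phi_2$, so
$$g_1^2/g_2^2 \;=\; p+q\cos(2\phi_2+d)\qquad\text{for some } p,q,d\in\R.$$
Since $\phi_2>\R$ and $\phi_2$ is continuous with $\phi_2\to+\infty$, the term $\cos(2\phi_2+d)$ takes the values $\pm1$ at arbitrarily large $t$. Convergence of the left side therefore forces $q=0$. Thus $g_1/g_2\in\R^>$.

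It follows that $\phi_1'/\phi_2'=(c_1/c_2)(g_2/g_1)^2$ is a positive constant, and this constant must be $1$ because $\phi_1'\sim\phi_2'$. Hence $\phi_1-\phi_2\in\R$.

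\emph{Case $\Dx(H)$ is $\upo$-free.} Enlarge each $H_i$ to a maximal Hardy field $M_i$. Then $M_i\supseteq\Ex(H)\supseteq\Dx(H)$, so both $M_i$ are Hardy field extensions of $\Dx(H)$. Now $\Dx(H)\supseteq\R$ is Liouville closed, hence real closed with asymptotic integration, and it is $\upo$-free. Since $f/4$ generates oscillation, $f\notin\bar\omega(\Dx(H))$, so Lemma~\ref{omuplosc} gives $f\in\sigma\big(\Upg(\Dx(H))\big){}^\uparrow$. We may therefore apply the previous case with $\Dx(H)$ in place of $H$.

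\emph{Last claim.} Let $(g,\phi)$ be any pair of $H$-hardian germs parametrizing $V$, and let $M$ be any maximal Hardy field extension of $H$. Lemma~\ref{lem:parametrization of ker A}, applied over $M$, gives a pair $(g_2,\phi_2)\in M^2$ parametrizing $V$. By uniqueness, $g\in\R^>g_2\subseteq M$ and $\phi\in\phi_2+\R\subseteq M$. As $M$ was arbitrary, $g,\phi\in\Ex(H)$. They are $\d$-algebraic over $H$ by Lemma~\ref{lem:parametrization of ker A}, so $g,\phi\in\Dx(H)$.

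The main obstacle is the quadratic-form step. There one must use genuine oscillation of $\cos(2\phi_2+d)$ rather than Hardy field arithmetic, because $g_1$ and $g_2$ may not lie in a common Hardy field.
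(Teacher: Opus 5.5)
Your proof is correct and follows essentially the same route as the paper. You reduce to a Liouville closed extension inside $\Dx(H)$ with $f\in\sigma\big(\Upg(H)\big){}^\uparrow$, get $\phi_1'\sim\phi_2'$ from Lemma~\ref{lem:asymptotics of phi'} and $g_i^2\phi_i'\in\R^>$ from Lemma~\ref{lem:parametrization of ker A}, and then eliminate the oscillating part of $g_1^2/g_2^2$ using that $\phi_2\to+\infty$ continuously. The only cosmetic difference is that the paper normalizes $c_i=1$ and argues with a $2\pi$-periodic function $F$ tending to $1$, whereas you write that function explicitly as $p+q\cos(2\phi_2+d)$.
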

\begin{proof}
We arrange that $H_1$, $H_2$ are maximal and thus contain $\Dx(H)$. Replacing~$H$ by $\Dx(H)$  we further arrange that $H$ is $\d$-perfect and  $f\in \sigma\big(\Upg(H)\big){}^\uparrow$.
Then $\phi_1'\sim\phi_2'$ by Lemma~\ref{lem:asymptotics of phi'},
and for $i=1,2$ we have $c_i\in\R^>$ with~$\phi_i' = c_i/g_i^2$, by Lemma~\ref{lem:parametrization of ker A}. 
Replacing~$g_i$ by $g_i/\sqrt{c_i}$ we arrange $c_i=1$ ($i=1,2$), so~$g_1\sim g_2$.
Consider now the elements $g_1\cos\phi_1$, $g_1\sin\phi_1$ of~$V$;
take~$a,b,c,d\in\R$ such that
$$g_1\cos \phi_1\  =\  ag_2\cos(\phi_2+b),\qquad g_1\sin \phi_1\  =\  cg_2\cos(\phi_2+d).$$
Then  
\begin{equation}\label{eq:g1g2}
g_1^2\ =\ g_1^2(\cos^2 \phi_1 + \sin^2 \phi_1)\  =\  g_2^2\big( a^2\cos^2(\phi_2+b) + c^2\cos^2(\phi_2+d) \big),
\end{equation}
and hence
$$ a^2\cos^2(\phi_2+b) + c^2\cos^2(\phi_2+d) \ \sim\  1.$$
Thus the $2\pi$-periodic function
$$t\mapsto F(t)\ :=\ a^2\cos^2(t+b)+c^2\cos^2(t+d)\ \colon\ \R\to\R$$ 
satisfies $F(t)\to 1$ as $t\to+\infty$, hence $F(t)=1$ for all $t$, so $g_1=g_2$ by \eqref{eq:g1g2}.
It follows that~$\phi_1'=\phi_2'$, so~$\phi_1-\phi_2\in\R$.

For the final claim, let $(g,\phi)$ be a pair of $H$-hardian germs parametrizing $V$. Let~$M$ be any maximal extension of $H$.   Lemma~\ref{lem:parametrization of ker A} gives a pair~${(g_M,\phi_M)\in M^2}$ that also parametrizes $V$. By the above, $g/g_M\in \R^{>}$ and $\phi-\phi_M\in \R$, therefore~${g, \phi\in M}$. Since $M$ is arbitrary and $g$, $\phi$ are $\d$-algebraic over $H$, this gives $g,\phi\in \Dx(H)$. 
\end{proof}

\noindent
This finishes the proof of Theorem~\ref{thm:Bosh}. With
Lemma~\ref{lem:parametrization of ker A} it yields Theorem~A from the introduction.
Theorem~B follows from the remarks after Theorem~\ref{thm:Bosh}:
condition (i) in Theorem~B is equivalent to 
$\overline{\omega}(H)=H\setminus\sigma\big(\Upg(H)\big){}^\uparrow$
and (ii) to $H$ not being $\upl$-free (by Lemma~\ref{lem:upl-free} and preceding remark).

\medskip
\noindent
We note a consequence of Lemmas~\ref{lem:parametrization of ker A} and~\ref{lem:g,phi unique}:

\begin{cor}\label{cor:perfect Schwarz closed, 1}
Suppose that $H$ is $\d$-perfect. Then $\omega(H)=\bar{\omega}(H)$ is downward closed and $\sigma\big(\Upg(H)\big)$ is upward closed.
\end{cor}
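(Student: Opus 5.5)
The claim that $\omega(H)=\bar\omega(H)$ and is downward closed is Corollary~\ref{cor:omega(H) downward closed}, since $H$ is $\d$-perfect. So the only new content is that $\sigma\big(\Upg(H)\big)$ is upward closed. Since $H$ is $\d$-perfect, it contains $\R$ and is a Liouville closed $H$-field. Let $s\in\sigma\big(\Upg(H)\big)$ and let $f\in H$ with $f\ge s$, say $s=\sigma(\upg)$ with $\upg\in\Upg(H)$. We must produce $\upg_1\in\Upg(H)$ with $\sigma(\upg_1)=f$.

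First, $f\notin\bar\omega(H)$. This follows from the remark before Lemma~\ref{lem:6.18}, which gives $\bar\omega(H)<\sigma\big(\Upg(H)\big)^\uparrow$: we have $f\in\sigma\big(\Upg(H)\big)^\uparrow$, so $f\notin\bar\omega(H)$, hence $f/4$ generates oscillation. Set $A:=4\der^2+f$ and $V:=\ker_{\Calinf}A$. By Lemma~\ref{lem:parametrization of ker A} there is a pair $(g,\phi)$ of $H$-hardian germs parametrizing $V$, and any such pair satisfies $\sigma(2\phi')=f$.

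Next we apply Lemma~\ref{lem:g,phi unique}. Its hypothesis $f\in\sigma\big(\Upg(H)\big)^\uparrow$ holds, so $g,\phi\in\Dx(H)=H$. Hence $\phi\in H$ with $\phi>\R$, so $2\phi'>0$ and $2\phi=\log \ex^{2\phi}$ with $\ex^{2\phi}\succ 1$ in $H$. Since $H$ is Liouville closed, \eqref{eq:11.8.19} gives $\Upg(H)=\{h':h\in H^>,\ h\succ1\}$, and therefore $2\phi'=(2\phi)'\in\Upg(H)$. Thus $f=\sigma(2\phi')\in\sigma\big(\Upg(H)\big)$, which proves upward closedness.

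The main point to check is the applicability of Lemma~\ref{lem:g,phi unique}, in particular that the background hypotheses of that subsection (with $f\notin\bar\omega(H)$) are met. They are, by the observation above that $\bar\omega(H)$ lies strictly below $\sigma\big(\Upg(H)\big)^\uparrow$. Everything else is routine.
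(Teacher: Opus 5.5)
Your proof is correct and follows the paper's own argument: Corollary~\ref{cor:omega(H) downward closed} for the first part, then Lemmas~\ref{lem:parametrization of ker A} and~\ref{lem:g,phi unique} to get a parametrizing pair $(g,\phi)$ with $g,\phi\in\Dx(H)=H$ and $\sigma(2\phi')=f$, and finally \eqref{eq:11.8.19} to get $2\phi'\in\Upg(H)$. Your explicit check that $f\notin\bar\omega(H)$, which makes the standing hypothesis of that subsection hold, is left implicit in the paper. The detour through $\ex^{2\phi}$ is unnecessary, since \eqref{eq:11.8.19} applies directly to $h=2\phi$.
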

\begin{proof}
By Corollary~\ref{cor:omega(H) downward closed}, $\omega(H)=\bar{\omega}(H)$ is downward closed. 
Let $f\in\sigma\big(\Upg(H)\big){}^\uparrow$. The last part of Lemma~\ref{lem:g,phi unique} gives $g,\phi\in H$ such that $(g,\phi)$ parametrizes $V$. Then~$\sigma(2\phi')=f$
by Lemma~\ref{lem:parametrization of ker A}. Now $2\phi'\in \Upg(H)$ by \eqref{eq:11.8.19}, so $f$ lies in
$\sigma\big(\Upg(H)\big)$. Thus $\sigma\big(\Upg(H)\big)$ is upward closed.
%this yields  and~\ref{lem:g,phi unique}, $\sigma\big(\Upg(H)\big)$ is upward closed.
\end{proof}

\noindent
This yields a characterization of the $\d$-perfect Hardy fields that are Schwarz closed (as defined after Corollary~\ref{cor:Schwarz closed}):

\begin{cor}  \label{cor:perfect Schwarz closed, 2}
Suppose $H$ is $\d$-perfect. Then the following are equivalent:
\begin{enumerate}
\item[\textup{(i)}] $H$ is Schwarz closed;
\item[\textup{(ii)}] $H$ is $\upo$-free;
\item[\textup{(iii)}] for all $f\in H$ the operator $4\der^2+f\in H[\der]$ splits over $K$;
\item[\textup{(iii)}] for all $a,b\in H$ the operator $\der^2+a\der+b\in H[\der]$ splits over $K$.
\end{enumerate}
\end{cor}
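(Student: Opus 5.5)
We prove the cycle (ii)~$\Rightarrow$~(i)~$\Rightarrow$~(iii)~$\Rightarrow$~(ii), and handle the last item (the one about $\der^2+a\der+b$) by showing it equivalent to the item about $4\der^2+f$. Throughout, $H$ is $\d$-perfect, so $H\supseteq\R$ is Liouville closed by the remarks in Section~\ref{sec:prelims}. In particular $H$ has asymptotic integration, and $K=H[\imag]$ is an algebraic closure of $H$.

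For (ii)~$\Rightarrow$~(i), assume $H$ is $\upo$-free. Corollary~\ref{cor:perfect Schwarz closed, 1} shows that $\omega(H)=\bar\omega(H)$ is downward closed and that $\sigma\big(\Upg(H)\big)$ is upward closed. Since $H$ is Liouville closed, Lemma~\ref{lem:11.8.29} gives $\omega(H)=\omega\big(\Upl(H)\big)$, so $\omega\big(\Upl(H)\big)$ is downward closed as well. This is condition (ii) of Corollary~\ref{cor:Schwarz closed}, hence $H$ is Schwarz closed.

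The implication (i)~$\Rightarrow$~(iii) is immediate from Corollary~\ref{cor:Schwarz closed}(iv). For (iii)~$\Rightarrow$~(ii), assume every $4\der^2+f$ with $f\in H$ splits over $K$. By \eqref{eq:sigma} we get $H=\omega(H)\cup\sigma(H^\times)$. Corollary~\ref{cor:perfect Schwarz closed, 1} gives that $\omega(H)$ is downward closed. So condition (iii) of Corollary~\ref{cor:Schwarz closed} holds, which yields (i) and in particular that $H$ is $\upo$-free.

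It remains to show that the item about $4\der^2+f$ is equivalent to the item about $\der^2+a\der+b$. Given $a,b\in H$, put $f:=\omega(a)+4b\in H$. Since $H$ is closed under integration and $H^\dagger=H$, there is $u\in H^\times$ with $u^\dagger=-\tfrac12 a$. The reduction in Section~\ref{sec:diffalg} then shows that $4(\der^2+a\der+b)_{\ltimes u}=4\der^2+f$, and that one splits over $K$ iff the other does (cf.\ Corollary~\ref{cor:char osc, 3}). Hence the $4\der^2+f$ condition implies the $\der^2+a\der+b$ condition. Conversely, $4\der^2+f=4(\der^2+f/4)$ is a unit multiple of an operator $\der^2+a\der+b$ with $a=0$, $b=f/4$.

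The only step requiring care is (ii)~$\Rightarrow$~(i): we must get the upward closedness of $\sigma\big(\Upg(H)\big)$ from $\d$-perfectness via Corollary~\ref{cor:perfect Schwarz closed, 1}, and not from newtonianity, which is not available here. All other steps are bookkeeping with Corollary~\ref{cor:Schwarz closed}.
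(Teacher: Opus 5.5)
Your proposal is correct and follows essentially the same route as the paper. The paper likewise obtains the equivalence of the first three items from Corollary~\ref{cor:Schwarz closed} together with Corollary~\ref{cor:perfect Schwarz closed, 1}, and reduces the $\der^2+a\der+b$ item to the $4\der^2+f$ item by the twist $A_{\ltimes u}$ (Corollary~\ref{cor:char osc, 3}); you have simply written these steps out in more detail.
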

\begin{proof}
The equivalence (iii)~$\Leftrightarrow$~(iv) holds by Corollary~\ref{cor:char osc, 3}. The two
 equi\-va\-len\-ces~(i)~$\Leftrightarrow$~(ii)~$\Leftrightarrow$~(iii) follow from Corollaries~\ref{cor:Schwarz closed} and~\ref{cor:perfect Schwarz closed, 1}.
\end{proof}

\subsection*{Asymptotics of phase functions}
{\em In  the rest of this section
$$A=\der^2+a\der+b\ (a,b\in H),\quad  V:=\ker_{\Calinf} A, \quad
f:=-2a'-a^2+4b,$$ and we take $H$-hardian~$h>0$ such that $h^\dagger=-\frac{1}{2}a$}. Note the role of Lem\-ma~\ref{parphi}(i) in this situation. We study further properties of the pairs of   $H$-hardian germs   parametrizing~$V$.
Here is a sufficient condition for $f/4$ to generate oscillation:

\begin{lemma}\label{lem:2nd order, f succ 1/x^2}
Suppose $f>0$, $f\succ x^{-2}$. Then $f\notin\overline\omega(H)$, 
there is a pair  of $H$-hardian germs parametrizing $V$, and
for 
each such pair $(g,\phi)$, we have  $g=ch/\sqrt{\phi'}$ for some $c\in\R^>$ and $\phi'\sim \frac{1}{2}\sqrt{f}$.
\end{lemma}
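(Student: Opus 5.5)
We first reduce to a convenient Hardy field. Replacing $H$ by a Hardy field extension changes neither $V$ nor the set of $H$-hardian parametrizing pairs, except that more pairs may be admitted, and $\bar\omega(E)\cap H=\bar\omega(H)$. So it suffices to work in some $\upo$-free Liouville closed $E\supseteq H(x)$ containing $\R$, for instance $E=\Dx(H(x))$ or the extension from Theorem~\ref{thm:maxhardymainthm}. By Lemma~\ref{parphi}(i), $(g,\phi)$ parametrizes $V$ iff $(g/h,\phi)$ parametrizes $\ker_{\Calinf}(4\der^2+f)$. By Lemma~\ref{lem:parametrization of ker A}, applied to $4\der^2+f$, we then get $(g/h)^2\phi'\in\R^>$, which is exactly $g=ch/\sqrt{\phi'}$ with $c\in\R^>$. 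Existence of a parametrizing pair is also given by Lemma~\ref{lem:parametrization of ker A}, once we know that $f\notin\bar\omega(H)$.

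\textbf{Non-oscillation fails.} Since $f>0$ and $f\succ x^{-2}$, we have $f\succ\upg_0^2=x^{-2}$. Hence $f>\sigma(\upg_0)+\upg_0^2=2x^{-2}$ eventually, which gives $f>\upo_n$ for every $n$, because $\upo_n\asymp x^{-2}$. In $E$, which we may take to contain the $\ell_n$, criterion~\eqref{eq:critosc} applies: $f/4$ generates oscillation, and so $f\notin\bar\omega(H)$. If one wants to avoid needing $\d$-algebraicity of $E$ over $\R(x)$, an alternative is Sturm comparison with the Euler equation. For $f>2x^{-2}$ that comparison alone is not enough, but it suffices to note that the solutions of $4y''+(c/x^2)y=0$ oscillate for every $c>1$, and $f\ge c x^{-2}$ eventually for every $c$.

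\textbf{Asymptotics of $\phi'$.} Let $(g,\phi)$ be a parametrizing pair, so $\sigma(2\phi')=f$. Work in a maximal Hardy field $M$ containing $E$ and $\phi$; it is $\upo$-free, newtonian and Liouville closed. The aim is to apply Lemma~\ref{lem:ADH 14.2.18} with $\upg=\upg_0=1/x$ and $b^2=f-\sigma(\upg_0)=f-2x^{-2}\sim f$. The hypothesis $b^2\succ\upg^2$ holds because $f\succ x^{-2}$, and $\upg_0$ is active in $M$. This yields $y\sim b\sim\sqrt f$ in $M$ with $\sigma(y)=f$. Since $b\succ\upg_0$ and $b>0$, we have $y\in\Upg(M)^\uparrow=\Upg(M)$ by~\eqref{eq:11.8.19}, using Liouville closedness. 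Also $2\phi'\in\Upg(M)$, since $\phi>\R$. By Lemma~\ref{lem:11.8.29}, $\sigma$ is strictly increasing on $\Upg(M)$, so $2\phi'=y$, and therefore $\phi'\sim\frac12\sqrt f$. This is essentially the argument of Lemma~\ref{lem:asymptotics of phi'} made explicit with $\upg_\rho=\upg_0$.

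\textbf{Main obstacle.} The step requiring care is checking the hypotheses of Lemma~\ref{lem:ADH 14.2.18} inside $M$. We need $M$ to have asymptotic integration and be $2$-newtonian, which holds by Corollary~\ref{cor:maxhardymainthm, 1}. We also need $\upg_0$ to be active in $M$, that is, $v(1/x)\in\Psi_M^\downarrow$, which holds since $x^\dagger=1/x$. Finally, $\sigma(\upg_0)=\omega(-\upg_0^\dagger)+\upg_0^2=2x^{-2}$ must be computed correctly. The remaining steps are bookkeeping.
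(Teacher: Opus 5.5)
Your proof is correct and follows the paper's route: reduce to $4\der^2+f$ via Lemma~\ref{parphi}(i), get existence and $g^2\phi'\in\R^>$ from Lemma~\ref{lem:parametrization of ker A}, and get $\phi'\sim\frac12\sqrt f$ by rerunning the proof of Lemma~\ref{lem:asymptotics of phi'} in a maximal $M\supseteq H\langle\phi\rangle$. It differs from the paper only in using $\upg_0=1/x$ instead of $\upg_1=1/(x\log x)$, and in using Sturm comparison with the Euler equation instead of $f>\sigma(\upg_1)$, hence $f\in\sigma\big(\Upg(H)\big){}^\uparrow$, to get $f\notin\bar\omega(H)$. One correction: your opening reduction is backwards, since passing to an extension $E$ can only shrink the set of hardian parametrizing pairs (this is harmless only for $E\subseteq\Ex(H)$, such as the paper's $\Li\big(H(\R)\big)$), and $\Dx(H(x))$ need not be $\upo$-free; nothing later uses $E$, so simply drop it.
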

\begin{proof}
Replacing $H$ by $\Li\!\big(H(\R)\big)$  and using Lemma~\ref{parphi}(i) and Lemma~\ref{lem:parametrization of ker A} we arrange  $H\supseteq\R$ to be Liouville closed, and $A=\der^2+f/4$, so ${a=0}$, ${b=f/4}$.
Put~$\ell_1:=\log x\in H$, 
$\upg_1:=\ell_1^\dagger\in\Upg(H)$,  
  $\upo_1:=\omega(-\upg_1^\dagger)$.
Then~$\sigma(\upg_1)=\upo_1+\upg_1^2 \sim x^{-2}$, so 
$f>\sigma(\upg_1)$. Hence
 $f\in \sigma\big(\Upg(H)\big){}^\uparrow$,
so ${f\notin \overline\omega(H)}$ by remarks before Lem\-ma~\ref{lem:6.18}.
Lemma~\ref{lem:parametrization of ker A} gives a pair  of $H$-hardian germs
parametrizing~$V$. That lemma says that for any such pair $(g,\phi)$ we have
$g^2\phi'\in\R^>$;   now~$\upg_1$ is active in~$H$ with~$f>\sigma(\upg_1)$ and~$f-\sigma(\upg_1)\sim f\succ \upg_1^2$, so $\phi'\sim \frac{1}{2}\sqrt{f}$ as in
the proof of Lem\-ma~\ref{lem:asymptotics of phi'}.
\end{proof}

%\begin{lemma}\label{lem:2nd order, f succ 1/x^2}
%Suppose $f>0$, $f\succ 1/x^2$. Then $f\notin\overline\omega(H)$, and  for some $H$-hardian germ $\phi$ with $\phi'\sim \frac{1}{2}\sqrt{f}$, and~$g:=1/\sqrt{\phi'}$ we have: $(gh,\phi)$ pa\-ra\-me\-tri\-zes~$V$. \marginpar{does $\phi'\sim \frac{1}{2}\sqrt{f}$ hold for every $H$-hardian phase function $\phi$??}
%\end{lemma}
%\begin{proof}  
%Using Theorem~\ref{thm:maxhardymainthm} we arrange that $H\supseteq\R$ is Liouville closed and $\upo$-free.  Choose a logarithmic sequence $(\ell_\rho)$ for $H$ with $\ell_0=x$, and define the sequences~$(\upg_\rho)$,~$(\upl_\rho)$,~$(\upo_\rho)$ as explained after Proposition~\ref{prop:14.2.16+18}. Then $\upg_0=\upl_0=1/x$ and $\upo_0=\upg_0^2=1/x^2$, and~$\upo_\rho-\upo_0 \sim \upg_{1}^2$ for~$\rho>0$, by [ADH, 11.7.1]. Thus~$\upo_\rho\sim 1/x^2$ for all~$\rho$ and hence~${f/4>\upo_\rho}$ for all~$\rho$, so $f/4$ generates oscillation by  Lemma~\ref{omuplosc}, and~${f\notin \overline\omega(H)}$, $f\in \sigma\big(\Upg(H)\big){}^\uparrow$. Lemma~\ref{lem:parametrization of ker A} gives a pair $(g,\phi)$ parametrizing $\ker_{\Calinf} (4\der^2+f)$ with $H$-hardian $\phi$ and~$g:=1/\sqrt{ \phi'}$. Now~$\upg:=1/x$ is active in~$H$ with $\sigma(\upg)=2\upg^2$ and so $f>\sigma(\upg)$ and  $f-\sigma(\upg)\sim f$.  Then $\phi'\sim \frac{1}{2}\sqrt{f}$  by the proof of Lemma~\ref{lem:asymptotics of phi'}, so~$\phi$ has the property stated in Lemma~\ref{lem:2nd order, f succ 1/x^2}. 
%\end{proof}

\noindent
Here is a special case:

\begin{cor}\label{cor:2nd order, f succ 1/x^2}
If $f\sim cx^{-2+r}$ \textup{(}$c,r\in\R^>$\textup{)}, then $f\notin\overline\omega(H)$,  
there is a pair  of $H$-hardian germs parametrizing $V$, and
for 
each such pair $(g,\phi)$, we have   $\phi\sim \frac{\sqrt{c}}{r}x^{r/2}$. 
\end{cor}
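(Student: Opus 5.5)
The plan is to reduce the corollary to Lemma~\ref{lem:2nd order, f succ 1/x^2} and then integrate the asymptotic relation for $\phi'$. Since $c,r>0$, $f\sim cx^{-2+r}$ gives $f>0$ and $f\succ x^{-2}$ (because $x^{-2+r}/x^{-2}=x^r\to+\infty$). That lemma then yields $f\notin\overline\omega(H)$, the existence of a pair of $H$-hardian germs parametrizing $V$, and $\phi'\sim\frac12\sqrt f$ for each such pair $(g,\phi)$. Taking square roots, which is harmless for positive germs, gives $\sqrt f\sim\sqrt c\,x^{-1+r/2}$, and hence
\[
\phi'\ \sim\ \tfrac{\sqrt c}{2}\,x^{-1+r/2}\ =\ \Big(\tfrac{\sqrt c}{r}\,x^{r/2}\Big)'.
\]

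To pass from derivatives to the germs themselves, I would work in one Hardy field $E$ containing both $\phi$ and $x^{r/2}$. For example, take a Hardy field extension of $H$ containing $\phi$ and replace it by $\Li$ of its extension by $\R$; this contains $x$ and therefore $x^{r/2}$. Set $\theta:=\frac{\sqrt c}{r}x^{r/2}$. Both $\phi$ and $\theta$ are $\succ 1$: for $\theta$ this is clear since $r>0$, and for $\phi$ it holds because $\phi>\R$ by the definition of parametrizing. In particular $\phi\preceq\theta\nasymp 1$ or the reverse, so the second part of Lemma~\ref{lem:9.1.4(ii)} applies in the asymptotic field $E$ with $\theta$ in the role of $g$. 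It gives $\phi\sim\theta\Leftrightarrow\phi'\sim\theta'$, and we already have $\phi'\sim\theta'$.

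The only point that needs care is the hypothesis $\phi\preceq\theta$ in Lemma~\ref{lem:9.1.4(ii)}. I would get it from the first part of that lemma: since $\phi,\theta\nasymp1$ and $\phi'\asymp\theta'$, that part gives $\phi\preceq\theta$ (indeed $\phi\asymp\theta$), and then the second part applies.

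If one prefers to avoid the lemma, an elementary route is also available: l'H\^opital applied to $\phi/\theta$ gives $\phi\sim\theta$, since $\theta\to+\infty$ and $\phi'/\theta'\to1$.
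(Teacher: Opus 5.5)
Your proof is correct and follows essentially the paper's route. The paper states this corollary as a direct special case of Lemma~\ref{lem:2nd order, f succ 1/x^2}: from $f>0$ and $f\succ x^{-2}$ one gets $\phi'\sim\tfrac12\sqrt f\sim\bigl(\tfrac{\sqrt c}{r}x^{r/2}\bigr)'$. The passage from this to $\phi\sim\tfrac{\sqrt c}{r}x^{r/2}$ is then done as you do it, via Lemma~\ref{lem:9.1.4(ii)} in a common Liouville closed Hardy field; the same step appears explicitly in the Wiman-type corollary of Section~\ref{sec:zeros}.
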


\begin{remark} Let $f,g,\phi$ be as in Lemma~\ref{lem:2nd order, f succ 1/x^2}. The proof of that lemma gives 
$f\in\sigma\big(\Upg(L)\big)$ for $L=\Li\big(H(\R)\big)\subseteq \Dx(H)$, so by Lemma~\ref{lem:g,phi unique} applied to $L$ we have $g,\phi\in \Dx(H)$, $g$ is unique up to multiplication by a positive real constant  and $\phi$ is unique up to addition of a real constant.
This is also the case for $f$, $g$, $\phi$ as in Corollary~\ref{cor:2nd order, f succ 1/x^2}. Lemma~\ref{lem:2nd order, f succ 1/x^2}, 
Corollary~\ref{cor:2nd order, f succ 1/x^2}, and this remark yield Corollary~2 from the introduction. \end{remark}

{\sloppy
\begin{lemma}\label{lem:2nd order, phi prec x}
Suppose $f\notin\bar{\omega}(H)$, and
let $(g,\phi)$ be a pair of $H$-hardian germs parametrizing $V$.
Then~${\phi\prec x}$ iff $f\prec 1$, and the same with $\preceq$ in place of $\prec$.
Also, if $f\sim c\in\R^>$, then $\phi\sim \frac{\sqrt{c}}{2}x$ and $(f<c\Rightarrow
\phi''>0)$, $(f>c\Rightarrow\phi''<0)$.
\end{lemma}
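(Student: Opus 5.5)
We first reduce to a Liouville closed $H$ of the special form $A=\der^2+f/4$, exactly as in the proof of Lemma~\ref{lem:2nd order, f succ 1/x^2}. By Lemma~\ref{parphi}(i), the pairs parametrizing $V$ have the form $(gh,\phi)$ with the same phase $\phi$, so the claims about $\phi$ depend only on $f$. We may therefore replace $H$ by $\Li\big(H(\R)\big)$ and $A$ by $\der^2+f/4$. Take a pair $(g,\phi)$ of $H$-hardian germs parametrizing $V$ and put $z:=2\phi'$, a positive hardian germ. Lemma~\ref{lem:parametrization of ker A} gives $\sigma(z)=f$, that is,
$$f\ =\ z^2+\omega(-z^\dagger)\ =\ z^2+2z^{\dagger\prime}-(z^\dagger)^2 .$$
The whole argument is to compare $z^2$ with the correction term $\omega(-z^\dagger)$ inside a Hardy field $E\supseteq H$ containing $z$. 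Since $\phi\succ 1$, we have $z\in\Upg(E)$ by \eqref{eq:11.8.19}.

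\textbf{Key estimate.} We claim $\omega(-z^\dagger)\prec z^2$ whenever $z\succeq 1/x$, and more generally whenever $\phi\succ\log x$. Indeed $z^\dagger\preceq$ the logarithmic derivative of an element of $\Upg(E)$, and for $z\in\Upg(E)$ one has $z^\dagger \prec z$ as soon as $z\succ\upg$ for a suitable active $\upg$. Then $\omega(-z^\dagger)\preceq (z^\dagger)^2+z^{\dagger\prime}\prec z^2$ by the small-derivation/asymptotic-field rules, namely Lemma~\ref{lem:9.1.4(ii)} together with $z^{\dagger\prime}\preceq (z^\dagger)^2$-type estimates. Concretely, for $z\succeq 1$ we have $z^\dagger\preceq 1/x\prec 1\preceq z$ in a Hardy field containing $x$, so $\omega(-z^\dagger)\prec z^2$ and $f\sim z^2$.

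\textbf{Case analysis.} If $f\asymp 1$ or $f\succ 1$, the following holds. Suppose $z\prec 1$; then $\omega(-z^\dagger)\preceq x^{-2}$-ish terms are $\prec 1$ (using $z^\dagger\preceq$ an element $\prec 1$), and $z^2\prec 1$, so $f\prec 1$, a contradiction. Hence $z\succeq 1$, and therefore $f\sim z^2$, so that $z\asymp\sqrt f$. This gives: $f\succ1\Rightarrow\phi'\succ1\Rightarrow\phi\succ x$, and $f\asymp1\Rightarrow\phi\asymp x$. Conversely, if $f\prec 1$, suppose $z\succeq 1$. Then $f\sim z^2\succeq 1$, a contradiction. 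So $z\prec 1$, hence $\phi'\prec 1$ and $\phi\prec x$ by l'Hôpital (Lemma~\ref{lem:9.1.4(ii)}). These cases yield both equivalences, with $\prec$ and with $\preceq$. If $f\sim c$, the above gives $z^2\sim c$, so $\phi'\sim\sqrt c/2$ and $\phi\sim \frac{\sqrt c}{2}x$. This is also consistent with the proof of Lemma~\ref{lem:asymptotics of phi'}.

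\textbf{Main obstacle: the sign of $\phi''$.} Write $z=\sqrt c(1+\varepsilon)$ with $\varepsilon\prec 1$ hardian. Then
$$f-c\ =\ c(2\varepsilon+\varepsilon^2)+\omega(-z^\dagger),\qquad z^\dagger=\varepsilon'/(1+\varepsilon).$$
We need $\omega(-z^\dagger)\prec\varepsilon$. Then $f-c\sim 2c\varepsilon$, and so $f<c\Leftrightarrow\varepsilon<0\Leftrightarrow\phi'$ increases to its limit; in Hardy fields this is $\phi''>0$, since $\phi'-\sqrt c/2<0$ and $\phi'$ is monotone, hence increasing. If $\varepsilon=0$ there is nothing to show beyond $f=c$.

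To get $\omega(-z^\dagger)\prec\varepsilon$: we have $z^\dagger\asymp\varepsilon'$, and $\varepsilon'\preceq\varepsilon/x$ because $\varepsilon\prec1$ and $\varepsilon^\dagger\preceq 1/x$ in a Hardy field containing $x$. Hence $(z^\dagger)^2\prec\varepsilon$. Similarly $z^{\dagger\prime}\asymp\varepsilon''$ up to smaller terms, with $\varepsilon''\preceq\varepsilon'/x\preceq\varepsilon/x^2\prec\varepsilon$. This handling of $\varepsilon''$ via $\varepsilon'\prec1$ and $(\varepsilon')^\dagger\preceq 1/x$, using \eqref{eq:9.1.11}-style bounds after adjoining $x$ to the Hardy field, is the step I expect to require the most care.
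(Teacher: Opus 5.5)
Your treatment of the two equivalences and of $\phi\sim\frac{\sqrt c}{2}x$ can be made to work, but several of the justifications are wrong.

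\begin{itemize}
\item The bound $z^\dagger\preceq 1/x$ fails for $z\succ 1$. Take $z=\ex^x$, which arises from $f=\sigma(\ex^x)=\ex^{2x}-1$. What you actually need is $z^\dagger\prec z$, which follows from $(1/z)'\prec 1$ (small derivation), and then $z^{\dagger\prime}\prec z^2$.
\item The claim $\omega(-z^\dagger)\prec z^2$ for $z\succeq 1/x$ fails at $z=1/x$, where $\omega(-z^\dagger)=z^2$.
\item For $z\prec 1$, the bound $\omega(-z^\dagger)\prec 1$ is false for general $z$: $z=\ex^{-x}$ gives $\omega(1)=-1$. You must use $z\in\Upg(E)$ here.
\end{itemize}

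The genuine gap is in the sign of $\phi''$. The estimate $\omega(-z^\dagger)\prec\varepsilon$ is false in general. The substep $\varepsilon^\dagger\preceq 1/x$ for $\varepsilon\prec 1$ already fails for $\varepsilon=\ex^{-x}$.

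Here is a concrete instance of the lemma where your step breaks. Let $\phi:=\frac{\sqrt c}{2}(x-\ex^{-x})$, so $z=2\phi'=\sqrt c\,(1+\ex^{-x})$, and put $f:=\sigma(z)\sim c$. By Lemma~\ref{parphi}(ii), $(1/\sqrt{\phi'},\phi)$ parametrizes $\ker_{\Calinf}(4\der^2+f)$. Yet $z^\dagger=-1/(\ex^x+1)$ gives $\omega(-z^\dagger)=2z^{\dagger\prime}-(z^\dagger)^2\sim 2\ex^{-x}=2\varepsilon$. So $f-c\sim 2(c+1)\varepsilon$, not $2c\varepsilon$.

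The $\varepsilon''$ contribution therefore cannot be estimated away; it has to be used for its sign. One repair within your setup is to expand exactly:
$$f-c\ =\ 2c\varepsilon+c\varepsilon^2+\frac{2\varepsilon''}{1+\varepsilon}-\frac{3(\varepsilon')^2}{(1+\varepsilon)^2}.$$
Then use two facts:
\begin{itemize}
\item $(\varepsilon')^2\prec\abs{\varepsilon}$, by small derivation applied to $\sqrt{\abs{\varepsilon}}\prec 1$.
\item For hardian $0\ne\varepsilon\prec 1$, $\varepsilon''$ has the same sign as $\varepsilon$, because $\varepsilon'\to 0$ monotonically.
\end{itemize}
Together these show that $f-c$ has the sign of $\varepsilon$.

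The paper's route is shorter and needs no estimates at all. We have $z\in\Upg(E)$ and $\R^>\subseteq\Upg(E)$ with $\sigma(d)=d^2$ for $d\in\R^>$. By Lemma~\ref{lem:11.8.29}, $\sigma$ is strictly increasing on $\Upg(E)$. Hence $f<c=\sigma(\sqrt c)$ gives $z<\sqrt c$ directly, and your final step (a negative hardian $\phi'-\frac{\sqrt c}{2}\prec 1$ must increase) finishes the argument.

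The same monotonicity also yields the first part at once. Since $z,f>0$, we have $z\prec 1$ iff $z<d$ for all $d\in\R^>$, iff $f<d^2$ for all $d\in\R^>$, iff $f\prec 1$. The analogous chain with ``for some $d$'' handles $\preceq$.
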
}
\begin{proof}
Arrange $H\supseteq\R$ to be  Liouville closed and $g,\phi\in H$.
Then~$y:=2\phi'\in\Upg(H)$ by \eqref{eq:11.8.19}. Lemma~\ref{lem:parametrization of ker A} gives $\sigma(y)=f$; also
$\sigma(c)=c^2$ for all~$c\in\R^>$.
As the restriction of $\sigma$ to $\Upg(H)$ is strictly increasing by Lemma~\ref{lem:11.8.29},  this yields the first part. 
Now suppose  $f\sim c\in\R^>$. Take $\lambda\in\R^>$ with $\phi\sim\lambda x$.
Then $y \sim 2\lambda$, and with~$z:=-y^\dagger\prec 1$ we have
$f=\sigma(y)=\omega(z)+y^2\sim 4\lambda^2$.
Hence~$\lambda=\sqrt{c}/2$.
Suppose~$f<c$;
then~$f=\sigma(y)<c=\sigma(\sqrt{c})$ yields $y<\sqrt{c}$, so~$\phi'<\lambda$.
With~$g:=\phi-\lambda x$ we have~$g\prec x$, $g'\prec 1$ 
 and   $g'=\phi'-\lambda<0$, so~$g''=\phi''>0$. The case~$f>c$ is similar.
\end{proof}

\noindent
Combining Lemmas~\ref{lem:parametrization of ker A} and~\ref{lem:2nd order, phi prec x} yields:

\begin{cor}\label{corcor}
Suppose $f\notin\bar{\omega}(H)$.
Then for every~$y\in V^{\neq}$ we have:
\begin{enumerate}
\item[\textup{(i)}] if $f\prec 1$, then $y\not\preccurlyeq h$ \textup{(}so $y$ is unbounded if in addition $a\leq 0$\textup{)};
\item[\textup{(ii)}] if $f\succ 1$, then $y\prec h$; and
\item[\textup{(iii)}] if $f\asymp 1$, then $y\preceq h$.
\end{enumerate}
\end{cor}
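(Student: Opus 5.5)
We fix $y\in V^{\neq}$ and work through the parametrization. Since $f\notin\bar\omega(H)$, Lemma~\ref{lem:parametrization of ker A} together with Lemma~\ref{parphi}(i) gives a pair $(g,\phi)$ of $H$-hardian germs parametrizing $V$. By Lemma~\ref{parphi}(i), $(g/h,\phi)$ parametrizes $\ker_{\Calinf}(4\der^2+f)$, and Lemma~\ref{lem:parametrization of ker A} then gives $(g/h)^2\phi'\in\R^>$. After rescaling $g$ we may take $g=h/\sqrt{\phi'}$. Every $y\in V^{\ne}$ then has the form $y=c\,(h/\sqrt{\phi'})\cos(\phi+d)$ with $c\in\R^\times$, $d\in\R$. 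Hence
\[ y/h\ =\ c\,(\phi')^{-1/2}\cos(\phi+d), \]
and everything reduces to comparing $\phi'$ with $1$. We arrange $g,\phi,h$ to lie in a common Liouville closed Hardy field $E\supseteq\R$; then $\phi'\in E^>$ and $(\phi')^{-1/2}$ is non-oscillating.

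By Lemma~\ref{lem:2nd order, phi prec x}, $\phi\prec x$ iff $f\prec 1$, and likewise with $\preceq$. Since $\phi,x\succ 1$ lie in the Hardy field $E$, Lemma~\ref{lem:9.1.4(ii)} converts these into $\phi'\prec 1$ iff $f\prec 1$, and $\phi'\preceq 1$ iff $f\preceq 1$. From this we get three cases. If $f\succ 1$, then $\phi'\succ 1$, so $(\phi')^{-1/2}\prec 1$; as $\cos(\phi+d)$ is bounded, $y/h\prec 1$, that is, $y\prec h$. If $f\asymp 1$, then $\phi'\asymp 1$, so $(\phi')^{-1/2}\preceq 1$ and $y\preceq h$. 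If $f\prec 1$, then $\phi'\prec 1$, so $(\phi')^{-1/2}\to+\infty$.

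The main point is case (i): we must show $y/h$ is not bounded, even though $\cos(\phi+d)$ has zeros. Since $\phi>\R$ is continuous and tends to $+\infty$, there are arbitrarily large $t$ with $\phi(t)+d\in 2\pi\Z$. At such $t$ we have $\abs{y(t)/h(t)}=\abs{c}\,\phi'(t)^{-1/2}$, which tends to infinity along these $t$. Hence $\abs{y}\le C\abs{h}$ fails eventually for every $C$, so $y\not\preceq h$. For the parenthetical claim, suppose $a\le 0$. Then $h^\dagger=-a/2\ge 0$, so $h$ is eventually nondecreasing and positive, hence $h\succeq 1$ and bounded away from $0$. At the same points $t$, $\abs{y(t)}=\abs{c}\,h(t)\phi'(t)^{-1/2}\to\infty$, so $y$ is unbounded.
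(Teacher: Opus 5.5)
Your proof is correct and is essentially the paper's argument. The paper simply combines Lemma~\ref{lem:parametrization of ker A}, applied through Lemma~\ref{parphi}(i) to get $(g/h)^2\phi'\in\R^>$, with Lemma~\ref{lem:2nd order, phi prec x}; your case analysis on $\phi'$ spells out what it leaves implicit. The one point worth a word is that $g/h$ is $H$-hardian: $h$ is determined up to a positive constant factor by $h^\dagger=-\frac{1}{2}a$, so it lies in every maximal Hardy field extension of $H$.
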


\begin{remarks} 
See \cite[Chapter~6]{Bellman} for related results in a more general setting. \marginpar{remark not checked, except what follows from Cor. 5.12} For example, if
$g\in\c$  is eventually   increasing and either $g\succ 1$,
or~$g\in\c^1$, $g\sim 1$, and $\int^{+\infty} |g'(t)| d t \preceq 1$,  then $y\preceq 1$ for all $y\in\c^2$ with $y''+gy=0$; see \S\S6,~18 in loc.~cit. 
Fatou~\cite{Fatou} claims that if $g\in\c$, $g\sim 1$, then $y\preceq 1$ for all 
$y\in\c^2$ with~$y''+gy=0$. 
By Corollary~\ref{corcor}(iii) this is true for hardian $g$, but it is false in general: counterexamples are in~\cite{Caccioppoli, Perron, Wintner}; see~\cite[Chapter~6, \S{}5]{Bellman} and \cite[3.3]{Cesari}.
\end{remarks}

\begin{cor}\label{cor:parametrization in H}
Suppose  $f\notin\bar{\omega}(H)$, and
$H\supseteq\R$ is Liouville closed. The following are equivalent:
\begin{enumerate}
\item[\textup{(i)}]  $g,\phi\in H$ for every pair
$(g,\phi)$ of $H$-hardian germs parametrizing $V$;
\item[\textup{(ii)}] there is a pair of germs in $H$  parametrizing $V$;
\item[\textup{(iii)}]   $f\in\sigma(H^\times)$.
\end{enumerate}
\end{cor}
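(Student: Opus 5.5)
We prove (i)~$\Rightarrow$~(ii)~$\Rightarrow$~(iii)~$\Rightarrow$~(i). By Lemma~\ref{parphi}(i), a pair $(g,\phi)$ parametrizes $V$ iff $(g/h,\phi)$ parametrizes $\ker_{\Calinf}(4\der^2+f)$. Since $h^\dagger=-\frac12 a\in H$ and $H$ is Liouville closed, we may take $h\in H^>$. Multiplying or dividing by $h$ therefore does not affect membership in $H$, so we reduce to the case $a=0$ and $A=\der^2+f/4$.

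For (i)~$\Rightarrow$~(ii): since $f\notin\bar\omega(H)$, Lemma~\ref{lem:parametrization of ker A} provides a pair of $H$-hardian germs parametrizing $V$. By (i) both germs lie in $H$, which is (ii).

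For (ii)~$\Rightarrow$~(iii): let $(g,\phi)\in H^2$ parametrize $V$. Lemma~\ref{lem:parametrization of ker A} gives $\sigma(2\phi')=f$. Here $2\phi'\in H$, and $2\phi'\neq0$ because $\phi>\R$. Hence $f\in\sigma(H^\times)$.

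For (iii)~$\Rightarrow$~(i), take $u\in H^\times$ with $\sigma(u)=f$. Since $\sigma(u)=\sigma(-u)$, we may assume $u>0$. Next we place $u$ in $\Upg(H)$. By Lemma~\ref{lem:11.8.29}, $\sigma\big(H^>\setminus\Upg(H)\big)\subseteq\omega(H)^\downarrow\subseteq\bar\omega(H)$, using that $\bar\omega(H)$ is downward closed and contains $\omega(H)$. As $f\notin\bar\omega(H)$, it follows that $u\in\Upg(H)$, and in particular $f\in\sigma(\Upg(H))\subseteq\sigma\big(\Upg(H)\big)^\uparrow$.

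Now let $(g,\phi)$ be any pair of $H$-hardian germs parametrizing $V$. Take a maximal Hardy field $E\supseteq H(g,\phi)$. Lemma~\ref{lem:parametrization of ker A} gives $\sigma(2\phi')=f$ and $g^2\phi'\in\R^>$. Since $E$ is Liouville closed and $\phi>\R$, \eqref{eq:11.8.19} gives $2\phi'\in\Upg(E)$. Also $u\in\Upg(H)\subseteq\Upg(E)$, because $u=k^\dagger$ for some $k\succ1$ in $H$. By Lemma~\ref{lem:11.8.29}, $\sigma$ is strictly increasing on $\Upg(E)$, and $\sigma(2\phi')=\sigma(u)$, so $2\phi'=u\in H$. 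Because $H$ is closed under integration, $\phi\in H+\R=H$. Finally $g=c/\sqrt{\phi'}$ for some $c\in\R^>$, and $H$ is real closed, so $g\in H$. This proves (i).

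The main obstacle is the step $u\in\Upg(H)$; it is exactly where the hypothesis $f\notin\bar\omega(H)$ is used. An alternative route would apply Lemma~\ref{lem:g,phi unique} using $f\in\sigma\big(\Upg(H)\big)^\uparrow$, but the direct injectivity argument above avoids it.
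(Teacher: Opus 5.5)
Your proof is correct. The reduction to $a=0$, the steps (i)$\Rightarrow$(ii)$\Rightarrow$(iii), and the use of Lemma~\ref{lem:11.8.29} with $f\notin\bar\omega(H)$ to get $u\in\Upg(H)$ all match the paper. The difference is in how you finish (iii)$\Rightarrow$(i).

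The paper takes a detour through (ii). From $f\in\sigma\big(\Upg(H)\big)$, Proposition~\ref{prop:2nd order kernel} and the remark after it produce an explicit pair in $H$ parametrizing $V$. The paper then invokes the general uniqueness result, Lemma~\ref{lem:g,phi unique}, to conclude (i).

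You instead compare an arbitrary $H$-hardian pair $(g,\phi)$ directly with the solution $u\in H$. Inside a maximal, hence Liouville closed, $E$ containing $H,g,\phi$, both $2\phi'$ and $u$ lie in $\Upg(E)$ by \eqref{eq:11.8.19}. Injectivity of $\sigma$ on $\Upg(E)$ then forces $2\phi'=u$.

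What your route buys is economy. Lemma~\ref{lem:g,phi unique} rests on Lemma~\ref{lem:asymptotics of phi'}, which uses newtonianity of maximal Hardy fields via Lemma~\ref{lem:ADH 14.2.18}, together with a periodicity argument. That machinery is needed there because two pairs may lie in different maximal Hardy fields and $f$ need only be in $\sigma\big(\Upg(H)\big)^{\uparrow}$. In your situation an exact solution $u$ already lies in $H$, hence in every extension, so each pair can be compared with $u$ separately and monotonicity of $\sigma$ suffices.

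What the paper's route buys is that it reuses lemmas already established. It also exhibits a pair in $H$ directly from (iii), rather than obtaining (ii) only through the existence part of Lemma~\ref{lem:parametrization of ker A}.
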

\begin{proof}
The implications (i)~$\Rightarrow$~(ii)~$\Rightarrow$~(iii) follow from Lemma~\ref{lem:parametrization of ker A} and
the remarks preceding Lemma~\ref{parphi}.
%(and don't need the hypotheses on $H$, $K$).
Suppose $f\in\sigma(H^\times)$.
Since $f\notin\bar\omega(H)$ and~$\omega(H)^\downarrow\subseteq\bar\omega(H)$,
we have $f\notin\omega(H)^\downarrow$, so
  $f\in\sigma\big(\Upg(H)\big)$ by Lemma~\ref{lem:11.8.29}.
Also, $4\der^2+f$ splits over $K$ but not over $H$, by \eqref{eq:omega} and \eqref{eq:sigma}, and $f/4$ generates oscillation. Hence   Proposition~\ref{prop:2nd order kernel}   and the remark following it yield a pair of germs in $H$ parametrizing~$V$. Now (i) follows from Lemma~\ref{lem:g,phi unique}.
\end{proof}

%\begin{cor}\label{cor:D(H) Schwarz closed}
%Suppose $H$ does not have asymptotic integration or $H$ is $\upo$-free. Then $\Dx(H)$ is Schwarz closed.
%\end{cor}
%\begin{proof}
%By the previous corollary it is enough to show that $\Dx(H)$ is $\upo$-free. If $\Dx(H)$ is $\upo$-free, then this follows from  [ADH, 13.6.1]. Otherwise, first replace $H$ by $H(\R)$ and use [ADH, ] to arrange $H\supseteq\R$; then $\operatorname{Li}(H)$ is $\upo$-free by Lemma~\ref{}, hence $\Dx(H)=\Dx(\operatorname{Li}(H))$ is $\upo$-free by the first case.
%\end{proof}

%\noindent
%If $H$ is bounded, then $\Ex(H)=\Dx(H)$ (see the remarks following Lemma~\ref{lem:Dx Ex}).
%Hence if  $H$ is bounded, and   $H$ does not have asymptotic integration or   $H$ is $\upo$-free, then $\Ex(H)$ is Schwarz closed,
%by   Corollary~\ref{cor:D(H) Schwarz closed}. In \dots we show how to remove the hypothesis of boundedness in this statement. \marginpar{to fill in}

\noindent
The case of Theorem~\ref{thm:Bosh} where $a$, $b$ are $\d$-algebraic (over $\Q$) is important for applications. In that case the $\Psi$-set
of the Hardy subfield $H_0:=\Q\langle a,b\rangle$ of $H$ is finite (see the remarks at the beginning of Section~\ref{sec:prelims}),   so $H_0$ has no asymptotic integration. Thus the relevance of the next result:

\begin{cor}\label{naicor}
Suppose $H$ does not have asymptotic integration and $f\notin \bar{\omega}(H)$. Then there is a pair~$(g,\phi)\in \Dx(H)^2$ parametrizing $V$
such that every pair of $H$-hardian germs parametrizing $V$ equals $(cg, \phi+d)$ for some $c\in \R^{>}$ and $d\in \R$. 
Moreover, for each such pair $(g,\phi)$ we have $(g/h)^2\phi'\in\R^>$.
\end{cor}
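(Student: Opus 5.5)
The plan is to reduce to Lemma~\ref{lem:g,phi unique}, whose hypothesis is that $\Dx(H)$ is $\upo$-free. The Remarks after Theorem~\ref{thm:Bosh} note, via Lemma~\ref{lem:7.11}, that $\Dx(H)$ is $\upo$-free whenever $H$ is not $\upl$-free. Since $\upl$-freeness includes asymptotic integration, the assumption that $H$ has no asymptotic integration makes $H$ not $\upl$-free. Lemma~\ref{lem:7.11} then gives that $\Dx(H)$ is $\upo$-free. I would double-check this against the exact hypotheses of Lemma~\ref{lem:7.11}: take $L:=\Dx(H)$, which is Liouville closed, $\d$-algebraic over $H$ and contains $\R$, and for which $\omega(L)=\bar\omega(L)$ by Corollary~\ref{cor:omega(H) downward closed}.

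First I reduce to the case $a=0$ using Lemma~\ref{parphi}(i). The germ $h$ with $h^\dagger=-\frac12 a$ can be taken in $\Li(H(\R))\subseteq\Dx(H)$. So $(g_0,\phi)$ parametrizes $V_0:=\ker_{\Calinf}(4\der^2+f)$ if and only if $(g_0h,\phi)$ parametrizes $V$. Being $H$-hardian is preserved under this correspondence because $h\in\Dx(H)$ lies in every maximal Hardy field extension of $H$.

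Next, since $f\notin\bar\omega(H)$, Lemma~\ref{lem:parametrization of ker A} yields a pair of $H$-hardian germs $(g_0,\phi)$ parametrizing $V_0$. Lemma~\ref{lem:g,phi unique}, applied with $\Dx(H)$ $\upo$-free, shows that $g_0,\phi\in\Dx(H)$. It also shows that any two such pairs differ by a positive constant factor in the first component and an additive real constant in the second. Setting $g:=g_0h\in\Dx(H)$ then gives the required pair for $V$, and the uniqueness statement transfers through Lemma~\ref{parphi}(i).

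For the last assertion, Lemma~\ref{lem:parametrization of ker A} applied to $V_0$ gives $g_0^2\phi'\in\R^>$. Since $g_0=g/h$, this is $(g/h)^2\phi'\in\R^>$.

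The only delicate point is confirming that Lemma~\ref{lem:7.11} really applies to $H$ itself, without first passing to $H(\R)$. If needed, I would replace $H$ by $H(\R)$, using Lemma~\ref{lem:1.3.3+} to see that $H(\R)$ is still not $\upl$-free. This replacement is harmless because $\Dx(H(\R))=\Dx(H)$ and $\bar\omega$ is compatible with extensions.
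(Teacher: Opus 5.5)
Your proposal is correct and matches the paper's proof. Since $H$ has no asymptotic integration, it is not $\upl$-free, so $\Dx(H)$ is $\upo$-free by Lemma~\ref{lem:7.11}. Existence and uniqueness then come from Theorem~\ref{thm:Bosh}, which you simply unfold into Lemma~\ref{parphi}(i), Lemma~\ref{lem:parametrization of ker A} and Lemma~\ref{lem:g,phi unique}, and the ``moreover'' part follows from Lemma~\ref{parphi}(i) together with $g_0^2\phi'\in\R^>$. The possible detour through $H(\R)$ is unnecessary, because Lemma~\ref{lem:7.11} is stated for an arbitrary Hardy field $H$.
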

\begin{proof} The assumption on $H$ gives that $\Dx(H)$ is $\upo$-free. Now use Theorem~\ref{thm:Bosh}, and for the ``moreover"
part, use Lemma~\ref{parphi}(i) and Lemma  \ref{lem:parametrization of ker A}.
\end{proof} 

\noindent
Suppose $a$, $b$ are $\d$-algebraic, and let $\upo_n$ be as defined before Corollary~1.
By \eqref{eq:critosc} (after  Lemma~\ref{lem:7.11}), applied to $\Li(\R\<f\>)$ in place of $H$, we have
$f\le \upo_n$ for some $n$ iff $f/4$ does not generate oscillation,
and in this case
$V\subseteq\Dx(H)$ by Corollary~\ref{cor:char osc}.
Also note that $f=4b$ whenever $a=0$. Together with Corollary~\ref{naicor} and the remarks preceding it,
this shows   Corollary~1 from the introduction.

\medskip
\noindent
{\em In the rest of this subsection   we assume $f\notin\bar{\omega}(H)$, and
  $(g,\phi)$ is a pair of $H$-hardian germs parametrizing $V$}.
Then   $\sigma(2\phi')=f$   (by~Lemma~\ref{lem:parametrization of ker A}) and  thus $$P(2\phi')\ =\ 0\quad\text{where $P(Y)\ :=\ 2YY''-3(Y')^2+Y^4-fY^2\in H\{Y\}$.}$$ 
{\sloppy Hence \cite[Theorem~5.25]{ADH5} (or Theorem~3 of Rosenlicht~\cite{Rosenlicht83})
%Theorem~\ref{thm:Ros83} 
applied to the Hardy field~$E:=H\<\phi\>=H(\phi, \phi', \phi'')$ gives, for grounded~$H$, ele\-ments~$h_0,h_1\in H^{>}$ and~$m$,~$n$ with  $h_0, h_1\succ 1$ and $m+n\leq 3$, such that
$$\log_{m+1} h_0\ \prec\ \phi\ \preceq\ \exp_n h_1.$$ 
The next two lemmas improve these bounds:}

\begin{lemma}\label{boundphi1}
Suppose  $\ell\in H^>$, $\ell\succ 1$, and $\max\Psi_{H}=v\upg$ for $\upg:=\ell^\dagger$, and 
put~$\upo:=\omega(-\upg^\dagger)$. Then either $f-\upo \succ \upg^2$ and $\phi\succ \log\ell$, or for some $c\in \R^{>}$ we have
   $f-\upo \sim c \upg^2$ and~$\phi \sim \frac{\sqrt{c}}{2} \log\ell$.
\end{lemma}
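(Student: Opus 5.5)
We use Lemma~\ref{lem:6.18}, which describes $\bar\omega(H)$ in the grounded case, and then compare $2\phi'$ with $\upg$ in a larger Hardy field.

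First, since $v\upg=\max\Psi_H$, Lemma~\ref{lem:6.18} gives $\bar\omega(H)=\upo+\upg^2\smallo_H^\downarrow$. As $f\notin\bar\omega(H)$, we get $f-\upo>\upg^2\epsilon$ for all $\epsilon\in\smallo_H$. Hence $f-\upo>0$ and $f-\upo\succeq\upg^2$. Two cases arise: $f-\upo\succ\upg^2$, or $f-\upo\asymp\upg^2$. In the latter case we want $f-\upo\sim c\upg^2$ with $c\in\R^>$. For this we replace $H$ by $H(\R)$, which by Lemma~\ref{lem:1.3.3+} has value group cofinal with $\Gamma_H$, and here $\max\Psi$ is unchanged. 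Then $\mathcal O=\R+\smallo$, which gives $f-\upo\sim c\upg^2$ for some $c\in\R^{>}$.

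Next we move to a maximal Hardy field $E\supseteq H$ containing $g,\phi$. This $E$ is Liouville closed, newtonian and $\upo$-free, contains $\R$, and has asymptotic integration. In $E$ we have $y:=2\phi'\in\Upg(E)$ by \eqref{eq:11.8.19}, and $\sigma(y)=f$ by Lemma~\ref{lem:parametrization of ker A}.

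We compute $\sigma(\sqrt{c}\,\upg)=\omega(-\upg^\dagger)+c\upg^2=\upo+c\upg^2$, because scaling by a constant does not change $\upg^\dagger$. In case 2 this yields $f-\sigma(\sqrt c\,\upg)\prec\upg^2$. We then want to conclude $y\sim\sqrt c\,\upg$, which gives $\phi'\sim\frac{\sqrt c}{2}\upg=\frac{\sqrt c}{2}(\log\ell)'$. Since $\log\ell\succ1$, Lemma~\ref{lem:9.1.4(ii)} then gives $\phi\sim\frac{\sqrt c}{2}\log\ell$.

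To get $y\sim\sqrt c\,\upg$ we use that $\sigma$ is strictly increasing on $\Upg(E)^\uparrow$ (Lemma~\ref{lem:11.8.29}). For real $c_1<c<c_2$ near $c$ we have $\sigma(\sqrt{c_1}\upg)<f<\sigma(\sqrt{c_2}\upg)$, because the differences are asymptotic to nonzero multiples of $\upg^2$. Note that $\sqrt{c_i}\,\upg\in\Upg(E)$, as $\Upg(E)$ is upward closed and contains $\upg$ up to a constant. Monotonicity then gives $\sqrt{c_1}\upg<y<\sqrt{c_2}\upg$. Letting $c_1,c_2\to c$ yields $y\sim\sqrt c\,\upg$.

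In case 1 the same monotonicity gives $y>\sqrt{C}\,\upg$ for every real $C$, since $f>\sigma(\sqrt C\upg)$ for all $C$. Thus $\phi'\succ(\log\ell)'$, and Lemma~\ref{lem:9.1.4(ii)} gives $\phi\succ\log\ell$, using $\phi\succ1$.

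The main obstacle is the step that produces a real constant $c$ in case 2 when $H\not\supseteq\R$. This needs the cofinality and value group facts from Lemma~\ref{lem:1.3.3+} for $H(\R)$, so that the hypothesis $\max\Psi_{H}=v\upg$ and the description of $\bar\omega$ transfer to $H(\R)$. If that transfer is delicate, the fallback is to apply the monotonicity argument directly in $E$, defining $c:=\sup\{C\in\R^{>}:\ f>\sigma(\sqrt C\upg)\}$. Case 1 corresponds to $c=\infty$. In case 2 the supremum is finite and positive: it is finite because $f-\upo\asymp\upg^2$, and positive because $f-\upo>0$ with $f-\upo\succeq\upg^2$. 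One then checks that $f-\upo\sim c\upg^2$ follows from the supremum definition.
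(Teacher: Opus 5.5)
Your proof is correct, but its key step differs from the paper's. Both arguments start by using Lemma~\ref{lem:6.18} to write $f=\upo+u\upg^2$ with $u\succeq 1$, $u>0$.

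The paper then reruns the argument of Lemma~\ref{lem:asymptotics of phi'}. In a maximal, hence newtonian, Hardy field $E$, Lemma~\ref{lem:ADH 14.2.18} produces a zero of $\sigma(Y)-f$ asymptotic to some $e$, and strict monotonicity of $\sigma$ on $\Upg(E)$ identifies this zero with $2\phi'$. When $u\succ 1$ one takes $e^2=f-\sigma(\upg)$. When $u\asymp 1$ the paper has to switch to the smaller active element $\upg_*=(\log\ell)^\dagger$, using \eqref{eq:11.7.6} to get $f-\sigma(\upg_*)\sim c\upg^2\succ\upg_*^2$, so that Lemma~\ref{lem:ADH 14.2.18} applies.

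You instead use two exact facts for real $C>0$: $\sigma(C\upg)=\upo+C^2\upg^2$, and $C\upg=(\ell^C)^\dagger\in\Upg(E)$. Comparing $f$ with these explicit values and using only strict monotonicity of $\sigma$ on $\Upg(E)$ (Lemma~\ref{lem:11.8.29}) squeezes $2\phi'$ between multiples of $\upg$. This avoids newtonianity, Lemma~\ref{lem:ADH 14.2.18} and \eqref{eq:11.7.6} altogether; any Liouville closed Hardy field containing $H(\R)$ and $\phi$ would serve as $E$.

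The paper's route buys a sharper conclusion in the first case, namely $\phi'\sim\frac{1}{2}\sqrt{f-\sigma(\upg)}$. Your squeeze there only gives $\phi'\succ\upg$, which is all the lemma asserts. The paper's route also keeps the argument uniform with Lemma~\ref{lem:asymptotics of phi'}.

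The ``main obstacle'' you flag is not an obstacle. Lemma~\ref{lem:6.18} has already been applied in $H$, so nothing needs to transfer to $H(\R)$. The germ $u=(f-\upo)/\upg^2\in H$ is hardian with $u\asymp 1$ and $u>0$, so $u\sim c$ for some $c\in\R^>$ directly. Equivalently, compute in $E\supseteq\R$, where $\mathcal{O}_E=\R+\smallo_E$. Your parenthetical claim that $\max\Psi$ is unchanged in $H(\R)$ is not given by Lemma~\ref{lem:1.3.3+}: that lemma gives cofinality of $\Gamma_H$ in $\Gamma_{H(\R)}$, which concerns large elements, whereas $\max\Psi$ is governed by small positive ones. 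Since you never use that claim, drop it, together with the supremum fallback.
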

\begin{proof}
  Lemma~\ref{lem:6.18} yields
$\bar\omega(H) = \upo+\upg^2\smallo_{H}^{\downarrow}$, hence $f= {\upo+\upg^2 u}$ where~${u\succeq 1}$, $u>0$,
and so $f-\sigma(\upg)=\upg^2(u-1)$. If $u\succ 1$, then~$f-\sigma(\upg)\sim u\upg^2\succ\upg^2$, 
and   the proof of Lemma~\ref{lem:asymptotics of phi'} shows that then $\phi'\sim e/2$ where~$e^2=f-\sigma(\upg)$, so~$e\succ \upg$ and thus $\phi\succ \log\ell$. 
Now suppose $u\asymp 1$.
Take $c\in\R^>$ with $u\sim c$, and
put~$\ell_*:=\log \ell$,   $\upg_*:=\ell_*^\dagger$.
Then by  \eqref{eq:11.7.6}, 
$$f-\sigma(\upg_*)\  =\  \omega\big({-\upg^\dagger}\big)-\omega\big({-\upg_*^\dagger}\big)+u\upg^2-\upg_*^2\ \sim\  u\upg^2 \ \sim\  c\upg^2\ \succ\  \upg_*^2,$$
and arguing as in the proof of Lemma~\ref{lem:asymptotics of phi'} as before gives~$\phi \sim \frac{\sqrt{c}}{2} \log\ell$.
\end{proof}

\begin{example}
Let $H=\R\langle \ell\rangle$ where~$\ell:=\ell_n$,
and let
$f=\sigma(\upg_n)(=\upo_n+\upg_n^2)$.
Since also~$f=\sigma(2\phi')$, this gives $\phi=\frac{1}{2}\ell_{n+1}+d$  for some $d\in\R$,  so $\phi\sim\frac{1}{2}\log\ell$.
\end{example}

\begin{lemma}\label{boundphi2}
Suppose $f\in \sigma\big(\Upg(H)\big){}^\uparrow$ or $H$ is not $\upl$-free, and  
$u\in H^>$ is such that~$u\succ 1$ and  $v(u^\dagger)=\min\Psi_{H}$. Then $\phi\leq u^n$ for some $n\geq 1$.
\end{lemma}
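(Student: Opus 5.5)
The plan is to compare $2\phi'$ with a suitable power of $u$ using the monotonicity of $\sigma$ on $\Upg$ of a Liouville closed Hardy field (Lemma~\ref{lem:11.8.29}), and then integrate the resulting bound.

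First I record the growth bound that $u$ provides. Since $v(u^\dagger)=\min\Psi_H$, every $h\in H$ with $h\succ 1$ satisfies $h^\dagger\preceq u^\dagger$. By \eqref{eq:9.1.11} this gives $\abs{h}\le u^k$ for some $k$. Hence every $h\in H^\times$ satisfies $\abs{h}\le u^k$ and $\abs{h}^{-1}\le u^k$ for some $k$. I apply this to $h=f$, $h=u^\dagger$, and $h=(u^\dagger)'$ (after dealing separately with the degenerate cases, where these are $0$ or $\asymp 1$).

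Next I take a maximal Hardy field $E\supseteq H$ containing $g$ and $\phi$; these lie in a common Hardy field extension by Lemma~\ref{lem:parametrization of ker A}. Then $E\supseteq\R$ is Liouville closed, and by \eqref{eq:11.8.19} we have $y:=2\phi'\in\Upg(E)$. Lemma~\ref{lem:parametrization of ker A} also gives $\sigma(y)=f$.

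Now set $y_0:=u^n$ with $n$ large. Then $y_0^\dagger=nu^\dagger$ and $\omega(-y_0^\dagger)=2nu^{\dagger}{}'-n^2(u^\dagger)^2$, and both terms are bounded by a fixed power of $u$. So for $n$ large,
\[\sigma(y_0)\ =\ \omega(-y_0^\dagger)+y_0^2\ \sim\ u^{2n}\ >\ f .\]
Because $\Upg(E)$ is upward closed and contains $y$, and $y_0\succ u^\dagger$, we get $y_0\in\Upg(E)$. Strict monotonicity of $\sigma$ on $\Upg(E)$ then yields $2\phi'=y<y_0=u^n$.

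Finally I integrate. Pick $k$ with $1/u^\dagger\le u^k$, and put $m:=n+k$. Suppose towards a contradiction that $\phi>u^m$; since $\phi\succ 1$ this means $\phi\succeq u^m\succ 1$. Lemma~\ref{lem:9.1.4(ii)} in the asymptotic field $E$ then gives
\[\phi'\ \succeq\ (u^m)'\ =\ m\,u^m u^\dagger\ \succeq\ u^{n}.\]
I can sharpen this to $\phi'\succ u^n$, which contradicts $\phi'<u^n/2$, by using $m+1$ in place of $m$. Hence $\phi\le u^{m+1}$.

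The main obstacle is the bookkeeping in these last steps. One point is verifying $\sigma(y_0)\sim y_0^2$, which needs the power bound applied to $(u^\dagger)'$. The other is the l'H\^opital step, where the constant $n+k$ must be chosen so that the dominance is strict. The alternative hypothesis ($f\in\sigma(\Upg(H))^\uparrow$ or $H$ not $\upl$-free) should only be needed to guarantee the existence of $(g,\phi)$ with $f\notin\bar\omega(H)$ as in the standing assumptions. If the argument above secretly needs more, I would first replace $H$ by $\Dx(H)$, using Lemma~\ref{lem:g,phi unique} to place $\phi$ in a $\d$-algebraic extension where $\Gamma_H^<$ is cofinal (Theorem~\ref{thm:13.6.1}, Proposition~\ref{prop:Gehret}). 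This would let the power bound transfer from $H$ to that extension.
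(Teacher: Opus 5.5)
Your argument is correct, and it takes a genuinely different route from the paper.

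\textbf{The paper's route.} It first treats the case where $H\supseteq\R$ is real closed with asymptotic integration and $f\in\sigma\big(\Upg(H)\big){}^\uparrow$. There Lemma~\ref{lem:asymptotics of phi'}, which rests on the newtonian Lemma~\ref{lem:ADH 14.2.18}, gives $e\in H^>$ with $\phi'\sim e$. Asymptotic integration then gives $\theta\in H^\times$ with $\phi\asymp\theta$, and $\phi^\dagger\asymp\theta^\dagger\preceq u^\dagger$ together with \eqref{eq:9.1.11} finishes. The general case is reduced to this one by passing to $H(\R)^{\operatorname{rc}}$, or to an $\upo$-free extension from Lemma~\ref{lem:1.3.18}, or to a gap-creating extension via Lemma~\ref{lem:11.5.14}. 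Each time one checks that $\Gamma_H$ stays cofinal, so that $v(u^\dagger)$ remains the minimum of $\Psi$. The disjunctive hypothesis is exactly what makes these reductions possible.

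\textbf{Your route.} You bypass all of this. You compare $\sigma(2\phi')=f$ with $\sigma(u^n)\sim u^{2n}>f$ inside a maximal $E$. Since $\sigma$ is strictly increasing on the upward closed set $\Upg(E)$ (Lemma~\ref{lem:11.8.29} and \eqref{eq:11.8.19}), this gives $2\phi'<u^n$ directly. Your integration step with exponent $n+k+1$ is sound.

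One wording point: the membership $u^n\in\Upg(E)$ comes from $u^\dagger\in\Upg(H)\subseteq\Upg(E)$ and $u^n>u^\dagger$. The fact that $y\in\Upg(E)$ plays no role there.

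\textbf{What each route buys.} Yours is shorter and uses neither newtonianity nor any cofinality results. It also proves a stronger statement: the hypothesis ``$f\in\sigma\big(\Upg(H)\big){}^\uparrow$ or $H$ is not $\upl$-free'' is never used. Nor is it needed for the existence of $(g,\phi)$; that comes from the standing assumption $f\notin\bar\omega(H)$. In the excluded case $\bar\omega(H)<f<\sigma\big(\Upg(H)\big)$, your comparison with $u^\dagger$ in place of $u^n$ even gives $2\phi'<u^\dagger$. So your closing fallback via $\Dx(H)$ is unnecessary. The paper's route gives finer information: after the reductions, $\phi\asymp\theta$ for an explicit $\theta$ in the enlarged field, which is a genuine asymptotic comparison rather than only an upper bound.
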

\begin{proof} We have $H$-hardian $\phi\succ 1$,  but this is not enough to get $\theta\in H^\times$ with~$\phi\asymp \theta$. 
That is why we consider first the case that $H\supseteq\R$  is real closed with asymptotic integration, and
$f\in \sigma\big(\Upg(H)\big){}^\uparrow$.
Then Lemma~\ref{lem:asymptotics of phi'} gives $e\in H^>$  such that~$\phi'\sim e$, and as $H$ has asymptotic integration
we obtain $\theta\in H^\times$ with $\phi\asymp \theta$. Hence~${\phi^\dagger\asymp \theta^\dagger \preceq u^\dagger}$,
and   thus $\phi\leq u^n$ for some $n\geq 1$, by \eqref{eq:9.1.11}. 

We now reduce the general case to this special case. Take a maximal Hardy field extension $E$ of $H$ with $g,\phi\in E$.
Suppose $H$ is $\upl$-free; so $f\in \sigma\big(\Upg(H)\big){}^\uparrow$. Then $H(\R)$ is $\upl$-free with the same value group as $H$, by 
Lemma~\ref{lem:1.3.3+}, and by that same lemma $L:=H(\R)^{\operatorname{rc}}\subseteq E$ has asymptotic integration, with $\Psi_L=\Psi_H$, so~$v(u^\dagger)=\min\Psi_{L}$ and thus~${\phi\leq u^n}$ for some $n\geq 1$ by the special case applied to~$L$ in the role of $H$.
 
For the rest of the proof we  assume $H$ is not $\upl$-free. 
 Then $H(\R)^{\operatorname{rc}}\subseteq E$ is not $\upl$-free and
$v(u^\dagger)=\min\Psi_{H(\R)^{\operatorname{rc}}}$, by~Lemma~\ref{lem:1.3.3+}. Hence replacing~$H$ by~$H(\R)^{\operatorname{rc}}$
% and noting that~$\Gamma_H^>$ is cofinal in $\Gamma_L^>$,
we arrange that~$H\supseteq\R$ and $H$  is real closed in what follows. 
%If $H$ has asymptotic integration and $f\in \sigma\big(\Upg(H)\big){}^\uparrow$, then we are done.   

Suppose $H$ has no asymptotic integration. Lemma~\ref{lem:1.3.18}
yields an $\upo$-free Hardy subfield $L\supseteq H$ of $E$ such that $\Gamma_H$ is cofinal  in $\Gamma_L$, so~$v(u^\dagger)=\min \Psi_L$. Moreover, $f\in L\setminus\bar\omega(L)= \sigma\big(\Upg(L)\big){}^\uparrow$ by 
Lemma~\ref{omuplosc}.  
Hence replacing $H$ by~$L^{\operatorname{rc}}$ we have a reduction to the special case. 

Suppose $H$ has asymptotic integration. 
%and~$f\notin \sigma\big(\Upg(H)\big){}^\uparrow$. 
Since $H$ is not $\upl$-free, we have $\upl\in H$ such that  $\upl+g^{\dagger\dagger} \prec g^\dagger$ for all $g\succ 1$
in $H$. Take   $y\in E^\times$ with $y^\dagger=-\upl$. By Lemma~\ref{lem:11.5.14}, $vy$ is a gap in $H(y)$ 
and hence in $L:=H(y)^{\operatorname{rc}}$, and $\Gamma_H$ is cofinal in $\Gamma_L$, so $v(u^\dagger)=\min \Psi_L$. So $L$ has no asymptotic integration and thus we can replace
$H$ by $L$ to get a reduction to the ``no asymptotic integration'' case.  
\end{proof}

\noindent
By the  last two lemmas, combined with Corollary~\ref{naicor} and the remarks before it:

\begin{cor}
Suppose $a$, $b$ are $\d$-algebraic and $H=\Q\langle a,b\rangle$. Then 
$\Psi_H$ is finite and
$g,\phi\in\Dx(H)\subseteq\c^\omega$.
Moreover, if $h_0,h_1\in H^>$ with   $h_0,h_1\succ 1$ and~$v(h_0^\dagger)=\max \Psi_{H}$,
$v(h_1^\dagger)=\min \Psi_{H}$, then  $\frac{1}{n}\log h_0 \le\phi \le h_1^n$ for some $n\ge 1$.
\end{cor}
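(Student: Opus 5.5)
We work in the standing setting of this subsection: $f\notin\bar\omega(H)$ and $(g,\phi)$ is a pair of $H$-hardian germs parametrizing $V$. The plan is to assemble the statement from results already proved; we make no new asymptotic estimates.

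\emph{Finiteness of $\Psi_H$.} Since $a,b$ are $\d$-algebraic over $\Q$, the Hardy field $H=\Q\langle a,b\rangle$ has finite transcendence degree over $\Q$, hence over its constant field. By the remarks at the start of Section~\ref{sec:prelims} (Rosenlicht), $\Gamma_H$ then has finite rank, so $\Psi_H$ is finite. A finite nonempty $\Psi$ has a maximum, so $H$ is grounded. In particular $H$ has no asymptotic integration.

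\emph{Membership $g,\phi\in\Dx(H)\subseteq\c^\omega$.} Corollary~\ref{naicor} applies and gives $g,\phi\in\Dx(H)$; this is uniqueness up to constants. Every element of $H$ is a rational function of $a,b$ and their derivatives. Since $a,b$ are $\d$-algebraic over $\Q$ and hardian, they are analytic, and I would cite \cite[Section~7]{ADH2} for this. Then $H\subseteq\c^\omega$, and the subsection on smooth and analytic Hardy fields gives $\Dx(H)\subseteq\c^\omega$. This analyticity step is the one I am least sure of: it relies on the theorem from \cite{ADH2} that a $\d$-algebraic hardian germ is analytic, applied to $H$ itself. If that is unavailable, I would fall back on Lemma~\ref{lem:parametrization of ker A} with $f\in\c^\omega$.

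\emph{Lower bound.} Take $h_0$ with $v(h_0^\dagger)=\max\Psi_H$ and set $\upg:=h_0^\dagger$. Lemma~\ref{boundphi1} with $\ell:=h_0$ gives two cases. Either $\phi\succ\log h_0$, or $\phi\sim\frac{\sqrt c}{2}\log h_0$ for some $c>0$. In both cases $\phi\ge\frac1n\log h_0$ for all large $n$, using $\phi>0$ and $\log h_0>0$.

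\emph{Upper bound.} Take $h_1$ with $v(h_1^\dagger)=\min\Psi_H$. The hypothesis of Lemma~\ref{boundphi2} holds because $H$ lacks asymptotic integration and is therefore not $\upl$-free. That lemma with $u:=h_1$ gives $\phi\le h_1^n$ for some $n\ge1$. Taking the larger of the two $n$'s yields $\frac1n\log h_0\le\phi\le h_1^n$.

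The main obstacle is the analyticity claim in the membership step; everything else is a direct combination of Lemmas~\ref{boundphi1}, \ref{boundphi2} and Corollary~\ref{naicor}.
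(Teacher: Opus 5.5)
Your proposal is correct and is essentially the paper's own argument: Rosenlicht's finiteness of $\Psi_H$ shows that $H=\Q\langle a,b\rangle$ lacks asymptotic integration (hence is not $\upl$-free), and then Corollary~\ref{naicor}, Lemma~\ref{boundphi1} (with $\ell=h_0$) and Lemma~\ref{boundphi2} (with $u=h_1$) give exactly the stated conclusions. The analyticity step you flagged is unproblematic and needs no fallback: $\Dx(H)$ is a $\d$-algebraic Hardy field extension of $\Q\subseteq\c^\omega$, so the subsection on smooth and analytic Hardy fields (that is, \cite[Corollary~7.8]{ADH2}) gives $\Dx(H)\subseteq\c^\omega$ directly.
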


\noindent
The hypothesis of the next corollary is satisfied by $H=\R(x^{\R})\subseteq \Li(\R)$.
Recall that  $f > x^{-2}=\omega_0$, since~$f\notin\bar{\omega}(H)$ and $\omega_0\in \bar{\omega}(H)$.

\begin{cor}\label{cor:upper lower bd for phi}
Assume $x\in H$ and $v(1/x)=\max\Psi_H$.
Then $g,\phi\in\Dx(H)$.
Moreover, let $c\in\R^>$, $r\in\R^{\ge}$ be such that $f\sim cx^{-2+r}$.
If $r>0$, then $\phi\sim \frac{\sqrt{c}}{r}x^{r/2}$,
and if~$r=0$, then $c>1$ and $\phi\sim\frac{\sqrt{c-1}}{2}\log x$.
\end{cor}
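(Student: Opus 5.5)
The plan is to reduce to Lemma~\ref{boundphi1} for the first part and for the case $r=0$, and to Corollary~\ref{cor:2nd order, f succ 1/x^2} for $r>0$.

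First, $H$ is grounded with $\max\Psi_H=v(1/x)$, so it has no asymptotic integration. By Corollary~\ref{naicor}, $\Dx(H)$ is $\upo$-free and $g,\phi\in\Dx(H)$. This holds because $f\notin\bar\omega(H)$ is a standing assumption.

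For the asymptotics I apply Lemma~\ref{boundphi1} with $\ell:=x$, so $\upg=1/x$ and $\upg^\dagger=-1/x$. Then
\[
\upo=\omega(1/x)=-\bigl(2(1/x)'+x^{-2}\bigr)=x^{-2}.
\]
The lemma gives two alternatives. Either $f-x^{-2}\succ x^{-2}$ and $\phi\succ\log x$, or $f-x^{-2}\sim c'x^{-2}$ for some $c'\in\R^>$ and $\phi\sim\frac{\sqrt{c'}}{2}\log x$.

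If $r>0$, then $f\sim cx^{-2+r}$ gives $f\succ x^{-2}$ and $f>0$. Corollary~\ref{cor:2nd order, f succ 1/x^2}, applied to this pair $(g,\phi)$, gives directly $\phi\sim\frac{\sqrt{c}}{r}x^{r/2}$.

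If $r=0$, then $f\sim cx^{-2}$, so $f-x^{-2}$ is either $\asymp x^{-2}$ or $\prec x^{-2}$. The first alternative of Lemma~\ref{boundphi1} is therefore excluded, and the second holds with $f-x^{-2}\sim c'x^{-2}$. Comparing with $f\sim cx^{-2}$ forces $c'=c-1$, and since $c'>0$ this yields $c>1$. Hence $\phi\sim\frac{\sqrt{c-1}}{2}\log x$.

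The main point to check is this last comparison. The case $c=1$ must be excluded by the lemma itself: if $f-x^{-2}\prec x^{-2}$, neither alternative of Lemma~\ref{boundphi1} holds. Such an $f$ lies in $\upo+\upg^2\smallo^\downarrow\subseteq\bar\omega(H)$ by Lemma~\ref{lem:6.18}, contradicting $f\notin\bar\omega(H)$. So the dichotomy in Lemma~\ref{boundphi1} is exhaustive precisely because of the standing assumption, and the identification $c'=c-1$ follows from $f-x^{-2}\sim(c-1)x^{-2}$ when $c\neq 1$.
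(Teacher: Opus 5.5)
Your proof is correct and follows essentially the same route as the paper. For $r>0$ the paper cites Lemma~\ref{lem:2nd order, f succ 1/x^2} directly, of which Corollary~\ref{cor:2nd order, f succ 1/x^2} is the special case you use; for $r=0$ it applies Lemma~\ref{boundphi1} with $\ell=x$ just as you do; and the membership $g,\phi\in\Dx(H)$, left implicit in the paper, comes from Corollary~\ref{naicor} because $H$ is grounded. Your closing re-check of the dichotomy via Lemma~\ref{lem:6.18} is harmless but redundant, since Lemma~\ref{boundphi1} already asserts that one of its two alternatives holds.
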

\begin{proof} 
If $r>0$, then  $\phi\sim \frac{\sqrt{c}}{r}x^{r/2}$
by Lemma~\ref{lem:2nd order, f succ 1/x^2}. If $r=0$, then from Lemma~\ref{boundphi1} with $\ell_0:= x$
we obtain $c>1$,
$f-x^{-2}\sim (c-1)x^{-2}$, and~${\phi\sim\frac{\sqrt{c-1}}{2}\log x}$.
\end{proof}  

\begin{remark}  
Suppose $a=0$. Then $b=f/4$, and $g^2\phi'\in\R^>$ by Lemma~\ref{lem:parametrization of ker A}; hence bounds on~$\phi$ 
give bounds on $g$.
Thus by Corollary~\ref{cor:upper lower bd for phi}, if $b\in\R(x^{\R})$, then~$g\asymp f^{-1/4}$.
\end{remark}

\noindent
In \cite{ADHbf} we shall strengthen Corollary~\ref{cor:upper lower bd for phi}  in the case $a,b\in H=\R(x)$ by obtaining an asymptotic \marginpar{to check in [8]} 
expansion for $\phi$.  

{\sloppy
\begin{examplesNumbered}\label{ex:boundphi} Assume  $a=0$; so $f=4b$. 
\begin{enumerate}
\item  Let $c\in\R$, $c>1$, and $b=(c/4)x^{-2}\in H$. Then the standing assumption~${f\notin\bar{\omega}(H)}$ holds, since $f=cx^{-2}$. The germ $y=g\ex^{\imag \phi}$ with
$g:=x^{1/2}$ and~$\phi:=(1/2)\sqrt{c-1} \log x$ satisfies $4y''+fy=0$, so $$\big(x^{1/2},\ (1/2)\sqrt{c-1} \log x\big) \quad \text{parametrizes $V$.}$$
%$$y\ :=\ x^{1/2}\cos\left(\frac{\sqrt{c-1}}{2} \log x\right)\in\Gom$$ solves the corresponding   linear differential equation $4Y''+ fY=0$.
\item Let  $r\in\R$, $r>-1$, with $ x^r\in H$. Then $b:= \frac{1}{4}\big({x^{2r}-r(r+2)x^{-2}}\big)\in H$ and the standing assumption $f\notin \bar{\omega}(H)$ holds
in view of $f\sim x^{2r}\succ x^{-2}$. Here $y=g\ex^{\imag\phi}$ with $g:=x^{-r/2}$, $\phi:=x^{r+1}/(2r+2)$ satisfies $4y''+fy=0$, so
$$ \big(x^{-r/2},\ x^{r+1}/(2r+2)\big)\quad \text{parametrizes $V$.}$$
\end{enumerate}
\end{examplesNumbered}}

\subsection*{Liouvillian phase functions}
Suppose $f\notin\overline\omega(H)$.
Theorem~\ref{thm:Bosh} yields  a pair~$(g,\phi)$ of  $H$-hardian germs parametrizing $V$, and any such $H$-hardian germs
are $\d$-algebraic over $H$.
In this subsection we ask when~$g$,~$\phi$ can be taken   in a Liouville Hardy field extension of~$H$.  
{\samepage

\begin{prop}\label{prop:2nd order, Liouvillian solutions} 
Suppose  $H\supseteq\R$, and $H$ does not have asymptotic integration or~$H$ is $\upo$-free. Then the following are equivalent: 
\begin{enumerate}
\item[\textup{(i)}] $A(y)=0$ for some $y\neq 0$ in  a Liouville extension of $K$;
\item[\textup{(ii)}]  some pair $(g,\phi)\in \operatorname{Li}(H)^2$ with $g^\dagger$, $\phi'$ algebraic over~$H$ parametrizes $V$;
\item[\textup{(iii)}]  some pair in $\operatorname{Li}(H)^2$ parametrizes $V$;
\item[\textup{(iv)}] every pair of $H$-hardian germs pa\-ra\-me\-trizing $V$ lies in $\operatorname{Li}(H)^2$.
\end{enumerate}
\end{prop}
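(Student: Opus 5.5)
The cycle I plan to follow is (i)$\Rightarrow$(ii)$\Rightarrow$(iii)$\Rightarrow$(iv)$\Rightarrow$(iii)$\Rightarrow$(i). First I reduce to the case $a=0$, that is, $A=\der^2+f/4$. The germ $h>0$ with $h^\dagger=-\frac12 a\in H$ lies in $\operatorname{Li}(H)$, and $h^\dagger\in H$. Lemma~\ref{parphi}(i) then shows that $(g,\phi)$ parametrizes $V$ iff $(g/h,\phi)$ parametrizes $\ker(4\der^2+f)$. Each of (ii)--(iv) is invariant under this change, and so is (i), since $A_{\ltimes h}$ and $A$ have Liouvillian zeros simultaneously because $h$ is Liouvillian over $K$. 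Recall also that $f\notin\bar\omega(H)$ is in force. The hypothesis that $H$ has no asymptotic integration or is $\upo$-free gives that $\Dx(H)$ is $\upo$-free, as in the Remarks after Theorem~\ref{thm:Bosh} and in Corollary~\ref{naicor}. Theorem~\ref{thm:Bosh} then says that all $H$-hardian parametrizing pairs agree up to $(cg,\phi+d)$, which is exactly what (iii)$\Rightarrow$(iv) needs. The step (iv)$\Rightarrow$(iii) only needs existence, which Theorem~\ref{thm:Bosh} provides.

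For (iii)$\Rightarrow$(i), take $(g,\phi)\in\operatorname{Li}(H)^2$ parametrizing $V$. Then $y=g\ex^{\phi\imag}$ satisfies $A(y)=0$ and $y^\dagger=g^\dagger+\phi'\imag\in \operatorname{Li}(H)[\imag]$. This is the setting of Lemma~\ref{lem:Liouvillian zeros} (iii)$\Rightarrow$(i) with $f:=\log g+\phi\imag\in\operatorname{Li}(H)[\imag]$, so $y$ lies in a Liouville extension of $K$.

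The main work is (i)$\Rightarrow$(ii). By Lemma~\ref{lem:Liouvillian zeros}, (i) yields $f_0\in\operatorname{Li}(H)[\imag]$ with $A(\ex^{f_0})=0$ and $f_0'$ algebraic over $K$. Write $f_0=p+q\imag$ with $p,q\in\operatorname{Li}(H)$ and set $y=\ex^{p}\ex^{q\imag}$. If $q\preceq 1$, then $y\in\Dx(H)[\imag]$ by Corollary~\ref{cor:phi preceq 1}. Its real or imaginary part would then be a nonzero non-oscillating solution, contradicting $f\notin\bar\omega(H)$. Hence $q\succ1$, and replacing $f_0$ by $\bar f_0$ if necessary we may take $q>\R$. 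Put $g:=\ex^{p}>0$ and $\phi:=q$. Then $\bar y$ is also a zero of $A$ because $A$ is real, and $y,\bar y$ are $\C$-independent because $q\notin\R$. Arguing as in Proposition~\ref{prop:2nd order kernel}, $\Re y$ and $\Im y$ span $V$, so $(g,\phi)$ parametrizes $V$. Moreover $g^\dagger=p'$ and $\phi'=q'$ are the real and imaginary parts of $f_0'$, which is algebraic over $K=H[\imag]$, so both are algebraic over $H$. The step I expect to need the most care is exactly this passage from the complex exponential to a real phase: checking that $q\succ 1$ (handled by Corollary~\ref{cor:phi preceq 1}) and that the sign of $q$ can be fixed to obtain $\phi>\R$. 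Finally, (ii)$\Rightarrow$(iii) is trivial.
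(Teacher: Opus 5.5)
Your proof is correct and follows the paper's argument. You get (i)$\Rightarrow$(ii) from Lemma~\ref{lem:Liouvillian zeros}, with Corollary~\ref{cor:phi preceq 1} forcing $\phi\succ 1$ and then a sign normalization. You get (iii)$\Rightarrow$(iv) from the uniqueness part of Theorem~\ref{thm:Bosh}, using that $\Dx(H)$ is $\upo$-free. You return to (i) via existence plus Lemma~\ref{lem:Liouvillian zeros}, (iii)$\Rightarrow$(i), applied to $\log g+\phi\imag$.

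The preliminary reduction to $a=0$ is harmless but unnecessary. Lemma~\ref{lem:Liouvillian zeros} applies to $A=\der^2+a\der+b$ directly, as the paper uses it, and this also spares you justifying that twisting by $h$ preserves condition (i).
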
}
\begin{proof}
Suppose (i) holds. 
Then Lemma~\ref{lem:Liouvillian zeros}  gives $g, \phi\in L:=\operatorname{Li}(H)$, $g\neq 0$, such that
$g^\dagger$, $\phi'$ are algebraic over $H$ and $A(g\ex^{\phi\imag})=0$.
Replacing~$g$ by~$-g$ if necessary we arrange $g>0$.
We have $\phi\succ 1$: otherwise $g\ex^{\phi\imag}\in E[\imag]^\times$ for some Hardy field extension $E$ of $L$, by Corollary~\ref{cor:phi preceq 1}, hence $\Re(g\ex^{\phi\imag})\in V^{\neq}$ does not oscillate, or $\Im(g\ex^{\phi\imag})\in V^{\neq}$ does not oscillate,
a contradiction. Replacing $\phi$ by~$-\phi$ if necessary we arrange $\phi>\R$. Then $(g,\phi)$ parametrizes $V$.
This yields~(ii).  The implication~(ii)~$\Rightarrow$~(iii) is trivial.
By the assumptions on $H$ and the remarks following Theorem~\ref{thm:Bosh}, $\Dx(H)$ is $\upo$-free,  so (iii)~$\Rightarrow$~(iv) follows from Theorem~\ref{thm:Bosh}. Finally, (iv)~$\Rightarrow$~(i)
follows from (iii)~$\Rightarrow$~(i) in Lemma~\ref{lem:Liouvillian zeros}.
\end{proof}

\noindent
In the rest of this subsection we let $(g,\phi)$ be a pair of $H$-hardian germs parametrizing~$V=\ker_{\Calinf}A$.
We now bring in the   operator $B:=\der^3+f\der+(f'/2)\in H[\der]$ of order $3$, already studied in Section~\ref{sec:diffalg}, and observe:

\begin{lemma}\label{lem:B, 1}
$B\big(\frac{1}{\phi'}\big)=0$.
\end{lemma}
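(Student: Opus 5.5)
The plan is to reduce to the self-adjoint operator $4\der^2+f$, apply the product computation from Lemma~\ref{lem:Appell} to a conjugate pair of complex solutions, and identify their product with a constant multiple of $1/\phi'$ via the Wronskian.

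\emph{Step 1 (reduction).} Since $(g,\phi)$ parametrizes $V=\ker_{\Calinf}A$, Lemma~\ref{parphi}(i) shows that $(g/h,\phi)$ parametrizes $\ker_{\Calinf}(4\der^2+f)$. Put $q:=g/h>0$. Then
$$z\ :=\ q\,\ex^{\phi\imag}\ =\ q\cos\phi+\imag\, q\sin\phi$$
and its conjugate $\bar z=q\ex^{-\phi\imag}$ both lie in $\ker_{\Calinf[\imag]}(4\der^2+f)$, because their real and imaginary parts lie in $\ker_{\Calinf}(4\der^2+f)$.

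\emph{Step 2 (Appell).} The computation in the proof of Lemma~\ref{lem:Appell} uses only the relations $y''=-(f/4)y$ and $z''=-(f/4)z$ and the Leibniz rule. It therefore holds verbatim in the differential ring $\Calinf[\imag]$, and gives $B(y_1y_2)=0$ for any $y_1,y_2\in\ker_{\Calinf[\imag]}(4\der^2+f)$. Applying this to $z$ and $\bar z$ yields
$$B(q^2)\ =\ B(z\bar z)\ =\ 0.$$

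\emph{Step 3 (Wronskian).} For the equation $4Y''+fY=0$ the Wronskian of two solutions has zero derivative, so $w:=z\bar z'-z'\bar z$ is a constant. A direct computation, using $z'=(q^\dagger+\phi'\imag)z$ and $\bar z'=(q^\dagger-\phi'\imag)\bar z$, gives
$$w\ =\ -2\imag\,\phi' q^2.$$
This shows that $q^2\phi'=c\in\R$. Since $q>0$ and $\phi'>0$ (as $\phi>\R$ is hardian), we have $c>0$. This is essentially the statement $g^2\phi'\in\R^>$ of Lemma~\ref{lem:parametrization of ker A}, recomputed here. The recomputation is needed because $g/h$ need not a priori lie in a common Hardy field with $\phi$, so that lemma cannot be quoted directly.

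\emph{Conclusion.} From Step 3 we get $1/\phi'=q^2/c$. By $\R$-linearity of $B$ and Step 2, $B(1/\phi')=B(q^2)/c=0$.

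The only delicate point is Step 2: Lemma~\ref{lem:Appell} is stated for a differential field $K$, whereas $z$ and $\bar z$ are oscillating germs in $\Calinf[\imag]$ that lie in no such field. I would handle this by noting explicitly that its proof is a pure ring computation and so carries over unchanged. Everything else is routine.
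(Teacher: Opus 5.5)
Your proof is correct and is essentially the paper's. Both proofs apply the product identity of Lemma~\ref{lem:Appell} to the conjugate pair of complex solutions $q\ex^{\pm\phi\imag}$. The paper normalizes $q=(\phi')^{-1/2}$ from the start (using $\sigma(2\phi')=f$ and Lemma~\ref{parphi}(ii)); to quote Lemma~\ref{lem:Appell} as stated, it enlarges $H$ to a Liouville closed Hardy field containing $\phi$ and works in the differential fraction field of the integral domain $K[\ex^{\phi\imag},\ex^{-\phi\imag}]$. You instead note, correctly, that the needed half of that lemma is a ring identity valid in $\Calinf[\imag]$, and you get the normalization from the Wronskian.

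Your stated reason for recomputing $q^2\phi'\in\R^>$ is wrong, though harmless. Lemma~\ref{lem:parametrization of ker A} does not presuppose a common Hardy field, and $g/h$ is $H$-hardian, since $h\in\Li\big(H(\R)\big)\subseteq\Dx(H)$ lies in every maximal Hardy field extension of $H$. So that lemma applies directly to $(g/h,\phi)$, as in the proof of Corollary~\ref{naicor}.
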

\begin{proof}
We  arrange that $H\supseteq\R$ contains $\phi$ and is Liouville closed.
By remarks before Lemma~\ref{lem:kerB} the differential subring $K[\ex^{\phi\imag},\ex^{-\phi\imag}]$ of $\Calinf[\imag]$
is an integral domain. Let~$L$ be its differential fraction field.
Then
$$(\phi')^{-1/2}\ex^{\phi\imag}, (\phi')^{-1/2}\ex^{-\phi\imag}\in \ker_{L}4\der^2+f.$$
Thus $B(1/\phi')=0$ by Lemma~\ref{lem:Appell} applied to $L$ in the role of $K$.
\end{proof}

% --- Below is an older version of this proof.
%\begin{proof}
%We  arrange that $H\supseteq\R$ contains $\phi$ and is Liouville closed, and identify the universal  exponential extension  $\Univ=\Univ_K$ of~$K=H[\imag]$   with a differential subring of~$\Calinf[\imag]$ as explained in Section~\ref{sec:prelims}, with differential fraction field $\Omega=\Omega_K$. Then
%$$(\phi')^{-1/2}\ex^{\phi\imag}, (\phi')^{-1/2}\ex^{-\phi\imag}\in \ker_{\Univ}4\der^2+f.$$
%Thus $B(1/\phi')=0$ by Lemma~\ref{lem:Appell} applied to $\Omega$ in the role of $K$.
%\end{proof}

\begin{lemma}\label{lem:B, 2}
Let $E$ be a  Hardy field extending  $H\langle \phi'\rangle$.
Then $\ker_E B = C_E\frac{1}{\phi'}$.
\end{lemma}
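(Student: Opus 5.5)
We aim to reduce to Lemma~\ref{lem:kerB}. By Lemma~\ref{lem:B, 1} we already have $B(1/\phi')=0$, and $1/\phi'\in E$; so $C_E\frac{1}{\phi'}\subseteq\ker_E B$. It remains to show $\dim_{C_E}\ker_E B\le 1$. Since $C_E\subseteq\R$ and kernels only grow under extension of Hardy fields, we may enlarge $E$. First enlarge $E$ to a Liouville closed Hardy field containing $\R$, e.g.\ $\Li\big(E(\R)\big)$. The constant field then becomes $\R$. Also any $C_E$-linearly independent elements of $\ker_E B$ remain $\R$-linearly independent in the bigger field, because a Hardy field $E$ is linearly disjoint from $\R$ over $C_E$ (constants are algebraically closed, and linear disjointness from constants is standard). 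So it suffices to treat $E\supseteq\R$ Liouville closed.

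Now set $y:=2\phi'\in E^\times$. By Lemma~\ref{lem:parametrization of ker A} we have $\sigma(y)=f$. To apply Lemma~\ref{lem:kerB} with $E$ in the role of $H$ (which is real closed with $E^\dagger=E$, as required in that subsection), we need $f\notin\omega(E)$. Suppose $f\in\omega(E)$. Then $4\der^2+f$ splits over $E$ by \eqref{eq:omega}. Since $E$ is Liouville closed, Lemma~\ref{lem:2.4.5} then gives $\dim_\R\ker_E(4\der^2+f)=2$, so all solutions of $4Y''+fY=0$ are hardian and hence non-oscillating. But $f\notin\bar\omega(H)$ means $f/4$ generates oscillation, a contradiction. (Alternatively: $\omega(E)\subseteq\bar\omega(E)$ and $\bar\omega(E)\cap H=\bar\omega(H)$.)

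Lemma~\ref{lem:kerB} then yields $\dim_{\R}\ker_E B=1$. Hence $\ker_E B=\R\frac{1}{\phi'}$ for the enlarged $E$. Intersecting back with the original $E$ and using $C_E$-linear disjointness from $\R$, we get $\ker_E B=C_E\frac{1}{\phi'}$.

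The main obstacle is the descent from the enlarged field to the original $E$, i.e.\ passing from an $\R$-dimension bound to a $C_E$-dimension bound. A cleaner alternative avoids this: if $z\in\ker_E B$, then $z$ and $1/\phi'$ both lie in $\ker B$ of the big field, so $z=c/\phi'$ with $c\in\R$. Then $c=z\phi'\in E$ and $c'=0$, so $c\in C_E$.
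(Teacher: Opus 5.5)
Your proof is correct. It shares the paper's overall skeleton: enlarge $E$, apply Lemma~\ref{lem:kerB}, and use Lemma~\ref{lem:B, 1} for the inclusion $C_E\frac{1}{\phi'}\subseteq\ker_E B$. Where it differs is in how you get the hypotheses of Lemma~\ref{lem:kerB}.

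The paper enlarges $E$ to a maximal Hardy field. That field is Schwarz closed by Corollary~\ref{cor:maxhardymainthm, 1}, which rests on the main theorem of \cite{ADH2}. It then uses $\bar\omega(H)=\omega(E)\cap H$ and $E=\omega(E)\cup\sigma(E^\times)$ to obtain $f\in\sigma(E^\times)\setminus\omega(E)$.

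You only pass to $\Li\big(E(\R)\big)$, which is all Lemma~\ref{lem:kerB} needs (real closed with $E^\dagger=E$). You take the explicit witness $2\phi'$ with $\sigma(2\phi')=f$, which is already available from Lemma~\ref{lem:parametrization of ker A}, strictly via Lemma~\ref{parphi}(i) for the twist by $h$, as the paper also glosses. You then show $f\notin\omega(E)$ using Lemma~\ref{lem:2.4.5} together with the fact that $f/4$ generates oscillation. This makes the step independent of the deep input from \cite{ADH2}; that input is needed only for the existence of $(g,\phi)$, which is a standing hypothesis here. You also make the descent back to the original $E$ explicit, whereas the paper hides it in ``we arrange''. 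The paper's version is shorter given the machinery it has already assembled.

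One small remark on your first paragraph. The linear disjointness of $E$ and $\R$ over $C_E$ does hold, but not because constants are algebraically closed. The relevant fact is the Wronskian-based linear disjointness of a differential subfield from the constants of an extension. Your closing ``cleaner alternative'' makes this unnecessary: from $z=c/\phi'$ with $c\in\R$ you get $c=z\phi'\in E$ and $c'=0$, hence $c\in C_E$. That settles the descent rigorously.
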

\begin{proof}
By Corollary~\ref{cor:maxhardymainthm, 1} we arrange that $E\supseteq\R$ is Schwarz closed.
Then~$f\notin\bar{\omega}(H)=\omega(E)\cap H$,
so $f\in\sigma(E^\times)$, and thus $\dim_{C_E} \ker_E B=1$ by Lem\-ma~\ref{lem:kerB}.
\end{proof}

\noindent
We can now complement Proposition~\ref{prop:2nd order, Liouvillian solutions}: 

\begin{cor}\label{cor:phi' quadratic}
Suppose   $\phi'$ is algebraic over $H$. Then   $(\phi')^2\in H$ and $g^\dagger\in H$.
\end{cor}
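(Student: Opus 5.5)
Let $F:=H(\phi')$, a finite algebraic extension of $H$ inside some Hardy field; since $\phi'$ is algebraic over $H$, $F$ is a Hardy field and $F\langle\phi'\rangle=F$. The plan is a Galois-conjugation argument on the kernel of $B$.

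First reduce to the case $H$ real closed with $H\supseteq\R$. Work inside the real closure $H^{\operatorname{rc}}$ of $H(\R)$, which lies in $\Dx(H)$. Then $\phi'\in H^{\operatorname{rc}}$, because $\phi'$ is algebraic over $H$ and lies in a common Hardy field with $H^{\operatorname{rc}}$; for instance, both sit in any maximal Hardy field extension containing $\phi'$.

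The core is Lemma~\ref{lem:B, 2} applied to $E:=H^{\operatorname{rc}}\supseteq H\langle\phi'\rangle$. It gives $\ker_E B=\R\cdot\frac{1}{\phi'}$.

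Next let $\tau$ be any field automorphism of the algebraic extension $M$ of $H$ generated by $\phi'$ and its conjugates. We can take $M\subseteq H^{\operatorname{rc}}[\imag]$, which is algebraically closed. Since $\der$ extends uniquely to algebraic extensions, $\tau$ commutes with $\der$, and it fixes $f,f'\in H$. Hence $\tau(1/\phi')\in\ker_{M}B$.

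Now use Lemma~\ref{lem:kerB}, or rather its proof applied over $H^{\operatorname{rc}}[\imag]$: the kernel of $B$ in $H^{\operatorname{rc}}[\imag]$ is $\C\cdot(1/y)$ with $y=2\phi'$, and this holds because the other two basis elements involve the transcendental $e(y)$. So every conjugate of $1/\phi'$ over $H$ is a constant multiple $c_\tau/\phi'$ with $c_\tau\in\C$.

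Then pass to norms. The conjugates of $\phi'$ are $\phi'/c_\tau$. The minimal polynomial of $\phi'$ over $H$ has degree $d$, and its constant term is $\pm(\phi')^d\prod_\tau c_\tau^{-1}$. Hence $(\phi')^d\in \C H\cap H^{\operatorname{rc}}$, and since this element is real, $(\phi')^d\in H\cdot\R$. That is not yet enough to conclude $(\phi')^2\in H$.

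To pin down the degree, note the map $\tau\mapsto c_\tau$ is a group homomorphism into $\C^\times$, so the $c_\tau$ are roots of unity. Each conjugate $\phi'/c_\tau$ must also satisfy $\sigma(2\cdot\phi'/c_\tau)=f$, since $\sigma$ is a differential rational function over $\Q$ and $f\in H$ is fixed. Expanding,
\[
\sigma(cy)=\omega(-y^\dagger)+c^2y^2.
\]
Comparing with $\sigma(y)=f$ forces $c^2=1$. So every conjugate of $\phi'$ is $\pm\phi'$, hence $\tau((\phi')^2)=(\phi')^2$ for all $\tau$, giving $(\phi')^2\in H$. The only caveat is that $H$ may have been enlarged to $H(\R)$. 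For that I use that $(\phi')^2$ is real-algebraic over $H$ with all conjugates equal over $H$ itself; running the argument with $M$ the normal closure over $H$, not over $H(\R)$, avoids the issue.

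Finally, Lemma~\ref{lem:parametrization of ker A} (via Lemma~\ref{parphi}(i)) gives $(g/h)^2\phi'\in\R^>$. Hence $2g^\dagger=2h^\dagger-(\phi')^\dagger=-a-\tfrac12\big((\phi')^2\big)^\dagger\in H$.

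The step I expect to be the main obstacle is justifying that conjugates of $1/\phi'$ lie in the one-dimensional kernel, which requires $B$'s kernel in the algebraic closure of $H$ to be one-dimensional. I would secure this by Lemma~\ref{lem:kerB} over a Schwarz closed $E$ (using $f\in\sigma(E^\times)\setminus\omega(E)$), noting that its proof shows $\ker_{E[\imag]}B=C(1/y)$.
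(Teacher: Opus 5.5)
Your proof is correct and is essentially the paper's argument. You conjugate $1/\phi'$ by automorphisms over the original $H$ inside $H^{\operatorname{rc}}[\imag]$, and use Lemmas~\ref{lem:B, 1} and~\ref{lem:B, 2} to get $\tau(\phi')=\phi'/c$ with $c\in\C^\times$. Then $\sigma(cy)-\sigma(y)=(c^2-1)y^2$ forces $c=\pm 1$, and $(g/h)^2\phi'\in\R^>$ gives $g^\dagger\in H$.

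Several detours can simply be dropped:
\begin{itemize}
\item the norm computation;
\item the unjustified claim that $\tau\mapsto c_\tau$ is a homomorphism;
\item the opening ``reduction to $H$ real closed'', which as you effectively note at the end must not replace $H$ itself;
\item re-running the proof of Lemma~\ref{lem:kerB}. Since $B$ has real coefficients, the real and imaginary parts of $\tau(1/\phi')$ already lie in $\ker_E B$, and Lemma~\ref{lem:B, 2} applies directly, as in the paper.
\end{itemize}
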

\begin{proof}
Let $E:=H^{\operatorname{rc}}\subseteq\Calinf$, so  $E\supseteq H\langle \phi'\rangle$.
Set  $L:=E[\imag]\subseteq \Calinf[\imag]$, so $L$ is an algebraic closure of the differential field~$H$.
Put
$u:=2\phi'\in E$, and
let~$\tau\in\operatorname{Aut}(L|H)$.
Then~$B(\tau(1/u))=0$ by Lemma~\ref{lem:B, 1}. So $\Re\tau(1/u)$ and~$\Im\tau(1/u)$
in~$E$ are also zeros of~$B$, hence
Lemma~\ref{lem:B, 2} yields $c\in\C^\times$ with~$\tau(1/u)=c/u$ and thus~$\tau(u)=c^{-1}u$.
Now with $$P(Y)\ :=\  2YY''-3(Y')^2+Y^4-fY^2\in H\{Y\}$$ we have $P(u)=0$, so $P(\tau(u))=0$, hence
$$0\ =\ P(u)-c^2P(\tau(u))\ =\ P(u)-c^2P(c^{-1}u)\ =\ (1-c^{-2})u^4$$
and  thus $c\in\{-1,1\}$, so $\tau(u^2)=u^2$. This proves the first statement.
The second statement follows from the first and $g^2\phi'\in\R^>$ by Lemma~\ref{lem:parametrization of ker A}.
\end{proof}

\noindent
Proposition~\ref{prop:2nd order, Liouvillian solutions} and Corollary~\ref{cor:phi' quadratic} yield Corollary~3 from the introduction, which is applied  to the Bessel equation in \cite{ADHbf}. 
We note that there is an algorithm which, upon input of a rational function
$f\in\Q(x)$,
decides whether some pair of germs in $\Li\!\big(\R(x)\big)$ parametrizes
$V=\ker_{\Calinf}(4\der^2+f)$: by Proposition~\ref{prop:2nd order, Liouvillian solutions} this is the same as
deciding whether condition (i) in that proposition holds for $H=\R(x)$ and~$A=\der^2+(f/4)$, and it is well-known that
this can be done algorithmically~\cite{Kovacic}.\marginpar{Kovacic result taken on faith} 
%in connection to   results of Liouville. %\cite{Liouville41}. 

\section{Distribution of Zeros and Critical Points} \label{sec:zeros}

\noindent
{\it In this section $H$ is a Hardy field.}\/
 Consider again a differential equation
\begin{equation} 
\tag{$\tilde{\operatorname{L}}$}
Y''+aY'+bY\ =\ 0\qquad (a,b\in H).
\end{equation}
Below we use Theorem~\ref{thm:Bosh}
to show
that for any oscillating solution~$y\in\Calinf$ of \eqref{eq:2nd order, app}
the sequence of successive zeros  (and the sequence of successive critical points) of~$y$ grows very regularly, with growth comparable to that of a hardian function. 
(For the equation $Y''+fY=0$,   where $f\in\Ex(\Q)$ generates oscillation, this was suggested
after~\cite[\S{}20, Conjecture~4]{Boshernitzan82}.)
To make this precise we first  
define a preordering~$\leq$ on the set  $\R^{\N}$ of sequences of real numbers  by
$$(s_n)\leq (t_n)\quad:\Longleftrightarrow\quad s_n \leq t_n \text{ eventually} 
\quad:\Longleftrightarrow\quad \exists m\,\forall n\geq m\  s_n\leq t_n.$$
(A preordering on a set is a reflexive and transitive  binary relation on that set.)
We say that $(s_n),(t_n)\in\R^{\N}$ are {\bf comparable} if $(s_n) \leq (t_n)$ or $(t_n) \leq (s_n)$. 
The induced equivalence relation $=_{\ex}$ on $\R^{\N}$ is that of having the same tail: 
$$(s_n) =_{\ex}(t_n) \quad:\Longleftrightarrow\quad (s_n) \leq (t_n) \text{ and }(t_n)\leq (s_n)\ \Longleftrightarrow\ s_n=t_n \text{ eventually}. $$
For any germ $f\in \c$ we take a representative in $\c_0$, denoted here also by $f$ for convenience, and associate to this germ the tail of the sequence $\big(f(n)\big)$; note that this tail is independent of the choice of representative. 
Thus if the germs of~${f,g\in\c_0}$ are contained in a common Hardy field, then the sequences~$\big(f(n)\big)$,~$\big(g(n)\big)$ are comparable. 
%\marginpar{for ``limit point'', see 5.2.10} 

\begin{definition}
Given an infinite set $S\subseteq\R$ with a lower bound in $\R$ and without a limit point, the {\bf enumeration}\/ of $S$
is the strictly increasing sequence~$(s_n)$ with~$S=\{s_0,s_1,\dots\}$ (so $s_n\to+\infty$ as $n\to+\infty$).
 \end{definition}
 
 \noindent
We take representatives of $a$, $b$ in $\c^1_e$ with $e\in \R$, denoting these by $a$ and $b$ as well, 
and set~$f:=-2a'-a^2+4b\in \c_e$.
Let $y\in \c_e^2$ be oscillating with 
$$y''+ay'+by\ =\ 0\qquad  \text{(so the germ of $f$ does not lie in $\bar{\omega}(H)$ and is thus $>x^{-2}$).}$$
The set $y^{-1}(0)$ of zeros of $y$ is infinite and has no limit point (see Section~\ref{sec:germs}). Let~$(s_n)$ be the enumeration of $y^{-1}(0)$. Theorem~\ref{thm:Bosh} 
yields~${e_0\geq e}$, $g\in \big(\c^2_{e_0}\big)^\times$, and $\phi\in\c_{e_0}^2$ such that $\phi'(t)>0$ for all $t\in [e_0,+\infty)$,  $y|_{[e_0,+\infty)}=g\cos \phi$, and~$g$,~$\phi$ lie in a common Hardy field extension of $H$ with the pair~$(g,\phi)$ pa\-ra\-me\-tri\-zing $\ker_{\Calinf}(\der^2+a\der+b)$ (where~$g$,~$\phi$ also
denote their own germs). 
The sequence $(s_n)$ can be interpolated by a strictly increasing function with hardian germ:

 \begin{lemma}\label{lem:zeta}
There is a strictly increasing $\zeta\in\c_{n_0}$ \textup{(}$n_0\in\N$\textup{)} 
 such that  $s_n=\zeta(n)$ for all $n\geq n_0$ and the germ of $\zeta$ is hardian with $H$-hardian compositional inverse.
 \end{lemma}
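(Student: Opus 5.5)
Since $y=g\cos\phi$ on $[e_0,+\infty)$ with $g(t)\neq 0$ there, the zeros of $y$ in $[e_0,+\infty)$ are exactly the $t\ge e_0$ with $\phi(t)\in \frac{\pi}{2}+\Z\pi$. Because $\phi$ is strictly increasing on $[e_0,+\infty)$ with $\phi(t)\to+\infty$, these zeros are enumerated by solving $\phi(t)=\frac{\pi}{2}+k\pi$ for all sufficiently large integers $k$. The plan is to set
$$\ell\ :=\ \frac{\phi}{\pi}-\frac12,$$
so that the zeros of $y$ beyond $e_0$ are precisely the points $\ell^{\inv}(k)$ with $k\ge \ell(e_0)$ an integer. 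Then I compare the indexing of $(s_n)$ with this integer indexing.

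First, there is a fixed integer offset $m\in\Z$ such that $s_n=\ell^{\inv}(n+m)$ for all sufficiently large $n$. Only finitely many $s_n$ lie in $[e,e_0)$, by the no-limit-point property. Both sequences list the same set in increasing order, so they differ eventually only by a shift. Now define $\zeta(t):=\ell^{\inv}(t+m)$, that is, $\zeta=\ell^{\inv}\circ(x+m)$, on $[n_0,+\infty)$ for suitable $n_0\in\N$. This $\zeta$ is continuous and strictly increasing, and $\zeta(n)=s_n$ for $n\ge n_0$.

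It remains to verify the hardian properties, which use only the composition results of Section~\ref{sec:prelims}. Since $g,\phi$ lie in a common Hardy field extension $E$ of $H$, the germ $\ell$ lies in $E(\R)$, and $\ell'=\phi'/\pi>0$. Lemma~\ref{lem:Bosh6.5} therefore applies to $\ell$ (with $E$ in the role of $H$), so $\ell^{\inv}$ is hardian. By Example~\ref{ex:Hcircx+d}, composing on the right with $x+m$ preserves hardianness, since $\ell^{\inv}\circ(x+m)$ lies in the Hardy field $\mathcal H\circ(x+m)$ whenever $\ell^{\inv}$ lies in $\mathcal H$. Hence $\zeta$ is hardian.

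The compositional inverse is $\zeta^{\inv}=\ell-m=\frac{\phi}{\pi}-\frac12-m$, which lies in $E(\R)$, a Hardy field extension of $H$. So $\zeta^{\inv}$ is $H$-hardian.

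The only mildly delicate point is the bookkeeping of the offset $m$ together with the domains, namely choosing $n_0\ge e_0$ large enough that $\zeta$ is defined and the indexing matches; there is no real obstacle. Note also that we do not claim $\zeta$ itself is $H$-hardian: composition with $\ell^{\inv}$ is exactly what can leave the class of $H$-hardian germs, which is why the statement asks only that $\zeta$ be hardian.
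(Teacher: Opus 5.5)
Your proof is correct and is essentially the paper's: the paper takes $\zeta$ to be the compositional inverse of $(\phi/\pi)-k_0$, with $k_0\in\frac12+\Z$ chosen so that $\phi(s_n)=(k_0+n)\pi$ for $n\ge n_0$ (your $k_0=m+\frac12$). It gets hardianness of $\zeta$ directly from Lemma~\ref{lem:Bosh6.5} applied to $(\phi/\pi)-k_0$, so your detour through Example~\ref{ex:Hcircx+d} for the shift by $m$ is valid but unnecessary.
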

\begin{proof}
Take~$n_0\in \N$ such that $s_n\geq e_0$ for all $n\geq n_0$, and then $k_0\in\frac{1}{2}+\Z$ such that~$\phi(s_n)=(k_0+n)\pi$ for all $n\geq n_0$. Thus $n_0=\big(\phi(s_{n_0})/\pi\big)-k_0$. Let
$\zeta \in\c_{n_0}$ be the compositional
inverse of~$(\phi/\pi)-k_0$ on $[s_{n_0},+\infty)$. Then $\zeta$ has the desired properties: the germ of $\zeta$ is hardian by Lemma~\ref{lem:Bosh6.5}.  
\end{proof}

%\begin{remark}[``continuous ranking'' of zeros; cf.~\cite{Muldoon,Vosmansky}]
%Let $d$ range over $\R$ and put~$y_d:=g\cos(\phi+d\pi)\in\c_{e_0}^2$, so $y_0=y|_{[e_0,+\infty)}$ and~$y_{-1/2}=g\sin\phi$. 
%With~$a$,~$b$ also denoting their restrictions to $[e_0,+\infty)$, we have~$y_d''+ay_d'+by_d = 0$. 
%Now let~${\zeta\in\c_{n_0}}$~\textup{(}${n_0\in\N}$\textup{)} be as in Lemma~\ref{lem:zeta},
%and consider the continuous function 
%$$(d,t)\mapsto\zeta_d(t):=\zeta(t-d)\,\colon\, \big\{ (d,t): t\ge d+n_0 \big\} \to \R.$$
%For every $d$ 
 %the  function $t\mapsto\zeta_d(t) \colon [d+n_0,+\infty)\to\R$ is continuous and strictly increasing 
 %with hardian germ
 %(see Example~\ref{ex:Hcircx+d}),
  %and 
  %$$y_d^{-1}(0)\cap \R^{\ge s_{n_0}} = \big\{\zeta_d(n) : n\ge d+n_0\big\}.$$
  %We don't know whether the germs of the   $\zeta_d$ are contained in a common Hardy field.
% %\marginpar{what if $a$, $b$ are in $\R(x)$?}
%\end{remark}

\noindent
If $a,b\in \Ginf$, then we can choose~$\zeta$ in Lemma~\ref{lem:zeta} such that its germ is in~$\Ginf$;
likewise with $\Gom$ in place of $\Ginf$. We do not know whether $\zeta$ in Lemma~\ref{lem:zeta} can always be chosen
to have $H$-hardian germ. 
In the next corollary we let $H_0:=\R(x)$. Since $H_0\cup\{a,b\}\subseteq \Dx(H)$, we have the Hardy subfield $H_0\langle a, b\rangle$ of $\Dx(H)$. Note that any hardian element of $\c$ is actually $H_0$-hardian. 

\begin{cor}\label{cor:zeta d-alg}
Suppose $a$, $b$ are $\d$-algebraic, and put $d:=\operatorname{trdeg}\!\big(H_0\langle a,b\rangle|H_0)\in\N$. We can choose the germ of $\zeta$ in Lemma~\ref{lem:zeta} to be $\d$-algebraic   such that
$$\operatorname{trdeg}\!\big(H_0\langle \zeta\rangle|H_0)\ \le\  d+2,$$ and then there are  $c\in\R^>$ and
$e,r,s\in\N$ such that   $r+s\le d+2$ and
$$(\log_{r} x)^c\ <\  \zeta\ <\ \exp_s (x^e),$$ so $(\log_r n)^c \le s_n < \exp_s (n^e)$ eventually. $($Here $\log_r$ indicates the $r$ times iterated real logarithm function, and likewise for $\exp_s.)$
\end{cor}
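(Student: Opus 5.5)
We construct $\zeta$ as in the proof of Lemma~\ref{lem:zeta}, as the compositional inverse of $\ell:=(\phi/\pi)-k_0$. We then bound its transcendence degree via Lemma~\ref{lem:trdegellinv}, and get the growth bounds from Rosenlicht-type bounds for Hardy fields of finite transcendence degree.

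\emph{Transcendence degree.} Work in the Hardy field $E:=H_0\langle a,b\rangle\langle \phi\rangle$, which lies in a Hardy field extension of $H$ containing $\phi$ and $x$. By Lemma~\ref{parphi}(i) and Lemma~\ref{lem:parametrization of ker A}, $z:=2\phi'$ satisfies $\sigma(z)=f$, where $f=-2a'-a^2+4b\in H_0\langle a,b\rangle$. Thus $P(z)=0$ for the order-$2$ differential polynomial $P(Y)=2YY''-3(Y')^2+Y^4-fY^2$, which is nonzero over $H_0\langle a,b\rangle$. Hence
\[
\operatorname{trdeg}\!\big(H_0\langle a,b\rangle\langle\phi'\rangle|H_0\langle a,b\rangle\big)\le 2,
\]
and adjoining $\phi$ itself costs at most one more. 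So $\operatorname{trdeg}\big(H_0\langle a,b,\phi\rangle|H_0\big)\le d+3$.

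To reach $d+2$, we do not count $\phi$ separately. By Lemma~\ref{lem:trdegellinv}, with $\ell$ in place of $\ell$ (note $\ell'=\phi'/\pi$),
\[
\operatorname{trdeg}\!\big(\R\langle x,\zeta\rangle|\R\big)=\operatorname{trdeg}\!\big(\R\langle x,\ell\rangle|\R\big).
\]
The right side is at most $1+\operatorname{trdeg}\big(\R\langle x,\phi'\rangle|\R\big)$, since $\ell$ is one integration of $\phi'$. It remains to bound $\operatorname{trdeg}\big(\R\langle x,\phi'\rangle|\R\big)$ by $d+2$. Then
\[
\operatorname{trdeg}\!\big(H_0\langle\zeta\rangle|H_0\big)=\operatorname{trdeg}\!\big(\R\langle x,\zeta\rangle|\R\big)-1\le d+2.
\]

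The main obstacle is this bookkeeping. A naive count gives $\operatorname{trdeg}(\R\langle x,\phi'\rangle|\R)\le 1+d+2$, one too many. The plan is to work with $\zeta$ directly instead. From $\ell\circ\zeta=x$ we get $\zeta'=\pi/(\phi'\circ\zeta)$, and $\phi'\circ\zeta$ satisfies the conjugated equation $P^{\circ}$, whose coefficients are $a\circ\zeta$, $b\circ\zeta$, and their derivatives. Alternatively, apply Lemma~\ref{lem:trdeghcirc} with $h:=\phi'$ and the above. If this still gives $d+3$, I would accept whatever bound $d'\le d+3$ the argument yields and adjust $r+s\le d'$ below accordingly. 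The d-algebraicity of $\zeta$ follows in either case.

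\emph{Growth bounds.} Apply Rosenlicht's theorem \cite[Theorem~5.25]{ADH5} to the Hardy field $F:=H_0\langle\zeta\rangle$, which has $\operatorname{trdeg}(F|\R)\le d+3$ and contains $x$. It gives $h_0,h_1\in F$ and iterated logarithms and exponentials, with $r+s$ bounded by the rank, such that $\log_r x\preceq \zeta\preceq \exp_s x$ up to powers. More concretely, the archimedean rank of $\Gamma_F$ is at most $\operatorname{trdeg}(F|\R)$. Since $\zeta\succ 1$, its comparability class lies strictly between those of $\log_r x$ and $\exp_s x$, with $r+s$ at most that rank minus the class of $x$, which gives $r+s\le d+2$. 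From $\zeta^\dagger$ dominance via \eqref{eq:9.1.11} we then get a constant $c>0$ and $e\in\N$ with $(\log_r x)^c<\zeta<\exp_s(x^e)$. Finally, evaluating at integers via $s_n=\zeta(n)$ gives the bounds on $s_n$.
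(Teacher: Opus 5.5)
Your route is the paper's route. You take $\zeta$ to be the compositional inverse of $\ell=(\phi/\pi)-k_0$ and transfer transcendence degree through Lemma~\ref{lem:trdegellinv}. You count using $P(2\phi')=0$ to get $\operatorname{trdeg}(H_0\langle\phi\rangle|H_0)\le d+3$. You then read off growth bounds from \cite[Theorem~5.25, Lemma~5.26]{ADH5} applied to $H_0\subseteq H_0\langle\zeta\rangle$.

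The step you flag, getting from $d+3$ down to $d+2$, is a genuine gap, and it cannot be closed. Since $\R\langle x,\ell\rangle=H_0\langle\phi\rangle$ and Lemma~\ref{lem:trdegellinv} is an \emph{equality}, we have $\operatorname{trdeg}(H_0\langle\zeta\rangle|H_0)=\operatorname{trdeg}(H_0\langle\phi\rangle|H_0)$. Your count $d+3$ for the right-hand side is attained.

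Take $a=0$, $b=x/4$, so $f=x$ and $d=0$. This is a rescaled Airy equation, whose differential Galois group over $\C(x)$ is $\mathrm{SL}_2(\C)$.
\begin{itemize}
\item Put $y_1=g\cos\phi$, $y_2=g\sin\phi$ and $u:=y_1^2+y_2^2$; then $\phi'=w/u$ with $w$ the Wronskian. By Lemma~\ref{lem:Appell}, the stabilizer of $u$ in $\mathrm{SL}_2(\C)$ is $\mathrm{SO}_2(\C)$. Hence $\operatorname{trdeg}(\C(x)\langle\phi'\rangle|\C(x))=3-1=2$.
\item Moreover $\phi\notin\C(x)\langle\phi'\rangle=\C(x)(u,u',u'')$. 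Every element of that field is the germ of a meromorphic function on $\C$. But for the entire functions $z=y_1+\imag y_2$ and $\bar z=y_1-\imag y_2$ one has $\phi'=\frac{\imag}{2}\big(\bar z'/\bar z-z'/z\big)$, which has residue $-\imag/2$ at each zero of $z$. And $z$ does have zeros: a zero-free entire solution of order $\le 3/2$ would be $\ex^{\alpha t+\beta}$, which is not a solution.
\item Since $\phi'$ lies in that field, $\phi$ is transcendental over it. Complexifying does not change transcendence degrees, so $\operatorname{trdeg}(H_0\langle\zeta\rangle|H_0)=3=d+3$.
\item Here $\phi$ is unique up to an additive constant by Corollary~\ref{naicor}, so no other choice of phase helps either.
\end{itemize}

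The paper's own proof makes exactly your count and then writes $\operatorname{trdeg}(H_0\langle\zeta\rangle|\R)=\operatorname{trdeg}(H_0\langle\phi\rangle|\R)\le d+3$. But from $\operatorname{trdeg}(H_0\langle\phi\rangle|H_0)\le d+3$ one only gets $\le d+4$ over $\R$. This dropped contribution of $x$ is the sole source of the bound $d+2$.

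Consequently your suggested repairs cannot work. Conjugating $P$ by $\zeta$ brings in $a\circ\zeta$ and $b\circ\zeta$, which need not lie in $H_0\langle\zeta\rangle$. Lemma~\ref{lem:trdeghcirc} with $h=\phi'$ is again an equality. No rebookkeeping helps for this $\zeta$.

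What this method actually proves is $\operatorname{trdeg}(H_0\langle\zeta\rangle|H_0)\le d+3$, together with the growth bounds with $r+s\le d+3$. That is precisely your fallback. Reaching $d+2$ would need a different interpolant $\zeta$ that is not the inverse of a phase function, or, for the growth bounds alone, a separate argument; the paper supplies neither.

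Finally, your ``more concretely'' rank count is only a heuristic. Let \cite[Theorem~5.25, Lemma~5.26]{ADH5} do that work, as the paper does.
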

\begin{proof}
With $f$ also denoting its germ we have $f\in \R\langle a,b\rangle$ and so 
$\operatorname{trdeg}\!\big(H_1|H_0)\le d$ for $H_1:=H_0\langle f\rangle$.
We also have $P(2\phi')=0$ where 
$$P(Y)\ := \ 2YY''-3(Y')^2+Y^4-fY^2\in H_1\{Y\},$$ so $H_1\subseteq H_0\langle\phi'\rangle$ with $H_0\langle\phi'\rangle  = H_1\langle\phi'\rangle=
H_1(\phi',\phi'')$, so $H_0\<\phi\>=H_1(\phi, \phi', \phi'')$ and thus $\operatorname{trdeg}\!\big(H_0\langle\phi\rangle|H_0)\le d+3$. 
Let $\zeta$ be as in the proof of Lemma~\ref{lem:zeta}. Then by Lemma~\ref{lem:trdegellinv}, $\zeta$ is $\d$-algebraic  with~$\operatorname{trdeg}\!\big(H_0\langle \zeta\rangle|\R)=
\operatorname{trdeg}\!\big(H_0\langle \phi\rangle|\R)\le d+3$ and hence
$\operatorname{trdeg}\!\big(H_0\langle \zeta\rangle|H_0)=
\operatorname{trdeg}\!\big(H_0\langle \phi\rangle|H_0)\le d+2$.
The rest follows from \cite[Theorem~5.25, Lemma~5.26]{ADH5} applied to $H_0$, $H_0\langle\zeta\rangle$
in place of $H, E$.
\end{proof}

\noindent
For  some $\phi$ we can describe the asymptotic behavior of~$\zeta$ in terms of $\phi$:

\begin{cor}\label{asympzeta}
If $\phi\succeq x^{1/n}$ for some $n\ge 1$, then 
in Lemma~\ref{lem:zeta} one can in addition choose~$\zeta\sim \phi^{\operatorname{inv}}\circ \pi x$.
\end{cor}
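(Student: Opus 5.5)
Take $\zeta$ exactly as in the proof of Lemma~\ref{lem:zeta}: the compositional inverse of $\ell:=(\phi/\pi)-k_0$ on $[s_{n_0},+\infty)$. Then $\zeta=\ell^{\operatorname{inv}}$, and we must show $\ell^{\operatorname{inv}}\sim\phi^{\operatorname{inv}}\circ\pi x$. Note that $\phi^{\operatorname{inv}}\circ \pi x=(\phi/\pi)^{\operatorname{inv}}$, since $(\phi/\pi)\circ\phi^{\operatorname{inv}}\circ\pi x=x$. So the claim is that the inverses of $\psi:=\phi/\pi$ and of $\psi-k_0$ are asymptotic. Equivalently, $\ell^{\operatorname{inv}}=\psi^{\operatorname{inv}}\circ(x+k_0)$, and we need
$$\psi^{\operatorname{inv}}(t+k_0)\ \sim\ \psi^{\operatorname{inv}}(t)\qquad (t\to+\infty).$$

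Put $G:=\psi^{\operatorname{inv}}$. It is hardian by Lemma~\ref{lem:Bosh6.5}, with $G>\R$ and $G'>0$, and $G'$ lies in a Hardy field together with $G$ and $x$ (after passing to $\Li$ of a Hardy field containing $\phi$ and $x$, as in the proof of Lemma~\ref{lem:Bosh6.5}). By the mean value theorem, $G(t+k_0)-G(t)=k_0G'(\xi_t)$ for some $\xi_t$ between $t$ and $t+k_0$, so it suffices to show that $G'(\xi)\prec G(t)$ uniformly for $\xi\in[t,t+|k_0|]$.

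This is where the hypothesis $\phi\succeq x^{1/n}$ enters. It gives $\psi\ge c\,x^{1/n}$ eventually for some $c>0$, hence $G\le (x/c)^n$ eventually, so $G$ grows at most polynomially. In the Hardy field containing $G$ and $x$, $G\succ 1$ and $G\preceq x^n$ give $G^\dagger\preceq x^\dagger=1/x$ by \eqref{eq:9.1.11}. Hence $G'\preceq G/x\prec G$.

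It remains to pass from $G'(\xi)$ to $G(t)$. Since $G'$ is hardian and eventually monotone, $G'(\xi)\le\max\{G'(t),G'(t+|k_0|)\}$. Moreover $G'(t+|k_0|)\preceq G(t+|k_0|)/(t+|k_0|)$, and $G(t+|k_0|)\le 2G(t)$ eventually. For the latter, use again that $\log G$ has derivative $G^\dagger\preceq 1/x\to 0$, so $\log G(t+|k_0|)-\log G(t)\to 0$. Together these give $G(t+k_0)-G(t)\prec G(t)$, which is the claim.

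The main obstacle, and the only place the hypothesis is used, is the estimate $G^\dagger\preceq 1/x$, obtained from the polynomial bound on $\phi^{\operatorname{inv}}$. Alternatively, one could invoke Lemma~\ref{lem:ginv}: verify $\ell\sim\psi$ and $\psi^{\dagger}\preceq 1/x$. But $\psi^\dagger\preceq 1/x$ fails when $\phi$ grows exponentially, so it is cleaner to apply that lemma to the inverse germs, where the condition $h^\dagger\preceq 1/x$ for $h=\psi^{\operatorname{inv}}$ is exactly the estimate above.
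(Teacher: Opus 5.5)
Your proof is correct, but your main argument follows a different route from the paper's. Your final paragraph does describe the paper's route.

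The paper's proof takes $\zeta$ from the proof of Lemma~\ref{lem:zeta} and uses only $\zeta^{\operatorname{inv}}=(\phi/\pi)-k_0\sim\phi/\pi$. From the hypothesis it gets $(\phi/\pi)^{\operatorname{inv}}\preceq x^n$. It then applies Lemma~\ref{lem:ginv} with $g=\zeta^{\operatorname{inv}}$ and $h=(\phi/\pi)^{\operatorname{inv}}$. The hypothesis $h^\dagger\preceq 1/x$ needed there is exactly your estimate $G^\dagger\preceq 1/x$, obtained from \eqref{eq:9.1.11}.

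Your argument instead uses the exact identity $\zeta=G\circ(x+k_0)$, which holds because $\zeta^{\operatorname{inv}}$ differs from $\psi$ by a constant. You then prove by hand the special case of Lemma~\ref{lem:ginv} you need, namely that $\sim$ survives a bounded shift of the argument. What this buys is a self-contained, elementary proof that needs no external corollary on inverses of asymptotically equivalent germs. The cost is that it depends on the exact form of $\zeta$, whereas the paper only needs $\zeta^{\operatorname{inv}}\sim\phi/\pi$.

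You can shorten your version. Your last observation already finishes the proof: by the mean value theorem,
\[
\log G(t+k_0)-\log G(t)\;=\;k_0\,G^\dagger(\eta_t)\;\to\;0 ,
\]
so $G(t+k_0)/G(t)\to 1$ for either sign of $k_0$. The mean value step for $G$ itself and the monotonicity of $G'$ are then unnecessary. That step, as written, handles only $\xi\in[t,t+|k_0|]$, that is, only $k_0>0$, and $k_0\in\frac{1}{2}+\Z$ may be negative.

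A small correction to your side remark: Lemma~\ref{lem:ginv} never puts a condition on $\psi$. To pass from $\ell\sim\psi$ to $\ell^{\operatorname{inv}}\sim\psi^{\operatorname{inv}}$ one takes $h=\psi^{\operatorname{inv}}$ from the start, which is the application you end with.
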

\begin{proof} Let $n_0$, $k_0$, $\zeta$ be as in the proof of Lemma~\ref{lem:zeta}. Then $$\zeta^{\operatorname{inv}}\ \sim\  (\phi/\pi)-k_0\ \sim\  \phi/\pi.$$  Now assume $\phi\succeq x^{1/n}$, $n\ge 1$. Then $(\phi/\pi)^{\operatorname{inv}}\preceq x^n$.
Lemma~\ref{lem:ginv} with~$\zeta^{\operatorname{inv}}$, $(\phi/\pi)^{\operatorname{inv}}$, in the role of $g$, $h$   gives $\zeta\sim(\phi/\pi)^{\operatorname{inv}}=\phi^{\operatorname{inv}}\circ \pi x$.
\end{proof}

\noindent
Combining Lemma~\ref{lem:ginv} and Corollaries~\ref{cor:2nd order, f succ 1/x^2} and~\ref{asympzeta}, we obtain:

\begin{cor}\label{cor:zeta asymptotics}
If $f\sim cx^{-2+r}$ \textup{(}$c,r\in\R^>$\textup{)}, then $s_n\sim\displaystyle \left(\frac{r\pi n)}{\sqrt{c}}\right)^{2/r}$ as $n\to\infty$.
\end{cor}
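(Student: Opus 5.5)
The plan is to combine three earlier results: the asymptotics of $\phi$ from Corollary~\ref{cor:2nd order, f succ 1/x^2}, the interpolation of the zeros from Corollary~\ref{asympzeta}, and the inversion lemma, Lemma~\ref{lem:ginv}.

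Here $f=-2a'-a^2+4b$, which is the same $f$ as in Corollary~\ref{cor:2nd order, f succ 1/x^2}. The pair $(g,\phi)$ fixed before Lemma~\ref{lem:zeta} consists of $H$-hardian germs parametrizing $\ker_{\Calinf}(\der^2+a\der+b)$. So Corollary~\ref{cor:2nd order, f succ 1/x^2} applies and gives
$$\phi\ \sim\ \tfrac{\sqrt{c}}{r}\,x^{r/2}.$$
In particular $\phi\succeq x^{1/m}$ for any $m\ge 1$ with $1/m\le r/2$. Corollary~\ref{asympzeta} then lets us choose $\zeta$ as in Lemma~\ref{lem:zeta} with $\zeta\sim\phi^{\operatorname{inv}}\circ\pi x$ and $s_n=\zeta(n)$ for all $n\ge n_0$.

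The next step is to compute $\phi^{\operatorname{inv}}$ asymptotically, using Lemma~\ref{lem:ginv}. We take the eventually strictly increasing germs $g:=\phi$ and $h:=(rx/\sqrt{c})^{2/r}$, both $\succ 1$. Then $h\in\c^1$ and $h^\dagger=\frac{2}{r}x^{-1}\preceq 1/x$. Moreover $h^{\operatorname{inv}}=\frac{\sqrt c}{r}x^{r/2}\sim\phi=g$, so Lemma~\ref{lem:ginv} gives
$$\phi^{\operatorname{inv}}\ \sim\ \big(rx/\sqrt{c}\big)^{2/r}.$$
Composition on the right with $\pi x$ preserves $\sim$ (Section~\ref{sec:germs}). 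Hence
$$\zeta\ \sim\ \phi^{\operatorname{inv}}\circ\pi x\ \sim\ \big(r\pi x/\sqrt{c}\big)^{2/r}.$$

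Finally, a relation $\zeta\sim u$ between germs means $\zeta(t)/u(t)\to 1$ as $t\to+\infty$ through the reals. Restricting to integers $t=n$ and using $s_n=\zeta(n)$ for $n\ge n_0$ gives $s_n\sim(r\pi n/\sqrt{c})^{2/r}$. This is the displayed formula of the corollary, once its stray parenthesis is removed.

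The only point needing care is checking the hypotheses of Lemma~\ref{lem:ginv} in the right roles, namely $g=\phi$ and $h$ the explicit power function. In particular $h$ rather than $\phi$ must satisfy $h^\dagger\preceq 1/x$, and this holds by the direct computation above. Everything else is bookkeeping.
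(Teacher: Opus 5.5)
Your proof is correct and follows the paper's own route: the paper obtains this corollary precisely by combining Corollary~\ref{cor:2nd order, f succ 1/x^2} (giving $\phi\sim\frac{\sqrt{c}}{r}x^{r/2}$), Corollary~\ref{asympzeta}, and Lemma~\ref{lem:ginv}. Your verification that $\phi\succeq x^{1/m}$ whenever $1/m\le r/2$, and your choice of roles $g=\phi$, $h=(rx/\sqrt{c})^{2/r}$ in Lemma~\ref{lem:ginv}, supply the bookkeeping that the paper leaves implicit.
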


\noindent
We also have a crude bound on the growth of $(s_n)$ when $H=\R(x^{\R})$ and $f\asymp 1/x^2$. More generally:

\begin{cor}\label{cor:zeta asymptotics, R(x)}
Suppose $x\in H$, $v(1/x)=\max\Psi_H$,  and $f\sim cx^{-2}$ \textup{(}$c\in\R^>$\textup{)}.
Then~$c>1$ and 
$\log s_n \sim \displaystyle\frac{2\pi n}{\sqrt{c-1}}$ as $n\to\infty$.
%$\ex^{d(1-\varepsilon)(n+k_0)} \le s_n \le \ex^{d(1+\varepsilon) (n+k_0)}$ eventually.
%If $f \asymp 1/x^2$, then for some $r\in\R^>$ we have~$\ex^{n/r} \leq s_n\leq \ex^{rn}$ eventually. If $f\succ 1/x^2$, then for some $m\geq 1$ and every~${\epsilon\in\R^>}$ we have  $n^{1/m}\leq s_n\leq \ex^{\epsilon n}$ eventually.
\end{cor}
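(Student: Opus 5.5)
The plan is to combine Corollary~\ref{cor:upper lower bd for phi} with the interpolation of Lemma~\ref{lem:zeta}. First note that the standing hypotheses of Corollary~\ref{cor:upper lower bd for phi} hold. We have $x\in H$ and $v(1/x)=\max\Psi_H$, and $f\sim cx^{-2}$, which is the case $r=0$. That corollary then gives $c>1$ and $\phi\sim\frac{\sqrt{c-1}}{2}\log x$, where $(g,\phi)$ is the pair of $H$-hardian germs used above with $y=g\cos\phi$ on $[e_0,+\infty)$.

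Next, I use the relation $\phi(s_n)=(k_0+n)\pi$ for $n\ge n_0$, taken from the proof of Lemma~\ref{lem:zeta}. Here $s_n\to+\infty$ and $\phi\sim\frac{\sqrt{c-1}}{2}\log x$ as germs, so evaluating along the sequence $s_n\to\infty$ gives
\[\phi(s_n)\Big/\tfrac{\sqrt{c-1}}{2}\log s_n\ \to\ 1 \qquad (n\to\infty).\]
This step is just the definition of $\sim$ for germs applied along a sequence tending to $+\infty$. Since $(k_0+n)\pi\sim n\pi$, we get
\[\log s_n\ \sim\ \frac{2(k_0+n)\pi}{\sqrt{c-1}}\ \sim\ \frac{2\pi n}{\sqrt{c-1}}.\]

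No step here is a real obstacle, and in particular no inversion lemma is needed: Lemma~\ref{lem:ginv} does not apply anyway, because $h^\dagger\preceq 1/x$ fails for the exponential-type inverse. The most delicate point is checking that Corollary~\ref{cor:upper lower bd for phi} is applicable in the present setting. Its standing assumptions are $f\notin\bar\omega(H)$, which holds because $y$ oscillates, and that $(g,\phi)$ is a pair of $H$-hardian germs parametrizing $V$, which is how $(g,\phi)$ was chosen via Theorem~\ref{thm:Bosh}. Both are satisfied.
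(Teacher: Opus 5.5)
Your argument is correct, and it is more direct than the paper's. Both proofs start from Corollary~\ref{cor:upper lower bd for phi}, which gives $c>1$ and $\phi\sim d\log x$ with $d=\frac{1}{2}\sqrt{c-1}$.

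The paper then works with germs. With $\zeta$ from the proof of Lemma~\ref{lem:zeta} it has $\zeta^{\operatorname{inv}}\sim\phi/\pi$. It composes with $\ex^x$ to get $(\log\zeta)^{\operatorname{inv}}=\zeta^{\operatorname{inv}}\circ\ex^x\sim(d/\pi)x$. It then applies Lemma~\ref{lem:ginv} with $g=(\log\zeta)^{\operatorname{inv}}$ and $h=(\pi/d)x$ to obtain the germ relation $\log\zeta\sim(\pi/d)x$, and finally evaluates at integers.

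You instead evaluate $\phi\sim d\log x$ along $s_n\to+\infty$ and use the exact identity $\phi(s_n)=(k_0+n)\pi$. No inversion is needed, because the quantity you want to estimate, $\log s_n$, is exactly what appears in the asymptotic formula for $\phi$.

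The paper's route gives more: the germ statement $\log\zeta\sim(\pi/d)x$ for the hardian interpolant $\zeta$, in keeping with its theme of interpolating $(s_n)$ by hardian functions. Your route is shorter and uses only the definition of $\sim$ along a sequence.

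One correction to your side remark. Lemma~\ref{lem:ginv} indeed cannot be applied to $\zeta$ and $\phi/\pi$ directly. It does apply after the change of variables by $\ex^x$, since $h=(\pi/d)x$ satisfies $h^\dagger=1/x$. That is precisely how the paper uses it.
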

\begin{proof}
By Corollary~\ref{cor:upper lower bd for phi} we have $c>1$ and
$\phi\sim d\log x$ where $d:=\frac{1}{2}\sqrt{c-1}$, hence~$\phi\circ \ex^x \sim dx$.
Take~$\zeta$  as in the proof of Lemma~\ref{lem:zeta}.
Then $\zeta^{\operatorname{inv}} \sim \phi/\pi$ and thus
$$(\log\zeta)^{\operatorname{inv}}=\zeta^{\operatorname{inv}} \circ \ex^x \sim (\phi\circ \ex^x)/\pi \sim (d/\pi)x,$$ so
Lemma~\ref{lem:ginv} with~$(\log\zeta)^{\operatorname{inv}}$,~$(\pi/d)x$ in the role of $g$, $h$   gives
$\log\zeta\sim (\pi/d)x$.
%Then for each $\varepsilon\in (0,1)$ we have $$\left(\frac{d}{\pi(1-\varepsilon)}\log x\right) - k_0 > (\phi/\pi)-k_0 >  \left(\frac{d}{\pi(1+\varepsilon)}\log x\right)-k_0,$$ hence $$\exp \left(\frac{\pi(1-\varepsilon)}{d}n\right) <s_n< \exp \left(\frac{\pi(1+\varepsilon)}{d}n\right)$$ eventually, by the proof of Lemma~\ref{lem:zeta}.
\end{proof}

\noindent
Corollaries~\ref{cor:zeta asymptotics} and~\ref{cor:zeta asymptotics, R(x)} prove   Corollary~4 in the introduction.
The next result is a version of the Sturm Convexity Theorem (\cite[p.~318]{BR}, \cite[p.~173]{Sturm})
concerning the differences between consecutive zeros of $y$:

\begin{cor}\label{cor:zeta, differences}  
If $f\prec 1$, then 
the sequence $(s_{n+1}-s_n)$ is eventually strictly increasing
with
$s_{n+1}-s_n\to +\infty$ as $n\to\infty$. If
 $f\succ  1$, then $(s_{n+1}-s_n)$ is eventually strictly decreasing with~${s_{n+1}-s_n}\to 0$  as $n\to\infty$.
Now suppose~$f\sim c$~\textup{(}$c\in\R^>$\textup{)}. Then~$s_{n+1}-s_n\to 2\pi/\sqrt{c}$ as~$n\to\infty$,
and if $f<c$, then~$(s_{n+1}-s_n)$ is eventually strictly decreasing,
if $f=c$, then~$(s_{n+1}-s_n)$ is eventually constant, and
if $f>c$, then $(s_{n+1}-s_n)$ is eventually strictly increasing.
%Moreover, if~$\phi''>0$, then the sequence $(s_{n+1}-s_n)$ is eventually strictly decreasing; if~$\phi''=0$, then $(s_{n+1}-s_n)$ is eventually constant;  and if $\phi''<0$, then~$(s_{n+1}-s_n)$ is eventually strictly increasing.
\end{cor}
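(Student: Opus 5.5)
We work with the interpolating function $\zeta$ constructed in the proof of Lemma~\ref{lem:zeta}. There $\zeta$ is the compositional inverse of $(\phi/\pi)-k_0$ on $[s_{n_0},+\infty)$, and $s_n=\zeta(n)$ for $n\ge n_0$. Since $\phi$ is hardian with $\phi'>0$, $\zeta$ is $\c^1$ (indeed its germ is hardian by Lemma~\ref{lem:Bosh6.5}) and
$$\zeta'\ =\ \pi/(\phi'\circ\zeta).$$
The key identity is
$$s_{n+1}-s_n\ =\ \int_n^{n+1}\zeta'(t)\,dt \qquad(n\ge n_0).$$
So every claim reduces to two facts about $\zeta'$ eventually: its limit, and whether it is strictly increasing, strictly decreasing, or constant. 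If $\zeta'$ is eventually strictly increasing, then so is the sequence of integrals over $[n,n+1]$; likewise for decreasing and constant.

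\emph{Limits.} These come from Lemma~\ref{lem:2nd order, phi prec x}, applied after passing to a Liouville closed Hardy field $E\supseteq\R$ containing $H$, $g$, $\phi$, $x$.
\begin{enumerate}
\item[\textup{(a)}] If $f\prec 1$, then $\phi\prec x$. Since both are in $E$ and $\phi,x\succ 1$, Lemma~\ref{lem:9.1.4(ii)} gives $\phi'\prec 1$, so $\phi'(t)\to 0$. As $\zeta(t)\to+\infty$, we get $\zeta'(t)\to+\infty$.
\item[\textup{(b)}] If $f\succ 1$, then $\phi\succ x$ gives $\phi'\to+\infty$, so $\zeta'\to 0$.
\item[\textup{(c)}] If $f\sim c$, then $\phi\sim\frac{\sqrt c}{2}x$, so $\phi'\to\sqrt c/2$ by Lemma~\ref{lem:9.1.4(ii)}, and $\zeta'\to 2\pi/\sqrt c$.
\end{enumerate}
In each case the integral formula gives the stated limit of $s_{n+1}-s_n$.

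\emph{Monotonicity.} From $\zeta'=\pi/(\phi'\circ\zeta)$ and $\zeta'>0$ we get
$$\zeta''\ =\ -\pi\,\frac{(\phi''\circ\zeta)\,\zeta'}{(\phi'\circ\zeta)^2},$$
so the sign of $\zeta''$ is eventually the opposite of the sign of $\phi''$. Because $\phi''$ lies in a Hardy field, it is either eventually $0$ or of constant nonzero sign.
\begin{enumerate}
\item[\textup{(a)}] If $f\prec1$, then $\phi'$ is positive and tends to $0$, hence cannot be constant. So $\phi''<0$ eventually, $\zeta'$ is strictly increasing, and the gaps strictly increase.
\item[\textup{(b)}] If $f\succ 1$, then $\phi'\to+\infty$ forces $\phi''>0$ eventually. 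So $\zeta'$ is strictly decreasing, and the gaps strictly decrease.
\item[\textup{(c)}] If $f\sim c$ and $f<c$, Lemma~\ref{lem:2nd order, phi prec x} gives $\phi''>0$, so the gaps strictly decrease. If $f>c$, it gives $\phi''<0$, so the gaps strictly increase.
\item[\textup{(d)}] If $f=c$, we have $\sigma(2\phi')=c=\sigma(\sqrt c)$ by Lemma~\ref{lem:parametrization of ker A}. Both $2\phi'$ and $\sqrt c$ lie in $\Upg(E)$ by~\eqref{eq:11.8.19}, and $\sigma$ is strictly increasing on $\Upg(E)$ by Lemma~\ref{lem:11.8.29}. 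Hence $\phi'=\sqrt c/2$, $\zeta$ is eventually affine, and the gaps are eventually constant.
\end{enumerate}

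The only step needing some care is transferring asymptotic information from $\phi$ to $\phi'$ and to the sign of $\phi''$. Lemma~\ref{lem:9.1.4(ii)} handles the first, and the Hardy field dichotomy for $\phi''$ handles the second. The rest is the routine integral comparison above.
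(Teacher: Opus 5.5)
Your proof is correct and takes essentially the paper's route: you interpolate the zeros by $\zeta$, use $\zeta'=\pi/(\phi'\circ\zeta)$ together with the fact that the sign of $\zeta''$ is opposite to that of $\phi''$, and feed in Lemma~\ref{lem:2nd order, phi prec x}. The differences are cosmetic. You write $s_{n+1}-s_n=\int_n^{n+1}\zeta'$ where the paper uses the Mean Value Theorem, and you spell out the case $f=c$ via the injectivity of $\sigma$ on $\Upg(E)$, which the paper covers only by ``likewise''.
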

\begin{proof}
Recall that $\phi\in\c_{e_0}^2$ and $\phi'(t)>0$ for all $t\ge e_0$. 
Take  $\zeta$ as in the proof of Lemma~\ref{lem:zeta}. Then~$\zeta\in\c_{n_0}^2$ with
$$\zeta'\ =\ \pi \frac{1}{\phi'\circ\zeta},\qquad \zeta''\ =\ -\pi^2\frac{\phi''\circ\zeta}{(\phi'\circ\zeta)^3}.$$
The Mean Value Theorem gives for every $n\geq n_0$ a $t_n\in (n,n+1)$ such that
$$s_{n+1}-s_n\ =\ \zeta(n+1)-\zeta(n)\ =\ \zeta'(t_n).$$
If $f\prec 1$, then Lemma~\ref{lem:2nd order, phi prec x} gives $\phi\prec x$, so $\zeta\succ x$, hence $\zeta'\succ 1$; this proves the first claim of the lemma. The other claims follow likewise using Lemma~\ref{lem:2nd order, phi prec x} and the above remarks on $\zeta'$ and $\zeta''$. 
\end{proof}

\noindent
The next two corollaries are multiplicative versions of Corollary~\ref{cor:zeta, differences}. We recall that since $(s_n)$ is strictly increasing and unbounded, we have
$s_n>0$ eventually.

\begin{cor}
Suppose $f\succ x^{-2}$. Then   $s_{n+1}/s_n \to 1$ as~$n\to\infty$.
\end{cor}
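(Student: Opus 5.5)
The plan is to use the interpolating function $\zeta$ from the proof of Lemma~\ref{lem:zeta}, which satisfies $s_n=\zeta(n)$ for $n\ge n_0$. Its compositional inverse is $(\phi/\pi)-k_0$, which is $H$-hardian, and $\zeta$ itself is hardian by Lemma~\ref{lem:Bosh6.5}. I will show that $\zeta(n+1)/\zeta(n)\to 1$ by bounding $\log\zeta(n+1)-\log\zeta(n)$ with the Mean Value Theorem. There is $t_n\in(n,n+1)$ with
$$\log s_{n+1}-\log s_n\ =\ (\log\zeta)'(t_n)\ =\ \zeta^\dagger(t_n),$$
so it suffices to prove $\zeta^\dagger\prec 1$, i.e.\ $\zeta'/\zeta\to 0$.

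To compute $\zeta^\dagger$, use $\zeta'=\pi/(\phi'\circ\zeta)$ from the proof of Corollary~\ref{cor:zeta, differences}. This gives
$$\zeta^\dagger\ =\ \frac{\pi}{(\phi'\cdot x)\circ\zeta}.$$
Composition with $\zeta$, where $\zeta$ is eventually increasing and $\zeta>_{\ev}\R$, preserves $\succ$. So it is enough to show $x\phi'\succ 1$, i.e.\ $\phi'\succ 1/x$.

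The required bound comes from Lemma~\ref{lem:2nd order, f succ 1/x^2}, applied to the germ of $f$. We have $f\succ x^{-2}$, and $f>0$ because $f\notin\bar\omega(H)$. That lemma gives $\phi'\sim\frac12\sqrt f$, and then $\sqrt f\succ 1/x$ yields $\phi'\succ 1/x$.

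One point needs care. The $H$-hardian pair $(g,\phi)$ parametrizes $\ker(\der^2+a\der+b)$, while Lemma~\ref{lem:2nd order, f succ 1/x^2} is stated for exactly the current $A$, $V$, $f$. Its conclusion holds for every $H$-hardian pair parametrizing $V$, so it applies directly to our $\phi$.

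The main obstacle is this use of compositional asymptotics. Since $x\phi'$ and $\zeta$ both lie in Hardy fields, $x\phi'\succ1$ means $t\,\phi'(t)\to+\infty$ as $t\to+\infty$. Because $\zeta(t_n)\to+\infty$, we get $\zeta^\dagger(t_n)\to 0$, and hence $\log(s_{n+1}/s_n)\to0$, which is the claim $s_{n+1}/s_n\to 1$.
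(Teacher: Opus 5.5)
Your argument is correct, and it takes a slightly different, somewhat cleaner route than the paper.

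The paper applies the Mean Value Theorem to $\zeta$ itself, writing $s_{n+1}/s_n-1=\zeta'(t_n)/\zeta(n)$. To turn $\zeta'(t_n)$ into a value of $\zeta^\dagger$, it then uses that $\zeta''$ has eventually constant sign, because $\zeta$ is hardian. This forces a case split:
\begin{itemize}
\item in the concave case it bounds $s_{n+1}/s_n-1$ by $\zeta'(n)/\zeta(n)$;
\item in the convex case it passes to $1-s_n/s_{n+1}\le \zeta'(n+1)/\zeta(n+1)$.
\end{itemize}

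By applying the Mean Value Theorem to $\log\zeta$ instead, you land directly on $\zeta^\dagger(t_n)$. So you need neither the case split nor any information about $\zeta''$ or the hardian nature of $\zeta$. All you use is $\zeta^\dagger\prec 1$. Both arguments obtain this the same way: from $\zeta'=\pi/(\phi'\circ\zeta)$ together with $\phi'\sim\frac12\sqrt f\succ 1/x$, which comes from Lemma~\ref{lem:2nd order, f succ 1/x^2}. The paper itself uses this logarithmic device in the corollary immediately following, for the case $f\sim cx^{-2}$.

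One small detail to state explicitly: take $n$ large enough that $\zeta>0$ on $[n,n+1]$, so that $\log\zeta$ is defined there. This holds eventually because $\zeta(t)\to+\infty$.
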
 
\begin{proof} 
Let $\zeta\in \c_{n_0}^2$ and $t_n$ for $n\ge n_0$  be
as in the proof of Corollary~\ref{cor:zeta, differences}. Here we take $n_0$ in the proof of Lemma~\ref{lem:zeta} big enough to
 arrange that $\zeta(t)>0$ and~$\sign\big(\zeta''(t)\big) = \sign\big(\zeta''(n_0)\big)$
for  $t\ge n_0$.
If $\zeta''(n_0)<0$, then for $n\ge n_0$:
$$0<\frac{s_{n+1}}{s_n}-1=\frac{\zeta'(t_n)}{\zeta(n)} < \frac{\zeta'(n)}{\zeta(n)},$$
and if $\zeta''(n_0)\ge 0$, then for $n\ge n_0$: 
 $$0<1-\frac{s_{n}}{s_{n+1}}=\frac{\zeta'(t_n)}{\zeta(n+1)} \le \frac{\zeta'(n+1)}{\zeta(n+1)}.$$
By Lemma~\ref{lem:2nd order, f succ 1/x^2} we have $\phi'\succ 1/x$ and so
$\zeta^\dagger=\big(\pi/(x\phi')\big)\circ\zeta\prec 1$. 
\end{proof}

\noindent
In the next corollary we return to the setting of Corollaries~\ref{cor:upper lower bd for phi} and~\ref{cor:zeta asymptotics, R(x)}: 

\begin{cor}
Suppose $x\in H$, $v(x^{-1})=\max\Psi_H$, and $f\sim cx^{-2}$ where $c\in\R^>$.
Then $c>1$, and with $d:=\frac{1}{2}\sqrt{c-1}$ we have~$s_{n+1}/s_n\to \ex^{\pi/d}$ as~$n\to\infty$. 
\end{cor}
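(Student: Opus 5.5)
The plan is to work with the interpolating function $\zeta$ from the proof of Lemma~\ref{lem:zeta}, exactly as in the proof of Corollary~\ref{cor:zeta asymptotics, R(x)}. First, Corollary~\ref{cor:upper lower bd for phi} gives $c>1$ and $\phi\sim d\log x$ with $d=\frac{1}{2}\sqrt{c-1}$. That corollary also shows $g,\phi\in\Dx(H)$, so $\phi$ lies in a Liouville closed Hardy field $L\supseteq H$ with $x\in L$. Since $s_n=\zeta(n)$ for $n\ge n_0$, it suffices to show $\log\zeta(n+1)-\log\zeta(n)\to \pi/d$. Put $\theta:=\log\zeta$; this is eventually strictly increasing with hardian germ, because $\zeta$ is hardian and $>\R$.

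The key is to control $\theta'$ rather than $\theta$ itself. Knowing $\theta\sim(\pi/d)x$ alone does not control the differences $\theta(n+1)-\theta(n)$. We have $\zeta^{\operatorname{inv}}=(\phi/\pi)-k_0$, hence $\theta^{\operatorname{inv}}=\zeta^{\operatorname{inv}}\circ\ex^x=(\phi\circ\ex^x)/\pi-k_0$. Its derivative is $(\theta^{\operatorname{inv}})'=\big((x\phi')\circ\ex^x\big)/\pi$. Now $x\phi'$ is hardian, and $\phi\sim d\log x$ with $\log x\succ 1$, so Lemma~\ref{lem:9.1.4(ii)} (the asymptotic-field l'H\^opital property in a Hardy field containing $\phi$ and $\log x$) gives $\phi'\sim d/x$, that is, $x\phi'\sim d$. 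Hence $(\theta^{\operatorname{inv}})'\to d/\pi$. Since $\theta'=1/\big((\theta^{\operatorname{inv}})'\circ\theta\big)$ and $\theta(t)\to+\infty$, we get $\theta'(t)\to\pi/d$ as $t\to+\infty$.

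Finally, apply the Mean Value Theorem as in the proof of Corollary~\ref{cor:zeta, differences}: for $n\ge n_0$ there is $t_n\in(n,n+1)$ with $\log s_{n+1}-\log s_n=\theta(n+1)-\theta(n)=\theta'(t_n)$. This tends to $\pi/d$, and exponentiating gives $s_{n+1}/s_n\to\ex^{\pi/d}$.

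The only delicate point is the differentiation step $\phi\sim d\log x\Rightarrow\phi'\sim d/x$. It needs $\phi$ and $\log x$ in a common Hardy field, which is why I first place $\phi$ in a Liouville closed extension of $H(\R)$ containing $\Dx(H)$. Such an extension contains $\log x$, and Lemma~\ref{lem:9.1.4(ii)} then applies, since $\log x\not\asymp 1$ and $\phi\preceq\log x$. Everything else is the routine composition bookkeeping already used in Corollaries~\ref{cor:zeta, differences} and~\ref{cor:zeta asymptotics, R(x)}.
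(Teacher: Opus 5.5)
Your proof is correct and is essentially the paper's argument. The paper also uses Corollary~\ref{cor:upper lower bd for phi} to get $c>1$ and $\phi\sim d\log x$, then Lemma~\ref{lem:9.1.4(ii)} to get $\phi'\sim d/x$, so that $(\log\zeta)'=\zeta^\dagger=\big(\pi/(x\phi')\big)\circ\zeta\to\pi/d$, and finally the Mean Value Theorem applied to $\log\zeta$ on $[n,n+1]$. The only cosmetic difference is that you compute $(\log\zeta)'$ through the inverse $\theta^{\operatorname{inv}}=\zeta^{\operatorname{inv}}\circ\ex^x$, whereas the paper reads it off directly from $\zeta'=\pi/(\phi'\circ\zeta)$.
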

\begin{proof}
Corollary~\ref{cor:upper lower bd for phi} gives
$c>1$ and 
$\phi\sim d\log x$, so $\phi'\sim d/x$ by Lemma~\ref{lem:9.1.4(ii)}.
Take~$\zeta\in\c^1_{n_0}$ as in the proof of Lemma~\ref{lem:zeta} with $n_0$ so large that
$\zeta\in\c_{n_0}^\times$. 
Then~$\zeta^\dagger=\big(\pi/(x\phi')\big)\circ \zeta\sim \pi/d$.
Take~$\varepsilon\in\c_{n_0}$ such that $\zeta^\dagger= (\pi/d)(1+\varepsilon)$, so $\varepsilon\prec 1$.
The Mean Value Theorem yields for each~$t\ge n_0$
a $\xi(t)\in(t,t+1)$ such that
$$\log\!\big(\zeta(t+1)/\zeta(t)\big)=\log\zeta(t+1)-\log\zeta(t)=\zeta'(\xi(t))/\zeta(\xi(t))=
(\pi/d)\big(1+\varepsilon(\xi(t))\big),$$
where $\varepsilon(\xi(t))\to 0$ as $t\to\infty$, hence
$$\zeta(t+1)/\zeta(t) = \ex^{(\pi/d)(1+\varepsilon(\xi(t))}\to \ex^{\pi/d}\qquad\text{as $t\to\infty$,}$$
and this yields the claim since $\zeta(n)=s_n$ for $n\ge n_0$.
\end{proof}

\begin{example}[Cauchy-Euler equation]
Suppose $a=\alpha x^{-1}$, $b=\beta x^{-2}$ ($\alpha,\beta\in\R$), so $$y''+\alpha x^{-1}y' +\beta x^{-2}y\ =\ 0.$$
Then $f=-2a'-a^2+4b=cx^{-2}$ where $c:=2\alpha -\alpha^2  +4\beta>1$.
%so $(\alpha-1)^2<4\beta$. 
With $d:=\frac{1}{2}\sqrt{c-1}$ we have
 $\phi - d\log x\in\R$ by Corollary~\ref{naicor} and Example~\ref{ex:boundphi}(1),
so there are   $C\in\R^>$ and $n_0\in\N$ such that for $n\ge n_0$ we have~$s_n=C\ex^{\pi n/d}$
%$s_n=C\exp\!\big( (\pi n)/d\big)$ 
and thus $s_{n+1}/s_n=\ex^{\pi/d}$.
\end{example}

\noindent
Define the counting function $N\colon \R^{\geq e}\to\N$ by
$$N(t)\ :=\  \big| [e,t]\cap y^{-1}(0) \big|\  =\  \min\{n:s_n>t\},$$
so for $n\ge 1$: $N(t)=n\Leftrightarrow s_{n-1} \le t < s_n$. Thus $N(t)\to+\infty$ as $t\to+\infty$; in fact:

\begin{lemma}\label{lem:N(t)} $N \sim \phi/\pi$.
\end{lemma}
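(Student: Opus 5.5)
The plan is to compare $N$ with the quantity $(\phi/\pi)-k_0$ from the proof of Lemma~\ref{lem:zeta}. Take $n_0$, $k_0$ and $\zeta$ as there, so that $\phi(s_n)=(k_0+n)\pi$ for all $n\ge n_0$. The zeros of $y$ on $[e_0,+\infty)$ are exactly the points where $\cos\phi$ vanishes, since $g(t)\neq 0$ there. Because $\phi$ is strictly increasing on $[e_0,+\infty)$, the zeros $s_n$ with $n\ge n_0$ are precisely the $t\ge s_{n_0}$ with $\phi(t)\in(k_0+\N)\pi$.

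First, for $t\ge s_{n_0}$ we have $N(t)=n$ iff $s_{n-1}\le t<s_n$. Applying $\phi$, this is equivalent to
\[
(k_0+n-1)\pi\ \le\ \phi(t)\ <\ (k_0+n)\pi .
\]
Hence $N(t)-1\le (\phi(t)/\pi)-k_0< N(t)$, that is,
\[
\bigl|N(t)-\phi(t)/\pi\bigr|\ \le\ |k_0|+1
\]
for all sufficiently large $t$.

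Second, since $\phi>\R$ we have $\phi(t)\to+\infty$. Dividing the bound by $\phi(t)/\pi$ then gives $N(t)/(\phi(t)/\pi)\to 1$, which is $N\sim\phi/\pi$.

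The only point needing care is the bookkeeping of the index shift: $N(t)$ counts the zeros $s_0,\dots,s_{n-1}$ in $[e,t]$, including the finitely many zeros below $e_0$. That shift is absorbed into the constant $k_0$ (together with $n_0$), so only a bounded error arises and it does not affect the asymptotics. Note that $N$ is a step function rather than a continuous germ, so $\sim$ here is to be read as $N(t)/(\phi(t)/\pi)\to1$, which is exactly what the bound gives. There is no serious obstacle beyond this.
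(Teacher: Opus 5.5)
Your proof is correct and follows essentially the same route as the paper. Like the paper, you take $n_0$ and $k_0$ from the proof of Lemma~\ref{lem:zeta} and apply the monotone $\phi$ to $s_{N(t)-1}\le t<s_{N(t)}$, which traps $\phi(t)/\pi$ between $N(t)+k_0-1$ and $N(t)+k_0$. The bounded error together with $\phi\to+\infty$ then gives $N\sim\phi/\pi$.
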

\begin{proof}
Take $n_0$, $k$ as in the proof of Lemma~\ref{lem:zeta}, so
$\phi(s_n)=(k+n)\pi$ for $n\geq n_0$.
Let $t\geq e$ be such that $N(t) \geq n_0+1$ ; then
$s_{N(t)-1} \leq t < s_{N(t)}$ and thus
$$N(t)+k-1\ =\ \phi(s_{N(t)-1})/\pi\  \leq\ \phi(t)/\pi\  <\  \phi(s_{N(t)})/\pi\  =\ N(t)+k.$$
This yields $N \sim \phi/\pi$. 
\end{proof}

\noindent
The quantity $N(t)$ has been studied extensively in connection with second-order linear differential equations; see~\cite[Chapter~IX, \S{}5, and the literature quoted on p.~401]{Hartman}.
For example, the next corollary is a consequence of a more general result due to Wiman~\cite{Wiman} (see \cite[Chapter~IX, Corollary~5.3]{Hartman}), but is also evident from our Hardy field calculus: 
%Here we assume~$a=0$, so $f=4b\in \c_e$.  

\begin{cor}
Suppose $f(t)>0$ for all $t\geq e$, and $f\succ 1/x^2$. % $\big(1/\sqrt{f}\big)'\prec 1$. 
 Then 
$$N(t)\ \sim\  \frac{1}{2\pi} \int_e^t  \sqrt{f(s)}\,ds\quad\text{as $t\to +\infty$.}$$
\end{cor}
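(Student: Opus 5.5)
The plan is to combine Lemma~\ref{lem:N(t)}, which gives $N\sim\phi/\pi$, with Lemma~\ref{lem:2nd order, f succ 1/x^2}, which gives $\phi'\sim\frac12\sqrt{f}$. We then pass from derivatives to germs by l'H\^opital, in the form of Lemma~\ref{lem:9.1.4(ii)}.

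First, the hypotheses $f>0$ and $f\succ x^{-2}$ are exactly those of Lemma~\ref{lem:2nd order, f succ 1/x^2}. Note that this $f$ is the germ $-2a'-a^2+4b$, which lies in $H$. That lemma yields $f\notin\bar\omega(H)$ and $\phi'\sim\frac12\sqrt f$ for our pair $(g,\phi)$. Put $F(t):=\int_e^t\sqrt{f(s)}\,ds$. Then $F'=\sqrt f$, so $F$ lies in a Hardy field extension of $H(\sqrt f)$, for instance $\Li\big(H(\R)\big)$. Working in a maximal Hardy field $E$ containing $H$, $\phi$ and $F$ (possible since $\phi$ is $H$-hardian and $F$ is an integral of an element of $E$), we have $\phi\succ1$ because $\phi>\R$. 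Since $f\succ x^{-2}$, the germ $\sqrt f$ is $\succ 1/x$, and so $F\succ\log x\succ1$; alternatively, $F\succ 1$ already follows from $\phi'\sim\frac12 F'$ with $\phi\succ 1$.

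Now apply the second part of Lemma~\ref{lem:9.1.4(ii)} in the asymptotic field $E$ to the germs $\phi$ and $\frac12F$. Both are $\succ1$, so the condition $\nasymp 1$ holds. Their derivatives satisfy $\phi'\sim\frac12F'$, and in particular $\phi'\asymp F'$, which gives $\phi\asymp F$ by the first part of that lemma. The second part then gives $\phi\sim\frac12 F$. Finally, Lemma~\ref{lem:N(t)} gives $N\sim\phi/\pi\sim\frac{1}{2\pi}F$, which is the assertion, since $\sim$ between germs is transitive and $N$ enters only as a function compared via limits.

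The main point to check is the l'H\^opital step. One could argue directly with the real-variable l'H\^opital rule: $\phi(t)\to\infty$ and $F(t)\to\infty$, and the ratio of derivatives tends to $1/2$, so the ratio $\phi/F$ tends to $1/2$. This elementary version avoids any need to put $F$ and $\phi$ in a common Hardy field, so I would use it if embedding into $E$ causes any trouble. The only nontrivial ingredient is then $\phi'\sim\frac12\sqrt f$, which is already supplied by Lemma~\ref{lem:2nd order, f succ 1/x^2}.
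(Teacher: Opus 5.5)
Your proposal is correct and matches the paper's proof essentially step for step. The steps are the same: place $\phi$ and an antiderivative of $\sqrt f$ in a common maximal Hardy field, take $\phi'\sim\frac12\sqrt f$ from Lemma~\ref{lem:2nd order, f succ 1/x^2}, pass to $\phi\sim\frac12\int_e^t\sqrt{f}$ via Lemma~\ref{lem:9.1.4(ii)}, and finish with Lemma~\ref{lem:N(t)}. Your explicit two-step use of Lemma~\ref{lem:9.1.4(ii)} (first $\asymp$, then $\sim$) and your real-variable l'H\^opital fallback are both valid; the fallback even makes the common-Hardy-field embedding unnecessary for that step.
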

\begin{proof} 
Arrange $H$ to be maximal with $(g,\phi)\in H^2$. 
Lemma~\ref{lem:2nd order, f succ 1/x^2} yields $\phi'\sim \frac{1}{2}\sqrt{f}$.
Let~$\phi_1\in \c_e^1$, $\phi_1(t):=\frac{1}{2}\int_e^t \sqrt{f(s)}\, ds$. Then $\phi_1'=\frac{1}{2}\sqrt{f}$, so (the germ~of)~$\phi_1$ lies in $H$ and  $\sqrt{f}\succ 1/x$, hence $\phi_1>\R$, and thus $\phi\sim \phi_1$ by Lemma~\ref{lem:9.1.4(ii)}. Now apply Lemma~\ref{lem:N(t)}.
%From  $\big(1/\sqrt{f}\big){}'\prec 1$ we get $f^\dagger\prec \sqrt{f}$. Now $f$ is hardian, so $f\preceq 1/x^2$ would give $f^\dagger\succeq 1/x$, which together with $f\preceq 1/x^2$  contradicts $f^\dagger\prec \sqrt{f}$. Thus~$f\succ 1/x^2$.   For the rest of the argument we arrange $H$ is maximal with $(g,\phi)\in H^2$.  Lemma~\ref{lem:2nd order, f succ 1/x^2} yields a pair $(g_1,\phi_1)\in H^2$ parametrizing $\ker_{\Calinf}(\der^2+b)$ such that~$\phi_1'\sim (1/2)\sqrt{f}$. Then $\phi-\phi_1\in \R$ by Lemma~\ref{lem:g,phi unique}, so $\phi'\sim (1/2)\sqrt{f}$. Let~$\phi_2\in \c_e^1$ be given by~$\phi_2(t)=(1/2)\int_e^t \sqrt{f(s)}\, ds$. Then $\phi_2'=(1/2)\sqrt{f}$, so (the germ of) $\phi_2$ lies in $H$ and  $\sqrt{f}\succ 1/x$, so $\phi_2>\R$. Hence by Lemma~\ref{lem:9.1.4(ii)} we have $\phi\sim \phi_2$. Now apply Lemma~\ref{lem:N(t)}. 
\end{proof}

\noindent
We now establish the comparability of $(s_n)$ with other sequences of interest:

\begin{cor}\label{cor:zeta, 3}
Let $h\in\c_0$ and suppose the germ of $h$ is in $\Ex(\Q)$. Then the se\-quen\-ces~$(s_n)$ and $\big(h(n)\big)$ are comparable.
\end{cor}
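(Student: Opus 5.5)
By Lemma~\ref{lem:zeta} there are $n_0\in\N$ and a strictly increasing $\zeta\in\c_{n_0}$ with $s_n=\zeta(n)$ for $n\ge n_0$. The germ of $\zeta$ is hardian and its compositional inverse $\zeta^{\operatorname{inv}}$ is $H$-hardian; in the proof of that lemma, $\zeta^{\operatorname{inv}}$ is the germ of $(\phi/\pi)-k_0$. By the remark before the definition of enumeration, it suffices to find a Hardy field containing the germs of both $\zeta$ and $h$: then $\zeta\le h$ or $h\le\zeta$ as germs, so $\big(\zeta(n)\big)$ and $\big(h(n)\big)$ are comparable, and hence so are $(s_n)$ and $\big(h(n)\big)$.

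The plan is to exploit perfection of $h$. Since $h$ is perfectly hardian, it lies in every maximal Hardy field. So it is enough to show that $\zeta$ lies in some maximal Hardy field $M$, and then $h\in M$ as well.

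To get such an $M$, start from a maximal Hardy field $M\supseteq\R(x)$ containing the hardian germ $\zeta$, which exists by Zorn's lemma since $\zeta$ is hardian. By Corollary~\ref{cor:maxhardymainthm, 1}, $M$ is Liouville closed. Since $h\in\Ex(\Q)\subseteq M$, both $\zeta$ and $h$ lie in $M$, and the proof is finished.

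The one point needing care is that the germ of $\zeta$ is hardian rather than merely $H$-hardian. This is exactly what Lemma~\ref{lem:zeta} provides, via Lemma~\ref{lem:Bosh6.5} applied to $\ell=(\phi/\pi)-k_0$ with $\ell'\in E$ for a Hardy field $E$ containing $\phi$. Also, $\zeta$ has a representative in $\c_{n_0}$ rather than $\c_0$; extending it arbitrarily and continuously to $[0,+\infty)$ changes neither the germ nor the tail of $(\zeta(n))$. I expect no real obstacle: everything reduces to the definition of perfectly hardian germs together with Lemma~\ref{lem:zeta}.
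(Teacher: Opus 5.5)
Your proof is correct and is essentially the paper's argument: the paper just notes that the germs of $\zeta$ (from Lemma~\ref{lem:zeta}) and $h$ lie in a common Hardy field. Your justification is exactly the reasoning left implicit there: a maximal Hardy field containing the hardian germ $\zeta$ also contains $h$, because $\Ex(\Q)$ is the intersection of all maximal Hardy fields. Your appeal to Liouville closedness of $M$ is not needed.
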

\begin{proof}
Let $\zeta$ be as in Lemma~\ref{lem:zeta}, and note that the germs of $\zeta$ and $h$ lie in a common Hardy field.
% the compositional inverse $\zeta^{\operatorname{inv}}$ of the germ of $\zeta$
%is $H$-hardian, and by Lemma~\ref{lem:comp with E(Q)}, so is $g\circ \zeta^{\operatorname{inv}}$; hence $g\circ \zeta^{\operatorname{inv}}\leq x$ or~$g\circ \zeta^{\operatorname{inv}}\geq x$ and so $g\leq\zeta$ or $g\geq\zeta$.
\end{proof}

\begin{remark} %\marginpar{to generalize this we would need  a version of 5.1.12 in {\tt maxhardy} for $g=x+c\log x+o(\log x)$}
In \cite{ADHbf} we show: if the germ of $f$ is in $\R(x)$ and $f=c+O(x^{-2})$, $c\in\R^>$, then for each~$h\in\c_0$ with hardian germ, the \marginpar{to check in [8]}
sequences~$(s_n)$ and~$\big(h(n)\big)$ are comparable.
\end{remark}

\noindent
Let also $\underline{a}, \underline{b}\in H$, and take representatives of $\underline{a}$, $\underline{b}$ in $\c_{\underline{e}}^1$ ($\underline{e}\in \R$), denoting these by~$\underline{a}$ and~$\underline{b}$ as well. Let $\underline{y}\in \c_{\underline{e}}^2$ be an oscillating solution of the differential equation$$Y''+\underline{a}Y' +\underline{b}Y\ =\ 0,$$
and let $(\underline{s}_n)$ be the enumeration of~$\underline{y}^{-1}(0)$.

\begin{lemma}\label{lem:comp sequ zeros}
The sequences $(s_n)$ and $(\underline{s}_n)$ are comparable.
%Tehere are some $a\in\R$ and some representatives of $y$, $z$ in $\c_a^2$ whose sequences of zeros are comparable.
%whenever $s_1<s_2<\cdots$ are the consecutive zeros of $y|_b$ and
%$t_1<t_2<\cdots$ are the consecutive zeros of $z|_b$, then $(s_n)$,
%, then there is exactly one zero $t\in [t_1,t_2]$ of $z$.
\end{lemma}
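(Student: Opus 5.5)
Both sequences are interpolated by hardian germs, and we want these germs in a common Hardy field. Comparability then follows as in the remarks before Lemma~\ref{lem:zeta}.

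First, apply Theorem~\ref{thm:Bosh} to both equations over $H$. We could take pairs $(g,\phi)$ and $(\underline g,\underline\phi)$ in separate Hardy field extensions. Instead, fix one maximal Hardy field extension $M$ of $H$ and use Lemma~\ref{lem:parametrization of ker A}, together with the reduction in Lemma~\ref{parphi}(i) and the proof of Theorem~\ref{thm:Bosh}. This yields pairs $(g,\phi)$, $(\underline g,\underline\phi)\in M^2$ that parametrize the respective solution spaces. Then on some $[e_0,+\infty)$ we have $y=cg\cos(\phi+d)$, and similarly $\underline y=\underline c\,\underline g\cos(\underline\phi+\underline d)$. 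Replacing $\phi$ by $\phi+d$ keeps it in $M$ because $\R\subseteq M$, so we may assume $y=cg\cos\phi$ and $\underline y=\underline c\,\underline g\cos\underline\phi$.

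Next, follow the proof of Lemma~\ref{lem:zeta}. Eventually $\phi(s_n)=(k_0+n)\pi$ and $\underline\phi(\underline s_n)=(\underline k_0+n)\pi$. Put $\zeta:=\big((\phi/\pi)-k_0\big)^{\operatorname{inv}}$ and $\underline\zeta:=\big((\underline\phi/\pi)-\underline k_0\big)^{\operatorname{inv}}$, so that $s_n=\zeta(n)$ and $\underline s_n=\underline\zeta(n)$ eventually. The germs $\ell:=(\phi/\pi)-k_0$ and $\underline\ell:=(\underline\phi/\pi)-\underline k_0$ both lie in $M$, which is Liouville closed and contains $x$ by Corollary~\ref{cor:maxhardymainthm, 1}.

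The key step is to put $\zeta$ and $\underline\zeta$ in one Hardy field. I would compare the inverses rather than compose directly. Since $M$ is a Hardy field, either $\ell\le\underline\ell$ or $\underline\ell\le\ell$. By the remark in Section~\ref{sec:germs} on inverses of eventually strictly increasing germs, $\ell\le\underline\ell$ implies $\underline\ell^{\operatorname{inv}}\le\ell^{\operatorname{inv}}$, that is, $\underline\zeta\le\zeta$ as germs; the other case is symmetric. So $\zeta(t)\le\underline\zeta(t)$ eventually or the reverse, and evaluating at integers $n$ makes $(s_n)$ and $(\underline s_n)$ comparable.

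No composition inside a Hardy field is needed: the inverse-order remark is purely order-theoretic for continuous, eventually strictly increasing germs tending to $+\infty$, and $\ell$ and $\underline\ell$ qualify because $\phi,\underline\phi>\R$ have positive derivatives. The main obstacle I expect is placing both phase functions in a common Hardy field. The uniqueness in Theorem~\ref{thm:Bosh} may fail without $\upo$-freeness, so an arbitrary $H$-hardian $\phi$ need not lie in $M$. I avoid this by constructing both pairs inside the single maximal field $M$ via Lemma~\ref{lem:parametrization of ker A} rather than taking them as given.
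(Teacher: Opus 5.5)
Your proof is correct and is essentially the paper's argument: the paper arranges $H$ to be maximal, so that the $H$-hardian compositional inverses $\zeta^{\operatorname{inv}}$ and $\underline{\zeta}^{\operatorname{inv}}$ (your $\ell$ and $\underline{\ell}$) lie in $H$ and are therefore comparable, and then reverses the order under compositional inversion. Your explicit construction of both parametrizing pairs inside one maximal $M$ is an unpacked version of that step of arranging $H$ to be maximal.
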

\begin{proof}
We arrange that $H$ is maximal and take $\zeta$ as in Lemma~\ref{lem:zeta}. This lemma also  provides a  strictly increasing $\underline{\zeta}\in\c_{\underline{n}_0}$ \textup{(}$\underline{n}_0\in\N$\textup{)} 
 such that  $\underline{s}_n=\underline{\zeta}(n)$ for all~$n\geq \underline{n}_0$ and the germ of $\underline{\zeta}$ is hardian with $H$-hardian compositional inverse. With $\zeta$ and $\underline{\zeta}$ denoting also their germs this gives
 $\zeta^{\operatorname{inv}}\le \underline{\zeta}^{\operatorname{inv}}$ or 
 $\zeta^{\operatorname{inv}}\ge \underline{\zeta}^{\operatorname{inv}}$, hence~$\zeta\ge \underline{\zeta}$ or $\zeta\le \underline{\zeta}$. Thus $(s_n)$ and $(\underline{s}_n)$ are comparable.
\end{proof}

\subsection*{The critical points of an oscillating solution}  Our oscillating solution $y\in \c_e^2$ has also oscillating derivative $y'$, so by Corollary~\ref{gphicos} there is for all sufficiently large~$n$ exactly one $t\in (s_n,s_{n+1})$ with $y'(t)=0$.  Also $b\ne 0$ in $H$, since $b=0$ would mean that~${z:=y'}$  satisfies
$z'+az=0$, so $z$ would be $H$-hardian. Increasing~$e$ and restricting $a$, $b$, $y$ accordingly we
have $y\in\c_e^{3}$. In this subsection~$\zeta$ is as constructed in the proof of Lemma~\ref{lem:zeta}. Then: 
% (and~$y''+ay'+by=0$ with oscillating~$y$ as before); then  the zero sets of $y,y',\dots,y^{(m)}$ are eventually parametrized by hardian germs as follows:
%the germ of $y'$ also satisfies a second-order linear differential equation over $H$. (See the remarks before Lemma~\ref{lem:A^der}.)  Thus Lemma~\ref{lem:zeta} applies to the enumerations of the zero set of~$y'|_{a_1}$ in place of $(s_n)$.

\begin{lemma}\label{lem:param zeros of derivatives} 
For some $n_1\ge n_0$ in $\N$ and strictly increasing function ${\zeta_1\in\c_{n_1}}$:
\begin{enumerate}
\item[\textup{(i)}]   $\zeta_1(n_1)\ge e$ and $\zeta_1(t)\to +\infty$ as $t\to +\infty$;
\item[\textup{(ii)}] the germ of $\zeta_1$ is hardian with $H$-hardian  compositional inverse; 
\item[\textup{(iii)}] $\big\{\zeta_1(n):\, n\ge n_1\big\}=\big\{t\geq \zeta_1(n_1):\, y'(t)=0\big\}$;
\item[\textup{(iv)}]  $\zeta_1^{\operatorname{inv}}-\zeta^{\operatorname{inv}}\preceq 1$;
\item[\textup{(v)}]  $\zeta(n)<\zeta_{1}(n)<\zeta(n+1)$ for all $n\ge n_{1}$.
\end{enumerate}
\end{lemma}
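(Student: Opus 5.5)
We will use Lemma~\ref{lem:param V'} to get a phase function for $y'$, and then invert it exactly as in the proof of Lemma~\ref{lem:zeta}. Take a maximal Hardy field extension $E$ of $H$ containing $g$ and $\phi$. Then $E\supseteq\R$ is real closed and closed under integration, and $(g,\phi)\in E^2$ parametrizes $\ker_{\Calinf}(\der^2+a\der+b)$. Lemma~\ref{lem:param V'} then gives $q,u\in E$ with
$$q=\sqrt{(g')^2+(g\phi')^2},\qquad u=\arccos(g'/q),$$
such that $(q,\phi+u)$ parametrizes $V'$. In particular $y'=q\cos(\phi+u)$ eventually; this is in fact the computation from the proof of Corollary~\ref{gphicos}, with $0<u<\pi$. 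Since $u\preceq 1$ and $\phi>\R$, we have $\psi:=\phi+u>\R$ in $E$, so $\psi'>0$ eventually. Also $q>0$ eventually because $\phi'>0$ and $g\neq 0$.

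Next we locate the critical points. The zeros of $y'$ beyond some point are exactly the $t$ with $\psi(t)\in\frac{\pi}{2}+\Z\pi$. As shown in the proof of Corollary~\ref{gphicos}, the unique critical point $t$ in $(s_n,s_{n+1})$ satisfies $\phi(t)+u(t)=\phi(s_n)+\pi$. Recall $\phi(s_n)=(k_0+n)\pi$ for $n\ge n_0$. So the critical point in $(s_n,s_{n+1})$ is the unique $t$ with $\psi(t)/\pi-k_0=n+1$.

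We then define $\zeta_1$ as the compositional inverse of $\psi/\pi-(k_0+1)$ on a half-line where this function is strictly increasing, and choose $n_1\ge n_0$ large enough that everything holds from $\zeta_1(n_1)\ge e$ onward. Item (ii) follows as in Lemma~\ref{lem:zeta}. Indeed, $\zeta_1^{\operatorname{inv}}=\psi/\pi-k_0-1$ lies in $E$, hence is $H$-hardian, and $\zeta_1$ is hardian by Lemma~\ref{lem:Bosh6.5}. Item (i) holds because $\psi\to+\infty$. Item (iv) follows from
$$\zeta_1^{\operatorname{inv}}-\zeta^{\operatorname{inv}}=u/\pi-1\preceq 1.$$
Items (iii) and (v) come from the characterization in the previous paragraph. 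The critical point in $(s_n,s_{n+1})=(\zeta(n),\zeta(n+1))$ equals $\zeta_1(n)$, so $\zeta(n)<\zeta_1(n)<\zeta(n+1)$. Moreover, every zero of $y'$ beyond $s_{n_1}$ lies in some such interval, since $y'$ cannot vanish at a zero of $y$ (otherwise $y=0$ by uniqueness of solutions). This gives (iii) once we enlarge $n_1$ so that Corollary~\ref{gphicos} applies on $[s_{n_1},+\infty)$.

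The main obstacle is the bookkeeping of the offset. One must check that $\psi$ takes the value $\phi(s_n)+\pi$, and not $\phi(s_n)$, inside $(s_n,s_{n+1})$. This uses $0<u<\pi$ and the monotonicity of $\psi$ on $[b,+\infty)$ from Corollary~\ref{gphicos}, and it must be reconciled with $\zeta_1$ being defined on all of $[n_1,+\infty)$ with $\zeta_1(n_1)\ge e$.
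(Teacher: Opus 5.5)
Your proof is correct, but it takes a more direct route than the paper.

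\textbf{The paper's route.} The paper never works with $\phi+u$ as the phase of $y'$ from the start. It introduces the monic order-$2$ operator $A^\der\in H[\der]$, which needs $b\ne 0$, with $A^\der(y')=0$. It then takes an arbitrary pair $(g_1,\phi_1)$ in the maximal Hardy field $H$ parametrizing $\ker_{\Calinf}A^\der$, and reruns the proof of Lemma~\ref{lem:zeta} to get (i)--(iii) with $\zeta_1^{\operatorname{inv}}=(\phi_1/\pi)-k_1$. For (iv) it must identify $\phi_1$ with $\phi+u$ up to a constant:
\begin{itemize}
\item Lemma~\ref{lem:A^der} gives $\ker_{\Calinf}A^\der=V'$;
\item Lemma~\ref{lem:param V'} says $(q,\phi+u)$ parametrizes $V'$;
\item the uniqueness result Lemma~\ref{lem:g,phi unique} then yields $\phi_1-(\phi+u)\in\R$.
\end{itemize}
Because this additive constant is not controlled, (v) needs an extra index shift by some $p\in\Z$.

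\textbf{Your route.} You use only the pointwise identity $y'=q\cos(\phi+u)$ with $0<u<\pi$, exactly as in the proof of Corollary~\ref{gphicos}. This pins down the offset once and for all: the critical point $t_n$ in $(s_n,s_{n+1})$ satisfies $\psi(t_n)=(k_0+n+1)\pi$. Then (iv) is just the identity $\zeta_1^{\operatorname{inv}}-\zeta^{\operatorname{inv}}=u/\pi-1$, and (v) is immediate. You need neither $A^\der$ (hence not $b\ne 0$), nor Lemma~\ref{lem:A^der}, nor the uniqueness machinery.

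\textbf{What the paper's route buys.} It shows that $\zeta_1^{\operatorname{inv}}$ is, up to an additive constant, a phase function of a second-order equation over $H$ satisfied by $y'$. So results proved for $\zeta$, such as Corollary~\ref{cor:zeta d-alg} used in the remarks after Proposition~\ref{prop:v}, transfer to $\zeta_1$ by applying them to $A^\der$. With your route this would be checked separately. That check is easy, since $u=\arccos(g'/q)$ is differentially algebraic over $\Q\langle g,\phi\rangle$.
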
 
\begin{proof}
We arrange that $H$ is maximal.  Set 
$A:=\der^2+a\der +b\in H[\der]$. As $b\ne 0$, we have the monic operator $A^{\der}\in H[\der]$ of order $2$ as defined before Lemma~\ref{lem:A^der},   with~$A^\der(y')=0$. Take  a pair $(g_1,\phi_1)$  of elements of $H$ parametrizing $\ker_{\Calinf} A^{\der}$. Then with $A^\der$, $g_1$, $\phi_1$ instead of $A$, $g$, $\phi$, and taking suitable representatives of the relevant germs, the proof of Lemma~\ref{lem:zeta} provides likewise an $n_1\ge n_0$ in $\N$, 
a~$k_1\in \frac{1}{2}+\Z$, and a strictly increasing function 
$\zeta_1\in \c_{n_1}$ satisfying clauses (i), (ii), (iii) and with compositional inverse given by $(\phi_1/\pi)-k_1$.

The remarks preceding Lemma~\ref{lem:kerB} yield a differential subring $R$ of $\Calinf[\imag]$ such that
 $R\supseteq K=H[\imag]$ and $R$ is an integral domain whose differential fraction field has constant field $\C$, with $\dim_{\C} \ker_R A=2$.
Then $\ker_{R} A^\der={\big\{z':\ z\in \ker_{R}A\big\}}$ and~$\dim_{\C}\ker_R A^\der=2$ 
by Lemma~\ref{lem:A^der}, so $$\ker_{\Calinf[\imag]} A^\der\ =\ {\big\{y':\ y\in \ker_{\Calinf[\imag]}A\big\}}.$$ 
In view of~$A\in H[\der]$, this displayed equality also holds with $\Calinf$ in place of $\Calinf[\imag]$.
Now (iv) follows from
Lemmas~\ref{lem:param V'} and~\ref{lem:g,phi unique}.

As to (v), the remark preceding the lemma gives $\ell\in \N$ and $p\in \Z$ such that for all~$n\ge n_1+\ell$ we have:
$n+p\ge n_0$ and $\zeta_1(n)$ is the unique zero of $y'$ in the interval~$\big(\zeta(n+p), \zeta(n+p+1)\big)$.
Set $n_1^*:=n_1+\ell+|p|$, and modify~$\zeta_1$ to~$\zeta_1^*\colon [n_1^*, +\infty)\to \R$ by setting
 $\zeta_1^*(t)=\zeta_1(t-p)$. Then  $\zeta(n)< \zeta_1^*(n) < \zeta(n+1)$ for all $n\ge n_1^*$. 
 The compositional inverse of $\zeta_1^*$ is given by $(\phi_1/\pi)-(k_1-p)$. Thus replacing $\zeta_1$,~$n_1$,~$k_1$ by 
 $\zeta_1^*$,~$n_1^*$,~$k_1-p$, all clauses are satisfied. 
\end{proof}

\noindent
For simplicity we assume next that $a=0$ on $[e,+\infty)$, so  $4y''+fy=0$ on $[e, +\infty)$ with $f=4b\in \c_e^1$. Towards using Lemma~\ref{lem:param zeros of derivatives} we increase $e$  and restrict $f$, $y$ accordingly to arrange that $y\in \c_e^3$ and $f(t)\ne 0$ for all $t\in [e,+\infty)$. Then each interval~$(s_n,s_{n+1})$
contains exactly one zero $t_n$ of $y'$, and  $t_0 < t_1 < t_2 < \cdots$  enumerates the critical points of $y$ that are $\ge s_0$. 

Take
$e_0\ge e$ and $g$, $\phi$ as before Lemma~\ref{lem:zeta} such that in addition 
$w:=g^2\phi'$ takes a constant positive value on $[e_0,+\infty)$ (possible by Lemma~\ref{lem:parametrization of ker A}), and
$z''+fz=0$ on $[e_0, +\infty)$ for $z:=g\sin \phi$.  
%Each~$t_n$ is an extremal point of $y$,
If~$f$ is strictly increasing (strictly decreasing), then the sequence $\big(\abs{y(t_n)}\big)$ is strictly decreasing (strictly increasing, respectively), by Lemma~\ref{lem:extremal pts}; likewise
without ``strictly''.
(Since the germ of $f$ is hardian, it is eventually   increasing or eventually   decreasing.) 
 Remarkably, $\big(\abs{y(t_n)}\big)$ 
can also be  interpolated by a hardian function: 
%\marginpar{if $\phi$ is perfectly $H$-hardian, then so is the germ of $v$; what about that of $u$?}

\begin{prop}\label{prop:v} There exists $n_1\ge n_0$ in $\N$ and a function $u\in \c_{n_1}$ with hardian germ such that
$u(n)=\abs{y(t_n)}$ for all $n\ge n_1$.
\end{prop}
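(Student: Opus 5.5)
The plan is to combine Trench's observation (Proposition~\ref{prop:Trench}) with the hardian interpolation $\zeta_1$ of the critical points from Lemma~\ref{lem:param zeros of derivatives}. Take $y_1:=g\cos\phi$ and $y_2:=z=g\sin\phi$ on $[e_0,+\infty)$. Both are solutions of $4Y''+fY=0$ there, and their Wronskian is
\[
y_1y_2'-y_1'y_2\ =\ g^2\phi'\ =\ w,
\]
a positive constant. Write $y=c_1y_1+c_2y_2$ with $c_1^2+c_2^2=1$ after rescaling. This is harmless: if $y=\lambda(c_1y_1+c_2y_2)$ with $\lambda>0$, we simply multiply the final $u$ by $\lambda$. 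Proposition~\ref{prop:Trench} then gives $\abs{y(t)}=v(t)$ at every critical point $t\ge e_0$, where
\[
v\ :=\ \frac{w}{\sqrt{(y_1')^2+(y_2')^2}}.
\]
We compute $(y_1')^2+(y_2')^2=(g')^2+(g\phi')^2=q^2$, with $q$ as in Lemma~\ref{lem:param V'}, the cross terms cancelling. Hence $v=w/q$. Since $g,\phi$ lie in a common Hardy field extension $E$ of $H$, which we may take real closed, the germ of $v$ lies in $E$ and is $H$-hardian.

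Next we set $u:=v\circ\zeta_1$, where $\zeta_1$ is as in Lemma~\ref{lem:param zeros of derivatives}. Clause~(v) of that lemma puts $\zeta_1(n)$ in $(s_n,s_{n+1})$, so $\zeta_1(n)=t_n$ for $n\ge n_1$, once the indexing of the $t_n$ (critical points $\ge s_0$, with $t_n\in(s_n,s_{n+1})$) is matched up with the shift built into clause~(v). Increasing $n_1$ so that $\zeta_1(n_1)\ge e_0$, we get $u(n)=v(t_n)=\abs{y(t_n)}$ for all $n\ge n_1$. Continuity of $u$ is clear.

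The main obstacle is showing that the germ of $u=v\circ\zeta_1$ is hardian. Here $v$ is $H$-hardian, and by clause~(ii) $\zeta_1$ is hardian with $H$-hardian compositional inverse $\ell:=\zeta_1^{\operatorname{inv}}=(\phi_1/\pi)-k_1$, where $\phi_1$ lies in a Hardy field extension of $H$. First pass to a maximal Hardy field $M$ containing $H$, $g$, $\phi$ and $\phi_1$; this is possible because $\phi_1$ can be chosen in any maximal extension of $H$ by the proof of Lemma~\ref{lem:param zeros of derivatives}, and $g,\phi$ can be taken in that same $M$ by Theorem~\ref{thm:Bosh}. Then $v,\ell\in M$, $\ell>_{\ev}\R$ and $\ell'\in M$. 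The subsection ``Hardy fields and composition'' applied to $M$ and this $\ell$ shows that $M\circ\ell^{\operatorname{inv}}=M\circ\zeta_1$ is a Hardy field. It contains $v\circ\zeta_1=u$, so the germ of $u$ is hardian, which completes the proof.
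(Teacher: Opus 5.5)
Your proposal is correct and follows the paper's proof essentially step for step. It applies Trench's observation to $y_1=g\cos\phi$, $y_2=g\sin\phi$ (Wronskian $w$), sets $u:=v\circ\zeta_1$, and gets that $u$ is hardian because it lies in the Hardy field $M\circ\zeta_1$, for a maximal $M$ containing $g$, $\phi$ and the compositional inverse of $\zeta_1$.

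The differences are only cosmetic. The paper rewrites $v=w/q$ as $g/\sqrt{1+(\psi'/2)^2}$ with $\psi=1/\phi'$, using $2g'\phi'+g\phi''=0$. Your rescaling step is unnecessary, since $y=y_1$ on $[e_0,+\infty)$. Finally, the maximal $M\ni g,\phi$ should be fixed before $\phi_1$ and $\zeta_1$ are constructed, as the paper does by arranging $H$ maximal at the outset.
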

\begin{proof}  We arrange $H$ is maximal, so that (the germs of) $g,\phi$ are in $H$. Then the germs of $\psi:=1/\phi'\in \c_{e_0}$ and $v:=\displaystyle\frac{g}{\sqrt{1+(\psi'/2)^2}}\in\c_{e_0}$ are also in $H$. The proof of Lemma~\ref{lem:param zeros of derivatives}  yields a strictly increasing~${\zeta_1\in\c_{n_1}}$ with $n_1\in\N,\ n_1\ge n_0$, such that its germ is hardian, $\zeta_1(n)=t_n\ge e_0$ for $n\ge n_1$, 
and the germ $\ell$  of the compositional inverse of $\zeta_1$ is in $H$. 
Then $H\circ \ell^{\inv}$ is a Hardy field that contains the germ of~$u:=v\circ\zeta_1\in \c_{n_1}$. (See the subsection ``Hardy fields and composition'' in Section~\ref{sec:prelims}.) 
Consider the elements $$y_1\ :=\ y|_{[e_0,+\infty)}\ =\ g\cos\phi,\qquad y_2\ :=\ g\sin\phi$$
of $\c^2_{e_0}$.
Then on $[e_0,+\infty)$  we have
$$y_1'\ =\ g'\cos\phi-g\phi'\sin\phi,\qquad y_2'=g'\sin\phi+g\phi'\cos\phi,$$
hence
$$y_1y_2'-y_1'y_2\ =\ w,\qquad (y_1')^2+(y_2')^2\  =\  (g')^2+(g\phi')^2.$$
From $g^2\phi'=w\in\R$ we obtain $2g'\phi'+g\phi''=0$, so
$$(2\phi'/g)^2\big( (y_1')^2+(y_2')^2 \big)\  =\  (\phi'')^2+4(\phi')^4 
$$
and thus on $[e_0,+\infty)$: 
$$\frac{w^2}{(y_1')^2+(y_2')^2}\  =\  \frac{4w(\phi')^3}{(\phi'')^2+4(\phi')^4}\ =\ 
\frac{4w/\psi^3}{(\psi')^2/\psi^4+4/\psi^4}\ =\ 
\frac{w\psi}{1+(\psi'/2)^2}\ =\ 
v^2,$$
so $v=\displaystyle\frac{w}{\sqrt{(y_1')^2+(y_2')^2}}$.
Hence $u$ has the desired property by Proposition~\ref{prop:Trench}.
\end{proof}

%\begin{lemma}\label{lem:v}
%Put $\psi:=1/\phi'$.  
%The germ of
%$v:=\displaystyle\frac{g}{\sqrt{1+(\psi'/2)^2}}\in\c_{e_0}$ is $H$-hardian,  
%the germ of $u:=v\circ\zeta_1\in\c_{n_1}$ is hardian, and  
%$u(n)=v(t_n)=\abs{y(t_n)}$ for~$n\ge n_1$.
%\end{lemma}
%\begin{proof}  We arrange $H$ is maximal, so that (the germs of) $g,\phi$ and hence of $v$ are in $H$. The proof of Lemma~\ref{lem:param zeros of derivatives} then yields a strictly increasing~${\zeta_1\in\c_{n_1}}$ with $n_1\in\N,\ n_1\ge n_0$, such that its germ is hardian, $\zeta_1(n)=t_n\ge e_0$ for $n\ge n_1$, 
%and the germ $\ell$  of the compositional inverse of $\zeta_1$ is in $H$. 
%Then $H\circ \ell^{\inv}$ is a Hardy field which contains the germ of $u=v\circ\zeta_1$. (See the subsection ``Hardy fields and composition'' in Section~\ref{sec:prelims}.) {\bf oksofar}
%Consider the elements $$y_1:=y=g\cos\phi,\qquad y_2:=g\sin\phi$$
%of $\c^2_{e_0}$.
%Then
%$$y_1'=g'\cos\phi-g\phi'\sin\phi,\qquad y_2'=g'\sin\phi+g\phi'\cos\phi,$$
%hence
%$$y_1y_2'-y_1'y_2=w,\qquad (y_1')^2+(y_2')^2 = (g')^2+(g\phi')^2.$$
%From $g^2\phi'=w\in\R$ we obtain $2g'\phi'+g\phi''=0$, so
%$$(2\phi'/g)^2\big( (y_1')^2+(y_2')^2 \big) = (\phi'')^2+4(\phi')^4 
%$$
%and thus
%$$\frac{w^2}{(y_1')^2+(y_2')^2} = \frac{4w(\phi')^3}{(\phi'')^2+4(\phi')^4}=
%\frac{4w/\psi^3}{(\psi')^2/\psi^4+4/\psi^4}=
%\frac{w\psi}{1+(\psi'/2)^2}=
%v^2.$$
%Now use Proposition~\ref{prop:Trench}.
%\end{proof}

{\samepage
\begin{remarks} Let $u$, $v$ be as in the proof of Proposition~\ref{prop:v}.
\begin{enumerate}
\item
Suppose the germ of $f$ is $\d$-algebraic. Then the germ of $\phi$ and hence the germ of $v$ are $\d$-algebraic. By
Corollary~\ref{cor:zeta d-alg} and the proof of Lemma~\ref{lem:zeta} the germ of $\zeta$ and of its compositional inverse are then 
$\d$-algebraic as well. Moreover, using also the proof of Lemma~\ref{lem:param zeros of derivatives} we can choose~$\zeta_1$ so that the germ of $\zeta_1$ and the
germ $\ell$ of its compositional  inverse are also $\d$-algebraic.  Then $u$
has $\d$-algebraic germ, by Lemma~\ref{lem:trdeghcirc}. Together with
Lemma~\ref{lem:param zeros of derivatives} and Proposition~\ref{prop:v}, this proves Corollary~4 from the introduction.
\item Suppose $\phi\succ\log x$. (See Corollary~\ref{cor:upper lower bd for phi} for situations where this holds.)
Then $\phi''\prec (\phi')^2$, hence $v\sim g$. (This justifies a remark after Corollary~4.)
\item  Proposition~\ref{prop:v} has an analogue for the sequence
$\big(\abs{y'(s_n)}\big)$:
Differentiating~$g\cos \phi$, evaluating at $s_n$, and using $g=\sqrt{w/\phi'}$ gives
$$|y'(s_n)|\ =\ \sqrt{w\phi'(s_n)}\quad \text{ for all $n\ge n_0$}.$$ 
The germ of $w\phi'$ is $H$-hardian. 
Suppose $\zeta\in\c_{n_0}$ is as in
Lemma~\ref{lem:zeta}.
Then~$u_0:=(w\phi')\circ\zeta\in\c_{n_0}$  satisfies $u_0(n)=|y'(s_n)|$ for all $n\ge n_0$
and has hardian germ.  %Let $n_0\in\N$ with $s_{n_0}\ge e_0$.
%We may  likewise interpolate   $\abs{y'(s_n)}$ ($n\ge n_0$) by a function with
%$H$-hardian germ:   $(\phi')^{1/2}(s_n)=\abs{y'(s_n)}$ for $n\ge n_0$.
\end{enumerate}
\end{remarks}}

\begin{example}
Suppose $r\in\R$, $r>-1$ with $x^r\in H$, and as in  Example~\ref{ex:boundphi}(2):
$$f\ =\  x^{2r}-r(r+2)x^{-2}, \qquad g\ =\ x^{-r/2}, \quad
\phi\ =\ \frac{x^{r+1}}{2r+2}.$$
Then  
$\psi=2x^{-r}$, so $\psi'/2=-rx^{-r-1}$ and hence
$v^2=\displaystyle\frac{g^2}{1+(\psi'/2)^2}=\frac{x^{r+2}}{x^{2r+2}+r^2}$.
Figure~\ref{fig:v} shows a plot of $y$, $-y$, $v$ for~$r=1$.
\end{example}

\begin{figure}
\begin{tikzpicture}

\begin{axis}[
clip = true,
    clip mode=individual,
    axis x line = middle,
    axis y line = middle,
domain=0.8:15, xlabel=$t$, axis lines=middle, xtick={0},ytick={0}, 
    inner axis line style={=>},  restrict x to domain=0.8:15,  ymin=-1, ymax=1,
    enlarge y limits={rel=0.10},
    enlarge x limits={rel=0.07}]
\addplot[samples = 200,
		smooth,
		thick]{(x^(-1/2))*( cos(deg(x^2/4))) )} node[right,pos=0] {$y(t)$};
		
\addplot[samples = 200,
		dotted,
		thick]{-(x^(-1/2))*( cos(deg(x^2/4)) )} node[right,pos=0] {$-y(t)$};

\addplot[samples = 200,
		dashed,
		thick]{sqrt((x^3)/(1+x^4) )} node[above left,pos=1] {$v(t)$};
		
\end{axis}

\end{tikzpicture}

%\begin{tikzpicture}

%\begin{axis}[
%clip = true,
%    clip mode=individual,
%    axis x line = middle,
%    axis y line = middle,
%domain=1:10000, xlabel=$t$, axis lines=middle, xtick={0},ytick={0}, 
%    inner axis line style={=>},  restrict x to domain=0:10000, enlarge y limits={rel=0.10},
%    enlarge x limits={rel=0.07}]
%\addplot[samples = 200,
%		smooth,
%		thick]{(x^(1/2))*( cos(deg(10*ln(x))) )} node[left,pos=1] {$y(t)$};
		
%\addplot[samples = 200,
%		dotted,
%		thick]{-(x^(1/2))*( cos(deg(10*ln(x))) )} node[below left,pos=1] {$-y(t)$};

%\addplot[samples = 200,
%		dashed,
%		thick]{sqrt(400*x/401)} node[above left,pos=1] {$v(t)$};
%\end{axis}

%\end{tikzpicture}

\caption{Plot of $y$, $-y$, $v$}\label{fig:v}
\end{figure}
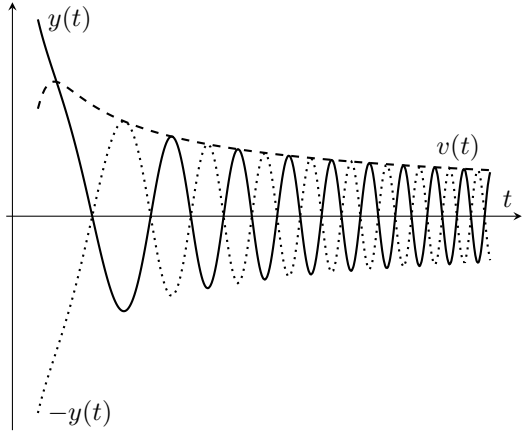

\section{When is the Perfect Hull of a Hardy Field $\upo$-free?}\label{sec:E(H) upo-free}

\noindent
{\it In this section $H$ is a Hardy field.}\/  
Below we use the lemmas that prove Theorem~\ref{thm:Bosh} to characterize $\upo$-freeness of $\Dx(H)$ in terms of $H$.
Recall the relevance of this condition to uniqueness 
in Theorem~\ref{thm:Bosh}; more on this at the end of this section.  

{\samepage
\begin{theorem}\label{thm:upo-freeness of the perfect hull}
The following are equivalent: 
\begin{enumerate}
\item[\textup{(i)}] $H$ is not $\upl$-free or  $\bar{\omega}(H)=H\setminus\sigma\big(\Upg(H)\big){}^\uparrow$;
\item[\textup{(ii)}] $\Dx(H)$ is $\upo$-free;
\item[\textup{(iii)}] $\Ex(H)$ is $\upo$-free.
\end{enumerate}
\end{theorem}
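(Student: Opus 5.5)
We prove (i)~$\Rightarrow$~(ii)~$\Rightarrow$~(iii)~$\Rightarrow$~(i). The implication (i)~$\Rightarrow$~(ii) is the parenthetical statement in Lemma~\ref{lem:7.11}. Take $L:=\Dx(H)$: it is a Liouville closed $\d$-algebraic Hardy field extension of $H$ containing $\R$. Since $L$ is $\d$-perfect, Corollary~\ref{cor:omega(H) downward closed} gives $\omega(L)=\bar\omega(L)$, so Lemma~\ref{lem:7.11} applies.

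For (ii)~$\Rightarrow$~(iii), note that $\Ex(H)$ is a Hardy field extension of $\Dx(H)$ which is pre-$\d$-valued and $H$-asymptotic. To apply Theorem~\ref{thm:13.6.1} we need $\d$-algebraicity, and $\Ex(H)$ need not be $\d$-algebraic over $\Dx(H)$. I would argue elementwise instead. By the definition, $\upo$-freeness of $\Ex(H)$ is a condition on each single $\upo\in\Ex(H)$. Given such $\upo$, consider $E_0:=\Dx(H)\langle\upo\rangle$, or better $\Dx(\Dx(H)\langle \upo\rangle)\subseteq\Ex(H)$. This is a $\d$-perfect Hardy field containing $\Dx(H)$, but it is again not $\d$-algebraic over $\Dx(H)$.

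An alternative route avoids this: use $\upo$-freeness of $\Ex(H)$ in the characterization via $\bar\omega$ and Corollary~\ref{cor:perfect Schwarz closed, 2}. Since $\Ex(H)$ is perfect, hence $\d$-perfect, it suffices to show that each $4\der^2+f$ with $f\in\Ex(H)$ splits over $\Ex(H)[\imag]$. If $f/4$ does not generate oscillation, this follows from Corollary~\ref{cor:char osc, 2}. Otherwise I would apply the Remarks after Theorem~\ref{thm:Bosh} to the Hardy field $\Ex(H)$ itself. I expect this step, getting $\upo$-freeness to propagate upward from $\Dx(H)$, to be the main obstacle. The approach I would try first is a direct argument from the definition: given $\upo\in\Ex(H)$, use cofinality of $\Gamma_{\Dx(H)}^<$ together with the fact that $\Upl$ and $\Upg$ of $\Dx(H)$ are coinitial/cofinal in those of $\Ex(H)$, so that the witnesses $g\succ1$ from $\Dx(H)$ still witness $\upo-\omega(-g^{\dagger\dagger})\prec (g^\dagger)^2$.

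For (iii)~$\Rightarrow$~(i), argue by contraposition. Suppose $H$ is $\upl$-free and $\bar\omega(H)\ne H\setminus\sigma(\Upg(H))^\uparrow$. By Lemma~\ref{lem:6.19}, $\bar\omega(H)=\omega(\Upl(H))^\downarrow$, and there is an $f\in H$ with
\[
\omega\big(\Upl(H)\big)<f<\sigma\big(\Upg(H)\big),
\]
with $f\notin\bar\omega(H)$, so $f/4$ generates oscillation. Assume towards a contradiction that $E:=\Ex(H)$ is $\upo$-free. Then Corollary~\ref{cor:char upofree} gives $f\in\sigma(\Upg(E))^\uparrow$, so $f\ge\sigma(\upg)$ for some $\upg\in\Upg(E)$. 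Since $E$ is $\d$-perfect, Corollary~\ref{cor:perfect Schwarz closed, 1} together with Lemma~\ref{lem:g,phi unique} gives a parametrizing pair $(g,\phi)\in E^2$ for $\ker(4\der^2+f)$, unique up to constants.

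Now take two maximal Hardy field extensions $M_1,M_2$ of $H$. Using that $f$ lies in the ``$\upo$-gap'' of $H$, I would construct, following the pattern of Lemma~\ref{lem:11.5.14}, two $H$-hardian phase functions with $\phi_1'\not\sim\phi_2'$. The idea is to realize the cut of $f$ differently, via $\phi_i'$ infinitesimally close to different elements. This contradicts uniqueness in $E\subseteq M_1\cap M_2$. Making this non-uniqueness construction precise is the second delicate point.
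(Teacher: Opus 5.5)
Your (i)$\Rightarrow$(ii) is correct and is exactly the paper's argument. The genuine gap is (ii)$\Rightarrow$(iii): none of your suggested routes works.

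Your ``alternative route'' is circular. $E:=\Ex(H)$ is Liouville closed, hence $\upl$-free, so invoking the Remarks after Theorem~\ref{thm:Bosh} for $E$ requires $\bar\omega(E)=E\setminus\sigma\big(\Upg(E)\big){}^\uparrow$. By Corollary~\ref{cor:perfect Schwarz closed, 1} this just says that $E$ is Schwarz closed, i.e.\ $\upo$-free, which is what you want to prove.

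Your ``direct'' route rests on a false principle: $\upo$-freeness is not inherited by extensions in which $\Gamma^<$ stays cofinal and $\Upg$ stays coinitial. The paper's own example shows this. $H=\R(\ell_0,\ell_1,\dots)$ is $\upo$-free, yet its \emph{immediate} extensions $H_{(j)}=H\langle\upo_{(j)}\rangle$ are not. A new element $\upo\in\Ex(H)$ comes with no witnesses from $\Dx(H)$. The danger is precisely that $\upo$ is a pseudolimit of $(\upo_\rho)$, $\upo_\rho=\omega(-\ell_\rho^{\dagger\dagger})$, and you need a reason why the perfect hull cannot contain such a pseudolimit.

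The paper supplies this in Lemma~\ref{lem:upo-freeness of the perfect hull, (ii)=>(iii)}, applied to $\Dx(H)$ (note $\Ex(\Dx(H))=\Ex(H)$). Over a $\upo$-free $H$ the pc-sequence $(\upo_\rho)$ is of $\d$-transcendental type [ADH, 13.6.3]. Proposition~\ref{prop:3.4} shows that a pseudolimit $\hat f$ of such a sequence with $v(\hat f-H)\ne\Gamma_H$ can, after rescaling, be perturbed by the flat germ $\ex^{-x}\sin x$. This gives a second $H$-hardian germ that shares no Hardy field with $\hat f$, so $\hat f\notin\Ex(H)$ (Corollary~\ref{cor:d-trans => c-sequ}). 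On the other hand, $v(\upo-H)$ misses $v(x^{-4})$ by [ADH, 11.7.2].

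For (iii)$\Rightarrow$(i), your reduction is sound and follows the paper's strategy. If $E$ were $\upo$-free, Corollaries~\ref{cor:char upofree} and~\ref{cor:perfect Schwarz closed, 1} would give $\upg\in\Upg(E)$ with $\sigma(\upg)=f$. That yields a phase function lying in every maximal Hardy field extension of $H$, so it suffices to exhibit a second, inequivalent $H$-hardian one.

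But that existence statement is the entire content of this direction, and you do not provide it. Lemma~\ref{lem:11.5.14} concerns the gap produced by failure of $\upl$-freeness and yields nothing of this kind. ``Realizing the cut differently'' does not produce actual germs in $\Calinf$, let alone hardian ones.

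The missing input is the analytic Lemma~\ref{lem:7.15+7.17}. First reduce to $H\supseteq\R$ Liouville closed by passing to $\Li\big(H(\R)\big)$; \eqref{eq:upo} survives by cofinality of $\Gamma^<$ and [ADH, 11.8.14]. Then there are continuum many $H$-hardian $\upg>0$ with $\sigma(\upg)=f$, no two in a common Hardy field. Pick $\tilde\upg\ne\upg$ and a maximal $M\ni\tilde\upg$; then $\upg,\tilde\upg\in M\supseteq E$, a contradiction. Note also that you do not need $\phi_1'\not\sim\phi_2'$; $\phi_1-\phi_2\notin\R$ suffices.
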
}
 
\noindent
The disjuncts of  (i) are exactly the conditions (a) and (b) in Theorem~B of the Introduction. By Corollary~\ref{cor:perfect Schwarz closed, 2}, 
a $\d$-perfect Hardy field is Schwarz closed iff it is $\upo$-free, so in~(ii),~(iii) we can replace 
  ``$\upo$-free'' by ``Schwarz closed''. 
The implication~(i)~$\Rightarrow$~(ii) holds by Lemma~\ref{lem:7.11}. %was shown already in \cite[Lemma 7.11]{ADH5}.  
For the contrapositive of~(iii)~$\Rightarrow$~(i), suppose~$H$ is $\upl$-free and~${\bar{\omega}(H)\neq H\setminus\sigma\big(\Upg(H)\big){}^\uparrow}$. Since~$\bar{\omega}(H)\subseteq H\setminus\sigma\big(\Upg(H)\big){}^\uparrow$ by the remark after Corollary~\ref{cor:omega(H) downward closed},
this yields~$\upo\in H$ with
\begin{equation}\label{eq:upo}
\bar{\omega}(H)\ <\  \upo\   <\  \sigma\big(\Upg(H)\big), 
\end{equation}
and so by Lemma~\ref{lem:upo-freeness of the perfect hull} below, $\operatorname{E}(H)$ is not $\upo$-free.  The proof of Lemma~\ref{lem:upo-freeness of the perfect hull} relies on Corollary~\ref{cor:perfect Schwarz closed, 1}, but
additionally draws on facts from
Sections~\ref{sec:ADH} and~\ref{sec:prelims} and
the following consequence of \cite[Corollaries~7.15, 7.17]{ADH5}:

\begin{lemma}\label{lem:7.15+7.17}
Suppose $H\supseteq \R$ is Liouville closed and  $\upo\in H$ satisfies \eqref{eq:upo}.
Then there are continuum many  $\upg\in (\Calinf)^\times$ with $\upg>0$ and $\sigma(\upg)=\upo$. Each
such germ~$\upg$ is $H$-hardian, and no Hardy field extension of~$H$ contains more than one such   $\upg$.
\end{lemma}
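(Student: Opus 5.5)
The approach is to identify all positive germs $\upg$ with $\sigma(\upg)=\upo$ explicitly in terms of the solution space $V:=\ker_{\Calinf}(4\der^2+\upo)$, and then to treat the three assertions (continuum many, $H$-hardian, at most one per Hardy field) separately.

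\emph{Oscillation.} By the remark after Corollary~\ref{cor:omega(H) downward closed}, $\bar\omega(H)$ is downward closed. Hence $\bar\omega(H)<\upo$ gives $\upo\notin\bar\omega(H)$, so $\upo/4$ generates oscillation. Lemma~\ref{lem:parametrization of ker A} then yields a pair $(g,\phi)$ of $H$-hardian germs parametrizing $V$, with $\sigma(2\phi')=\upo$. So $\upg_0:=2\phi'$ is one germ of the required kind.

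\emph{Continuum many.} Put $y_1:=g\cos\phi$ and $y_2:=g\sin\phi$, a basis of $V$ with constant Wronskian $w$. For any basis $u_1,u_2$ of $V$ with Wronskian $w_u>0$, set $z:=u_1+u_2\imag$. A direct computation (the one in Lemma~\ref{parphi}(ii), run backwards) shows that $\upg_u:=2w_u/(u_1^2+u_2^2)$ satisfies $\sigma(\upg_u)=\upo$ and $\upg_u>0$. Conversely, any $\upg>0$ in $\Calinf$ with $\sigma(\upg)=\upo$ arises this way. To see this, take $\theta$ with $\theta'=\upg/2$; then $\upg^{-1/2}\ex^{\pm\theta\imag}$ solve $4Y''+\upo Y=0$, exactly as in Lemma~\ref{parphi}(ii). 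Thus the $\upg$'s correspond to positive definite quadratic forms $q(y_1,y_2)$ in $y_1^2,y_1y_2,y_2^2$ of fixed discriminant, via $1/\upg=q(y_1,y_2)/2w$. By Lemma~\ref{lem:Appell}, $y_1^2,y_1y_2,y_2^2$ are linearly independent, so distinct forms give distinct $\upg$. This gives continuum many, namely a copy of $\mathrm{SL}_2(\R)/\mathrm{SO}_2(\R)$.

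\emph{At most one per Hardy field.} Suppose a Hardy field $E\supseteq H$ contains two such germs $\upg_1,\upg_2$; enlarge $E$ to a maximal Hardy field. The germs $1/\upg_i$ lie in $\ker_E B$ for $B=\der^3+\upo\der+\upo'/2$, by Lemma~\ref{lem:B, 1}. Since $\upo=\sigma(\upg_1)$ and $\upo\notin\bar\omega(E)\supseteq\omega(E)$, Lemma~\ref{lem:kerB} gives $\dim\ker_E B=1$, so $\upg_2=c\upg_1$ for some $c\in\R^>$. Then $\sigma(c\upg_1)=\sigma(\upg_1)+(c^2-1)\upg_1^2$, and $\sigma(\upg_2)=\sigma(\upg_1)=\upo$ forces $c=1$.

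\emph{Every such $\upg$ is $H$-hardian.} This is the main obstacle. For $\upg=\upg_u$ we have $1/\upg\asymp A+B\cos2\phi+C\sin2\phi$ with $A>\sqrt{B^2+C^2}$. This is not obviously hardian, and it certainly does not lie in the same Hardy field as $\phi$ unless $B=C=0$. I would first use the hypothesis $\upo<\sigma(\Upg(H))$: by Lemma~\ref{lem:11.8.29} and the monotonicity of $\sigma$, every positive solution $\upg$ lies below $\Upg(H)$ and above all $h\in H$ with $h\le\upg_0$. So all the $\upg_u$ realize the same cut in $H$, with $\phi'\prec\Upg(H)$. From this I would try to show that $H\langle\upg_u\rangle$ is a Hardy field by producing, for each $u$, a Hardy field extension $M_u$ of $H$ in which $V$ is parametrized by $(\sqrt{2w_u/\upg_u},\theta_u)$. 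Concretely, one builds a maximal Hardy field extension that realizes the corresponding cut, using \cite[Corollaries~7.15, 7.17]{ADH5}. If this direct route fails, I would fall back on invoking those corollaries for the hardian-ness.
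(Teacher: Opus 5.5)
Your proof takes a different route from the paper for two of the three assertions, but it does not establish the third, the $H$-hardianness. That is the substantive part, and for it you end up relying on the same citation the paper uses for the whole lemma, \cite[Corollaries~7.15, 7.17]{ADH5}.

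\emph{The two parts you prove directly.} Both arguments are correct.
\begin{itemize}
\item[(1)] Continuum many. Write $u_1+u_2\imag=r\ex^{\theta\imag}$. Then $r^2\theta'=w_u$, and taking real parts of $\omega(2z^\dagger)=\upo$ gives $\sigma(\upg_u)=\upo$ for $\upg_u=2w_u/(u_1^2+u_2^2)$. The converse direction and the linear independence of $y_1^2,y_1y_2,y_2^2$ then give a continuum of solutions by elementary means. This step works for any basis of $V$, so it needs neither Lemma~\ref{lem:parametrization of ker A} nor Theorem~\ref{thm:maxhardymainthm}.
\item[(2)] At most one per Hardy field. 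Your argument is exactly the mechanism of Lemma~\ref{lem:B, 2} and Corollary~\ref{cor:phi' quadratic}. Two small points. First, $B(1/\upg)=0$ follows directly from the computation in Lemma~\ref{lem:Appell} applied to $\upg^{-1/2}\ex^{\pm\theta\imag}$, so you need not check the hypotheses of Lemma~\ref{lem:B, 1}. Second, $\upo\notin\omega(E)$ holds because $\bar{\omega}(E)\cap H=\bar{\omega}(H)$.
\end{itemize}

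\emph{The gap: $H$-hardianness.} The hypothesis $\upo<\sigma\big(\Upg(H)\big)$ is used only here, and it cannot be avoided. If instead $\upo\in\sigma\big(\Upg(H)\big){}^\uparrow$, the same continuum of germs $\upg_u$ still exists, but by Lemma~\ref{lem:g,phi unique} only $2\phi'$ among them is $H$-hardian. Your sketch does not supply an argument that uses the hypothesis in an essential way.
\begin{itemize}
\item[(a)] Using Lemma~\ref{lem:11.8.29} to place $\upg_u$ in a cut of $H$ presupposes that $\upg_u$ already lies in an $H$-field extension of $H$, which is what is to be shown. The explicit formula $\upg_u=2\phi'/(A+B\cos2\phi+C\sin2\phi)$ does give the cut without that assumption.
\item[(b)] The cut is only zeroth-order information. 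It fixes the sign of $\upg_u-h$ for $h\in H$, but not the sign of $P(\upg_u)$ for $P\in H\{Y\}$, where derivatives of the oscillating factor enter.
\item[(c)] Building a maximal Hardy field that ``realizes the cut'' yields \emph{some} germ with that cut, not the given germ $\upg_u$.
\end{itemize}
So the proposal is complete only modulo citing \cite{ADH5} for hardianness.
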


\begin{lemma}\label{lem:upo-freeness of the perfect hull}
Suppose $H$ is $\upl$-free, and $\upo\in H$ satisfies \eqref{eq:upo}. % $\bar{\omega}(H)<\upo<\sigma\big(\Upg(H)\big)$.
Then 
$$\omega(E)\ =\ \bar{\omega}(E)\ <\ \upo\  <\  \sigma\big(\Upg(E)\big)\qquad\text{for $E:=\Ex(H)$,}$$
hence $E$ is not $\upo$-free. $($The end of this section gives $H$, $\upo$ satisfying the hypothesis.$)$ 
\end{lemma}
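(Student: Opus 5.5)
We want, for $E:=\Ex(H)$, that $\omega(E)=\bar\omega(E)<\upo<\sigma\big(\Upg(E)\big)$; non-$\upo$-freeness of $E$ then follows. Indeed $E$ is $\d$-perfect, hence Liouville closed with asymptotic integration. By Corollary~\ref{cor:omega(H) downward closed} we have $\omega(E)=\bar\omega(E)=\omega\big(\Upl(E)\big)$, the last equality by Lemma~\ref{lem:11.8.29}. So the displayed inequalities show $\upo\notin\omega\big(\Upl(E)\big)^\downarrow\cup\sigma\big(\Upg(E)\big)^\uparrow$, and \eqref{eq:upofree} shows $E$ is not $\upo$-free.

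The first equality, $\omega(E)=\bar\omega(E)$, is Corollary~\ref{cor:omega(H) downward closed}, since $E$ is perfect and hence $\d$-perfect.

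For $\bar\omega(E)<\upo$: we have $\bar\omega(E)\cap H=\bar\omega(H)$, so $\upo\notin\bar\omega(E)$, because $\upo>\bar\omega(H)$ while $\upo\in H$. Since $\bar\omega(E)$ is downward closed, this gives $\bar\omega(E)<\upo$.

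For $\upo<\sigma\big(\Upg(E)\big)$: since $E$ is $\d$-perfect, Corollary~\ref{cor:perfect Schwarz closed, 1} says $\sigma\big(\Upg(E)\big)$ is upward closed. So it suffices to show $\upo\notin\sigma\big(\Upg(E)\big)$. Suppose instead that $\upo=\sigma(\upg)$ with $\upg\in\Upg(E)\subseteq E^>$. We first replace $H$ by $L:=\Li\big(H(\R)\big)\subseteq E$ so as to apply Lemma~\ref{lem:7.15+7.17}. This requires checking that $\upo$ still satisfies \eqref{eq:upo} with $L$ in place of $H$, and this is the main obstacle. For the lower bound, $\bar\omega(L)\cap H=\bar\omega(H)$ and $\upo\notin\bar\omega(H)$ give $\upo\notin\bar\omega(L)$, hence $\bar\omega(L)<\upo$. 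For the upper bound we need $\upo<\sigma\big(\Upg(L)\big)$. By Lemma~\ref{lem:1.3.3+} and Proposition~\ref{prop:Gehret}, $L$ is $\upl$-free and $\Gamma_H^<$ is cofinal in $\Gamma_L^<$; so $\Upg(H)$ should be coinitial in $\Upg(L)$, for instance via logarithmic sequences $(\upg_\rho)$ of $H$ that remain logarithmic sequences for $L$. Since $\sigma$ is increasing on $\Upg(L)$ (Lemma~\ref{lem:11.8.29}), from $\upo<\sigma\big(\Upg(H)\big)$ I expect to obtain $\upo<\sigma(\upg_\rho)$ for all $\rho$, and hence $\upo<\sigma\big(\Upg(L)\big)$. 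If this coinitiality argument fails, a fallback is to use Lemma~\ref{lem:6.19} for $L$ together with the dichotomy that $\bar\omega(L)$ is $\omega(L)^\downarrow$ or $L\setminus\sigma\big(\Upg(L)\big)^\uparrow$.

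With \eqref{eq:upo} established for $L$, Lemma~\ref{lem:7.15+7.17} gives continuum many germs $\upg>0$ with $\sigma(\upg)=\upo$. Each is $L$-hardian, hence $H$-hardian, and no Hardy field extension of $L$ contains two of them. Take a maximal Hardy field $M\supseteq E$. It contains our $\upg\in E$, so $\upg$ is the only such germ in $M$. Now pick another solution $\upg_1\neq\upg$ and a maximal Hardy field $M_1\supseteq L\langle\upg_1\rangle$. Since $E\subseteq M_1$, we get $\upg\in M_1$ as well, so $M_1$ contains two distinct solutions, a contradiction. Therefore $\upo\notin\sigma\big(\Upg(E)\big)$, which completes the proof.
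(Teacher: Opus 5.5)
Your proof is correct and follows essentially the paper's argument. You reduce to a Liouville closed $L\supseteq\R$ inside $E$; the coinitiality of $\Upg(H)$ in $\Upg(L)$ that you hedge on is exactly [ADH, 11.8.14], which the paper cites for this step, and your logarithmic-sequence route to it also works. You then get $\bar\omega(E)<\upo$ from $\bar\omega(E)\cap H=\bar\omega(H)$, and you rule out $\upo\in\sigma\big(\Upg(E)\big)$ using upward closedness (Corollary~\ref{cor:perfect Schwarz closed, 1}) together with the uniqueness clause of Lemma~\ref{lem:7.15+7.17}, applied in a maximal Hardy field that contains a second solution.

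The differences from the paper are only cosmetic. You perform the reduction to $L$ only where Lemma~\ref{lem:7.15+7.17} needs it, rather than at the outset. You also spell out, via \eqref{eq:upofree}, why the displayed inequalities make $E$ non-$\upo$-free, which the paper leaves implicit.
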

{\sloppy
\begin{proof}
The restriction of $\sigma$ to $\Upg(H)$ is strictly increasing by
Lemma~\ref{lem:11.8.29}.
Hence we may replace~$H$ by any $\upl$-free Hardy subfield $L$ of $E$ containing~$H$ such that~$\Gamma_H^<$ is cofinal in~$\Gamma^<_L$, since then $\Upg(H)$ is coinitial in $\Upg(L)$ by [ADH, 11.8.14]
and thus~$\sigma\big(\Upg(H)\big)$ is coinitial in $\sigma\big(\Upg(L)\big)$. Using this observation  and Lemma~\ref{lem:1.3.3+}   replace $H$ by~$H(\R)$ to
arrange~$H\supseteq\R$.  Next   
replace~$H$ by $\operatorname{Li}(H)\subseteq E$ to arrange that~$H$ is Liouville closed, using Proposition~\ref{prop:Gehret}. 
Now $\omega(E)=\bar{\omega}(E)$ is downward closed and~${\bar{\omega}(E)\cap H=\bar{\omega}(H)}$,  so $\omega(E) < \upo$. %We have $\omega(E)  < \sigma\big(\Upg(E)\big)$ where~ and 
 %$\sigma\big(\Upg(E)\big)$ is upward closed, by Corollary~\ref{cor:perfect Schwarz closed, 1}.
 %Since 
  Towards a contradiction, assume~${\upo\in \sigma\big(\Upg(E)\big){}^{\uparrow}}$. As~$\sigma\big(\Upg(E)\big)$ is upward closed by Corollary~\ref{cor:perfect Schwarz closed, 1}, we have~${\upg\in \Upg(E)}$ with~$\sigma(\upg)=\upo$.  Lemma~\ref{lem:7.15+7.17}     yields $\tilde\upg\ne \upg$ in $(\Calinf)^\times$ 
with~$\tilde\upg >0$ and~${\sigma(\tilde\upg)=\upo}$. Taking a maximal Hardy field extension~$M$ of $H$
with~$\tilde\upg\in M$ then yields a contradiction with the last part of Lemma~\ref{lem:7.15+7.17}  in view of $\upg\in M$.
%Now $M$ is $\upo$-free by Corollary~\ref{cor:maxhardymainthm, 1} and~${\upo\notin\bar{\omega}(M)}$, so
%~$\upo\in\sigma\big(\Upg(M)\big){}^{\uparrow}$ by Lemma~\ref{omuplosc} and so~
%$\tilde \upg\in\Upg(M)$ by Lemma~\ref{lem:11.8.29}. 
%Since~${E\subseteq M}$ we have~$\Upg(E)\subseteq\Upg(M)$. Then from~$\sigma(\upg)=\upo=\sigma(\tilde \upg)$ we   obtain $\upg=\tilde \upg$ by Lemma~\ref{lem:11.8.29}, a contradiction.
\end{proof}}

\noindent
To finish the proof of Theorem~\ref{thm:upo-freeness of the perfect hull}
it remains to show the implication~(ii)~$\Rightarrow$~(iii), which we do in Lemma~\ref{lem:upo-freeness of the perfect hull, (ii)=>(iii)} below. (By~\cite[Theorem~14.4]{Boshernitzan82}, this  implication holds trivially if $H$ is {\it bounded,}\/ that is,
if there is some $\phi\in\c$ with $h\le\phi$ for each~$h\in H$; see~also~\cite[Theorem~5.20]{ADH5}.) 
%We precede this lemma with a useful consequence of a proposition from  \cite{AvdDan}, for which 
Until  Lemma~\ref{lem:upo-freeness of the perfect hull, (ii)=>(iii)} we assume that
 $H\supseteq\R(x)$ is real closed with asymptotic integration.
Let
$(f_\rho)$ be a pc-sequence in $H$. Here,
as in~[ADH],    {\em pc-sequence}\/   abbreviates 
{\em pseudocauchy sequence}, and ${f_\rho\leadsto f}$ means that~$f$ is a pseudolimit of~$(f_\rho)$.
See [ADH, 2.2,~3.2]
for basic facts about pc-sequences.
We also refer to~[ADH, 4.4] for $(f_\rho)$ being of {\it   $\d$-transcendental type}\/ over $H$.

Suppose $(f_\rho)$ is of $\d$-transcendental type over $H$. Then $(f_\rho)$ has no pseudolimit in~$H$, and
if  $\hat f$ is a pseudolimit of $(f_\rho)$ in a Hardy field extension of $H$, then~$H\langle\hat f\rangle$ is an immediate valued field extension  of $H$ by [ADH, 11.4.7, 11.4.13], and~${v(\hat f-H)\subseteq\Gamma_H}$ is downward closed in $\Gamma_H$.
The following is \cite[Proposition~3.4]{AvdDan}:

\begin{prop}\label{prop:3.4}
Suppose 
$(f_\rho)$ is of $\d$-transcendental type over $H$. Let $\hat f$ in a
Hardy field extension of $H$ be such that $f_{\rho} \leadsto \hat{f}$ and $0\notin v(\hat f-H)$, and let $f\in\Calinf$ be such that $(f-\hat f)^{(m)} \prec x^{-n}$ for all $m$, $n$.
Then $f$ is $H$-hardian, and there is an isomorphism $H\<f\> \to H\<\hat f\>$ of Hardy fields over $H$ sending $f$
to $\hat f$.
\end{prop}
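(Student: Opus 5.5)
The plan is to show that for every nonzero $P\in H\{Y\}$ we have $P(f)\sim P(\hat f)$ in $\c$. This suffices: $\hat f$ is $\d$-transcendental over $H$ because $(f_\rho)$ is of $\d$-transcendental type. So $P(\hat f)\neq 0$ lies in the Hardy field $H\langle\hat f\rangle$, and $P(f)\sim P(\hat f)$ makes $P(f)$ eventually of the same nonzero sign. Hence $P(f)\in\c^\times$ for all $P\ne0$. Then $H\{f\}\to H\{\hat f\}$, $P(f)\mapsto P(\hat f)$, is a well-defined injective differential ring morphism over $H$. It extends to fraction fields, and the fraction field $H\langle f\rangle$ of $H\{f\}$ sits inside $\Calinf$, since all nonzero $P(f)$ are units there. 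Thus $H\langle f\rangle$ is a Hardy field, $f$ is $H$-hardian, and we get the isomorphism over $H$ sending $f$ to $\hat f$. Since signs are preserved, it is even order-preserving.

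To get $P(f)\sim P(\hat f)$, set $\varepsilon:=f-\hat f$, so $\varepsilon^{(m)}\prec x^{-n}$ for all $m,n$. Taylor-expand:
$$P(f)-P(\hat f)\ =\ \sum_{\abs{\i}\ge1} \frac{1}{\i!}P_{(\i)}(\hat f)\,\varepsilon^{\i}.$$
Each coefficient $P_{(\i)}(\hat f)$ lies in the immediate extension $H\langle\hat f\rangle$, so it is $\asymp$ some element of $H$. Each monomial $\varepsilon^{\i}$ with $\abs{\i}\ge1$ is $\prec x^{-n}$ for every $n$. So it is enough to prove two bounds, for each $P\neq 0$:
\begin{enumerate}
\item[(a)] an upper bound $P_{(\i)}(\hat f)\preceq x^{N}\,P(\hat f)$ for some $N$ and all $\i$;
\item[(b)] equivalently, that the ratios $P_{(\i)}(\hat f)/P(\hat f)$, which are elements of $H\langle \hat f\rangle$, have valuations bounded below by $v(x^{N})$.
\end{enumerate}
Granting this, every Taylor term is $\prec P(\hat f)$, and $P(f)\sim P(\hat f)$ follows.

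Establishing (a) is the main obstacle. It is a quantitative ``no cancellation'' statement for $P(\hat f)$. My first attempt uses the $\d$-transcendental-type hypothesis: for each $Q\in H\{Y\}^{\ne}$, eventually $Q(f_\rho)\asymp Q(\hat f)$, with $v(Q(f_\rho))$ eventually constant. This gives $P(\hat f)\asymp P(f_\rho)$ and $P_{(\i)}(\hat f)\asymp P_{(\i)}(f_\rho)$ for large $\rho$. The problem then reduces to comparing, inside $H$, the valuation of $P$ with that of its partial derivatives evaluated at $f_\rho$. I would attempt this as follows:
\begin{enumerate}
\item[(i)] additively conjugate by $f_\rho$ and multiplicatively conjugate by some $h\in H$ with $h\asymp \hat f-f_\rho$, as in the Newton-polygon analysis of [ADH, 11.2–11.4], so that $P_{+f_\rho,\times h}$ has dominant part of degree $0$ in the relevant eventual sense;
\item[(ii)] use $0\notin v(\hat f-H)$ (so that, by downward closedness, $v(\hat f-H)<0$) to keep the rescalings $h$ in the region $h\succ1$. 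There the derivation on $H$ is controlled, and by \eqref{eq:9.1.11} with $x\in H$ the relevant logarithmic derivatives are $\preceq$ a power of $x$;
\item[(iii)] conclude that the loss of valuation from passing to partial derivatives is at most $v(x^{N})$ for a fixed $N$ depending on the order and degree of $P$.
\end{enumerate}

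If this direct estimate turns out too weak, the fallback is to use the fact that $H\supseteq\R(x)$ has asymptotic integration. Compositional conjugation by a suitable active element would then turn ``$\prec x^{-n}$ in all derivatives'' into ``flat'' relative to $H$. After that, apply the same Taylor argument in the conjugated field, where the bound in (a) becomes $P_{(\i)}\preceq P$ up to a factor in $\Gamma_H$ that is dominated by the flatness of $\varepsilon$.
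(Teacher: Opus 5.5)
Your reduction is sound. It does suffice to show $P(f)\sim P(\hat f)$ for all $P\ne 0$, and via the Taylor expansion this follows from your estimate (a), which is in fact true. But (a) is the entire content of the proposition, and the proposal does not prove it.

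\begin{itemize}
\item Knowing that $v\big(Q(f_\rho)\big)$ is eventually constant for each $Q$ only moves (a) from $\hat f$ to $f_\rho$. For a general $a\in H$ the ratios $P_{(\i)}(a)/P(a)$ can be as large as you like (take $a$ very close to a zero of $P$). So ``comparing inside $H$'' is the same problem, and something still has to turn $\d$-transcendental type into a no-cancellation statement.
\item Step (ii) is false as stated: nothing makes the logarithmic derivatives of the rescalings $h\asymp\hat f-f_\rho$ polynomially bounded. If $\ex^{\ex^x}\in H$ and $h\succ\ex^{\ex^x}$ (arranged by scaling $(f_\rho)$ by a large element of $H$, which preserves all hypotheses), then \eqref{eq:9.1.11} gives $h^\dagger\succeq\ex^x$. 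Moreover, a statement at scale $h\succ 1$ centered at $f_\rho$ still has to be transported to scale $1$ centered at $\hat f$, and you do not address this.
\item The fallback fails too. Composing with $\ell^{\operatorname{inv}}$ preserves $\prec$ between $\varepsilon$ and elements of $H$, so it cannot make $\varepsilon$ ``flat relative to $H$''; compositional conjugation only changes the derivation.
\end{itemize}

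The missing input is that $\d$-transcendental type forces Newton degree $0$ near $\hat f$. For each $P\ne 0$ there should be $a\in H$ and $\mathfrak v\in H^\times$ with $\hat f-a\prec\mathfrak v$ and $\ndeg_{\prec\mathfrak v}P_{+a}=0$; in the notation of [ADH, 11.4] this is $Z(H,\hat f)=\emptyset$. I have not checked the exact hypotheses under which [ADH, 11.4] delivers this for a $\d$-transcendental-type pc-sequence, so verify that they cover the present setting.

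Given that input, the argument goes as follows.
\begin{itemize}
\item Since $\hat f-a\succ 1$ (this is where $0\notin v(\hat f-H)$ is used), we have $1\prec\mathfrak v$.
\item Invariance of Newton degree in the region $\prec\mathfrak v$ under additive conjugation by $\hat f-a$ then gives $\ndeg P_{+\hat f}\le\ndeg_{\prec\mathfrak v}P_{+\hat f}=0$.
\item Hence there is an active $\phi\in H$ with $\phi\preceq 1$ such that all non-constant coefficients of $(P_{+\hat f})^\phi$ are $\prec P(\hat f)$. Automatically $\phi\succ x^{-2}$, since $x\in H$.
\item Now $P(f)=(P_{+\hat f})^\phi(\varepsilon)$ is a polynomial in the $\derdelta^k(\varepsilon)$, where $\derdelta=\phi^{-1}\der$. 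Each $\derdelta^k(\varepsilon)$ is a combination of $\varepsilon,\dots,\varepsilon^{(k)}$ with polynomially bounded coefficients from $H$, hence $\prec 1$. So $P(f)\sim P(\hat f)$ directly.
\end{itemize}
Undoing the conjugation also yields your (a), with $N$ depending on $\phi$ and the order of $P$. The polynomial bound comes from the compositional conjugation by $\phi$, not from the rescalings $h$.
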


\noindent
From Proposition~\ref{prop:3.4}  we   obtain:

\begin{cor}\label{cor:d-trans => c-sequ}
Suppose $(f_\rho)$ is of $\d$-transcendental type over $H$, and 
 $\hat f\in\Ex(H)$ is a pseudolimit of $(f_{\rho})$. Then $v(\hat f-H)=\Gamma_H$.  
\end{cor}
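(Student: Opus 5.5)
We argue by contradiction. Since $v(\hat f-H)$ is downward closed in $\Gamma_H$, it suffices to exclude the case where it is a proper initial segment, so suppose it is one. Then there is $\gamma\in\Gamma_H$ with $v(\hat f-h)<\gamma$ for all $h\in H$. The plan is to use Proposition~\ref{prop:3.4} to perturb $\hat f$ by a rapidly vanishing germ, producing a second $H$-hardian pseudolimit $f\neq\hat f$ of $(f_\rho)$ that does not lie in a common Hardy field with $\hat f$. That would contradict $\hat f\in\Ex(H)$: $\hat f$ would have to lie in every maximal Hardy field extension of $H$, in particular in one containing $f$.

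First, a normalization. The hypothesis $0\notin v(\hat f-H)$ of Proposition~\ref{prop:3.4} can be arranged by replacing $(f_\rho)$, $\hat f$ by $(af_\rho)$, $a\hat f$ for a suitable $a\in H^\times$. Indeed, $v(\hat f-H)$ is a proper downward closed subset, so we may choose $a$ with $va+v(\hat f-H)<0$. This scaling preserves $\d$-transcendental type, membership in $\Ex(H)$, and the property $v(\hat f-H)=\Gamma_H$. With $0\notin v(\hat f-H)$ in place, $v(\hat f-H)<0$, so $\hat f-h\succ 1$ for all $h\in H$.

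Next, pick a germ $\varepsilon\in\Calinf$ with $\varepsilon^{(m)}\prec x^{-n}$ for all $m,n$ and with $\varepsilon$ oscillating, for instance $\varepsilon=\ex^{-x}\sin(\ex^{x})$, after checking its derivatives, or more safely $\ex^{-\ex^{x}}\sin x$. Put $f:=\hat f+\varepsilon$. Proposition~\ref{prop:3.4} gives that $f$ is $H$-hardian, with an isomorphism $H\langle f\rangle\cong H\langle\hat f\rangle$ over $H$.

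Now take a maximal Hardy field $M\supseteq H\langle f\rangle$. Since $\hat f\in\Ex(H)\subseteq M$, both $f$ and $\hat f$ lie in $M$, hence $\varepsilon=f-\hat f\in M$. Then $\varepsilon$ is non-oscillating, contradicting our choice of $\varepsilon$.

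This argument never uses the assumption that $v(\hat f-H)\neq\Gamma_H$, which is suspicious. The role of that assumption should be to guarantee that $0\notin v(\hat f-H)$ can be arranged by scaling; if $v(\hat f-H)=\Gamma_H$, no scaling achieves this. So the real structure is: assume not, normalize so that $0\notin v(\hat f-H)$, then perturb by an oscillating flat germ to get a contradiction. The main obstacle is checking that the scaling preserves all the hypotheses of Proposition~\ref{prop:3.4}, and that the chosen flat oscillating $\varepsilon$ really has all derivatives $\prec x^{-n}$; the choice $\ex^{-\ex^{x}}\sin x$ makes the latter routine.
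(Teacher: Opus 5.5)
Your proof is correct and is essentially the paper's argument: assume $v(\hat f-H)\ne\Gamma_H$, rescale by an element of $H^\times$ to get $v(\hat f-H)<0$, perturb $\hat f$ by a flat oscillating germ, apply Proposition~\ref{prop:3.4}, and contradict $\hat f\in\Ex(H)$ using a maximal Hardy field containing $H\langle f\rangle$. The assumption is used in the rescaling step, so your worry that it goes unused is unfounded. One caution: your first candidate $\ex^{-x}\sin(\ex^{x})$ fails the hypothesis of Proposition~\ref{prop:3.4}, because its derivative contains the term $\cos(\ex^{x})$; your second choice $\ex^{-\ex^{x}}\sin x$ works, as does the paper's simpler $\ex^{-x}\sin x$, for which $\abs{z^{(m)}}\le 2^m\ex^{-x}$.
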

\begin{proof}
Suppose not. Take  $g\in H^\times$ with
$v(\hat f-H)<vg$ and
replace~$(f_\rho)$,~$\hat f$  by~$(f_\rho/g)$,~$\hat f/g$,  respectively, to arrange $v(\hat f-H)<0$.
Now put~$z:=\ex^{-x}\sin x \in \c^\omega$. Then $\abs{z^{(m)}}\leq 2^m\ex^{-x}$ for all $m$,
hence by Proposition~\ref{prop:3.4},  
the germ~$f:=\hat f+z$ generates a Hardy field $H\langle f\rangle$ over $H$; however,
no maximal Hardy field extension of $H$ contains both $\hat f$ and $f$, contradicting $\hat f\in\Ex(H)$.
\end{proof}

\noindent
The next lemma, with $\Dx(H)$ for $H$, yields  (ii)~$\Rightarrow$~(iii) in
Theorem~\ref{thm:upo-freeness of the perfect hull}:

\begin{lemma}\label{lem:upo-freeness of the perfect hull, (ii)=>(iii)}
Suppose $H$ is $\upo$-free. Then  $\Ex(H)$ is also $\upo$-free.
\end{lemma}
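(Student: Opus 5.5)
The plan is to argue by contradiction, using Corollary~\ref{cor:d-trans => c-sequ} applied to the canonical pc-sequence that witnesses failure of $\upo$-freeness. Put $E:=\Ex(H)$. Then $E\supseteq H$ is perfect, hence $\d$-perfect, so $E\supseteq\R$ is a Liouville closed $H$-field. In particular $E$ is $\upl$-free and has asymptotic integration. Suppose $E$ is not $\upo$-free. By \eqref{eq:upofree} and Corollary~\ref{cor:perfect Schwarz closed, 1} (which says $\sigma(\Upg(E))$ is upward closed and $\omega(E)=\omega(\Upl(E))$ is downward closed), we get $\upo\in E$ with
$$\omega\big(\Upl(E)\big)\ <\ \upo\ <\ \sigma\big(\Upg(E)\big).$$

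Next I would pick a logarithmic sequence $(\ell_\rho)$ for $H$ and form $\upg_\rho=\ell_\rho^\dagger$, $\upl_\rho=-\upg_\rho^\dagger$, $\upo_\rho=\omega(\upl_\rho)$ in $H$. Since $H$ is $\upo$-free, $(\upo_\rho)$ is a pc-sequence in $H$ of $\d$-transcendental type over $H$ [ADH, 11.7.13]. The key claim is that $(\ell_\rho)$ remains a logarithmic sequence for $E$, i.e.\ $\Gamma_H^<$ is cofinal in $\Gamma_E^<$. Granting this, $(\upg_\rho)$ is coinitial in $\Upg(E)$. Then [ADH, 11.7] gives $\upo_\rho\leadsto\upo$, with
$$v(\upo-\upo_\rho)\ =\ v(\upg_{\rho+1}^2)\quad\text{eventually (up to the usual index shift).}$$
Corollary~\ref{cor:d-trans => c-sequ} then applies, since $H\supseteq\R(x)$ is real closed with asymptotic integration as assumed before that corollary. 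It yields $v(\upo-H)=\Gamma_H$.

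To reach a contradiction, note that $v(\upo-H)$ is the set of $\gamma\in\Gamma_H$ lying below some $v(\upo-\upo_\rho)=2v(\upg_{\rho+1})$. Each $v(\upg_\rho)$ lies in $\Psi_H$, and $\Psi_H<(\Gamma_H^>)'$. Since $x\in H$, the element $v(x^{-1})$ lies in $\Gamma_H^>$, so $2v(\upg_{\rho})<2v(x^{-2})$ for all $\rho$. Hence $v(x^{-4})\notin v(\upo-H)$, which contradicts $v(\upo-H)=\Gamma_H$.

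The main obstacle is the cofinality claim: $E$ need not be $\d$-algebraic over $H$, so Theorem~\ref{thm:13.6.1} does not apply directly. I would first replace $H$ by $\Dx(H)$, or by $\Li(H)$, which keeps $\upo$-freeness by Theorem~\ref{thm:13.6.1} and does not change $E$. For a general element $y\in E$ with $\Gamma_H^<<vy<0$, I would try to show that $H\langle y\rangle$ cannot lie in every maximal extension. The idea is to perturb by a flat germ such as $\ex^{-x}\sin x$, as in the proof of Corollary~\ref{cor:d-trans => c-sequ}. If that fails, I would instead work with the pseudolimit directly: compare $\upo$ with the $\upo_\rho$ inside the maximal Hardy fields containing $E$, which are $\upo$-free by Corollary~\ref{cor:maxhardymainthm, 1}, and use that $\upo$-freeness there forces $\upo_\rho\leadsto\upo$ relative to $H$'s sequence.
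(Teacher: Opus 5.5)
Your skeleton is exactly the paper's: an $\upo\in E$ with $\omega(\Upl(E))<\upo<\sigma(\Upg(E))$, the pc-sequence $(\upo_\rho)$ of $\d$-transcendental type over $H$, Corollary~\ref{cor:d-trans => c-sequ}, and the observation $v(x^{-4})\notin v(\upo-H)$. But as written there is a genuine gap. You derive $\upo_\rho\leadsto\upo$ from the claim that $\Gamma_H^<$ is cofinal in $\Gamma_E^<$, and you never prove that claim.

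Theorem~\ref{thm:13.6.1} does not give it, because $E=\Ex(H)$ need not be $\d$-algebraic over $H$. Your proposed remedies also do not work as stated:
\begin{itemize}
\item An element $y\in E$ with $\Gamma_H^<<vy<0$ has $vy\notin\Gamma_H$. So $y$ is not a pseudolimit of a pc-sequence in $H$ of $\d$-transcendental type, and Proposition~\ref{prop:3.4} (hence the $\ex^{-x}\sin x$ perturbation) does not apply to it.
\item The $\upo$-freeness of the maximal Hardy fields containing $E$ is beside the point.
\end{itemize}

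The missing observation is that cofinality is not needed at all, because pseudoconvergence only uses inequalities relative to $H$.
\begin{itemize}
\item Since $\omega(H)\subseteq\omega(E)$ and $\Upg(H)\subseteq\Upg(E)$, your $\upo$ satisfies $\omega(H)<\upo<\sigma\big(\Upg(H)\big)$. In particular $\upo_\rho<\upo<\sigma(\upg_\rho)=\upo_\rho+\upg_\rho^2$ for all $\rho$.
\item Write $\upo-\upo_\rho=(\upo-\upo_{\rho+1})+(\upo_{\rho+1}-\upo_\rho)$. Here $0<\upo-\upo_{\rho+1}<\upg_{\rho+1}^2$ and $0<\upo_{\rho+1}-\upo_\rho\sim\upg_{\rho+1}^2$ (cf.\ [ADH, 11.7.1]).
\item Convexity of the valuation ring of $E$ then gives $\upo-\upo_\rho\asymp\upg_{\rho+1}^2$. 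Since $v(\upg_{\rho+1})$ is strictly increasing, this yields $\upo_\rho\leadsto\upo$.
\end{itemize}
This is what the paper extracts from [ADH, 11.8.30]. After that, [ADH, 11.7.2] and Corollary~\ref{cor:d-trans => c-sequ} finish exactly as you do.

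Also make the reduction at the outset, not as part of the cofinality discussion. Corollary~\ref{cor:d-trans => c-sequ} and your use of $x\in H$ require $H\supseteq\R$ to be Liouville closed. You get this by replacing $H$ with $\Li\big(H(\R)\big)$ or $\Dx(H)$; plain $\Li(H)$ presupposes $H\supseteq\R$. Either replacement is $\d$-algebraic over $H$, so it stays $\upo$-free by Theorem~\ref{thm:13.6.1}, and it has the same perfect hull.
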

\begin{proof}
Since~$E:=\Ex(H)$ is Liouville closed  and contains $\R$ we may replace $H$ by the Hardy subfield
$\operatorname{Li}\!\big(H(\R)\big)$ of $E$, which remains $\upo$-free by Theorem~\ref{thm:13.6.1}, to arrange that $H\supseteq \R$ and $H$ is Liouville closed (so Corollary~\ref{cor:d-trans => c-sequ} applies).
Towards a contradiction, suppose~$\upo\in E$ satisfies $\omega(E) < \upo < \sigma\big(\Upg(E)\big)$; then
$\omega(H) < \upo < \sigma\big(\Upg(H)\big)$. Take a logarithmic sequence $(\ell_\rho)$ for $H$ as in [ADH, 11.5]
and define~$\upo_\rho:=\omega(-\ell_\rho^{\dagger\dagger})$. 
Then $(\upo_\rho)$ is a   pc-sequence in~$H$ with $\upo_\rho\leadsto\upo$, by [ADH, 11.8.30], 
 and~$(\upo_\rho)$ is of $\d$-transcendental type over~$H$ by~[ADH, 13.6.3].
By~[ADH, 11.7.2] we have~${\Gamma_H\setminus v(\upo-H)=\big\{\gamma\in \Gamma_H:\gamma> 2\Psi_H\big\}}$  which contains~$v(1/x^4)=2v\big((1/x)'\big)$,  
contradicting Corollary~\ref{cor:d-trans => c-sequ}.
\end{proof}

\noindent
Next  we describe for $j=1,2$ a $\upl$-free Hardy field $H_{(j)}\supseteq\R$ and $\upo_{(j)}\in H_{(j)}$ such that~$\omega\big(\Upl(H_{(j)})\big) <  \upo_{(j)}  < \sigma\big(\Upg(H_{(j)})\big)$  (so $H_{(j)}$ is not $\upo$-free by \eqref{eq:upofree}), and  
\begin{enumerate}
\item  $\upo_{(1)}\in\bar{\omega}(H_{(1)})$;
\item $\upo_{(2)}\notin\bar{\omega}(H_{(2)})$.
\end{enumerate}
It follows that $\bar{\omega}(H_{(1)})=H_{(1)}\setminus\sigma\big(\Upg(H_{(1)})\big){}^\uparrow$ by Lemma~\ref{lem:6.19},
hence   condition~(i) in Theorem~\ref{thm:upo-freeness of the perfect hull} is  satisfied for $H=H_{(1)}$, but it is {\it not}\/ satisfied for $H=H_{(2)}$; thus~$\Dx(H_{(1)})$ and $\operatorname{E}(H_{(1)})$ are   $\upo$-free, whereas $\Dx(H_{(2)})$ and $\operatorname{E}(H_{(2)})$ are not. 

{\sloppy
\medskip
\noindent
To construct such $H_{(j)}$ and $\upo_{(j)}\in H_{(j)}$ we start with 
 the Hardy field $$H\ :=\ \R( \ell_0,\ell_1,\ell_2,\dots)\quad\text{ where 
 $\ell_0=x$ and $\ell_{n+1}=\log \ell_n$ for all $n$.}$$ Then $H$ is $\upo$-free by Proposition~\ref{prop:11.7.15}.
Now take
a hardian   germ $\ell_\omega$ such that~${\R<\ell_\omega<\ell_n}$ for each $n$.  (See   
\cite[remarks after Corollary~5.28]{ADH5} for how to obtain such $\ell_\omega$.) 
Set $$\upg\ :=\ \ell_\omega^\dagger,\quad \upl\ :=\ -\upg^\dagger,\quad \upo_{(1)}\ :=\ \omega(\upl), \quad \upo_{(2)}\ :=\ \sigma(\upg)\ =\ \upo_{(1)} +\upg^2,$$
which are in the Hardy field $M:=H\langle\ell_\omega\rangle$. 
By [ADH,   11.5, 11.7],  $\upo_{(1)}$, $\upo_{(2)}$  are pseu\-do\-limits of the pc-sequence~$(\upo_n)$ in~$H$.  
For $j=1,2$,  the  Hardy subfield~$H_{(j)} :=  H\langle \upo_{(j)}\rangle$
of   $M$ is an immediate $\upl$-free extension of $H$ by [ADH, 13.6.3, 13.6.4], and therefore~$\omega\big(\Upl(H_{(j)}\big) <  \upo_{(j)}  < \sigma\big(\Upg(H_{(j)})\big)$ by [ADH, 11.8.30]. Moreover 
$$\bar{\omega}\big(H_{(j)}\big)\ =\ \bar{\omega}(M)\cap H_{(j)}\ \text{  for }j=1,2,$$  so (1) holds
since $\upo_1\in\omega(M)\subseteq\bar{\omega}(M)$,  
whereas $\upo_2\in\sigma\big(\Upg(M)\big)\subseteq M\setminus \bar{\omega}(M)$,  hence~(2) holds.}

{\sloppy
 \begin{remarkNumbered}\label{rem:non-uniqueness}
The above yields $\upl$-free $H$ and $\upo\in H$ with
$\bar{\omega}(H)\ <\  \upo\   <\  \sigma\big(\Upg(H)\big)$, namely $H_{(2)}$.   
We use this to show that the ``$\Dx(H)$ is $\upo$-free" condition in the last part of
Theorem~\ref{thm:Bosh} cannot be dropped. So assume $\Dx(H)$ is not $\upo$-free, equivalently, $H$ satisfies neither (a) nor (b)
in Theorem~B of the introduction. 

As in the proof of Lemma~\ref{lem:upo-freeness of the perfect hull},  arrange 
that~$H\supseteq\R$ is Liouville closed. Lem\-ma~\ref{lem:7.15+7.17}   gives infinitely many $H$-hardian~${\upg>0}$
 with~$\sigma(\upg)=\upo$. Let $M$ be a maximal
Hardy field extension of~$H$ containing such $\upg$. Then $\upg\in \Upg(M)$ by \cite[Corollary~7.16]{ADH5}, so~\eqref{eq:11.8.19}
gives 
$\phi >\R$ in $M$ with $(2\phi)'= \upg$. Hence  by Lem\-ma~\ref{parphi}(ii) the pair~$(1/\sqrt{\phi'},\phi)$  parametrizes~$\ker_{\Calinf} (4\der^2+\upo)$.  But another $\upg$ cannot lie in the same~$M$, and likewise for  the corresponding germs $\phi$.  
 \end{remarkNumbered}}

\newlength\templinewidth
\setlength{\templinewidth}{\textwidth}
\addtolength{\templinewidth}{-2.25em}

\patchcmd{\thebibliography}{\list}{\printremarkbeforebib\list}{}{}

%\addcontentsline{References}
\let\oldaddcontentsline\addcontentsline% Store \addcontentsline
\renewcommand{\addcontentsline}[3]{\oldaddcontentsline{toc}{section}{References}}

\def\printremarkbeforebib{\bigskip\hskip1em The citation [ADH] refers to the book \\

\hskip1em\parbox{\templinewidth}{
M. Aschenbrenner, L. van den Dries, J. van der Hoeven,
\textit{Asymptotic Differential Algebra and Model Theory of Transseries,} Annals of Mathematics Studies, vol.~195, Princeton University Press, Princeton, NJ, 2017.
}

\bigskip

} 

\bibliographystyle{amsplain}

\end{document}